\documentclass[11pt,reqno]{amsart}
\usepackage{amscd,amsmath,amsopn,amssymb,amsthm,multicol}
\usepackage{hyperref}
\usepackage{amscd}
\usepackage{lscape}
\usepackage{slashed}
\usepackage{graphicx}
\usepackage{lscape}
\usepackage{verbatim}
\usepackage{color}
\usepackage[b]{esvect}
\usepackage[all,cmtip]{xy}
\usepackage{tikz-cd}
\usepackage{amsmath}


\definecolor{dblue}{rgb}{0.01,0.01,0.44}
\definecolor{red}{rgb}{0.57,0.11,0.15}
\language=0
 \textwidth 170mm
 \textheight 240mm

 \setlength{\topmargin}{-0.5cm}
\setlength{\oddsidemargin}{0cm}
\setlength{\evensidemargin}{0cm}

\DeclareMathOperator{\Ad}{Ad}
\DeclareMathOperator{\ad}{ad}
\DeclareMathOperator{\Aut}{Aut}

\DeclareMathOperator{\Id}{Id}

\DeclareMathOperator{\Cl}{C\ell}

\DeclareMathOperator{\Hgt}{ht}

\DeclareMathOperator{\Spin}{Spin}

\DeclareMathOperator{\rnk}{rnk}

\newcommand{\fr}{\mathfrak}
\newcommand{\al}{\alpha}
\newcommand{\be}{\beta}

\newcommand{\bb}{\mathbb}
\newcommand{\cal}{\mathcal}

\DeclareMathOperator{\SO}{SO}
\DeclareMathOperator{\Sp}{Sp}
 \DeclareMathOperator{\SU}{SU}
  
\DeclareMathOperator{\U}{U}
\DeclareMathOperator{\G}{G}
\DeclareMathOperator{\F}{F}
\DeclareMathOperator{\Oo}{O}
\DeclareMathOperator{\E}{E}
\DeclareMathOperator{\A}{A}
\DeclareMathOperator{\B}{B}
\DeclareMathOperator{\Cc}{C}
\DeclareMathOperator{\D}{D}
\DeclareMathOperator{\Ss}{S}

\DeclareMathOperator{\Gl}{GL}
\newcommand{\thickline}{\noalign{\hrule height 1pt}}

 \newtheorem{lemma} {Lemma} [section]
\newtheorem{theorem}[lemma]{Theorem}
\newtheorem{remark}[lemma] {Remark}
\newtheorem{prop} [lemma]{Proposition}
\newtheorem{definition}[lemma] {Definition}
\newtheorem{corol}[lemma] {Corollary}
\newtheorem{example}[lemma] {Example}

\begin{document}

 \title
{Spin structures on  compact homogeneous  pseudo-Riemannian  manifolds}
 \author{Dmitri V.~ Alekseevsky and Ioannis Chrysikos}
 \address{Institute for Information Transmission Problems, B. Karetny
per. 19, 127051, Moscow, Russia
and Faculty of Science,  University of Hradec Kr\'alov\'e,  Rokitanskeho 62, Hradec Kr\'alov\'e
 50003, Czech Republic}
 \email{dalekseevsky@iitp.ru}
 \address{Faculty of Science,  University of Hradec Kr\'alov\'e,  Rokitanskeho 62 Hradec Kr\'alov\'e
 50003, Czech Republic and Department of Mathematics and Statistics, Masaryk University, Kotl\'a\v{r}sk\'a 2, Brno  611 37, Czech Republic}
 \email{ioannis.chrysikos@uhk.cz}


 \begin{abstract}

     We    study  spin    structures   on  compact simply-connected  homogeneous  pseudo-Riemannian manifolds  $(M = G/H,g)$  of  a  compact   semisimple  Lie  group $G$.
  We classify   flag manifolds $F = G/H$   of a compact simple Lie group   which  are    spin.
This yields   also the  classification of all flag manifolds carrying an invariant metaplectic  structure.
Then we   investigate   spin  structures  on principal torus  bundles over flag manifolds $F=G/H$, i.e. C-spaces, or equivalently   simply-connected  homogeneous  complex  manifolds $M=G/L$ of  a  compact   semisimple Lie  group $G$.  We study the topology of $M$ and we provide a sufficient  and necessary condition for  the existence of an (invariant) spin structure, in terms of the Koszul form of $F$. We  also classify  all C-spaces  which are  fibered over an exceptional spin flag manifold and hence they are spin. 

 \medskip
 \noindent 2000 {\it Mathematics Subject Classification.}    53C10, 53C30, 53C50, 53D05.

 \noindent {\it Keywords}:   spin structure, metaplectic structure,  homogeneous pseudo-Riemannian manifold, flag manifold,   Koszul form, C-space

  \smallskip
 \centerline{\it Dedicated to the memory of M.~Graev}

  \end{abstract}
\maketitle

 \section*{Introduction}

   This paper is devoted to a systematic study of invariant spin structures  and   metaplectic structures on homogeneous spaces $M=G/L$.  Spin structures,   $\text{spin}^{\bb{C}}$ structures and metaplectic structures  have crucial  role  in   differential geometry  and  physics.  For example, existence  of a  spin   structure  on   a manifold  is  assumed in  most physical models in supergravity  and string  theory,  which is  essential   for the  definition  of  Dirac  and   twistor  operators, Killing  and   twistor spinors,  for   formulation   and  description of  supersymmetry, etc (see \cite{Law, Baum,  AT, Fried, Srni,  FF, Kil2} for references in all these directions). The same time,   $\text{spin}^{\bb{C}}$ structures are ubiquitous in dimension 4 by means of Seiberg-Witten theory (cf. \cite{Fried}) and metaplectic structures are necessary  in geometric quantization (cf. \cite{forger, Hab}).

    If    $M=G/L$ is a simply-connected homogeneous  pseudo-Riemannian manifold which is  time-oriented  and space-oriented,  
    then  the existence  of a spin    structure  does not  depend   on a particular  invariant metric and its  signature,    but only on the topology  of  $M$.  Moreover, if $G$ is simply-connected and  such a (unique) structure  exists, then it will be  $G$-invariant, i.e. it is  defined   by  a  lift $\tilde{\vartheta} : L \to  \Spin(V),\,  V = T_oM, $  of   the isotropy    representation  $\vartheta : L \to \SO(V)$ of  the  stability   subgroup  to  the  spin group  $\Spin(V)$.  Since   the isotropy  representation  $\vartheta : \mathfrak{l} \to \mathfrak{so}(V)$  of  the  stability  subalgebra can be   always  lifted   to $\mathfrak{spin}(V)$,  the  lift  of  $L$, if exists,  may be obtained   by exponentiation of  the  lift $\vartheta(\mathfrak{l}) \subset \mathfrak{spin}(V)$ of  the isotropy Lie  algebra. Then, one can  describe   the associated  spinor bundle $\Sigma(M) = G \times_{\tilde{\vartheta}(L)}\Delta$,  where $\Delta$ is  the  spinor  module, and define spinorial objects (spinor fields, Dirac operators,  etc.)
 Hence, existence  of a  spin   structure allows us   to  construct  this bundle  explicitly   and   deal  with    spinor  geometry.
   Notice however  that do exist   homogeneous (pseudo)Riemannian   manifolds which are not spin, e.g. $\bb{C}P^{2}$ or $\bb{C}P^{2}-\{{\rm point}\}$  (although it carries countably many  $\text{spin}^{\bb{C}}$ structures, see \cite{plymen}).  Moreover, for $n\geq 5$ there are compact oriented manifolds which do not carry a $\text{spin}^{\bb{C}}$ structure, see \cite{Kil2}.   Therefore, spin or $\text{spin}^{\bb{C}}$ structures (or metaplectic structures) do not always exist.  However,  except of special structures  and particular constructions (see for example \cite{FKMS, Srni, Law}), only a few general classification  results     are known  about the existence  of  spin  structures on   homogeneous spaces.  For  example, existence of    a spin  structure  on  Riemannian symmetric spaces and  quadrics, is  investigated in \cite{Cahen2, Cahen}. In a more recent work \cite{Gadea},    invariant spin structures are also described  on pseudo-symmetric spaces and non-symmetric cyclic homogeneous Riemannian manifolds.

   Our results can be read as follows. After recalling some basic material in Section \ref{intro},  in Section  \ref{homogspin} we study invariant spin structures on  pseudo-Riemannian homogeneous spaces, using homogeneous fibrations.  Recall that given a smooth fibre bundle $\pi : E\to B$ with connected fibre $F$,  the tangent bundle $TF$ of $F$ is stably equivalent to $i^{*}(TE)$, where $i : F\hookrightarrow E$ is the inclusion map (cf. \cite{Sin}). Evaluating this result at the level of characteristic classes,  one can treat the existence of a spin structure on the total space $E$ in terms of Stiefel-Whitney classes of $B$ and $F$, in the spirit of the theory developed by Borel and Hirzebruch  \cite{Bo}. We apply these considerations for fibrations induced by  a tower of closed connected  Lie subgroups $L\subset H\subset G$ (Proposition \ref{genw2}) and we describe sufficient and necessary conditions for the existence of a spin structure on the associated total space (Corollary \ref{springer2}, see also \cite{Gadea}). %
  Next  we apply these results in several  particular cases.
   For example,  in   Section \ref{flags}   we classify   spin and  metaplectic structures on compact homogeneous K\"ahler manifolds of a compact connected semisimple Lie group $G$, i.e. (generalized) flag manifolds.

   Generalized  flag manifolds are homogeneous spaces of the form $G/H$, where $H$ is the centralizer of  torus in $G$. 
  Here, we  explain how   the existence  and classification  of   invariant  spin or metaplectic   structures  can be treated  in term of representation theory (painted Dynkin diagrams) and provide a criterion  in terms of the so-called {\it Koszul numbers} (Proposition \ref{chernclass2}).  These are the integer  coordinates of the  invariant Chern  form  (which  represent   the  first Chern class  of of  an invariant  complex  structure $J$  of $F=G/H$), with  respect   to   the fundamental weights.
By applying an algorithm given in \cite{AP} (slightly revised),  we compute the Koszul numbers for any flag manifold corresponding to a classical Lie group and provide necessary and sufficient conditions for the existence of a spin or metaplectic  structure for any such coset (Corollary \ref{CLASFLAGS}, Theorem \ref{apl1}).  In addition, we present an explicit description of all classical spin or metaplectic flag manifolds with $b_{2}(F)=1$, or $b_{2}(F)=2$ (Theorem \ref{OURGOOD1}, Table 1).  Then, we extend our study on flag manifolds associated to an exceptional Lie group and  provide  the Koszul numbers for any such space, a problem which  was left open in \cite{AP}.  There are 101 non-isomorphic  exceptional flag manifolds and we show that, up to equivalence,  37 of them admit a (unique) $G$-invariant spin or metaplectic structure (Theorems \ref{g2}, \ref{e7-8}, \ref{e8}).        For  such spaces we also compute another invariant, namely the  cardinality of the $T$-root system $R_{T}$, verifying the classification given in  \cite{Gr2}. For convenience, we summarize these results  together with   the Koszul form  in     Tables  2, 3 and 4.  It worths to remark that  the    Koszul  form  encodes  important geometric  information   about a flag manifold $F$  and other associated  spaces. For example,  it   defines an invariant  K\"ahler-Einstein metric on $F$   associated  with   an  invariant  complex  structure, a  Sasaki-Einstein metric on  the  associated  $\Ss^1$-bundle $S$  over $F$ and  a Ricci-flat  K\"ahler metric   on   the Riemannian   cone  over  $S$.

     In this direction we examine (invariant) spin structures on C-spaces, that is  simply-connected  compact complex homogeneous spaces.  Such manifolds were classified by Wang \cite{Wang} and they are   toric  bundles over    flag manifolds.  C-spaces may   admit    invariant  Lorentz  metric and    invariant   complex  structure   with zero  first Chern class, in contrast to flag manifolds.  Therefore, such homogeneous spaces may provide examples of  homogeneous Calabi-Yau structures with torsion, see     \cite{Fino, Gran, GGP} for  details and references.
    Here, we use the Tit's fibration $M=G/L\to F=G/H$ of a  C-space over a flag manifold $G/H$ of a compact semisimple Lie group $G$ and treat the first Chern class of $G/L$ in terms of the Koszul form associated to $G/H$. We examine the topology of $M$ (see  Propositions \ref{cspacespin}, \ref{c1w2}, \ref{newnew}) and  describe necessary and sufficient conditions for the existence of a spin structure (see Corollary \ref{concl} and Theorem \ref{final3}).   
   It is an immediate conclusion that any  C-space $G/L$  fibered over a $G$-spin flag manifold $G/H$   is  automatically $G$-spin.  Thus, for example, our classification of spin flag manifolds enables us to describe all C-spaces that can be fibered over a $G$-spin flag manifold of an exceptional Lie group $G$  (Proposition \ref{cprop}).  We finally provide  a new construction which allows us to present C-spaces admitting an invariant spin structure, even when the base of the Tit's fibration is not spin (Corollary \ref{cspacespin2}). 

   We   mention  that the results of this work  can be applied  for  the  classification of  spin structures on  homogeneous Lorentzian manifolds of a semisimple Lie group. This  application will be  presented  in a forthcoming work.

 \noindent {\bf Acknowledgements:} The second   author is grateful to  Yusuke Sakane (Osaka) for sharing his insight at an early stage  and acknowledges his steady influence.  He  also thanks  Svatopluk Kr\'ysl  (Prague) for  valuable references on  metaplectic structures.  Part of this work was completed under the support of   GA\v{C}R (Czech Science Foundation),  post-doctoral grant   GP14-24642P, which is gratefully acknowledged by the second author.
 Both authors acknowledge the institutional support of University of Hradec Kr\'alov\'e
and thank Masaryk University in Brno for hospitality.


\section{Preliminaries}\label{intro}

 In this section we collect basic facts on spin structures on pseudo-Riemannian manifolds. For a detailed exposition the reader may consult the books \cite{Baum2, Law, Baum, Fried}.
      Consider a  connected oriented  pseudo-Riemanian manifold   $(M^{n},g)$   of  signature $(p,q)$ and let        $\pi: P=\SO(M)\to M$ be  the $\SO_{p,q}$-principal  bundle of positively oriented orthonormal  frames.
 Then, the tangent bundle  $TM =\SO(M)\times_{\SO_{p,q}} \mathbb{R}^n$    admits  an orthogonal
    decomposition
    \begin{equation}\label{clasdec}
    TM = \eta_- \oplus  \eta_+,
    \end{equation}
       where  $\eta_- $ (resp.  $\eta_+$)  is a rank $p$  time-like subbundle, i.e. $g|_{\eta_-} <0$    (resp.  rank $q$  space-like subbundle), i.e. $g|_{\eta_+} >0$.   In general, this decomposition is not unique.
          Recall that   $(M^{n}, g)$ is called {\it time-oriented }  (resp. {\it  space-oriented}) if   $\eta_-$ (resp.  $\eta_+$) is oriented, which is equivalent to say that  the associated first  Stiefel-Whitney  class   vanishes,  $w_1(\eta_-) =0$  (resp.  $w_1(\eta_+) =0$).
Since $w_1(M) := w_1(TM) = w_1(\eta_-) + w_1(\eta_+)$,  the manifold $M$ is oriented if  and only if  $w_1(\eta_-) + w_1(\eta_+)=0$.

Consider the spin group $\Spin_{p, q}$ and let us denote by $\Ad : \Spin_{p, q} \to \SO_{p, q}$ the  $\bb{Z}_{2}$-cover over $\SO_{p, q}$.
\begin{definition}\label{def1}
 \textnormal{A  $\Spin_{p, q}$-structure   (shortly  {\it spin structure}) on a connected  oriented  pseudo-Riemannian manifold  $(M^{n}, g)$ of signature $(p, q)$ is  a   $\Spin_{p,q}$-principal   bundle  $\tilde{\pi} : Q= \mathrm{Spin}(M) \to M$ over $M$    which is   a
    $\mathbb{Z}_2$-cover  $\Lambda : \Spin(M)\to \SO(M)$ of  the orthonormal frame  bundle  $\pi: \SO(M)  \to M$  such  that  the following diagram is commutative:
        {\small{\[
    \xymatrix{
                                                \Spin(M)\times\Spin_{p, q} \ar[d]_{\Lambda\times \Ad}\ar[r] & \Spin(M) \ar[dr]^{\tilde{\pi}}  \ar[d]^{\Lambda} & \\
                                                    \SO(M)\times \SO_{p, q} \ar[r]  & \SO(M) \ar[r]^{\pi}& M }
         \]}}
         If such a pair $(Q, \Lambda)$ exists,  we shall call  $(M^{n}, g)$   a {\it pseudo-Riemannian spin manifold}.}

      \end{definition}


\begin{definition}
\textnormal{Two $\Spin_{p, q}$-structures $(Q_{1}, \Lambda_{1})$ and $(Q_{2}, \Lambda_{2})$ are said to be {\it equivalent}  if there is a  $\Spin_{p, q}$-equivariant map $\cal{U}: Q_{1}\to Q_{2}$  between the $\Spin_{p, q}$-principal bundles $Q_{1}, Q_{2}$ such that $\Lambda_{2}\circ \cal{U}=\Lambda_{1}$.}
\end{definition}

 \begin{prop}\textnormal{(\cite[Prop.~1.1.26]{Karoubi}, \cite[Satz~2.2]{Baum2}, \cite[Prop.~9]{Cahen2})}\label{ba}
An oriented pseudo-Riemannian manifold $(M^{n}, g)$ of signature $(p,q)$ admits  a  $\Spin_{p, q}$-structure  if and only if $w_{2}(\eta_{-})+w_{2}(\eta_{+})=0$,
  or  equivalently, $w_2(TM) = w_1(\eta_-)\smile w_1(\eta_+)$. Here,  $\eta_{-}$ (resp. $\eta_{+}$) is the time-like (resp. space-like) subbundle  given in (\ref{clasdec}).  If this  condition  is satisfied, then inequivalent spin structures on $(M, g)$ are in bijective correspondence with elements in $H^{1}(M; \bb{Z}_{2})$.
\end{prop}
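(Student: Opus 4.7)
The plan is to exploit the orthogonal decomposition $TM=\eta_-\oplus\eta_+$ from (\ref{clasdec}) together with the classical obstruction theory for lifts through central $\bb{Z}_2$-extensions. First I would reduce the structure group of $\SO(M)$ from $\SO_{p,q}$ to its maximal compact subgroup $K:=S(\Oo_p\times\Oo_q)$, consisting of block-diagonal matrices $\diag(A,B)$, $A\in \Oo_p$, $B\in \Oo_q$, with $\det A\cdot\det B=1$. Such a reduction exists and is canonical up to homotopy because $K\hookrightarrow \SO_{p,q}$ is a deformation retract, and the reduced bundle $P'\to M$ is concretely the frame bundle adapted to the splitting $\eta_-\oplus\eta_+$. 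A $\Spin_{p,q}$-structure on $(M,g)$ is then equivalent to a lift of $P'$ to a principal $\widetilde K$-bundle, where $\widetilde K:=\Ad^{-1}(K)\subset \Spin_{p,q}$ fits into a central extension $1\to \bb{Z}_2\to \widetilde K\to K\to 1$; one direction follows by restriction along $\widetilde K\hookrightarrow \Spin_{p,q}$, the other by the associated-bundle construction $Q\times_{\widetilde K}\Spin_{p,q}$.

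The second step is to identify the obstruction $\omega\in H^2(M;\bb{Z}_2)$ to such a lift, namely the pullback of the universal class in $H^2(BK;\bb{Z}_2)$ classifying $\widetilde K\to K$. The key algebraic observation is that this extension is the pushout of a richer $\bb{Z}_2\oplus\bb{Z}_2$-extension along the sum map. Writing $\widetilde{K}'$ for the fibred product of the $\Pin^{\pm}$-extensions of the two factors of $K$ over the $\det$-condition, there is a commutative diagram of central extensions
\[
\begin{CD}
1 @>>> \bb{Z}_2\oplus\bb{Z}_2 @>>> \widetilde{K}' @>>> K @>>> 1\\
@. @VV{+}V @VVV @|\\
1 @>>> \bb{Z}_2 @>>> \widetilde{K} @>>> K @>>> 1.
\end{CD}
\]
By the very definition of the second Stiefel--Whitney class, the obstruction of the top row evaluated on $P'$ is the pair $(w_2(\eta_-),w_2(\eta_+))$; naturality of the obstruction under the pushout then yields $\omega=w_2(\eta_-)+w_2(\eta_+)$, which is the first form of the criterion. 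Equivalence with $w_2(TM)=w_1(\eta_-)\smile w_1(\eta_+)$ is immediate from the Whitney sum formula $w_2(TM)=w_2(\eta_-)+w_1(\eta_-)\smile w_1(\eta_+)+w_2(\eta_+)$ applied to (\ref{clasdec}), working mod $2$.

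Finally, when $\omega=0$ the set of isomorphism classes of $\Spin_{p,q}$-structures is a torsor over $H^1(M;\bb{Z}_2)$: this follows from the long exact non-abelian \v{C}ech cohomology sequence attached to $1\to \bb{Z}_2\to \Spin_{p,q}\to \SO_{p,q}\to 1$, combined with the fact that two lifts differ precisely by a $\bb{Z}_2$-torsor, i.e.\ an element of $H^1(M;\bb{Z}_2)$. The main obstacle is the careful bookkeeping of connected components and orientations: for $p,q>0$ the group $\Spin_{p,q}$ is disconnected, and although $M$ is oriented the bundles $\eta_\pm$ need not be individually orientable, so the $\Pin^{\pm}$-step above must be phrased with the right convention ($\Pin^+$ detects $w_2$, $\Pin^-$ detects $w_2+w_1\smile w_1$, cf.~\cite{Karoubi}) to ensure that each component obstruction on $P'$ is exactly $w_2(\eta_\mp)$ and not a twisted variant; once this is verified, the pushout identity is formal and the argument closes.
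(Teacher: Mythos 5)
The paper offers no proof of this proposition (it is imported verbatim from Karoubi, Baum and Cahen--Gutt--Trautman), so there is nothing internal to compare against; your argument follows the standard route of those references: reduce $\SO(M)$ to the maximal compact $K=S(\Oo_p\times\Oo_q)$ via the splitting $\eta_-\oplus\eta_+$, identify spin structures with lifts through the central extension $1\to\bb{Z}_2\to\widetilde K:=\Ad^{-1}(K)\to K\to 1$, and compute the obstruction class. The reduction step, the Whitney-formula equivalence of the two stated criteria, and the $H^1(M;\bb{Z}_2)$-torsor statement at the end are all correct.

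The genuine gap sits exactly where you park it in your final paragraph, and it is not orientation bookkeeping but the entire content of the proposition. A central $\bb{Z}_2$-extension of $K$ is classified by a class $\epsilon_-\,w_2(\eta_-)+\epsilon_+\,w_2(\eta_+)+\epsilon_0\,w_1(\eta_-)\smile w_1(\eta_+)$ in $H^2(BK;\bb{Z}_2)$, and the whole point is that for $\widetilde K\subset\Spin_{p,q}$ one has $\epsilon_0=0$, whereas for the compact group $\Spin_{p+q}$ one has $\epsilon_0=1$ and the obstruction is the familiar $w_2(TM)$. Your pushout diagram \emph{presupposes} $\epsilon_0=0$: the Baer sum of the two $\Pin$-type extensions pulled back to $K$ has no cross term by construction, so identifying $\widetilde K$ with that pushout is logically equivalent to the statement being proved, and likewise the claim that the top row has obstruction exactly $(w_2(\eta_-),w_2(\eta_+))$ needs the same verification. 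What is missing is the short Clifford-algebra computation that settles all three coefficients: with the paper's convention $x^2=-\langle x,x\rangle$, a timelike unit vector $x\in\eta_-$ has $x^2=+1$ and a spacelike unit vector $y\in\eta_+$ has $y^2=-1$, so $(xy)^2=-x^2y^2=+1$; hence the lift of a mixed pair of reflections has order two, the extension is trivial on the corresponding $\bb{Z}_2\subset K$, and $\epsilon_0=0$. By contrast $(x_1x_2)^2=-1$ for two vectors of the same causal character, which forces $\epsilon_-=\epsilon_+=1$. Once these few lines are added, your argument closes and agrees with the cited sources.
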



As a consequence, we see that

\begin{corol}
 Let $(M^{n}, g)$ be a connected oriented pseudo-Riemannian manifold. If $M$ is    space-oriented or time-oriented,  then it  admits   a spin structure if and only if its  second Stiefel-Whitney class vanishes. The same holds if  $(M^{n}, g)$ is a connected oriented Riemannian manifold.
\end{corol}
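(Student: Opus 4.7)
The plan is to reduce the corollary to a direct application of Proposition \ref{ba}. That proposition characterizes the existence of a spin structure by the equation
\[
w_2(TM) = w_1(\eta_-) \smile w_1(\eta_+),
\]
so it suffices to show that the right-hand side vanishes under each of the listed hypotheses.

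First I would handle the space-oriented case: by definition this means $w_1(\eta_+) = 0$, so the cup product $w_1(\eta_-) \smile w_1(\eta_+)$ vanishes in $H^2(M; \bb{Z}_2)$ regardless of the value of $w_1(\eta_-)$. Symmetrically, if $M$ is time-oriented then $w_1(\eta_-) = 0$ and the same conclusion follows. In either case the condition of Proposition \ref{ba} reduces to $w_2(TM) = 0$, giving the stated equivalence.

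Then I would dispatch the Riemannian case by noting that the signature is $(0, n)$, so $\eta_-$ has rank zero and $w_1(\eta_-) = 0$ automatically; equivalently, a connected oriented Riemannian manifold is trivially time-oriented, and the previous argument applies verbatim.

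I do not anticipate any real obstacle here: once Proposition \ref{ba} is in hand, the corollary is immediate from the naturality of Stiefel--Whitney classes and the vanishing of any cup product with a zero factor. The only mildly delicate point to acknowledge is that the splitting $TM = \eta_- \oplus \eta_+$ in \eqref{clasdec} is not unique, but since the notions of space- and time-orientability are \emph{defined} via the vanishing of $w_1(\eta_\pm)$, the hypotheses already refer to the correct topological invariants and no further argument is required.
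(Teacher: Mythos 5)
Your argument is correct and is precisely the deduction the paper intends: the corollary is stated as an immediate consequence of Proposition \ref{ba}, and your observation that one of $w_1(\eta_\pm)$ vanishes under each hypothesis, killing the cup product and reducing the criterion to $w_2(TM)=0$, is the whole content. The Riemannian case via the rank-zero time-like subbundle is likewise the standard reduction.
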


  \begin{remark}
\textnormal{The existence  of  spin structure   for   a connected oriented  Riemannian manifold $(M,g)$  does not  depend   on  the metric  $g$,  but only on  the  topology  of   $M$. 
The  similar  result  is valid  also  for  Lorentzian  manifold  \cite{Mor}. However,  it is not   true  for  other  types of signature (cf. \cite[p.~78]{Baum2}).}
\end{remark}

Consider an almost complex manifold $(M^{2n}, J)$.  Then, $M$ carries a natural orientation induced by $J$. 
 Since  $TM$ is a  complex vector bundle, one can define the Chern classes  $c_{j}\equiv c_{j}(M):=c_{j}(TM, J)\in H^{2j}(M; \bb{Z})$  of $(TM, J)$. When $M$ is compact, $H^{2n}(M; \bb{Z})\cong\bb{Z}$ due to the natural orientation and   the top Chern class $c_{n}(M) $ coincides  with  the  Euler  class $e(M) \in H^{2n}(M,\mathbb{Z})$  of
 the   underlying real tangent     bundle. Moreover, $c_{1}(TM, J)= w_{2}(TM)  \, (\mathrm{mod} \, 2)$,  see \cite[p.~82]{Law}.
 \begin{prop}\textnormal{(\cite{AT})}
 An almost complex manifold $(M^{2n}, J)$ admits a spin structure if and only if $K_{M}$ admits a square root, i.e. there exists a complex line bundle $\cal{L}$ such that $\cal{L}^{\otimes 2}=K_{M}$,  where $K_M:= \Lambda^{n, 0}(M)=\Lambda^{n}(T^{*}M^{1, 0})$  is the   canonical line bundle.
 \end{prop}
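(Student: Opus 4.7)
The plan is to chain together three equivalences, using material already introduced in the excerpt. First, because the almost complex structure $J$ endows $M$ with a canonical orientation, the corollary above applies: $(M,J)$ admits a spin structure if and only if $w_{2}(TM)=0$. Second, using the congruence $c_{1}(TM,J)\equiv w_{2}(TM)\pmod{2}$ recalled just before the statement, $w_{2}(TM)=0$ is equivalent to the integral class $c_{1}(TM,J)\in H^{2}(M;\mathbb{Z})$ being divisible by $2$, i.e.\ $c_{1}(TM,J)=2\beta$ for some $\beta\in H^{2}(M;\mathbb{Z})$.

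Third, I would translate this divisibility into the existence of a square root of $K_{M}$. For any complex vector bundle $E$ of complex rank $n$, one has $c_{1}(\Lambda^{n}E)=c_{1}(E)$, and for any complex line bundle $L$, $c_{1}(L^{*})=-c_{1}(L)$. Applied to $E=T^{1,0}M$, this gives
\[
c_{1}(K_{M})=c_{1}\bigl(\Lambda^{n}(T^{*}M)^{1,0}\bigr)=c_{1}\bigl((T^{1,0}M)^{*}\bigr)=-c_{1}(TM,J),
\]
so $c_{1}(TM,J)$ is divisible by $2$ in $H^{2}(M;\mathbb{Z})$ if and only if $c_{1}(K_{M})$ is.

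Finally, I would invoke the classification of complex line bundles on a paracompact manifold: the Picard-type group of isomorphism classes of complex line bundles under tensor product is isomorphic to $H^{2}(M;\mathbb{Z})$ via $c_{1}$, with $c_{1}(\mathcal{L}\otimes\mathcal{L}')=c_{1}(\mathcal{L})+c_{1}(\mathcal{L}')$. Therefore the equation $\mathcal{L}^{\otimes 2}\cong K_{M}$ has a solution $\mathcal{L}$ if and only if $c_{1}(K_{M})\in H^{2}(M;\mathbb{Z})$ is even. Combining the three equivalences yields the claim.

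There is no real obstacle here; the argument is a routine chain of standard facts. The only point requiring care is the sign computation $c_{1}(K_{M})=-c_{1}(TM,J)$, which is irrelevant for parity, and the classification $H^{1}(M;\underline{\mathbb{C}^{*}})\cong H^{2}(M;\mathbb{Z})$ of complex line bundles. Note also that when $K_{M}$ does admit a square root, the set of square roots is a torsor over the $2$-torsion subgroup of $H^{2}(M;\mathbb{Z})$, in parallel with the bijection between spin structures and $H^{1}(M;\mathbb{Z}_{2})$ from Proposition \ref{ba}.
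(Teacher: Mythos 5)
Your argument is correct: the chain $w_2(TM)=0 \Leftrightarrow c_1(TM,J)\in 2H^2(M;\mathbb{Z}) \Leftrightarrow c_1(K_M)\in 2H^2(M;\mathbb{Z}) \Leftrightarrow K_M$ admits a square root is exactly the standard proof, and the paper itself only cites this proposition from \cite{AT} without giving any argument, so there is no competing approach to compare against. The one step worth making explicit is the first equivalence: $c_1(TM,J)\equiv w_2(TM)\ (\mathrm{mod}\ 2)$ shows that $w_2(TM)=0$ iff $c_1(TM,J)$ lies in the kernel of the mod-$2$ reduction $H^2(M;\mathbb{Z})\to H^2(M;\mathbb{Z}_2)$, and identifying that kernel with $2H^2(M;\mathbb{Z})$ requires the Bockstein exact sequence of $0\to\mathbb{Z}\xrightarrow{\times 2}\mathbb{Z}\to\mathbb{Z}_2\to 0$ (together with choosing a $J$-compatible Riemannian metric so that the spin criterion $w_2=0$ applies); your aside on the torsor structure of the set of square roots is also correct.
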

 In the case of a compact complex manifold one can extend this result  to holomorphic line bundles.

  \begin{prop}\label{at}\textnormal{(\cite{Law, AT})}
Let   $M^{2n}$ be a compact   complex   manifold  with   complex   structure  $J$. Then
   $M$  admits  a  spin  structure  if  and only if    the  first Chern  class  $c_1(M) \in H^2(M, \mathbb{Z})$ is  even,  i.e. it is   divisible  by  2   in $H^2(M,\mathbb{Z})$.
 Moreover, spin structures      are in 1-1 correspondence with isomorphism classes of holomorphic line bundles $\cal{L}$ such that $\cal{L}^{\otimes 2}=K_{M}$. 
 \end{prop}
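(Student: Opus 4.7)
The first equivalence is essentially immediate from the identity $c_{1}(M)\equiv w_{2}(TM)\pmod{2}$ recorded just above the statement. Since $J$ canonically orients the compact manifold $M$, the corollary following Proposition \ref{ba} applies and identifies the spin condition with the vanishing of $w_{2}(TM)$. The Bockstein exact sequence
\[
H^{2}(M,\bb{Z})\xrightarrow{\,\cdot 2\,}H^{2}(M,\bb{Z})\xrightarrow{\rho}H^{2}(M,\bb{Z}_{2})
\]
coming from $0\to\bb{Z}\xrightarrow{\cdot 2}\bb{Z}\to\bb{Z}_{2}\to 0$ then shows that $\rho(c_{1}(M))=0$ if and only if $c_{1}(M)$ lies in $2\cdot H^{2}(M,\bb{Z})$, i.e.\ $c_{1}(M)$ is even. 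This takes care of the first assertion.

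For the second, complex-geometric part, I would pass to the Picard group $\mathrm{Pic}(M)=H^{1}(M,\mathcal{O}^{*})$. The key observation is that every non-vanishing holomorphic function admits local holomorphic square roots, so the squaring map $\mathcal{O}^{*}\xrightarrow{(\cdot)^{2}}\mathcal{O}^{*}$ is a sheaf epimorphism with kernel the constant sheaf $\bb{Z}_{2}=\{\pm 1\}$. Its long exact cohomology sequence contains
\[
\bb{C}^{*}\xrightarrow{\cdot 2}\bb{C}^{*}\to H^{1}(M,\bb{Z}_{2})\to\mathrm{Pic}(M)\xrightarrow{\cdot 2}\mathrm{Pic}(M)\xrightarrow{\delta}H^{2}(M,\bb{Z}_{2}),
\]
and a standard comparison with the exponential sequence $0\to\bb{Z}\to\mathcal{O}\to\mathcal{O}^{*}\to 0$ identifies the connecting map $\delta$ with the mod-$2$ reduction of $c_{1}$. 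Applied to $\mathcal{L}=K_{M}$, and using $c_{1}(K_{M})=-c_{1}(M)$, this yields that $K_{M}$ admits a holomorphic square root if and only if $w_{2}(M)=0$, which by the first part is equivalent to $M$ being spin.

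Finally, to promote this to the claimed $1$-$1$ correspondence I would argue as follows. When non-empty, the set of holomorphic square roots of $K_{M}$ is a torsor over $\ker(\cdot 2\colon\mathrm{Pic}(M)\to\mathrm{Pic}(M))$; since squaring on $\bb{C}^{*}=H^{0}(M,\mathcal{O}^{*})$ is surjective, the above sequence produces a natural isomorphism $H^{1}(M,\bb{Z}_{2})\cong\ker(\cdot 2\colon\mathrm{Pic}(M)\to\mathrm{Pic}(M))$. By Proposition \ref{ba} the set of inequivalent spin structures on $M$ is likewise a torsor over $H^{1}(M,\bb{Z}_{2})$. A canonical identification of the two torsors can then be obtained by reduction of structure group: for any Hermitian metric compatible with $J$, a spin structure on $M$ is the same as a lift of the $\U(n)$-frame bundle to the double cover $\widetilde{\U}(n)=\{(A,z)\in\U(n)\times\U(1):z^{2}=\det A\}$, and such a lift is precisely the datum of a holomorphic line bundle $\mathcal{L}$ with $\mathcal{L}^{\otimes 2}\cong K_{M}$. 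The main technical obstacle I foresee is this last step — verifying that the $\widetilde{\U}(n)$-reduction of frames coincides with a holomorphic square root of $K_{M}$ — since everything else is a purely formal consequence of the two sheaf sequences.
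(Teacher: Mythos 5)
Your argument is correct. The paper does not actually prove this proposition — it is quoted from \cite{Law, AT} — and what you have written is precisely the standard proof from those references: the Bockstein sequence for $0\to\bb{Z}\to\bb{Z}\to\bb{Z}_2\to 0$ together with $w_2\equiv c_1 \ (\mathrm{mod}\ 2)$ for the first claim, and the Kummer-type sequence $1\to\bb{Z}_2\to\mathcal{O}^{*}\to\mathcal{O}^{*}\to 1$ compared with the exponential sequence for the second. You have also correctly located the only step with real content, namely the identification of spin structures with lifts of the unitary frame bundle to $\widetilde{\U}(n)=\{(A,z):z^{2}=\det A\}$ and hence with square roots of the (anti)canonical bundle; this is exactly the argument carried out in \cite{Law} (Appendix D) and \cite{AT}, so no gap remains.
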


 Let   $(V = \mathbb{R}^{2n}, \, \omega )$ be   the  symplectic vector  space   and  $ \mathrm{Sp}(V) ={\rm Sp}_{n}(\bb{R}) := \mathrm{Aut}(V, \omega)$    the  symplectic  group.
    Recall that the metaplectic group ${\rm Mp}_{n}(\bb{R})$ is   the unique  connected (double) covering of ${\rm Sp}_{n}(\bb{R})$ (cf.   \cite{Hab}).      Given a symplectic manifold $(M^{2n}, \omega)$,  we denote by ${\rm Sp}(M)$ the ${\rm Sp}_n(\mathbb{R})$-principal bundle
  of   symplectic  frames, i.e.   frames $e_1, \cdots , e_n, f_1, \cdots, f_n$  such that  $\omega(e_i,e_j)= \omega(f_i, f_j)=0,\, \omega(e_i, f_j) = \delta_{ij}$.
   \begin{definition}
 \textnormal{A {\it metaplectic structure} on a symplectic manifold $(M^{2n}, \omega)$  is a  ${\rm Mp}_n(\mathbb{R})$-equivariant lift  of the symplectic frame bundle ${\rm Sp}(M)\to M$ with respect to the double covering $\rho : {\rm Mp}_{n}\bb{R}\to{\rm Sp}_{n}\bb{R}$.}
 \end{definition}

  The  obstruction to the existence of a metaplectic  structure, is exactly the same as in the case of a spin structure on a Riemannian manifold. Recall that the first Chern class of $(M^{2n}, \omega)$  is defined as the first Chern class of   $(TM, J)$, where $J$ is a $\omega$-compatible almost complex structure.  Since the space of $\omega$-compatible almost complex structures is contractible, $c_{1}(TM, J)$ is independent of $J$ (cf. \cite{Hab}).
  \begin{prop}\textnormal{(\cite{Hab})}\label{meta}
 A symplectic manifold $(M^{2n}, \omega)$  admits    a metaplectic structure  if and only if $w_2(M)= 0$ or  equivalently,
 the first Chern class $c_{1}(M)$  is even.  In this  case,  the set of metaplectic structures on $(M^{2n}, \omega)$ stands in a bijective correspondence with $H^{1}(M; \bb{Z}_{2})$.
\end{prop}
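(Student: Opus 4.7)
The plan is to mimic the Riemannian spin argument, replacing the orthogonal group by the symplectic group and exploiting a maximal compact reduction. First I would pick an $\omega$-compatible almost complex structure $J$ on $M$; since the space of such $J$ is contractible, the class $c_{1}(M):=c_{1}(TM,J)$ does not depend on this choice. The inclusion of the unitary group $\U(n)\hookrightarrow {\rm Sp}_{n}(\bb{R})$ as a maximal compact subgroup is a homotopy equivalence, so the symplectic frame bundle ${\rm Sp}(M)$ reduces to the unitary frame bundle $\U(M)$ of $(TM,J)$, and lifting ${\rm Sp}(M)$ along $\rho:{\rm Mp}_{n}(\bb{R})\to{\rm Sp}_{n}(\bb{R})$ is equivalent to lifting $\U(M)$ along the pulled-back double covering $\widetilde{\U}(n)\to\U(n)$. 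Because ${\rm Mp}_{n}(\bb{R})\to{\rm Sp}_{n}(\bb{R})$ is the unique connected two-fold cover and $\U(n)$ carries the unique connected two-fold cover (corresponding to $2\bb{Z}\subset \bb{Z}=\pi_{1}(\U(n))$), these two pull-backs coincide, and admit the explicit model $\widetilde{\U}(n)=\{(A,z)\in\U(n)\times\bb{C}^{*}:z^{2}=\det A\}$.

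Next, I would translate the lifting problem into a square-root problem for the determinant line bundle. Using the above model, a reduction of $\U(M)$ along $\widetilde{\U}(n)\to\U(n)$ is precisely the data of a complex line bundle $\cal{L}\to M$ together with an isomorphism $\cal{L}^{\otimes 2}\cong \det(TM,J)$, i.e., a square root of the anti-canonical bundle $K_{M}^{-1}$; this follows from the fact that the square diagram formed by $\det:\U(n)\to\U(1)$ and $z\mapsto z^{2}:\U(1)\to\U(1)$ together with $\widetilde{\U}(n)\to\U(n)$ is a pull-back of principal bundles. The obstruction to the existence of such a square root of $\det(TM,J)$ is its first Chern class modulo $2$, which equals $c_{1}(M)\pmod 2$. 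Combined with the universal congruence $c_{1}\equiv w_{2}\pmod 2$ for complex vector bundles (cf.\ \cite[p.~82]{Law}), this shows that the obstruction is $w_{2}(M)\in H^{2}(M;\bb{Z}_{2})$, yielding both the equivalence of $w_{2}(M)=0$ with $c_{1}(M)$ even, and the main if-and-only-if assertion.

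For the enumeration once existence is secured, I would invoke standard principal-bundle obstruction theory. The kernel $\bb{Z}_{2}=\ker\rho$ acts freely and transitively on the set of lifts of a given $\U(n)$-bundle to $\widetilde{\U}(n)$, so any two metaplectic structures differ by the tensor product with a flat $\bb{Z}_{2}$-bundle. The long exact sequence in \v Cech cohomology associated to the central extension $1\to\bb{Z}_{2}\to{\rm Mp}_{n}(\bb{R})\to{\rm Sp}_{n}(\bb{R})\to 1$ (equivalently, the Serre spectral sequence of the fibration $B\bb{Z}_{2}\to B{\rm Mp}_{n}(\bb{R})\to B{\rm Sp}_{n}(\bb{R})$) identifies these torsors with the abelian group $H^{1}(M;\bb{Z}_{2})$, parallel to the spin case of Proposition~\ref{ba}.

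The main obstacle I anticipate is making the identification of $\widetilde{\U}(n)\to\U(n)$ with the restriction of the metaplectic cover completely rigorous, rather than invoking uniqueness of connected double covers abstractly: one should check, for instance via fundamental groups or via the explicit construction of ${\rm Mp}_{n}(\bb{R})$, that the diagonal embedding of $\widetilde{\U}(n)$ into ${\rm Mp}_{n}(\bb{R})$ does recover the restriction of $\rho$. Once this reduction step is in hand, the remainder of the proof is the standard observation that a complex line bundle admits a square root iff its first Chern class is even, supplemented by the classical identity relating $c_{1}$ and $w_{2}$.
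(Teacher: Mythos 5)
Your proposal is correct, but note that the paper does not actually prove Proposition~\ref{meta}: it is imported verbatim from \cite{Hab}, with only the remark that the obstruction is ``exactly the same as in the case of a spin structure'' and that $c_{1}(TM,J)$ is independent of the compatible $J$ by contractibility. What you have written is, in effect, the standard proof from the cited source: reduce ${\rm Sp}(M)$ to the unitary frame bundle via the homotopy equivalence $\U(n)\hookrightarrow{\rm Sp}_{n}(\bb{R})$, identify the restriction of ${\rm Mp}_{n}(\bb{R})\to{\rm Sp}_{n}(\bb{R})$ with the connected double cover $\widetilde{\U}(n)=\{(A,z):z^{2}=\det A\}$ (legitimate, since the inclusion is an isomorphism on $\pi_{1}=\bb{Z}$, which has a unique index-two subgroup), translate lifts into square roots of $\det(TM,J)=K_{M}^{-1}$, and conclude via divisibility of $c_{1}$ by two together with $c_{1}\equiv w_{2}\pmod 2$ and the exactness of $H^{2}(M;\bb{Z})\xrightarrow{\times 2}H^{2}(M;\bb{Z})\to H^{2}(M;\bb{Z}_{2})$. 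Your count of metaplectic structures is also handled correctly: you enumerate lifts as a torsor over \v{C}ech $H^{1}(M;\bb{Z}_{2})$ rather than counting isomorphism classes of square-root line bundles (which would only see the $2$-torsion of $H^{2}(M;\bb{Z})$, i.e.\ the image of the Bockstein), and your record of the data as a pair $(\cal{L},\ \cal{L}^{\otimes 2}\cong\det(TM,J))$ keeps this distinction visible. The one point you flag as delicate --- that the abstract double cover $\widetilde{\U}(n)$ really is $\rho^{-1}(\U(n))$ --- is settled exactly by the $\pi_{1}$ argument you indicate, so there is no gap.
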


\section{Spin  structures   on  compact  homogeneous pseudo-Riemannian    manifolds}\label{homogspin}

 \subsection{Invariant spin structures}
  In the following we shall   examine spin structures   on compact  and  simply-connected homogeneous  (pseudo)Riemannian manifolds $(M^{n}=G/L, g)$, endowed with an  almost  effective action   of a connected Lie group $G$. 
     By an old theorem of Montgomery \cite{Mon}  it is known that given a compact and simply-connected  homogeneous space $G/L$,  one can always assume that $G$ is a compact, connected and simply-connected Lie group and  $L$ is a closed connected subgroup.  Finally,    
 up to   a  finite  covering, $G$ is  a  direct product   of  a  torus  ${\rm T}^a$     with  a  simply-connected    compact  semisimple  Lie   group  $G'$,     which  still acts  transitively  on $M$.    Hence,    $M=G/L$  turns out to be isometrically isomorphic to the   coset  space  $G'/L',\,  L' = L \cap G'$, of a compact, connected and simply-connected semisimple Lie group, modulo  a closed connected subgroup. This  is the setting that we will use in the sequel, especially in Sections \ref{flags} and \ref{Cspace}.  However, let us  recall first some details of a  more special  case.

 Any   homogeneous manifold  $M = G/L$   with {\it  compact }  stabilizer $L$    admits   a  reductive  decomposition, i.e. an orthogonal splitting $\fr{g}=\fr{l}+\fr{q}$ with $\Ad_{L}\fr{q}\subset\fr{q}$.  Then, a $G$-invariant  Riemannian metric on $M$ is defined by an $\Ad_{L}$-invariant scalar product  $g_{o}$ in $\mathfrak{q}= T_{eL}M$.  If  the  isotropy  representation $\vartheta : L\mapsto \Ad_{L}|_{\fr{q}}$   is  reducible,  then $M=G/L$     admits  also  an invariant  pseudo-Riemannian metric,  as  the  following lemma  shows.

 \begin{lemma}\label{alek1}
    Let  $M= G/L$  be   a homogeneous manifold   with   compact stabilizer  $L$.   Then, \\
  (i)   $M $ admits  a $G$-invariant  metric $g_M$ of  signature $(p, n-p)$,
   if  and only if  the  tangent space  $\mathfrak{q}$ admits  a $\vartheta(L)$-invariant  $p$-dimensional  subspace  $\mathfrak{q}_-$. In this case,  the  metric $g$ is   defined  by
    a  $\vartheta(L)$-invariant  pseudo-Euclidean   metric  $g:=   - g_0 |_{\mathfrak{q}_-} \oplus  g_0| _{\mathfrak{q}_+} $ on $\fr{q}$,   where   $g_0$ is an $\Ad_{L}$-invariant  Euclidean  metric    on $\mathfrak{q}$, and   $\mathfrak{q}_+ $ is  the  $g_0$-orthogonal  complement  to $\mathfrak{q}_-$.
 Conversely,  any  $G$-invariant pseudo-Riemannian  metric  $g$  is   described    as   above.\\
 (ii) A $G$-invariant  decomposition    $TM = \eta_- \oplus \eta_+$  into a direct sum of    $g$-time-like  and    $g$-space-like   subbundles (where $g$ is the pseudo-Euclidean metric given above),  is  unique,    if  and only   if   the   decomposition  $\mathfrak{q} = \sum_{j=1}^d \mathfrak{p}_j$ of the   $\vartheta(L)$-module $\fr{q}=T_{eL}M$ into  irreducible submodules  is unique, i.e.   the submodules $\mathfrak{p}_j$ are mutually non-equivalent.
\end{lemma}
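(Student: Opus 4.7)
Both parts reduce to linear algebra on the tangent space $\mathfrak{q}=T_{eL}M$ via the standard bijection between $G$-invariant tensor fields on $G/L$ and $\vartheta(L)$-invariant tensors on $\mathfrak{q}$. Since $L$ is compact, I fix an $\Ad_L$-invariant Euclidean inner product $g_0$ on $\mathfrak{q}$ as a reference metric and observe that any $\Ad_L$-invariant symmetric bilinear form $g$ on $\mathfrak{q}$ admits a unique representation $g(X,Y)=g_0(SX,Y)$ with $S\in\mathrm{End}(\mathfrak{q})$ being $g_0$-self-adjoint and commuting with $\vartheta(L)$.

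For part (i), the spectral theorem decomposes $\mathfrak{q}$ into $g_0$-orthogonal eigenspaces of $S$, each of which is automatically $\vartheta(L)$-invariant because $S$ intertwines the isotropy representation. If $g$ has signature $(p,n-p)$, I take $\mathfrak{q}_-$ to be the direct sum of the negative-eigenvalue eigenspaces and $\mathfrak{q}_+$ to be its $g_0$-orthogonal complement; a positive rescaling on each eigenspace then reduces $g$ to the claimed normal form $-g_0|_{\mathfrak{q}_-}\oplus g_0|_{\mathfrak{q}_+}$. The converse is immediate: given any $\vartheta(L)$-invariant $p$-plane $\mathfrak{q}_-$, its $g_0$-orthogonal complement is also $\vartheta(L)$-invariant, and the prescribed formula defines an invariant pseudo-Euclidean form of signature $(p,n-p)$.

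For part (ii), a $G$-invariant splitting $TM=\eta_-\oplus\eta_+$ corresponds precisely to a $\vartheta(L)$-invariant decomposition $\mathfrak{q}=\mathfrak{q}_-\oplus\mathfrak{q}_+$ with $g$ negative-definite on $\mathfrak{q}_-$ and positive-definite on $\mathfrak{q}_+$, so the claim amounts to uniqueness of such a splitting. In the multiplicity-free case, Schur's lemma ensures that the summands $\mathfrak{p}_j$ are pairwise $g_0$- and $g$-orthogonal, that every $\vartheta(L)$-invariant subspace of $\mathfrak{q}$ is a direct sum of some of them, and that $g|_{\mathfrak{p}_j}$ is a nonzero real scalar multiple of $g_0|_{\mathfrak{p}_j}$, hence definite; therefore $\mathfrak{q}_-=\bigoplus\{\mathfrak{p}_j : g|_{\mathfrak{p}_j}<0\}$ is forced and the splitting is unique. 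Conversely, if some irreducible type appears with multiplicity at least two, then the isotypic component $\mathfrak{p}_1\oplus\mathfrak{p}_2$ carries a positive-dimensional Grassmannian of $\vartheta(L)$-invariant subspaces coming from changes of basis in the multiplicity space $\mathbb{R}^2$; combined with part (i), this yields a continuous family of distinct invariant splittings $TM=\eta_-\oplus\eta_+$ of the given signature.

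The only genuinely nontrivial step is the definiteness of $g|_{\mathfrak{p}_j}$ in the multiplicity-free case, which reduces to showing that the $g_0$-self-adjoint part of the commutant of an irreducible real representation of the compact group $L$ equals $\mathbb{R}\cdot\mathrm{Id}$. This requires a brief case analysis in the real, complex and quaternionic types; the common thread is that the invariant complex structures on a non-real-type summand are skew-symmetric with respect to $g_0$ and so contribute no self-adjoint degrees of freedom. With this in hand the two parts follow, and no further obstacle is expected beyond bookkeeping.
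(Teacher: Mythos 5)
Your argument for part (i) and for the ``if'' direction of (ii) is essentially the paper's own proof: both reduce to the $g_0$-self-adjoint intertwiner $A=g_0^{-1}\circ g$, take its eigenspace decomposition to produce the invariant splitting $\mathfrak{q}=\mathfrak{q}_-+\mathfrak{q}_+$, and invoke Schur's lemma to force $A|_{\mathfrak{p}_j}=\mu_j\,\mathrm{id}$ in the multiplicity-free case. Your closing worry about real/complex/quaternionic types is unnecessary: a $g_0$-self-adjoint intertwiner of an irreducible real module has a real eigenvalue, its eigenspace is invariant, hence it is scalar --- no case analysis needed (this is exactly the step the paper asserts without comment).

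Where you go beyond the paper is the converse of (ii), which the paper does not actually prove, and there your argument misses the target slightly. A positive-dimensional family of invariant subspaces in an isotypic component $\mathfrak{p}_1\oplus\mathfrak{p}_2$ yields, via part (i), a family of invariant \emph{metrics} of the given signature --- not automatically a family of time-like/space-like splittings for the \emph{fixed} metric $g$ appearing in the statement. Indeed, if both equivalent copies sit inside $\mathfrak{q}_+$ for the given $g$ (i.e.\ $A$ has positive eigenvalues on the whole isotypic component), every invariant negative-definite subspace still avoids that component and the splitting remains unique. To exhibit genuine non-uniqueness you must arrange the two equivalent summands to carry $A$-eigenvalues of opposite sign; then in the multiplicity space $\mathbb{R}^2$ one finds $g$-orthogonal pairs of ``mixed'' invariant lines (e.g.\ $\mathrm{span}(2,1)\otimes V$ and $\mathrm{span}(1,2)\otimes V$ for $A=\mathrm{diag}(1,-1)\otimes\mathrm{Id}_V$), giving distinct invariant orthogonal splittings of the same $g$. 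With that adjustment the converse holds in the sense of ``for some invariant metric of the given signature,'' which is presumably the intended reading.
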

 \begin{proof} (i) The    restriction        $g$  of  an invariant pseudo-Riemannian metric  of  signature $(p,q)$ to  $\mathfrak{q} = T_oM$ $(o = eL)$  is a $\vartheta(L)$-invariant  pseudo-Euclidean  metric.   Since the isotropy  group  $\vartheta(L)$ is  compact, it preserves an  Euclidean metric   $g_0$  in  $\mathfrak{q}$.  Hence  $\vartheta(L)$ commutes  with  the symmetric  endomorphism   $A:= g_o^{-1}\circ g $.
  Denote  by  $\mathfrak{q}_+$  (resp. $\mathfrak{q}_{-}$)  the  sum  of $A$-eigenspaces  with positive (resp. negative) eigenvalues.  Then,
$  \mathfrak{q} = \mathfrak{q}_{-} + \mathfrak{q}_{+}$   is  an $\Ad_{L}$-invariant  orthogonal  decomposition    such   that
  $g|_{\mathfrak{q}_+} >0$ and  $g|_{\mathfrak{q}_-} <0  $.  This defines  a $G$-invariant  decomposition  $TM = \eta_- \oplus \eta_+$ into
    direct  sum of   time-like   and  space-like subbundles. 
      The converse is obvious. \\
  (ii)  Assume  that for any $1\leq i\neq j\leq d$, the irreducible   submodules  $\fr{p}_{i}, \fr{p}_{j}$ are inequivalent, i.e. $\fr{p}_{i}\ncong \mathfrak{p}_j$.   Then,  the   decomposition
     $\mathfrak{q} = \sum_{j=1}^s \mathfrak{p}_j$ is  orthogonal  with  respect to   the  pseudo-Euclidean  metric $g$  (and more general, any  $\vartheta(L)$-invariant metric). By Schur's  lemma, the   endomorphism   $A|_{\mathfrak{p}_j} = \mu_j \mathrm{id}$  is  scalar and  the    restriction  $g|_{\mathfrak{p}_j}$ is positively,   or negatively  defined.  But then,
     $\mathfrak{q}_+ =\sum_{j, \, g_{\mathfrak{p}_j} >0} \mathfrak{p}_j$ is  uniquely   determined and our claims follows.  \end{proof}

 \begin{prop}\label{alek2} Let   $(M = G/L, g_M)$ be  a   homogeneous  pseudo-Riemannian manifold  with  compact  stabilizer  and reductive  decomposition  $\mathfrak{g}= \mathfrak{l}+ \mathfrak{q}$.  Then, $M=G/L$  is  time-oriented  (resp. space-oriented),    if  and only if    the linear   group  $\vartheta(L)|_{\mathfrak{q}_-}$  (resp.  $\vartheta(L)|_{\mathfrak{q}_-}$) is unimodular,  where   $\mathfrak{q}= \mathfrak{q}_- + \mathfrak{q}_+$ is  a  $\vartheta(L)$-invariant  orthogonal decomposition   which  induces the $G$-invariant  splitting  of $TM$  into   time-like  and  space-like  subbundles. In  particular, this is  the  case if  $L$ is  connected.
 \end{prop}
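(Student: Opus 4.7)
The plan is to interpret $\eta_\pm$ as associated homogeneous vector bundles and reduce their orientability to a representation-theoretic condition on the restricted isotropy action.

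First, I would use the $\vartheta(L)$-invariant orthogonal decomposition $\mathfrak{q}=\mathfrak{q}_-+\mathfrak{q}_+$ supplied by Lemma~\ref{alek1} to identify the $G$-invariant time-like and space-like subbundles as associated bundles
\[
\eta_- \;=\; G\times_L \mathfrak{q}_-, \qquad \eta_+ \;=\; G\times_L \mathfrak{q}_+,
\]
on which $L$ acts through $\vartheta|_{\mathfrak{q}_\pm}$.

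Second, I would invoke the standard fact that a homogeneous vector bundle $G\times_L V$ over $G/L$ is orientable if and only if its fibre representation is orientation-preserving, equivalently $\det\circ\vartheta(\ell)|_V>0$ for every $\ell\in L$. The argument is one line: the orientation double cover of $G\times_L V$ is $G\times_L \mathrm{or}(V)$, so a global orientation amounts to an $L$-invariant orientation of the fibre, which exists precisely when $\vartheta(L)|_V\subset \Gl^+(V)$. ($G$-invariance of any such orientation is automatic, since $G$ is connected and acts on the discrete set of orientations.)

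Third, because $L$ is compact the image $\vartheta(L)|_{\mathfrak{q}_\pm}$ is a compact subgroup of $\Gl(\mathfrak{q}_\pm)$, so the continuous homomorphism $\det\circ\vartheta|_{\mathfrak{q}_\pm}\colon L\to\mathbb{R}^*$ has image in the unique compact subgroup $\{\pm 1\}\subset \mathbb{R}^*$. Hence the condition $\det>0$ is equivalent to $\det\equiv 1$, i.e.\ to $\vartheta(L)|_{\mathfrak{q}_\pm}\subset \Sl(\mathfrak{q}_\pm)$, which is precisely the unimodularity stated in the proposition; applied to $\mathfrak{q}_-$ this is the time-oriented criterion, and to $\mathfrak{q}_+$ the space-oriented one. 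Finally, if $L$ is connected then so is $\vartheta(L)|_{\mathfrak{q}_\pm}$, and continuity of $\det$ on a connected set taking values in $\{\pm 1\}$ forces $\det\equiv 1$, so the unimodularity holds automatically and $M$ is simultaneously time- and space-oriented.

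I do not anticipate a real obstacle: the representation-theoretic reformulation of orientability is classical, and the compactness/connectedness arguments are elementary. The only bookkeeping point worth stating carefully is the meaning of ``unimodular'' used here, namely that $\det\equiv 1$ on the linear group, which for compact image is just the refinement of $\det\in\{\pm 1\}$ forced by connectedness, or by the requirement that orientations be preserved.
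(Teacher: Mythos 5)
Your proof is correct. The paper states Proposition \ref{alek2} without proof, and your argument is precisely the standard one the authors intend: identify $\eta_\pm$ with the associated bundles $G\times_L\mathfrak{q}_\pm$, observe that for connected $G$ an orientation of such a bundle is the same as a $\vartheta(L)$-fixed orientation of the fibre (i.e.\ $\det\vartheta(\ell)|_{\mathfrak{q}_\pm}>0$), and use compactness of $\vartheta(L)|_{\mathfrak{q}_\pm}$ to upgrade $\det>0$ to $\det\equiv 1$, with the connected case following from continuity of $\det$ into $\{\pm1\}$. Your reading of ``unimodular'' as $\det\equiv 1$ is the only one that makes the statement non-vacuous, and you have also silently corrected the typo in the statement (the second occurrence of $\mathfrak{q}_-$ should be $\mathfrak{q}_+$).
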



 \begin{definition}
\textnormal{A spin   structure $\tilde{\pi} :  Q \to M$ on  a   homogeneous pseudo-Riemannian manifold   $(M = G/L,g)$ is   called
 {\it $G$-invariant} if  the natural  action  of  $G$  on the  bundle  $P:=\SO(M)=G\times_{L}\SO(\fr{q})$  of positively oriented orthonormal  frames of $M$,   can be  extended  to  an  action  on the $\Spin_{p, q}\equiv\Spin(\fr{q})$-principal bundle  $ \tilde{\pi}: Q \to  M$.}
  \end{definition}
 Invariant spin structures on reductive homogeneous spaces can be described in terms of lifts of the isotropy representation into the spin group. In particular,
   \begin{prop}\label{springer}\textnormal{(\cite{Cahen2})}
Let   $(M=G/L,g)$ be an oriented   homogeneous pseudo-Riemannian manifold with a reductive decomposition $\fr{g}=\fr{l}+\fr{q}$.  Given a lift  of  the isotropy representation onto the spin group $\Spin(\fr{q})$, i.e. a homomorphism $\widetilde{\vartheta}: L \to  \Spin(\fr{q})$ such that $\vartheta=\Ad\circ\widetilde{\vartheta}$,  
then $M$ admits a $G$-invariant spin structure given by $Q =G \times_{\widetilde{\vartheta}} \Spin(\fr{q})$. 
  Conversely, if $G$ is simply-connected and $(M=G/L, g)$ has a spin structure, then $\vartheta$ lifts to $\Spin(\fr{q})$, i.e. the spin structure is $G$-invariant. Hence in this case there is a one-to-one correspondence between the set of spin structures on $(M = G/L,g)$ and the set of lifts of $\vartheta$ onto $\Spin(\fr{q})$.
\end{prop}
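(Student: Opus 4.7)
The plan is in two directions.

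For the forward implication, given a lift $\widetilde{\vartheta}: L \to \Spin(\fr{q})$ of $\vartheta$, I would form the associated bundle $Q := G \times_{\widetilde{\vartheta}} \Spin(\fr{q})$, where $L$ acts by $(g, s)\cdot l = (gl,\, \widetilde{\vartheta}(l)^{-1}s)$. This is a principal $\Spin(\fr{q})$-bundle over $M = G/L$ carrying the natural left $G$-action on the first factor. Define $\Lambda : Q \to \SO(M) = G \times_{\vartheta}\SO(\fr{q})$ by $[g, s]\mapsto [g, \Ad(s)]$; this is well-defined precisely because $\Ad\circ\widetilde{\vartheta} = \vartheta$, and fibrewise it is the $\bb{Z}_2$-cover $\Ad : \Spin(\fr{q})\to \SO(\fr{q})$. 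Commutativity of the diagram in Definition \ref{def1} is then routine and $G$-equivariance is built in, so $(Q, \Lambda)$ is a $G$-invariant spin structure.

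For the converse, assume $G$ is simply-connected and $(Q, \Lambda)$ is a spin structure on $M$. The key step is to lift the natural $G$-action on $\SO(M)$ to a $G$-action on $Q$ through the $\bb{Z}_2$-cover $\Lambda$. View the action as a map $\Phi : G \times Q \to \SO(M)$, $(g, \tilde{u})\mapsto g\cdot \Lambda(\tilde{u})$, and invoke the covering-space lifting criterion: a lift $\tilde{\Phi} : G \times Q \to Q$ exists iff $\Phi_{*}\pi_1(G \times Q) \subset \Lambda_{*}\pi_1(Q)$. Since $\pi_1(G \times Q) = \pi_1(G) \times \pi_1(Q)$ and $\pi_1(G) = 0$, this image is exactly $\Lambda_{*}\pi_1(Q)$, so the criterion holds automatically. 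Fixing $\tilde{\Phi}(e, \tilde{u}) = \tilde{u}$ determines $\tilde{\Phi}$ uniquely, and uniqueness of lifts with prescribed initial value forces $\tilde{\Phi}$ to satisfy the group-action axioms and to commute with the right $\Spin(\fr{q})$-action: the two maps $g\cdot(\tilde{u}\cdot\alpha)$ and $(g\cdot \tilde{u})\cdot \alpha$ cover the same map on $\SO(M)$ and agree at $(g,\alpha) = (e,1)$, hence agree everywhere.

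With the $G$-action on $Q$ in hand, I would extract the lift of $\vartheta$ as follows. Fix a frame $u_0\in \SO(M)_o$ associated to an orthonormal basis of $\fr{q}$ and pick any $\tilde{u}_0\in \Lambda^{-1}(u_0)$. For each $l \in L$ the identity $l\cdot u_0 = u_0\cdot \vartheta(l)$ lifts through $\Lambda$ to a unique $\widetilde{\vartheta}(l)\in \Spin(\fr{q})$ with $l\cdot \tilde{u}_0 = \tilde{u}_0\cdot \widetilde{\vartheta}(l)$. The homomorphism property follows from associativity,
\[
(l_1 l_2)\cdot \tilde{u}_0 \;=\; l_1\cdot\bigl(\tilde{u}_0\cdot \widetilde{\vartheta}(l_2)\bigr) \;=\; (l_1\cdot \tilde{u}_0)\cdot \widetilde{\vartheta}(l_2) \;=\; \tilde{u}_0\cdot \widetilde{\vartheta}(l_1)\widetilde{\vartheta}(l_2),
\]
continuity is inherited from the local triviality of $Q$, and $\Ad\circ \widetilde{\vartheta} = \vartheta$ is immediate. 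The tautological isomorphism $Q\cong G\times_{\widetilde{\vartheta}}\Spin(\fr{q})$ matches this with the forward construction, while the two elements of $\Lambda^{-1}(u_0)$ account for the two possible lifts (when distinct), yielding the bijective correspondence.

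The main obstacle is the compatibility of the lifted $G$-action on $Q$ with the principal $\Spin(\fr{q})$-action; without this, $\widetilde{\vartheta}(l)$ would only be a deck transformation rather than right-multiplication by a spin element, and the homomorphism property above would fail. This is where $\pi_1(G) = 0$ enters essentially, both to produce $\tilde{\Phi}$ via the covering criterion and to propagate its compatibilities globally from a single fibre.
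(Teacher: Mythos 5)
The paper offers no proof of this proposition---it is quoted verbatim from \cite{Cahen2}---so there is no in-text argument to compare against. Your proposal follows the standard route of that reference and the two main implications are sound: the forward direction via the associated bundle $Q=G\times_{\widetilde{\vartheta}}\Spin(\fr{q})$ with $\Lambda([g,s])=[g,\Ad(s)]$ (well-defined exactly because $\Ad\circ\widetilde{\vartheta}=\vartheta$), and the converse via the covering-space lifting criterion applied to $\Phi(g,\tilde u)=g\cdot\Lambda(\tilde u)$, where $\pi_1(G)=0$ is used both to produce the lifted action and, through uniqueness of lifts, to make it a genuine $G$-action commuting with the right $\Spin(\fr{q})$-action. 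The extraction of $\widetilde{\vartheta}$ from the fibre over $o$ and the identification $Q\cong G\times_{\widetilde{\vartheta}}\Spin(\fr{q})$ are also correct. (A fixable technicality: the lifting criterion needs a connected, locally path-connected domain, and $Q$ or $\SO(M)$ may be disconnected in the pseudo-Riemannian setting; one applies the criterion component by component.)

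There is, however, a concrete error in your justification of the final one-to-one correspondence. The two points of $\Lambda^{-1}(u_0)$ do \emph{not} yield two distinct lifts: replacing $\tilde u_0$ by $\tilde u_0\cdot(-1)$ replaces $\widetilde{\vartheta}(l)$ by $(-1)^{-1}\widetilde{\vartheta}(l)(-1)$, and since $-1$ is central in $\Spin(\fr{q})$ the extracted homomorphism is unchanged. The actual multiplicity of lifts comes from twisting by characters $\chi:L\to\{\pm1\}$, i.e.\ $\widetilde{\vartheta}'=\chi\cdot\widetilde{\vartheta}$, and the bijection with spin structures rests on the isomorphism $H^{1}(M;\bb{Z}_{2})\cong\Hom(\pi_{1}(M),\bb{Z}_{2})\cong\Hom(\pi_{0}(L),\bb{Z}_{2})=\Hom(L,\bb{Z}_{2})$, which follows from the homotopy sequence of $L\to G\to G/L$ when $G$ is simply connected. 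As written you show that every spin structure arises from some lift, but you do not show that distinct lifts give inequivalent spin structures, nor that equivalent spin structures give the same lift; both require the character description above together with the observation that an equivalence $\cal{U}:Q_{1}\to Q_{2}$ covering $\mathrm{id}_{\SO(M)}$ is automatically $G$-equivariant (again by uniqueness of lifts), so it intertwines the two induced homomorphisms $\widetilde{\vartheta}_{1},\widetilde{\vartheta}_{2}$.
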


   The above discussion, in combination with  Proposition  \ref{alek2},  yields that
   \begin{corol} \label{basics1}  Let  $(M= G/L, g)$ be  a   homogeneous pseudo-Riemannian manifold of  signature $(p,q)$ of a connected  Lie group $G$ modulo a compact  connected Lie subgroup  $L$.   Then,  $M=G/L$  admits    a  spin  structure  if  and only if  $w_2(M) =0$.  If  $M$  admits  an invariant  almost  complex  structure $J$,   then   this   condition is  equivalent  to  say that  $c_1(M,J)$ is  even in $H^{2}(M; \bb{Z})$. 
 \end{corol}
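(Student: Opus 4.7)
The plan is to deduce this corollary from the statements immediately preceding it, namely Proposition~\ref{alek2} together with the corollary to Proposition~\ref{ba}. The first step is to show that under the present hypotheses the orthogonal splitting $TM=\eta_-\oplus\eta_+$ coming from a reductive decomposition $\mathfrak{g}=\mathfrak{l}+\mathfrak{q}$ with $\mathfrak{q}=\mathfrak{q}_-\oplus\mathfrak{q}_+$ is automatically oriented on both factors. Since $L$ is assumed connected and compact, the isotropy image $\vartheta(L)|_{\mathfrak{q}_\pm}\subset\mathrm{O}(\mathfrak{q}_\pm,g|_{\mathfrak{q}_\pm})$ is connected and hence lies in $\mathrm{SO}(\mathfrak{q}_\pm,g|_{\mathfrak{q}_\pm})$, so $\vartheta(L)|_{\mathfrak{q}_\pm}$ is unimodular. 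By Proposition~\ref{alek2} this means that $(M,g)$ is both time-oriented and space-oriented, i.e.\ $w_1(\eta_-)=0=w_1(\eta_+)$.

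Combining this with Proposition~\ref{ba}, whose obstruction reads $w_2(TM)=w_1(\eta_-)\smile w_1(\eta_+)$, we obtain that $M=G/L$ admits a $\Spin_{p,q}$-structure if and only if $w_2(TM)=0$. This is exactly the first assertion of the corollary, and is the same observation recorded in the unnumbered corollary that follows Proposition~\ref{ba}; here we only need to invoke connectedness of $L$ to certify that the hypothesis of that corollary is met.

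For the second assertion, suppose in addition that $M$ carries a $G$-invariant almost complex structure $J$. The natural orientation induced by $J$ coincides with the one used above, and by the standard identity $c_1(TM,J)\equiv w_2(TM)\pmod 2$ (recalled in the paragraph preceding Proposition~\ref{at}) the reduction mod $2$ of $c_1(M,J)\in H^2(M;\mathbb{Z})$ equals $w_2(M)\in H^2(M;\mathbb{Z}_2)$. Therefore the vanishing $w_2(M)=0$ is equivalent to the divisibility of $c_1(M,J)$ by $2$ in $H^2(M;\mathbb{Z})$, which by definition means that $c_1(M,J)$ is even, completing the proof.

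The statement is essentially a packaging result, so there is no genuine obstacle; the only point that deserves care is the verification that the connectedness of $L$ suffices to force both $\eta_-$ and $\eta_+$ to be orientable, which feeds Proposition~\ref{ba} in its simplified form. The almost complex part is then a direct application of the mod~$2$ reduction formula for $c_1$.
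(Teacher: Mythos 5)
Your argument is correct and follows exactly the route the paper intends: connectedness of $L$ gives unimodularity of $\vartheta(L)|_{\mathfrak{q}_\pm}$, hence time- and space-orientedness via Proposition~\ref{alek2}, which reduces the obstruction of Proposition~\ref{ba} to $w_2(TM)=0$; the almost complex case then follows from $c_1(TM,J)\equiv w_2(TM)\pmod 2$ together with the exactness of $H^2(M;\mathbb{Z})\xrightarrow{2}H^2(M;\mathbb{Z})\to H^2(M;\mathbb{Z}_2)$. The paper leaves this as an immediate consequence of the preceding discussion, and your write-up simply makes those steps explicit.
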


\subsection{Homogeneous    fibrations and spin structures}
    Let $L\subset H\subset G$   be   compact  connected  subgroups   of a   compact  connected Lie  group $G$.
    Then, $\pi : M= G/L \to F =G/H$ is a homogeneous  fibration   with   base space $F=G/H$  and  fibre $H/K$.
     We   fix  an   $\Ad_{L}$-invariant reductive decomposition for $M = G/L$
  \[
    \mathfrak{g}= \mathfrak{l} + \mathfrak{q} =  \mathfrak{l} +   (\mathfrak{n}+\mathfrak{m}),   \quad \fr{q}:=\fr{n}+\fr{m}=T_{eL}M
    \]
    such that   $ \mathfrak{h}= \mathfrak{l} + \mathfrak{n}$ is  a  reductive  decomposition of $H/L$  and
   $  \mathfrak{g}= \mathfrak{h} +\mathfrak{m} =  (\mathfrak{l} +\mathfrak{n}) + \mathfrak{m}$
     is   a reductive decomposition of $F=G/H$.
     An $\Ad_{L}$-invariant (pseudo-Euclidean) metric  $g_{\mathfrak{n}}$ in  $\mathfrak{n}$  defines  a (pseudo-Riemannian) invariant metric in $H/L$  and an $\Ad_{H}$-invariant (pseudo-Euclidean) metric  $g_{\mathfrak{m}}$ in $\fr{m}$  gives rise to a (pseudo-Riemannian) invariant metric in  the  base $F=G/H$.  Then,   the   direct  sum metric $g_{\mathfrak{q}} = g_{\mathfrak{n}} \oplus g_{\mathfrak{m}}$  in $\mathfrak{q}$,   induces  an invariant   pseudo-Riemannian  metric
     in $M =G/L$    such  that  the projection  $\pi : G/L \to G/H$ is  a pseudo-Riemannian  submersion with  totally geodesic  fibres.

\begin{prop}\label{genw2} \textnormal{(see also \cite{Bo} for more general homogeneous bundles)}
     Let  $G$  be  a  compact, connected   Lie  group   and  $L \subset H\subset G$    connected,   compact  subgroups. Consider the   fibration  $\pi : M=G/L \to F= G/H$. Then \\
     (i) The bundles $i^{*}(TM)$ and $TN$ are stably equivalent.  In particular, the Stiefel-Whitney classes of the fiber $N=H/L$ are in the image of the homomorphism $i^{*} : H^{*}(M; \bb{Z}_{2})\to H^{*}(N; \bb{Z}_{2})$, induced by the inclusion map $i : N=H/L\hookrightarrow M=G/L$.\\
     (ii) It is  $w_{1}(TM)=0$ and $w_2(TM) = w_2(\tau_{N}) + \pi^* (w_2(TF))$, where $\tau_{N}$ is the tangent bundle along the fibres.
        \end{prop}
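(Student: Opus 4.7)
The plan is to derive everything from the reductive decomposition at the Lie algebra level and translate it into a Whitney sum decomposition of $TM$. First I would observe that the decomposition $\fr{g}=\fr{l}+\fr{n}+\fr{m}$ with $\fr{h}=\fr{l}+\fr{n}$ induces a $G$-invariant splitting
\begin{equation*}
TM \;\cong\; \tau_{N}\,\oplus\,\pi^{*}(TF),
\end{equation*}
where $\tau_{N}=G\times_{L}\fr{n}$ is the vertical subbundle (the tangent bundle along the fibres) and $G\times_{L}\fr{m}\cong\pi^{*}(TF)$ is the horizontal one, with $TF=G\times_{H}\fr{m}$. The identification $G\times_{L}\fr{m}\cong\pi^{*}(G\times_{H}\fr{m})$ is the standard double-quotient fact, using that $L\subset H$ acts on $\fr{m}$ by the restriction of the isotropy representation $\vartheta|_{H}$.

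For (i), I would restrict this splitting along the inclusion $i:N=H/L\hookrightarrow M$. Clearly $i^{*}(\tau_{N})=TN$, while $i^{*}\pi^{*}(TF)$ is the pullback of $TF$ under the constant map $N\to\{eH\}\subset F$, hence it is the trivial bundle $N\times\fr{m}$ of rank $\dim F$. Therefore
\begin{equation*}
i^{*}(TM)\;\cong\; TN\,\oplus\,\underline{\fr{m}},
\end{equation*}
which is exactly the stable equivalence asserted in (i). Since Stiefel--Whitney classes are stable invariants, one obtains $w_{k}(TN)=w_{k}(i^{*}TM)=i^{*}(w_{k}(TM))$, so each $w_{k}(TN)$ lies in the image of $i^{*}:H^{*}(M;\bb{Z}_{2})\to H^{*}(N;\bb{Z}_{2})$.

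For (ii), the vanishing $w_{1}(TM)=0$ is immediate: since $L$ is connected, the isotropy image $\vartheta(L)$ lies in the identity component of the orthogonal group, so $M=G/L$ is orientable. The same reasoning gives $w_{1}(\tau_{N})=0$ (connectedness of $L$ acting on $\fr{n}$) and $w_{1}(TF)=0$ (connectedness of $H$ acting on $\fr{m}$, which itself follows because $G$ is connected and simply-connected or at least the isotropy lifts through the identity component). Then, applying the Whitney product formula to the splitting $TM\cong\tau_{N}\oplus\pi^{*}(TF)$ in degree two yields
\begin{equation*}
w_{2}(TM)=w_{2}(\tau_{N})+w_{1}(\tau_{N})\smile w_{1}(\pi^{*}TF)+w_{2}(\pi^{*}TF),
\end{equation*}
and the cross term vanishes by the above. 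By naturality $w_{2}(\pi^{*}TF)=\pi^{*}w_{2}(TF)$, giving $w_{2}(TM)=w_{2}(\tau_{N})+\pi^{*}(w_{2}(TF))$ as claimed.

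The only real subtlety is verifying the Whitney-sum splitting $TM\cong\tau_{N}\oplus\pi^{*}(TF)$ as $G$-equivariant bundles; once this is in hand, everything is a routine application of the product formula for Stiefel--Whitney classes and orientability of quotients of compact connected groups by connected subgroups. No serious obstacle is expected beyond this bookkeeping.
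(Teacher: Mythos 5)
Your proof is correct and follows essentially the same route as the paper: the $G$-invariant splitting $TM=\tau_{N}\oplus\pi^{*}(TF)$ coming from the reductive decomposition $\fr{g}=\fr{l}+(\fr{n}+\fr{m})$, the observation that $i^{*}\pi^{*}(TF)=(\pi\circ i)^{*}(TF)$ is trivial so that $i^{*}(TM)\cong TN\oplus\epsilon^{\dim F}$, and the Whitney product formula combined with orientability from the connectedness of $L$ and $H$. The only cosmetic difference is that the paper realizes the splitting as an orthogonal one by choosing a metric making $\pi$ a pseudo-Riemannian submersion, which changes nothing in the argument.
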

\begin{proof}
    (i) We    choose     an $G$-invariant  pseudo-Riemannian    metric   $g=g_{\fr{m}}\oplus g_{\fr{n}}$ on $M=G/L$ as  above,  such  that  $\pi : G/L \to G/H$ is  a   pseudo-Riemannian  submersion.  Then,   the tangent   bundle   admits the following orthogonal  decomposition:
          \begin{equation}\label{decomp1}
TM=G\times_{L}\fr{q}=G\times_{L}(\fr{n}+\fr{m})=(G\times_{L}\fr{n})\oplus(G\times_{L}\fr{m}):=\tau_{N}\oplus \pi^{*}(TF),
\end{equation}
   where we identify  the  pull-back of     the tangent   bundle  $TF=G\times_{H}\fr{m}$  with the homogeneous vector bundle $G\times_{L}\fr{m}$, i.e. $\pi^{*}(TF)\cong G\times_{L}\fr{m}$. Moreover, the tangent bundle along the fibres $\tau_{N}:=G\times_{L}\fr{n}\to G/L$ is   the homogeneous  vector bundle  whose fibres  are the tangent spaces $T_{eL}N\cong\fr{n}$ of the fibres $\pi^{-1}(x)\cong H/L:=N$ $(x\in F)$  (cf. \cite[6.7, 7.4]{Bo}).    Then,   $TN=H\times_{L}\fr{n}\cong i^{*}(\tau_{N})$. On the other hand
   \[
   (i^{*}\circ\pi^{*})(TF)=(\pi\circ i)^{*}(TF)=\epsilon^{\dim F}
   \]
   where $\epsilon^{t}$ is the trivial real vector bundle of rank $t$. Thus
  $
  i^{*}(TM)=\epsilon^{\dim F}\oplus TN.
$
  This proves our first claim, and consequently,  by  the naturality  of Stiefel-Whitney classes we get that
  \[
 w_{j}(i^{*}(TM))=i^{*}(w_{j}(TM))=w_{j}(TN),
 \]
 or equivalently,  $i^{*}(w_{j}(M))=w_{j}(N)$, where $i^{*} : H^{*}(M; \bb{Z}_{2})\to H^{*}(N; \bb{Z}_{2})$, see also \cite[7.4, p.~480]{Bo}.\\
 (ii)     Since  both $L\subset H$ are connected,  the  vector bundles $\tau_{N}$ and  $TF$  are oriented, $w_1(\tau_{N}) = w_1(TF) =0$. The same is true for $TM$, i.e. $w_{1}(TM)=w_{1}(\tau_{N})+\pi^{*}(w_{1}(TF))=0$, which also follows by the connectedness of $L$.  Now, the relation for $w_{2}(TM)$ is  an immediate consequence of the  Whitney product formula $w_{s}(E\oplus E^{\perp})=\sum_{i=1}^{s}w_{i}(E)\smile w_{s-i}(E^{\perp})$:
          \begin{eqnarray*}
w_{2}(TM)&=&w_2(\tau_{N}\oplus \pi^*(TF)) =w_{2}(\tau_{N})+w_{1}(\tau_{N})\smile w_{1}(\pi^{*}(TF))+w_{2}(\pi^{*}(TF))\\
&=&w_{2}(\tau_{N})+w_{2}(\pi^{*}(TF))=w_{2}(\tau_{N})+\pi^{*}(w_{2}(TF)).
\end{eqnarray*}
     \end{proof}
\begin{corol}\label{springer2}Let $N\overset{i}{\hookrightarrow}M\overset{\pi}{\to}F$  be a homogeneous fibration  as in Proposition \ref{genw2}. Then: \\
(i) If $F=G/H$  is  spin,  then $M=G/L$ is spin if and only if $N=H/L$ is spin. \\
        (ii) If  $N=H/L$ is spin, then $M=G/L$ is spin if and only  if  $w_{2}(G/H)\equiv w_{2}(TF)\in \ker \pi^{*}\subset H^{2}(F; \bb{Z}_{2})$, where $\pi^{*} : H^{2}(F; \bb{Z}_{2})\to H^{2}(M; \bb{Z}_{2})$ is the induced homomorphism by $\pi$.  In particular, if  $N$ and $F$ are spin, so is $M$  with  respect  to   any  pseudo-Riemannian metric.
         \end{corol}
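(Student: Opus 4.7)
The plan is to derive Corollary \ref{springer2} directly from the two formulas of Proposition \ref{genw2}, namely the Whitney-type identity
\[
w_{2}(TM) \;=\; w_{2}(\tau_{N}) + \pi^{*} w_{2}(TF)
\]
in $H^{2}(M;\bb{Z}_{2})$, together with the restriction identity $i^{*}(w_{2}(\tau_{N})) = w_{2}(TN)$ coming from $i^{*}(\tau_{N}) \cong TN$ in part (i) of Proposition \ref{genw2}. Since $L,H$ are compact connected, the total space $M$, base $F$, and fibre $N$ are all oriented, so by Corollary \ref{basics1} spinnability of each is equivalent to the vanishing of its second Stiefel--Whitney class.

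For part (i), I would assume $F$ is spin, so $w_{2}(TF)=0$, and the displayed formula collapses to $w_{2}(TM) = w_{2}(\tau_{N})$. Thus $M$ is spin iff $w_{2}(\tau_{N})=0$. The forward direction ($M$ spin $\Rightarrow$ $N$ spin) follows at once by applying $i^{*}$: one gets $w_{2}(TN) = i^{*}w_{2}(\tau_{N}) = 0$, so by Corollary \ref{basics1} the fibre $N$ is spin. The converse is the subtle step: assuming $N$ spin, I would invoke Proposition \ref{springer} applied to the sub-fibration $L \hookrightarrow H \to H/L=N$ (replacing $H$ by its universal cover if needed) to produce a lift $\widetilde{\vartheta}_{N}\colon L \to \Spin(\fr{n})$ of the isotropy representation $\vartheta_{N}\colon L \to \SO(\fr{n})$ associated to the reductive decomposition $\fr{h}=\fr{l}+\fr{n}$. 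This group-level lift immediately defines a $\Spin(\fr{n})$-principal bundle $G \times_{\widetilde{\vartheta}_{N}} \Spin(\fr{n})$ double-covering the orthonormal frame bundle of the associated vector bundle $\tau_{N} = G \times_{L} \fr{n}$, which forces $w_{2}(\tau_{N}) = 0$, and hence $M$ is spin.

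For part (ii), I would assume $N$ is spin; by the same lift argument, $w_{2}(\tau_{N}) = 0$, so the master identity collapses to $w_{2}(TM) = \pi^{*} w_{2}(TF)$. Therefore $M$ is spin if and only if $\pi^{*} w_{2}(TF) = 0$, i.e.\ if and only if $w_{2}(TF) \in \ker\pi^{*} \subset H^{2}(F;\bb{Z}_{2})$. The ``in particular'' clause is immediate: when $F$ is also spin, $w_{2}(TF) = 0$ lies trivially in $\ker\pi^{*}$, giving $M$ spin. The independence from the invariant pseudo-Riemannian metric follows from Proposition \ref{alek2} (which grants automatic space- and time-orientability since $L$ is connected) combined with Corollary \ref{basics1}, as spinnability depends only on $w_{2}(TM)$.

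The principal technical obstacle is the reverse implication in (i). The restriction map $i^{*}\colon H^{2}(M;\bb{Z}_{2}) \to H^{2}(N;\bb{Z}_{2})$ need not be injective, so one cannot pass from $w_{2}(TN)=0$ to $w_{2}(\tau_{N})=0$ by pure cohomology; the representation-theoretic bridge provided by Proposition \ref{springer}---which identifies spin structures on $H/L$ with group-level lifts of $\vartheta_{N}$, and thereby with spin structures on every associated bundle $G\times_{L}\fr{n}$---is precisely what allows this step to succeed.
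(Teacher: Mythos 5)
Your outline coincides with the paper's: everything is reduced to the identity $w_2(TM)=w_2(\tau_N)+\pi^*(w_2(TF))$ of Proposition \ref{genw2}, the forward direction of (i) is obtained by applying $i^*$, and (ii) follows from the collapsed identity once one knows $w_2(\tau_N)=0$. The single nontrivial step, which you correctly isolate, is the implication ``$N$ spin $\Rightarrow w_2(\tau_N)=0$''. The paper handles it by introducing a formal pushforward $i_*$ with $i^*\circ i_*=\Id$ and writing $w_2(\tau_N)=i_*(w_2(TN))$; no such operation exists on $H^2(\cdot\,;\bb{Z}_2)$ in general, so your instinct to replace this by the representation-theoretic lift of Proposition \ref{springer} is sound and, when $H$ is simply connected, gives a rigorous argument: a lift $\widetilde{\vartheta}_{\fr{n}}:L\to\Spin(\fr{n})$ of $\vartheta_{\fr{n}}:L\to\SO(\fr{n})$ exists iff $N$ is spin, and such a lift trivializes $w_2(G\times_L\fr{n})$.

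However, your parenthetical ``replacing $H$ by its universal cover if needed'' does not close the argument when $H$ is not simply connected: Proposition \ref{springer} applied to $\widetilde{H}/\widetilde{L}$ produces a lift of $\widetilde{L}\to\SO(\fr{n})$, where $\widetilde{L}$ is the preimage of $L$ in $\widetilde{H}$, and a lift of $\widetilde{L}$ need not descend to a lift of $L$; only the latter yields a spin structure on $\tau_N=G\times_L\fr{n}$. This is not a repairable technicality, because the implication itself fails there. Take $G=\SU_4\supset H=\Ss(\U_2\times\U_2)\supset L=\Ss(\U_2\times\U_1\times\U_1)$, i.e. $\Pi_W(L)=\{\al_1\}\subset\Pi_W(H)=\{\al_1,\al_3\}$. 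Then $F=\SU_4/\Ss(\U_2\times\U_2)$ has $\sigma^{J_F}=4\Lambda_2$ and is spin, the fibre $N=H/L\cong\bb{C}P^1$ is spin, but $\sigma^{J_M}=2\al_1+4\al_2+3\al_3=3\Lambda_2+2\Lambda_3$, so $M=\SU_4/L$ is not spin (consistent with Table 1 and Corollary \ref{intChern}); equivalently $w_2(\tau_N)=\Lambda_2\neq 0$ even though $w_2(TN)=0$, the point being that $\Lambda_2$ lies in $\ker i^*$. So both the reverse implication in (i) and the ``in particular'' clause of (ii) require an additional hypothesis --- e.g. that $H$ be simply connected, or that ``$N$ spin'' be replaced by ``$\vartheta_{\fr{n}}:L\to\SO(\fr{n})$ lifts to $\Spin(\fr{n})$'' (equivalently, $\tau_N$ spin) --- and under either hypothesis your argument goes through and is cleaner than the paper's.
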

         \begin{proof}
         (i)
      Under our assumptions,  the
      existence  of a     spin  structure   on all manifolds $G/L, H/L, G/H$  is  equivalent  to   the vanishing of the associated second Stiefel-Whitney class $w_2$.  Now,   for the injection $i : N \hookrightarrow M$ it is $i^{*}\circ i_{*}=\Id$ and $i^{*}(\tau_{N})=TN$.  Hence,   we also conclude that $\tau_{N}=i_{*}(TN)$ and (\ref{decomp1}) takes the form   $TM=\tau_{N}\oplus \pi^{*}TF=i_{*}(TN)\oplus \pi^{*}TF$.      Since $w_{1}(TF)=w_{2}(TF)=0$, by   the second part of Proposition \ref{genw2} we obtain that (see also \cite{Gadea})
      \begin{equation}\label{secdec}
      w_{2}(TM)=w_{2}(\tau_{N})=w_{2}(i_{*}(TN))=i_{*}(w_{2}(TN)).
      \end{equation}
Assume now that $M=G/L$ is spin, $w_{2}(TM)=0$. Then, by  (\ref{secdec}) it follows that $0=w_{2}(i_{*}(TN))=w_{2}(\tau_{N})$, so considering the pull-back via $i$ we get $i^{*}(w_{2}(\tau_{N}))=0$, or equivalently $w_{2}(i^{*}\tau_{N})=0$ which gives $w_{2}(TN)=0$, i.e. the fiber $N=H/L$ is spin. Conversely, assume that  $w_{2}(TN)=0$.  Then, $i_{*}(w_{2}(TN))=0$ and by (\ref{secdec}) we also get
$w_{2}(i_{*}(TN))=0$ or equivalently $w_{2}(\tau_{N})=0$.  Another way to prove this is as follows: if $w_{2}(TN)=0$, then $w_{2}(i^{*}\tau_{N})=0$ and so $i_{*}(w_{2}(i^{*}\tau_{N}))=w_{2}(\tau_{N})=0$.
Hence, one can easily get the desired result: $w_{2}(TM)=0$, i.e. $M=G/L$ is spin.      The proof of (ii) is easy.  \end{proof}

\begin{remark}
\textnormal{Using Proposition \ref{genw2},  notice that  if $M$ is $G$-spin and $N$ is $H$-spin, then
  $\pi^{*}(w_{2}(TF))=w_{2}(\pi^{*}(TF))=0$, which  in general does not imply the relation $w_{2}(TF)=0$, i.e. $F$ is not necessarily spin.
For example, consider the  the {\it Hopf fibration} 
\[
\Ss^{1}\to\Ss^{2n+1}=\SU_{n+1}/\SU_{n}\to\bb{C}P^{n}=\SU_{n+1}/\Ss(\U_{1}\times\U_{n}).
\]
Although the sphere $\Ss^{2n+1}$ is   a spin manifold for any $n$ (its tangent bundle is stably trivial), recall that $\bb{C}P^{n}$ is spin only for $n=\text{odd}$, see for instance Example \ref{cpn}.}
\end{remark}

\subsection{Invariant metaplectic   structures}

   Recall  that     a   compact   homogeneous  symplectic  manifold   $(M =G/H,\omega)$   is  a  direct product  of  a    flag manifold $F $, i.e.   an adjoint orbit  of  a  compact semisimple  Lie group  (see Section \ref{flags} below)  and  a solvmanifold    with    an invariant  symplectic    structure.  In particular,
   any   simply-connected  compact  homogeneous symplectic  manifold  $(M=G/H, \omega)$   is symplectomorphic   to  a  flag manifold.
    Hence, based on  Proposition \ref{meta}   we get that
    \begin{prop}
    Simply-connected  compact   homogeneous  symplectic manifolds admitting a   metaplectic   structure,    are exhausted  by  flag manifolds   $F =G/H$ of   a   compact simply-connected semisimple Lie  group $G$, whose  second  Stiefel-Whitney   class vanishes, i.e. $w_2(F)=0$.  For  any  (invariant)  symplectic   form  such  a  structure is   unique.
   \end{prop}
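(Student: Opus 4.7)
The plan is to combine the structural classification of compact homogeneous symplectic manifolds, recalled in the paragraph preceding the statement, with the general existence and uniqueness criterion furnished by Proposition \ref{meta}.

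First, I would invoke the cited decomposition theorem: every compact homogeneous symplectic manifold $(M=G/H,\omega)$ splits, as a symplectic manifold, into a product $F\times S$, where $F$ is a flag manifold of a compact semisimple Lie group (i.e. an adjoint orbit) and $S$ is a compact solvmanifold endowed with an invariant symplectic structure. Here I would use the fact that a compact solvmanifold is aspherical, with torsion-free fundamental group whose rank equals $\dim S$ (by Mostow); in particular the only simply-connected such $S$ is a point. Thus, under the simple-connectedness hypothesis, the solv factor disappears and $M$ is symplectomorphic to a flag manifold $F=G/H$ of a compact semisimple Lie group $G$, which, up to finite covering, may be taken simply-connected (as discussed at the beginning of Section \ref{homogspin}).

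Second, for existence I would invoke Proposition \ref{meta} directly: the flag manifold $F$ admits a metaplectic structure if and only if $w_2(F)=0$, equivalently, if and only if $c_1(F)$ is even in $H^2(F;\mathbb{Z})$. This pins down the class of manifolds appearing in the statement.

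Finally, for the uniqueness assertion I would again appeal to Proposition \ref{meta}: when a metaplectic structure exists, the set of inequivalent metaplectic structures is in bijective correspondence with $H^1(F;\mathbb{Z}_2)$. Since $F$ is simply-connected, the Hurewicz theorem together with the universal coefficient theorem give $H^1(F;\mathbb{Z}_2)=0$, so the metaplectic structure is unique; and as the argument depends only on $w_2$, it is independent of the chosen invariant symplectic form. The main obstacle I foresee is less a technical step than a bookkeeping one: making sure the symplectic splitting $M \simeq F\times S$ from the classification is consistent with the simple-connectedness assumption, so that the solv factor really drops out and no hidden coverings are needed; once that is granted, the rest follows by direct citation.
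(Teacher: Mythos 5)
Your proposal is correct and follows essentially the same route as the paper: the splitting of a compact homogeneous symplectic manifold into a flag manifold times a symplectic solvmanifold, the observation that simple-connectedness kills the solv factor, and then a direct application of Proposition \ref{meta} together with $H^1(F;\mathbb{Z}_2)=0$ for uniqueness. The only difference is that you spell out why the solv factor must be a point (asphericity of compact solvmanifolds), a detail the paper leaves implicit.
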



  \section{Spin  structures on   flag manifolds}\label{flags}

\subsection{Basic facts about flag manifolds}\label{bflags}    Recall  that   a  flag manifold   is   an   adjoint   orbit $F =\mathrm{Ad}_G w =   G/H$ $(w\in\fr{g}=T_{e}G)$ of  a  compact connected semisimple  Lie  group $G$.  Any flag manifold  is a direct product of  flag  manifolds  of    simple   groups.      The    stabilizer $H$ is    the  centralizer  of a  torus in $G$,   hence connected.  Two   flag manifolds $G/H_{1}, G/H_{2}$ of  $G$   are called  {\it isomorphic} if  the  stability  subgroups  $H_{1}$ and $H_{2}$  are  conjugated in $G$.

        Let $F=G/H$ be a flag manifold of  a compact semisimple Lie group $G$ and let us denote by   $B$   the    Killing  form   of  the Lie  algebra  $\mathfrak{g}$. Consider the   $B$-orthogonal  reductive  decomposition
     \[
   \mathfrak{g} = \mathfrak{h} + \mathfrak{m} = (Z(\mathfrak{h}) +  \mathfrak{h}') + \mathfrak{m}
   \]
     where $Z(\mathfrak{h})$ is  the  center   and $\mathfrak{h}'$  is  the  semisimple part of  $\mathfrak{h}$.  
      The $H$-module   $\mathfrak{m}$   coincides with  the
         the tangent space  $T_{eH}F$ and  the isotropy representation $\vartheta : H \to \Aut(\fr{m})$ is equivalent with   $\Ad_{H}|_{\fr{m}}$.

\smallskip
            {\bf Notation.}  Let us fix some notation for the following of the article.   We denote  by   $\fr{a}$ a Cartan  subalgebra  of $\fr{h}$ (hence  also  of $\mathfrak{g}$)  and  by
                    $\mathfrak{a}^{\mathbb{C}}$ its complexification.  It   defines   the  root  space  decomposition
                     $\fr{g}^{\bb{C}}=\fr{a}^{\bb{C}}+ \mathfrak{g}(R):=  \fr{a}^{\bb{C}}  + \sum_{\al\in R}\fr{g}_{\al},$
                     where $R$  is  the  root  system of  $(\mathfrak{g}^{\mathbb{C}}, \mathfrak{a}^{\mathbb{C}})$.    Set
                     \[
                     \mathfrak{a}_0:= i \mathfrak{a}, \quad \fr{z}=Z(\fr{h}), \quad \mathfrak{t}:=  i \fr{z}\subset \mathfrak{a}_0.
                     \]
                      The  Killing  form $B$  of $\mathfrak{g}^{\mathbb{C}}$  induces  an Euclidean metric $( \ , \  )$ in $\mathfrak{a}_0$,  which  allows us to  identify the spaces  $\mathfrak{a}_0^*$ and $\mathfrak{a}_0$. 
         The    root  system $R_H \subset R$   of  $(\mathfrak{h}^{\mathbb{C}}, \mathfrak{a}^{\mathbb{C}})$  consists of  roots   which  vanish  on $ Z(\mathfrak{h})$.  Roots  in $R_F:= R \setminus R_H$ are  called  {\it complementary}.

      We fix  a  fundamental   system  $\Pi_W =  \{\al_1, \cdots, \al_u \}  $ of  $R_H$ and   extend it   to  a  fundamental  system $\Pi = \Pi_W \sqcup \Pi_B$ of $R$, such that $\ell:=\rnk G=u+v$. The    roots   from  $\Pi_W$  are  called  {\it white}  and   roots  from $\Pi_B = \{\beta_1, \cdots, \beta_v\}$  are called  {\it black}. Graphically,   the   decomposition $\Pi= \Pi_W \sqcup \Pi_B$ is   represented  by  painted  Dynkin  diagram (PDD),  that is   the   Dynkin  diagram of $\Pi$  with  black nodes   associated  with   black  roots.

       A PDD  determines  the      flag manifold  $F = G/H$   together  with  an invariant  complex  structure $J$.
       The   semisimple  part  $\mathfrak{h}'$ of   $\mathfrak{h} = \mathfrak{h}' + i \mathfrak{t}$ is  defined  as   the   subalgebra of $\mathfrak{g}$
        associated  with white   subgiagram  $\Pi_W$   and    the  center $i \mathfrak{t}$ is   defined   by
        $    \fr{t}=\fr{z}\cap\fr{a}_{0}=\{h\in\fr{a}_{0} : (h, \al_{i})=0,  \, \forall \ \al_{i}\in\Pi_{W}\}$.  A basis of $\fr{t}^{*}\cong \mathfrak{t}$  is given by the ``black"  fundamental weights $(\Lambda_1, \cdots, \Lambda_v)$  associated  to  the black  simple  roots $\beta_i \in \Pi_B$. These are linear forms   defined by            the conditions
\[
 (\Lambda_i| \beta_j):= \frac{2(\Lambda_i, \beta_{j})}{(\beta_{j}, \beta_{j})} = \delta_{ij}, \quad (\Lambda_i| \alpha_k) =  0.
 \]
\noindent   To  define   the  complex  structure,  we denote   by $R^+$  the positive  roots,  defined  by $\Pi$ and   set
   $R_H^+ = R_H \cap R^+,\,  R_F^+ = R_F \cap R^+$. Then,  the  decomposition  $R = (-R_{F}^-) \sqcup R_H \sqcup R_{F}^+$
   of  $R$ into a  disjoint  union of   closed  subsystems
   induces the generalized Gauss  decomposition   of  $\mathfrak{g}^{\mathbb{C}}$, i.e. the direct sum decomposition  
$$   \mathfrak{g}^{\mathbb{C}} = \mathfrak{n}^- + \mathfrak{h}^{\mathbb{C}} + \mathfrak{n}^+ = \mathfrak{g}(-R_F^+) + (\mathfrak{a}^{\mathbb{C}}+ \mathfrak{g}(R_H))  + \mathfrak{g}(R_F^+),$$ with $\fr{n}^{\pm}:= \mathfrak{g}(\pm R_F^+)$.
Thus, the  complexification of  $\fr{g}=\fr{h}+\fr{m}$  is   given  by
 $    \mathfrak{g}^{\mathbb{C}} = \mathfrak{h}^{\mathbb{C}} + ( \mathfrak{g}(-R_F^+ ) + \mathfrak{g}(R_F^+))$
and the invariant  complex  structure $J$ is  defined  by  the  condition  that $\mathfrak{n}^{\pm}$   are $\pm i$ eigenspaces
of  $J$ in  the complexified  tangent  space  $T_o^{\bb{C}}F = \mathfrak{m}^{\bb{C}}= \mathfrak{n}^- + \mathfrak{n}^+$.

 The   subalgebras $\mathfrak{p}  = \mathfrak{h}^{\mathbb{C}} + \mathfrak{n}^+, \, \mathfrak{p}^-  = \mathfrak{h}^{\mathbb{C}} + \mathfrak{n}^-  $
         are  called  opposite  parabolic subalgebras associated  with a PDD.  The connected  group of  authomorphisms  of  the  complex manifold
         $(F = G/H, J)$ is  the   complex  group $G^{\mathbb{C}}$  and   the  stability  subgroup  of  the point   $o = eH$ is   the parabolic   subgroup $P$,  generated  by  the parabolic  subaglebra  $\mathfrak{p}$.  So  as a  complex manifold,  the flag manifold  is  identified  with   the   quotient  $F = G^{\mathbb{C}}/P$. Moreover, any parabolic subalgebra is conjugate to a subalgebra of the form $\fr{p}  = \mathfrak{h}^{\mathbb{C}} + \mathfrak{g}(R_F^+)$
  associated   to  a PDD. To summarize
   \begin{prop} \textnormal{(\cite{AP, Alek, Ale1})} \label{complex}
 Invariant  complex  structures  on a  flag manifold  $F = G/H$ bijectively  correspond   to   extensions   of a  fixed fundamental  system $\Pi_W$ of  the subalgebra  $\mathfrak{h}^{\mathbb{C}}$  to a  fundamental  system  $\Pi= \Pi_W \cup \Pi_B$ of    $\mathfrak{g}^{\mathbb{C}}$, or equivalently, to parabolic subalgebras $\mathfrak{p}$ of $\fr{g}^{\bb{C}}$  with reductive part $\fr{h}^{\bb{C}}$.
  \end{prop}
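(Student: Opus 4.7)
The plan is to pass from the geometric data (invariant complex structures) to purely root-theoretic data (partitions of $R_F$), then to closed parabolic subsystems of $R$, and finally to extensions of $\Pi_W$ to a fundamental system of $R$.

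First, I would invoke the standard fact that $G$-invariant almost complex structures on $F = G/H$ are in bijection with $\Ad_H$-invariant endomorphisms $J$ of $\mathfrak{m} = T_{eH}F$ satisfying $J^2 = -\Id$. Since $\mathfrak{a}^{\mathbb{C}} \subset \mathfrak{h}^{\mathbb{C}}$ acts diagonally on $\mathfrak{m}^{\mathbb{C}} = \bigoplus_{\alpha \in R_F} \mathfrak{g}_\alpha$ with distinct characters $\{\alpha|_{\mathfrak{a}}\}_{\alpha \in R_F}$, any $J$ commuting with this action preserves each root line $\mathfrak{g}_\alpha$. The constraint $J^2 = -\Id$ forces $J|_{\mathfrak{g}_\alpha}$ to be multiplication by $\pm i$, while compatibility with the compact real form (so that $J$ is real on $\mathfrak{m}$) gives $J|_{\mathfrak{g}_{-\alpha}} = -J|_{\mathfrak{g}_\alpha}$. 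Thus $J$ determines a partition $R_F = R_F^+ \sqcup (-R_F^+)$ with $R_F^+ := \{\alpha \in R_F : J|_{\mathfrak{g}_\alpha} = i\,\Id\}$.

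Next, I would show that the two remaining conditions on $J$ translate into closedness conditions on $R_F^+$. Invariance under $\ad(\mathfrak{h}^{\mathbb{C}})$ beyond $\mathfrak{a}^{\mathbb{C}}$ says that whenever $\alpha \in R_F^+$, $\gamma \in R_H$ and $\alpha+\gamma \in R$, then $\alpha+\gamma \in R_F^+$ (apply $J$ to the nonzero bracket $[\mathfrak{g}_\gamma,\mathfrak{g}_\alpha] \subset \mathfrak{g}_{\alpha+\gamma}$). Integrability of $J$ reduces via Newlander--Nirenberg and the reductive decomposition to the algebraic requirement that $\mathfrak{p} := \mathfrak{h}^{\mathbb{C}} + \mathfrak{g}(R_F^+)$ is a subalgebra of $\mathfrak{g}^{\mathbb{C}}$, i.e.\ whenever $\alpha,\alpha' \in R_F^+$ and $\alpha+\alpha' \in R$, then $\alpha+\alpha' \in R_F^+$. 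Together, these conditions say that $R(\mathfrak{p}) := R_H \sqcup R_F^+$ is a closed subsystem satisfying $R(\mathfrak{p}) \cup (-R(\mathfrak{p})) = R$ and $R(\mathfrak{p}) \cap (-R(\mathfrak{p})) = R_H$, which is precisely to say that $\mathfrak{p}$ is a parabolic subalgebra of $\mathfrak{g}^{\mathbb{C}}$ with reductive part $\mathfrak{h}^{\mathbb{C}}$.

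Finally, I would establish the bijection between such parabolic subalgebras and extensions of the fixed $\Pi_W$. In one direction, given $\Pi = \Pi_W \sqcup \Pi_B$, the generated positive system $R^+$ satisfies $R^+ \setminus R_H^+ = R_F^+$, and $\mathfrak{p} = \mathfrak{h}^{\mathbb{C}}+\mathfrak{g}(R_F^+)$ is visibly parabolic with the required reductive part. Conversely, given $\mathfrak{p}$ with reductive part $\mathfrak{h}^{\mathbb{C}}$, the set $R_F^+ = R(\mathfrak{p}) \setminus R_H$ consists of those complementary roots taking a prescribed sign on a suitable regular element $\xi \in \mathfrak{t} = i\,Z(\mathfrak{h})$, singling out a positive system of $R$ up to the action of $W(R_H)$; since $W(R_H)$ acts simply transitively on the fundamental systems of $R_H$, exactly one such positive system has $\Pi_W$ as the base of its Levi part, and its simple roots give the desired extension $\Pi = \Pi_W \sqcup \Pi_B$. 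The main obstacle is precisely this last uniqueness/existence step: one must verify that a parabolic set $R(\mathfrak{p})$ with reductive roots $R_H$ is always of the form $\{\alpha \in R : (\alpha,\xi) \geq 0\}$ for some $\xi \in \mathfrak{t}$, which reduces to the characterization of parabolics by characters of the center $Z(\mathfrak{h})$, i.e.\ to the classical description of (standard) parabolic subalgebras by subsets of simple roots.
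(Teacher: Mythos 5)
The paper does not prove this proposition — it is quoted from \cite{AP, Alek, Ale1}, with only the forward direction sketched in the preceding discussion (a painted Dynkin diagram determines $R_F^+$, hence the eigenspaces $\mathfrak{n}^{\pm}$ of $J$ and the parabolic $\mathfrak{p}=\mathfrak{h}^{\mathbb{C}}+\mathfrak{n}^+$). Your argument is correct and follows exactly the route those references take: reduce $J$ to a splitting $R_F=R_F^+\sqcup(-R_F^+)$ via the weight decomposition, translate $\Ad_H$-invariance and integrability into closedness of $R_H\sqcup R_F^+$, and then use the classification of parabolic subsets (together with simple transitivity of $W(R_H)$ on fundamental systems of $R_H$) to get a unique extension of $\Pi_W$; you also correctly isolate that last step as the only nontrivial point of the converse.
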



 \subsection{T-roots and   decomposition of isotropy   representation }
                \begin{definition} \textnormal{ A {\it $T$-root} is the restriction
            on $\mathfrak{t}$ of a complementary root $\al\in R_{F}=R\backslash R_{H}$, via the linear map  $\kappa : \mathfrak{a}^*  \to  \mathfrak{t}^*, \,   \alpha \mapsto  \alpha|_{\mathfrak{t}}$. We denote by  $R_T:= \kappa(R_F) \subset \mathfrak{t}^*$  the set of  all $T$-roots.}
         \end{definition}

 The  system   $R_{T} \subset \mathfrak{t}^*$  of  $T$-roots is not necessary  a  root  system in abstract  sense, but it  has  many properties of   a root  system (for  details see  \cite{Alek, Ale1, Gr1, Gr2}). The integer $v:=\sharp(\Pi_{B})$ is called the {\it rank} of $R_{T}$.  Consider  now the weight lattice associated to $R$, that is
\[
\cal{P}=\{ \Lambda \in \mathfrak{a}_0^*  \  : \    (\Lambda | \alpha)   \in  \mathbb{Z}, \   \forall \alpha \in R \} = \mathrm{span}_{\mathbb{Z}}(\Lambda_1, \cdots , \Lambda_\ell) \subset  \mathfrak{a}_0^*
\]
 and set  $\cal{P}_T:= \{ \lambda \in \cal{P} :  (\lambda, \alpha)=0,  \ \forall \ \alpha \in R_H  \}  \subset \cal{P}$.
\begin{lemma}\label{Tlattice}   $\cal{P}_T$  vanishes  on  the Cartan  subalgebra $\mathfrak{a}' = \mathfrak{a}\cap \mathfrak{h}'$  of  the  semisimple part  $\mathfrak{h}'$ and   defines   a lattice   in   $\mathfrak{t}^*$, which is called  $T$-weight lattice. It is generated by the fundamental weights $\Lambda_{1}, \cdots, \Lambda_{v}$ corresponding to the black simple roots $\Pi_{B}=\Pi\backslash\Pi_{W}$.
\end{lemma}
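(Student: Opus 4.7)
The plan is to exploit the orthogonal splitting $\mathfrak{a}_0 = \mathfrak{a}'_0 \oplus \mathfrak{t}$ (where $\mathfrak{a}'_0 = i\mathfrak{a}'$) together with the fundamental-weight basis of $\cal{P}$ adapted to the white/black partition $\Pi = \Pi_W \sqcup \Pi_B$. To prepare the ground, I would first recall that the Cartan $\mathfrak{a}'$ of the semisimple part $\mathfrak{h}'$ satisfies $\mathfrak{a}'^{\mathbb{C}} = \mathfrak{a}^{\mathbb{C}} \cap \mathfrak{h}'^{\mathbb{C}}$ and is spanned (after multiplication by $i$) by the coroots of $R_H$, so that under the identification $\mathfrak{a}_0^* \simeq \mathfrak{a}_0$ induced by the Killing form, $\mathfrak{a}'_0$ is identified with the real span of $R_H$, while its orthogonal complement is precisely $\mathfrak{t}$.

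For the first assertion, take any $\lambda \in \cal{P}_T$. By definition $(\lambda, \alpha) = 0$ for all $\alpha \in R_H$, and since $R_H$ is generated by $\Pi_W$, the functional $\lambda$ annihilates the real span of $\Pi_W$, which dualizes to the coroot span of $R_H$, i.e.\ $\mathfrak{a}'_0$. Thus $\lambda$ vanishes on $\mathfrak{a}'_0$, hence on $\mathfrak{a}'$, placing $\cal{P}_T$ inside $\mathfrak{t}^*$.

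For the second assertion, I would expand any $\lambda \in \cal{P}$ in the fundamental-weight basis
\[
\lambda = \sum_{j=1}^{v} m_j \Lambda_j + \sum_{k=1}^{u} n_k \Omega_k,
\]
where $\Lambda_j$ is the fundamental weight dual to the black simple root $\beta_j \in \Pi_B$ and $\Omega_k$ is the fundamental weight dual to the white simple root $\alpha_k \in \Pi_W$, characterized by $(\Omega_k|\alpha_l)=\delta_{kl}$, $(\Omega_k|\beta_j)=0$. Pairing $\lambda$ with an arbitrary white simple root $\alpha_l$ via $(\,\cdot\,|\,\cdot\,)$ gives $(\lambda|\alpha_l) = n_l$. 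If $\lambda \in \cal{P}_T$, then $(\lambda|\alpha)=0$ for every $\alpha \in R_H$ and in particular for $\alpha_l \in \Pi_W$, forcing every $n_l = 0$. Hence $\lambda = \sum_{j=1}^v m_j \Lambda_j$ with $m_j \in \mathbb{Z}$, which proves $\cal{P}_T \subseteq \mathrm{span}_{\mathbb{Z}}(\Lambda_1,\dots,\Lambda_v)$. The reverse inclusion is immediate since each $\Lambda_j$ annihilates every white simple root, and therefore every root in $R_H$.

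Finally, the weights $\Lambda_1,\dots,\Lambda_v$ are part of a $\mathbb{Z}$-basis of $\cal{P}$, hence linearly independent, and they lie in the $v$-dimensional subspace $\mathfrak{t}^*$; consequently they form a $\mathbb{Z}$-basis of $\cal{P}_T$ and a $\mathbb{R}$-basis of $\mathfrak{t}^*$, so $\cal{P}_T$ is a full-rank lattice in $\mathfrak{t}^*$. The only slightly delicate point—and the main bookkeeping hurdle—is keeping the two pairings $(\,\cdot\,,\,\cdot\,)$ and $(\,\cdot\,|\,\cdot\,)$ straight and correctly translating ``orthogonal to $R_H$'' into ``vanishing on $\mathfrak{a}'$'' via the Killing-form identification; but no deeper difficulty arises.
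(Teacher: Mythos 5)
Your proof is correct and follows essentially the same route as the paper: translate orthogonality to $R_H$ into vanishing on $\mathfrak{a}'$ via the Killing-form identification, then identify $\cal{P}_T$ inside $\mathfrak{t}^*$. You go slightly further than the paper's (very terse) proof by explicitly expanding a weight in the black/white fundamental-weight basis to verify that $\cal{P}_T = \mathrm{span}_{\mathbb{Z}}(\Lambda_1,\dots,\Lambda_v)$, which is a welcome addition rather than a deviation.
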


\begin{proof}  A  form  $\lambda \in \cal{P}_T$ is orthogonal  to  $ \mathrm{span}_{\mathbb{R}}(R_H)=  i (\mathfrak{a}')^*. $  Thus  it vanishes  on
$B^{-1} \circ ( \mathfrak{a}')^* = \mathfrak{a}'$   and  defines  a  lattice  $\cal{P}_T$ in  $\mathfrak{t}^*$  with  $\mathrm{rank} (\cal{P}_T) = \dim \mathfrak{t} =v$.  \end{proof}

 Recall  that  there  is  a natural  1-1  correspondence  between  $T$-roots $\xi\in R_{T}=\kappa(R_{F})$ and
  irreducible  $H$-submodules  $\mathfrak{f}_{\xi}$ of the  complexified tangent  space  $\mathfrak{m}^{\mathbb{C}} =T_o^{\mathbb{C}}F$, and  also  between  positive $T$-roots  $ \xi \in  R_T^+ =\kappa(R_{F}^{+})$ and  irreducible $H$-submodules
  $\mathfrak{m}_{\xi} \subset \mathfrak{m}$,  given by  (see also \cite{Sie, Alek, AP, AA})
\[
  R_T \ni \xi  \longleftrightarrow    \mathfrak{f}_{\xi} : = \mathfrak{g}(\kappa^{-1}(\xi))=\sum_{\beta \in R_{F}, \, \kappa(\beta)=\xi} \mathfrak{g}_{\beta}, \quad \text{and} \quad      R^+_T \ni \xi    \longleftrightarrow     \mathfrak{m}_{\xi} := (\mathfrak{f}_{\xi}   + \mathfrak{f}_{-\xi}) \cap \mathfrak{m},
  \]
  \noindent respectively.  Thus, $\fr{m}^{\bb{C}}=\sum_{\xi\in R_{T}}  \mathfrak{f}_{\xi}$ and  $\fr{m}=\sum_{\xi\in R_{T}^{+}} \mathfrak{m}_{\xi}$ are orthogonal decompositions with $\dim_{\mathbb{C}}\mathfrak{f}_{\xi}=   \dim_{\mathbb{R}}\mathfrak{m}_{\xi}  = d_{\xi}$,
  where
  $d_{\xi} := \sharp(\kappa^{-1}( \xi)) $ is  the  cardinality  of   $\kappa^{-1}(\xi)$. As a corollary  we get the  following   description of  $G$-invariant   pseudo-Riemannian metrics on $F$.
   \begin{corol} \label{metric}
      Any   $G$-invariant pseudo-Riemannian metric  $g$ on a flag manifold $F=G/H$  is  defined   by an $\Ad_{H}$-invariant  pseudo-Euclidean  metric  on $\mathfrak{m}$,
       given  by $g_{o}:= \sum_{i=1}^{d}x_{\xi_{i}}B_{\xi_{i}}$,  where   $B_{\xi_{i}}:=-B|_{\mathfrak{m}_{i}}$ is  the  restriction of    $-B$  on  the irreducible    submodule  $\fr{m}_{i}$ associated to the positive $T$-root $\xi_{i}\in R_{T}^{+}$, and $x_{\xi_{i}} \neq 0$ are real numbers, for any $i=1, \ldots, d:=\sharp(R_{T}^{+})$.
 The   signature of  the   metric $g$  is  $(2N_-, 2N_+)$,
 where $N_-:=  \sum_{\xi_{i} \in R^+_T : x_{\xi_{i}}<0} d_{\xi_{i}},\,\, N_+:=  \sum_{\xi_{i} \in R^+_T :  x_{\xi_{i}}>0} d_{\xi_{i}}.$
 In particular, the metric  $g$   is  Riemannian if  all $x_{\xi_{i}}>0$,  and no  metric is  Lorentzian.
  \end{corol}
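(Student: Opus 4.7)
The plan is to read off the statement directly from the decomposition of $\mathfrak{m}$ into irreducible $H$-submodules, combined with a Schur-type argument and a parity observation on the real dimensions $d_{\xi}$.

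First I would translate the problem: any $G$-invariant pseudo-Riemannian metric on $F=G/H$ corresponds, via evaluation at the base point $o=eH$, to an $\Ad_H$-invariant non-degenerate symmetric bilinear form $g_o$ on $\mathfrak{m}=T_oF$. So the task is to parametrize all such forms. I would then invoke the decomposition $\mathfrak{m}=\bigoplus_{\xi\in R_T^+}\mathfrak{m}_\xi$ recalled just before the corollary: distinct positive $T$-roots yield pairwise inequivalent irreducible real $\Ad_H$-modules $\mathfrak{m}_\xi$, because their complexifications $\mathfrak{f}_\xi+\mathfrak{f}_{-\xi}$ involve different weight sets under the torus $\mathfrak{t}\subset\mathfrak{a}$.

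Next I would apply Schur's lemma. Since $H$ is compact, $-B$ is positive definite on $\mathfrak{m}$ and $\Ad_H$-invariant; its restriction to each $\mathfrak{m}_{\xi_i}$ provides a fixed positive definite invariant form $B_{\xi_i}=-B|_{\mathfrak{m}_{\xi_i}}$. Any $\Ad_H$-invariant symmetric bilinear form on an irreducible real module of a compact group is a scalar multiple of a fixed invariant inner product (equivalently, the associated intertwiner with respect to $B_{\xi_i}$ is a scalar by Schur), and because the $\mathfrak{m}_{\xi_i}$ are pairwise non-equivalent there are no non-zero invariant cross terms between distinct summands. Hence
\[
g_o=\sum_{i=1}^{d}x_{\xi_i}\,B_{\xi_i},\qquad x_{\xi_i}\in\mathbb{R},
\]
with non-degeneracy equivalent to all $x_{\xi_i}\neq 0$. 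This gives precisely the claimed form.

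Finally I would compute the signature. Because $B_{\xi_i}$ is positive definite on $\mathfrak{m}_{\xi_i}$, the summand $\mathfrak{m}_{\xi_i}$ contributes to the negative (resp.\ positive) part of $g_o$ exactly when $x_{\xi_i}<0$ (resp.\ $x_{\xi_i}>0$), with multiplicity $\dim_{\mathbb{R}}\mathfrak{m}_{\xi_i}=d_{\xi_i}$. However, by the description $\mathfrak{m}_{\xi_i}=(\mathfrak{f}_{\xi_i}+\mathfrak{f}_{-\xi_i})\cap\mathfrak{m}$ with $\dim_{\mathbb{C}}\mathfrak{f}_{\pm\xi_i}=d_{\xi_i}$, taking the real form doubles the count, so in fact $\dim_{\mathbb{R}}\mathfrak{m}_{\xi_i}=2d_{\xi_i}$. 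Summing over $i$ yields the signature $(2N_-,2N_+)$ with the stated $N_\pm$. The Riemannian case is immediate from $x_{\xi_i}>0$ for all $i$. The impossibility of a Lorentzian signature is the one delicate point: since every $\mathfrak{m}_{\xi_i}$ has \emph{even} real dimension $2d_{\xi_i}\geq 2$, neither $2N_-$ nor $2N_+$ can equal $1$, so no choice of signs $x_{\xi_i}$ produces signature $(1,\dim F-1)$ or $(\dim F-1,1)$. The main (and essentially only) obstacle is verifying the pairwise non-equivalence of the modules $\mathfrak{m}_{\xi_i}$ so that Schur's lemma rules out cross-terms; this is exactly the content of the $T$-root parametrization recalled above, which is why the argument goes through cleanly.
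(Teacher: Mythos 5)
Your proof is correct and follows essentially the same route as the paper's: Schur's lemma applied to the multiplicity-free decomposition $\mathfrak{m}=\sum_{\xi\in R_T^+}\mathfrak{m}_\xi$, positive definiteness of $-B$ on $\mathfrak{m}$, and the evenness of each $\dim_{\mathbb{R}}\mathfrak{m}_\xi$ to exclude Lorentzian signature. You are also right to insist on $\dim_{\mathbb{R}}\mathfrak{m}_{\xi_i}=2d_{\xi_i}$ (the text preceding the corollary asserts $\dim_{\mathbb{R}}\mathfrak{m}_{\xi_i}=d_{\xi_i}$, which is a slip), since that is exactly what the stated signature $(2N_-,2N_+)$ requires; likewise your explicit check that distinct positive $T$-roots give pairwise inequivalent irreducible $\Ad_H$-modules, so that Schur's lemma also kills the cross terms, is a detail the paper leaves implicit.
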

\begin{proof}  By Schur's  lemma,   the  restriction  of a $G$-invariant  metric $g$  on the irreducible  submodule $\mathfrak{m}_{\pm \xi}$,  is proportional  to  the   restriction of  the  Killing  form.
  This implies  the  first  claim.  Since  $\dim \mathfrak{m}_{\pm \xi} \geq 2$   and  $-B$ is  positively defined  on $\mathfrak{m}$,    the   formula  for  signature  holds. The last claim follows since   the   restriction of   an invariant  metric    to the irreducible  submodule  $\mathfrak{m}_{\xi}$  is positively, or  negatively  defined  and $\dim \mathfrak{m}_{\xi} \geq 2$.
  \end{proof}

\subsection{Line bundles, Koszul numbers and the first Chern class}
  Let $F=G/H$  be  a flag manifold       with  reductive   decomposition
$\mathfrak{g} =\fr{h}+\fr{m}= (\mathfrak{h}' + i\mathfrak{t})+ \mathfrak{m}$.  
The complex  of $G$-invariant  differential  forms  $\Omega(F)^G$ is naturally identified  with   the  subcomplex
 $$\Lambda(\mathfrak{m}^*)^H   = \{\omega \in \Lambda(\mathfrak{g}^*)^H,\,  X\lrcorner \omega=0 \,\,  \forall  X \in \fr{h} \} \subset \Lambda(\mathfrak{g}^*)^H $$
 of  the  complex $\Lambda(\mathfrak{g}^*)^H$ of  $\Ad_H$-invariant   exterior  forms of   the Lie  algebra $\mathfrak{g}$.    Let us denote   by $\Lambda^k_{cl}(\mathfrak{m}^*)^{H}$   the   space  of   $\Ad_{H}$-invariant closed  $k$-forms   and   by
    $H^k(\mathfrak{m}^*)^H :=  \Lambda^k_{cl}(\mathfrak{m}^*)^{H}/d \Lambda^{k-1}(\mathfrak{m}^*)^H     \simeq  H^k(F,\mathbb{R} )$  the   cohomology  group.  Consider also   the basis  $\omega_{\alpha}$   of   $(\mathfrak{m}^{\mathbb{C}})^*$,  dual  to the basis   $\{E_{\alpha},\, \alpha  \in  R_F\}$  of  $\mathfrak{m}^{\mathbb{C}}$, i.e. $\omega^{\al}(E_{\be})=\delta^{\al}_{\be}$ and   $\omega^{\al}(\fr{h}^{\bb{C}})=0$. We  need  the  following well-known result.

  \begin{prop}\textnormal{(\cite{Bo, Alek, AP, Tak})}\label{mainuse1}
 There is  a natural isomorphism  (transgression)  $\tau: \fr{t}^{*}\to \Lambda^{2}_{cl}(\fr{m}^*)^{H}\cong  H^2(\mathfrak{m}^*)^H \simeq  H^2(F, \mathbb{R}) $ between   the space $\fr{t}^{*}$ and the space $\Lambda^{2}_{cl}(\fr{m}^{*})^{H}$ of $\Ad_{H}$-invariant  closed real 2-forms on $\fr{m}$
 (identified  with the space of    closed  $G$-invariant real 2-forms  on $F$),    given by
    \[
  \fr{a}_{0}^{*}\supset\fr{t}^{*}\ni \xi \mapsto \tau(\xi)\equiv \omega_{\xi} :=\frac{i}{2\pi}d\xi = \frac{i}{2\pi}\sum_{\al\in R_{F}^{+}}  (\xi | \al) \omega^{\al}\wedge \omega^{-\al}\in \Lambda^{2}_{cl}(\fr{m}^{*})^{H}\cong H^2(F, \mathbb{R}),
  \]
where $(\xi | \al):=2(\xi, \al)/(\al, \al)$. In particular, $\tau(\cal{P}_{T})\cong H^{2}(F, \bb{Z})$  and   $b_{2}(F)=\dim\fr{t}=v=\rnk R_{T}$. 
     \end{prop}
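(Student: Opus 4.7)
Plan. The plan is to build $\tau$ explicitly via the Chevalley--Eilenberg differential, verify the displayed formula and closedness, then identify $\Lambda^2_{cl}(\fr{m}^*)^H$ with $H^2(F,\bb{R})$ and establish bijectivity by a dimension count; the integrality statement will follow from passing to associated homogeneous line bundles.

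First, I would extend any $\xi \in \fr{t}^* \subset \fr{a}_0^*$ to a linear form on $\fr{g}^{\bb{C}}$ by zero outside $\fr{a}^{\bb{C}}$; this extension vanishes on $\fr{a}'$, on $\fr{h}'$ and on $\fr{m}$, and is $\Ad_H$-invariant because $\fr{t} \subset Z(\fr{h})$. Applying the Chevalley--Eilenberg formula $d\xi(X,Y) = -\xi([X,Y])$ in a Cartan--Weyl basis $\{E_\alpha\}$: since $\xi$ annihilates the root spaces, $d\xi(E_\alpha, E_\beta)$ vanishes unless $\beta = -\alpha$, and in that case $d\xi(E_\alpha, E_{-\alpha}) = -\xi(t_\alpha)$, which is a nonzero multiple of $(\xi|\alpha)$. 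The terms indexed by $\alpha \in R_H$ drop out because $R_H$ annihilates $\fr{t}$, so $(\xi,\alpha) = 0$ for every $\alpha \in R_H$; this produces exactly the displayed formula $\omega_\xi = \frac{i}{2\pi}\sum_{\alpha \in R_F^+}(\xi|\alpha)\,\omega^\alpha \wedge \omega^{-\alpha}$. For closedness, the cleanest argument is to recognize $\omega_\xi$ as (a multiple of) the curvature of the canonical $G$-invariant connection on the principal $H$-bundle $G \to G/H$ composed with the central character defined by $\xi$, so that Bianchi's identity gives $d\omega_\xi = 0$; alternatively one checks $d\omega_\xi = 0$ algebraically in the CE complex $\Lambda^*(\fr{m}^*)^H$ using $d\omega(X,Y,Z) = -\sum(-1)^{i+j}\omega([X_i,X_j]_{\fr{m}},\ldots)$ and the Jacobi identity.

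Next, $\Lambda^1(\fr{m}^*)^H = (\fr{m}^*)^H = 0$ because $\fr{m}$ splits under $\fr{a}$ into root spaces with nonzero $\fr{a}$-weight, so there are no exact invariant 2-forms and $\Lambda^2_{cl}(\fr{m}^*)^H = H^2(\fr{m}^*)^H$; the latter is $H^2(F,\bb{R})$ by the Chevalley--Cartan averaging principle, valid since $G$ is compact and $H$ connected. Injectivity of $\tau$ is immediate from the explicit formula: $\omega_\xi = 0$ forces $(\xi|\alpha) = 0$ for every $\alpha \in R_F^+$, and since the $T$-roots $\kappa(R_F)$ span $\fr{t}^*$, this gives $\xi = 0$. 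Surjectivity then reduces to the dimension count $b_2(F) = \dim \fr{t} = v$, a classical fact for flag manifolds provable either via the Bruhat decomposition $F = G^{\bb{C}}/P$ (whose only 2-cells are in bijection with the black simple roots $\Pi_B$) or via Borel's theorem $H^*(G/H,\bb{R}) \cong H^*(BH,\bb{R})/(H^{>0}(BG,\bb{R}))$.

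Finally, for the integrality claim $\tau(\cal{P}_T) \cong H^2(F,\bb{Z})$, each $\xi \in \cal{P}_T$ vanishes on $\fr{h}'$ by Lemma \ref{Tlattice} and takes integer values on the coroot lattice, so $e^{2\pi i\xi}$ descends to a character $\chi_\xi : H \to \U(1)$ (using that $G$ is simply-connected); the associated homogeneous line bundle $L_\xi = G \times_H \bb{C}_{\chi_\xi}$ on $F$ has first Chern form precisely $\omega_\xi$ by the standard Chern--Weil identity for an invariant connection, hence $\tau(\xi) = c_1(L_\xi) \in H^2(F,\bb{Z})$, and the assignment $\cal{P}_T \ni \xi \mapsto c_1(L_\xi)$ is an isomorphism since every line bundle on the simply-connected flag manifold $F$ is $G$-equivariant. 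The main obstacle is the dimension count in the previous step: it depends on the nontrivial global topological fact $b_2(F) = v$, which cannot be extracted from the infinitesimal CE complex alone but requires invoking the structure theory of flag manifolds (Bruhat decomposition or Borel's theorem); a purely algebraic proof, via analysis of $H$-invariants inside the zero-$\fr{a}$-weight part of $\Lambda^2(\fr{m}^*)$ using the $T$-root decomposition $\fr{m}^{\bb{C}} = \sum_{\xi \in R_T}\fr{f}_\xi$, is feasible but more tedious.
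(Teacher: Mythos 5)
The paper does not prove this proposition: it is quoted verbatim from the literature (Borel--Hirzebruch, Alekseevsky, Alekseevsky--Perelomov, Takeuchi), so there is no in-paper argument to compare against. Judged on its own, your reconstruction is correct and is essentially the standard transgression argument. The computation of $d\xi$ in a Cartan--Weyl basis is right (with the paper's normalization $[E_\al,E_{-\al}]=H_\al$ one gets exactly $\xi(H_\al)=(\xi|\al)$, and the $R_H$-terms vanish because $\xi\perp R_H$); horizontality and $\Ad_H$-invariance of $d\xi$ follow from $[\fr{h},\fr{g}]\subset\fr{h}'+\fr{m}\subset\ker\xi$, so $\omega_\xi$ does land in $\Lambda^2_{cl}(\fr{m}^*)^H$; the observation $(\fr{m}^*)^H=0$ correctly identifies $\Lambda^2_{cl}(\fr{m}^*)^H$ with $H^2(F,\bb{R})$; and injectivity follows since $R_F$ together with $R_H$ spans $\fr{a}_0^*$. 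You are also right, and commendably explicit, that surjectivity is the one step needing a global input: either the topological fact $b_2(F)=v$ (Bruhat cells of $G^{\bb{C}}/P$, or Borel's theorem), or the purely algebraic alternative you sketch, in which $Z(H)$-invariance forces a closed invariant $2$-form to pair only $\fr{f}_\xi$ with $\fr{f}_{-\xi}$ with coefficients additive in the $T$-roots, hence of the form $d\xi$ --- this latter route is the one closest in spirit to \cite{Alek, AP} and avoids assuming what the final sentence of the proposition asserts. The integrality step via $c_1(\cal{L}_\xi)=[\omega_\xi]$ and the identification $\mathcal{X}(H)\cong\cal{P}_T\cong H^2(F,\bb{Z})$ is the standard one (and is consistent with Propositions \ref{Tcharac} and \ref{bline}(iii) later in the paper); it is stated tersely but correctly, injectivity of $\xi\mapsto c_1(\cal{L}_\xi)$ already following from the injectivity of $\tau$ and the absence of exact invariant $2$-forms.
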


From now on we   assume that $G$  is simply-connected.   Let    $\mathcal{X}(H) = \mathrm{Hom}(H, {\rm T}^{1})$  be  the    group of  (real)  characters   of  the  stability  subgroup $H =Z(H)\cdot H'={\rm T}^{v}\cdot H'$  of  the  flag manifold $F = G/H=G^{\bb{C}}/P$,    and   $\mathcal{X}(P) = \mathrm{Hom}(P, \mathbb{C}^*)$  the   group  of   holomorphic   characters  of  the parabolic  subgroup $P$.
 In  the  case  of    a  full   flag manifold  $F = G/{\rm T}^{\ell} = G^{\mathbb{C}}/B_{+}$  (where  $B_{+}$ is  the Borel  subgroup),  it is   well-known  that
 the character groups   of ${\rm T}^{\ell}$ and $B_{+}$  are  isomorphic   to  the weight  lattice $\mathcal{P}$ (generated by  fundamental  weights). In particular,
a  weight  $\lambda \in \mathcal{P} \subset {\mathfrak{a}}^*_0$   defines a  real   character  $\chi_{\lambda} \in \mathcal{X}({\rm T}^{\ell})$, given  by
 \[
  \chi_{\lambda}(\mathrm{exp}2\pi iX)= \mathrm{exp}(2 \pi i\lambda(X)), \quad \forall X \in \mathfrak{a}_0, 
   \]
   and  a   holomorphic   character    $\chi^{\mathbb{C}}_{\lambda} \in  \mathrm{Hom}(B_{+}, \mathbb{C}^*)= \mathrm{Hom}(({\rm T}^{\ell})^{\mathbb{C}} \cdot B_{+}', \mathbb{C}^*) $   which is    the  natural  extension of  $\chi_{\lambda}$  such  that  the  kernel of  $\chi^{\mathbb{C}}_{\lambda}$     contains    the  unipotent  radical  $B'_{+} = [B_{+},B_{+}]$.

   The  natural  generalisation   to    any  flag manifold can be described    as follows.


 \begin{prop}  \label{Tcharac}  The   group $\mathcal{X}(H)$  of  real characters    and   the  group  $\mathcal{X}(P)$ of   holomorphic  characters   are  isomorphic  to  the  lattice  $\mathcal{P}_T$  of  $T$-weights.  In particular,    any $\lambda \in  \mathcal{P}_T$    defines   the  character  $\chi_{\lambda} : Z(H)\cdot H'  \to  {\rm T}^1 $ with $H' \subset \mathrm{ker}\chi_{\lambda}$ and has an extension ${\chi^{\mathbb{C}}_{\lambda}} : P  = H^{\mathbb{C}}\cdot N^+  \to  \mathbb{C}^*$  with  $N^+ \subset \mathrm{ker}{\chi^{\mathbb{C}}_{\lambda}} $, where $N^{+}$ denotes the closed connected subgroup of $G^{\bb{C}}$ with Lie algebra  $\fr{n}^{+}:=\sum_{\al\in R_{F}^{+}}\fr{g}_{\al}$.
    \end{prop}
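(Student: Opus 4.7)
The plan is to establish the isomorphism $\mathcal{X}(H)\cong \mathcal{P}_T\cong \mathcal{X}(P)$ by exploiting the reductive decompositions of $H$ and $P$, and then reducing the problem to characters of a central torus. First I would use the structure $H=Z(H)\cdot H'$ with $H'$ compact, connected and semisimple. Since a compact connected semisimple group coincides with its own commutator subgroup, every homomorphism $H'\to \mathrm{T}^1$ is trivial. Hence any $\chi\in \mathcal{X}(H)$ factors through $H/H'\cong Z(H)/(Z(H)\cap H')$, reducing the problem to characters of the torus $Z(H)\cong \mathrm{T}^v$ which are trivial on the finite subgroup $Z(H)\cap H'$.

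Next I would describe $\mathcal{X}(Z(H))$ via the exponential map. Since $G$ is compact and simply-connected, the weight lattice $\mathcal{P}\subset \mathfrak{a}_0^*$ is exactly the character lattice of the maximal torus $\mathrm{T}^\ell=\exp(\mathfrak{a})$, via the rule $\chi_\lambda(\exp 2\pi iX)=\exp(2\pi i\lambda(X))$. A weight $\lambda\in\mathcal{P}$ descends to a well-defined character of $H$ precisely when its differential annihilates $\mathfrak{h}'$, equivalently annihilates the Cartan subalgebra $\mathfrak{a}'=\mathfrak{a}\cap\mathfrak{h}'$ of $\mathfrak{h}'$ (the rest of $\mathfrak{h}'$ being generated by the root spaces $\mathfrak{g}_\alpha$ with $\alpha\in R_H$, on which any $\lambda\in \mathfrak{a}^*$ vanishes trivially). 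By Lemma \ref{Tlattice} this is exactly the condition $\lambda\in \mathcal{P}_T$, so the assignment $\lambda\mapsto \chi_\lambda$ yields the desired isomorphism $\mathcal{P}_T\xrightarrow{\sim}\mathcal{X}(H)$, with $H'\subset \ker\chi_\lambda$ by construction.

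For the holomorphic side, I would use the Levi decomposition $P=H^{\mathbb{C}}\cdot N^+$ with $N^+$ the unipotent radical. Every holomorphic character of a unipotent algebraic group is trivial (its derivative must vanish on the nilpotent Lie algebra since $\mathbb{C}^*$ is abelian and $[\mathfrak{n}^+,\mathfrak{n}^+]$ exhausts $\mathfrak{n}^+$ up to the derived flag, and exponentials of nilpotent elements then have trivial image), so restriction gives $\mathcal{X}(P)\cong \mathcal{X}(H^{\mathbb{C}})$. Then I would repeat the Levi argument complex-analytically: the decomposition $H^{\mathbb{C}}=Z(H^{\mathbb{C}})\cdot (H')^{\mathbb{C}}$ together with the fact that the connected semisimple complex group $(H')^{\mathbb{C}}$ has no non-trivial holomorphic characters (again because it equals its own commutator subgroup), reduces $\mathcal{X}(H^{\mathbb{C}})$ to characters of $Z(H^{\mathbb{C}})\cong (\mathbb{C}^*)^v$. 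The holomorphic extension $\chi_\lambda^{\mathbb{C}}(\exp Z)=\exp\lambda(Z)$ of $\chi_\lambda$ from $Z(H)$ to $Z(H^{\mathbb{C}})$, and then trivially on $(H')^{\mathbb{C}}\cdot N^+$, gives a well-defined holomorphic character of $P$, and the same condition $\lambda\in\mathcal{P}_T$ governs well-definedness.

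The main obstacle is bookkeeping the integrality and compatibility conditions: one must check that the descent from the simply-connected maximal torus down to $Z(H)/(Z(H)\cap H')$ imposes exactly the condition $\lambda|_{\mathfrak{a}'}=0$ and no additional integrality beyond $\lambda\in\mathcal{P}$, which is where the hypothesis that $G$ is simply-connected is genuinely used. The holomorphic extension step is then almost formal, since the complexification map $H\hookrightarrow H^{\mathbb{C}}$ realizes $H^{\mathbb{C}}$ as the universal complexification of $H$, guaranteeing a unique holomorphic extension of every real character of $H$ to a holomorphic character of $H^{\mathbb{C}}$, hence of $P$.
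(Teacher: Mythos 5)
Your argument is correct and, at its core, rests on the same mechanism as the paper's proof: a form $\lambda\in\mathcal{P}_T$ kills $\mathfrak{a}'$, hence extends to a homomorphism of $\mathfrak{h}$ trivial on $\mathfrak{h}'$ and to a complex homomorphism of $\mathfrak{p}$ trivial on $\mathfrak{n}^+$, which one then exponentiates. The difference is one of completeness rather than of route: the paper only constructs the map $\lambda\mapsto(\chi_\lambda,\chi^{\mathbb{C}}_\lambda)$ at the Lie-algebra level and leaves surjectivity implicit, whereas you also prove the converse — every real character of $H=Z(H)\cdot H'$ dies on the semisimple factor $H'=[H',H']$, and every holomorphic character of $P$ dies on $N^+$ and on $(H')^{\mathbb{C}}$ — so that the correspondence is genuinely an isomorphism onto $\mathcal{P}_T$; you also correctly isolate where simple connectedness of $G$ enters (identifying $\mathcal{X}(\mathrm{T}^\ell)$ with all of $\mathcal{P}$, so that no integrality condition beyond $\lambda|_{\mathfrak{a}'}=0$ survives the descent through the finite group $Z(H)\cap H'\subset\exp(\mathfrak{a}')$). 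One local misstatement: your justification that a unipotent group has no nontrivial characters via ``$[\mathfrak{n}^+,\mathfrak{n}^+]$ exhausts $\mathfrak{n}^+$'' cannot be right as written, since a nilpotent Lie algebra never equals its own derived algebra. The correct reason in this setting is that $\mathfrak{n}^+\subseteq[\mathfrak{p},\mathfrak{p}]$, because $[\mathfrak{a}^{\mathbb{C}},\mathfrak{g}_\alpha]=\mathfrak{g}_\alpha$ for every $\alpha\in R_F^+$ (equivalently, $[P,P]=(H')^{\mathbb{C}}\cdot N^+$, as the paper uses in the proof of Proposition \ref{bline}); alternatively, invoke $\mathrm{Hom}(\mathbb{G}_a,\mathbb{G}_m)=0$. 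With that repair the proof is complete.
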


   \begin{proof}  Recall  that  the  lattice  $\mathcal{P}_T$ of  $T$-weights  is  the  sublattice of $\mathcal{P}$    which  annihilates  the Cartan    subalgebra $\mathfrak{a}'  \subset \mathfrak{a}$ of  the  semisimple  part $\mathfrak{h}'$  of $ \mathfrak{h}$.  Since  $\lambda \in \cal{P}_T$ vanishes  on   $\mathfrak{a}'$, it  can be  extended  to   a  real homomorphism   $\lambda : \mathfrak{h}  \to i \mathbb{R} $  with $ \mathfrak{h}' \subset\mathrm{ker} \lambda$  and also  to  a   complex  homomorphism  $\lambda^{\mathbb{C}} : \mathfrak{p}  = \mathfrak{h}^{\mathbb{C}} + \mathfrak{n}^+  \to \mathbb{C}$,  with $\mathfrak{n}^+ \subset \mathrm{ker} \lambda^{\mathbb{C}}$. The  exponents  of  these   homomorphisms   define  the  desired characters  $\chi_{\lambda}  : H \to  {\rm T}^1$ and
$\chi^{\mathbb{C}}_{\lambda}  : P \to \mathbb{C}^*$, respectively.
\end{proof}

Let us  associate   to   a character  $\chi = \chi_{\lambda}$   a   homogeneous principal circle  bundle    $F_{\lambda} = G/H_{\lambda} \to  F = G/H$,    where  $H_{\chi} = \mathrm{ker}(\chi_{\lambda})$. Similarly,  to any holomorphic    character    ${\chi^{\mathbb{C}}_{\lambda}} : P \to \mathbb{C}^*  = {\rm GL}(\mathbb{C}_{\lambda})$  (or equivalently, a $T$-weight $\lambda\in\cal{P}_{T}$) we assign  the 1-dimensional $P$-module $\bb{C}_{\lambda}$ (with underlying space $\bb{C}$) and a   holomorphic  line  bundle  over $G^{\bb{C}}/P$, with fiber $\bb{C}_{\lambda}$, given by
 $\mathcal{L}_{\lambda} = G^{\mathbb{C}} \times_P \mathbb{C}_{\lambda} \to  F = G^{\mathbb{C}}/P$. 
We consider the splitting $R=R_{F}^{-}\sqcup R_{H}\sqcup R_{F}^{+}$ associated with the generalized Gauss decomposition
 \[
 \fr{g}^{\bb{C}}=\fr{n}^{-}+\fr{h}^{\bb{C}}+\fr{n}^{+}.
 \]
 The root system of the parabolic subalgebra $\fr{p}=\fr{h}^{\bb{C}}+\fr{n}^{+}$ is given by $R_{\fr{p}}=R_{H}\cup R^{+}=R_{H}\sqcup R_{F}^{+}$.   Recall that
 $\fr{p}$ corresponds to a fundamental grading  of $\fr{g}^{\bb{C}}$, i.e. a direct sum decomposition
 \begin{equation}\label{grad}
 \fr{g}^{\bb{C}}=\fr{g}_{-k}+\cdots+\fr{g}_{0}+\cdots+\fr{g}_{k}, \quad [\fr{g}_{i}, \fr{g}_{j}]\subset\fr{g}_{i+j} \ \forall i, j\in \bb{Z}, \quad \fr{g}_{-k}\neq \{0\},
 \end{equation}
 with the additional requirement that $\fr{g}_{-k}$ is generated by $\fr{g}_{-1}$. The depth $k$ of  grading is given by the degree ${\rm deg}_{\Pi_{B}}(\tilde{\al}):=m_{i_{1}}+\cdots+m_{i_{v}}$ of the highest root $\tilde{\al}=\sum_{i=1}^{\ell} m_{i}\al_{i} \in R^+$
associated    with  the subset
  $\Pi_{B}=\Pi\backslash\Pi_{W}  =\{\be_{i_{1}}, \ldots, \be_{i_{v}}\} $ of black simple roots.
     In particular,  by setting $R_{i}:=\{\al\in R : {\rm deg}(\al)=i\}$ for any $i\in\bb{Z}$,
      $\fr{g}_{i}:= \fr{g}(R_i)$  for  $i \neq 0$   and  $\fr{g}_{0}:= \fr{a}^{\bb{C}} + \fr{g}(R_0)  =  \fr{h}^{\bb{C}}$,
       we  get  a  depth $k$ grading  $\fr{g}= \sum_{i=-k}^{k}\fr{g}_{i}$  associated   with the  parabolic   subalgebra $\fr{p} = \sum_{0}^{k}\fr{g}_{i}$.

For the description of the holomorphic  line  bundle over $G^{\bb{C}}/P$,  associated  to a black fundamental  weight $\Lambda_{\be}$ $(\be\in\Pi_{B})$, we proceed as follows.  Let $\Pi = \{\alpha_0 = \beta, \alpha_1, \cdots, \alpha_{N} \}$
be a system of  simple  roots  such that $1+N=\ell$  and let  $\{\Lambda_{\alpha_i}\}$ be the associated       fundamental  weights. 
Any  root  has  a  decomposition
\[
\alpha = m_0(\alpha)\beta + \sum_{\alpha_i \in \Pi\setminus \{\beta\}} m_i(\alpha)\alpha_i.
\]
For    the maximal  root  we  write $\tilde{\al} = m_{\beta}\beta + \sum_{i>0}m_i \alpha_i$, where the coefficients  $m_i$ are  the so-called  Dynkin marks (cf. \cite{AA}). The   root  $\beta$   defines  a depth  $m_{\beta}$  gradation  of  $\mathfrak{g}^{\mathbb{C}}$,  such  that
$\mathfrak{g}_k = \mathfrak{g}(R_{\beta}^k)$  where $R_{\beta}^k :=\{ \gamma \in R : m_{\be}(\gamma)\equiv m_0(\gamma) =k \}$.
We set
\[
  \sigma = \sigma_G := \frac12\sum_{\gamma \in R^+} \gamma=  \Lambda_{\beta} + \sum_{i>0} \Lambda_{\alpha_i}.
  \]
 We may  decompose $\sigma$ as $ \sigma=  \sigma^0_{\beta} + \sigma^+_{\beta}$,  where  $\sigma^0_{\beta}:=(1/2)\sum_{\gamma\in R_{\beta}^{0}}\gamma$ and  $\sigma^+_{\beta}:=\sum_{k=1}^{m_{\beta}} \sigma^k_{\beta}$, respectively, with $\sigma^k_{\beta}:= (1/2)\sum_{\gamma \in R^k_{\beta}} \gamma$.
We also  normalize  the root vectors $E_{\alpha}$ $(a\in R)$ as
 $B(E_{\alpha}, E_{- \alpha})= \frac{2}{(\alpha, \alpha)}$ and  put  $H_{\alpha} := [E_{\alpha}, E_{- \alpha}] =  \frac{2 B^{-1}\alpha}{(\alpha, \alpha)}$, such that
    $\Lambda_{i}(H_{\alpha_j})  = \delta_{ij}$, for any $1\leq i, j\leq \ell$ (cf.  \cite{GOV}). Recall   that     $\be(H_{\al})=\frac{2(\be, \al)}{(\al, \al)}\in\bb{Z}$    are the so-called Cartan integers and $\al(H_{\al})=2$ for any $\al\in R$.

\begin{lemma}\label{koz1}
 It is $2\sigma ^+_{\beta} = k_{\beta} \Lambda_{\beta}$,  where  $k_{\beta}: = 2\sigma^+_{\beta}(H_{\beta}) \geq 2$.
  \end{lemma}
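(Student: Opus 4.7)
The plan is to establish the two assertions separately: the equality $2\sigma^+_\beta = k_\beta \Lambda_\beta$ and the bound $k_\beta \geq 2$.

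For the equality, I would expand $2\sigma^+_\beta$ in the basis $\{\Lambda_{\alpha_i}\}$ of fundamental weights. Using the duality $\Lambda_{\alpha_i}(H_{\alpha_j}) = \delta_{ij}$, the coefficient of $\Lambda_{\alpha_i}$ in such an expansion equals $2\sigma^+_\beta(H_{\alpha_i})$, which is $k_\beta$ by definition when $\alpha_i = \beta$. Thus the task reduces to showing $2\sigma^+_\beta(H_{\alpha_i}) = 0$ for every $\alpha_i \in \Pi \setminus \{\beta\}$. For such an $\alpha_i$, the simple reflection $s_{\alpha_i}$ sends $\alpha_i \mapsto -\alpha_i$ and permutes $R^+ \setminus \{\alpha_i\}$. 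Since $s_{\alpha_i}(\gamma) = \gamma - \gamma(H_{\alpha_i})\alpha_i$ and $m_0(\alpha_i) = 0$, the $\beta$-coefficient $m_0(\cdot)$ is invariant under $s_{\alpha_i}$. Combined with the fact that $\alpha_i \notin R^+_\beta$, this forces $s_{\alpha_i}$ to permute $R^+_\beta$ as a set, so $s_{\alpha_i}(2\sigma^+_\beta) = 2\sigma^+_\beta$. Applying the formula $s_{\alpha_i}(\mu) = \mu - \mu(H_{\alpha_i})\alpha_i$ with $\mu = 2\sigma^+_\beta$ then yields $2\sigma^+_\beta(H_{\alpha_i}) = 0$.

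For the bound $k_\beta \geq 2$, I would exploit the identity $\sigma = \Lambda_\beta + \sum_{i>0}\Lambda_{\alpha_i}$ recalled in the excerpt. Evaluating at $H_\beta$ gives $\sigma(H_\beta) = 1$, and since $\sigma = \tfrac12 \sum_{\gamma \in R^+}\gamma$, this yields $\sum_{\gamma \in R^+}\gamma(H_\beta) = 2$. Splitting $R^+$ as the disjoint union $R^0_\beta \sqcup R^+_\beta$ (roots in which $\beta$ does not occur versus those in which it does), we obtain
\[
k_\beta = \sum_{\gamma \in R^+_\beta}\gamma(H_\beta) = 2 - \sum_{\gamma \in R^0_\beta}\gamma(H_\beta).
\]
Every $\gamma \in R^0_\beta$ is a non-negative integer combination of simple roots distinct from $\beta$, and for such simple roots the Cartan integer $\alpha_i(H_\beta) = 2(\alpha_i,\beta)/(\beta,\beta)$ is non-positive. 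Hence each term in the last sum is $\leq 0$, which gives $k_\beta \geq 2$.

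No serious obstacle is anticipated. The crux of the equality is that the Weyl subgroup generated by the reflections $s_{\alpha_i}$ with $\alpha_i \neq \beta$ preserves the $\beta$-grading of $R$, and therefore fixes $2\sigma^+_\beta$, forcing it to lie along $\Lambda_\beta$. The inequality is then a two-line consequence of the standard formula for $\sigma$ combined with the non-positivity of the Cartan integers between distinct simple roots.
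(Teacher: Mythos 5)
Your proof is correct and follows essentially the same route as the paper: the inequality $k_\beta\geq 2$ is obtained by exactly the same computation ($k_\beta = 2\sigma_G(H_\beta)-2\sigma^0_\beta(H_\beta)=2-2\sigma^0_\beta(H_\beta)$ together with $\gamma(H_\beta)\leq 0$ for $\gamma\in R^0_\beta\cap R^+$). The only cosmetic difference is in the first part: the paper deduces $2\sigma^+_\beta(H_{\alpha_i})=0$ from $\sigma_G(H_{\alpha_i})=\sigma^0_\beta(H_{\alpha_i})=1$, whereas you prove it directly by observing that $s_{\alpha_i}$ preserves the $\beta$-grading and hence fixes $2\sigma^+_\beta$ --- which is just the standard proof of those two identities unwound.
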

\begin{proof}
By definition, for  any  $\alpha, \gamma \in \Pi$ it is   $\Lambda_{\alpha}(h_{\gamma}) = \delta_{\alpha, \gamma}$.
Since for  any $\alpha \in \Pi$  it is  $\sigma(H_{\alpha})=1$   and   similarly $\sigma^0_{\beta}(H_{\alpha}) =1$  for   any $\alpha \in \Pi\setminus\{\beta\}$,  we  have  that  $\sigma^+_{\be}(H_{\alpha})= 0$  for  any  $\alpha  \in \Pi\setminus\{\beta\}$.
Since  $H_{\beta}, \, H_{\alpha},\, \alpha \in  \Pi\setminus \{\beta \}$ form  a basis  of  $\mathfrak{a}^{\mathbb{C}}$,  our first assertion
   follows.  Now  we  estimate  $k_{\beta}$
using  the  fact  that   $\sigma_{G}(H_{\al})=1$ for any $\al\in\Pi$ and $\sigma_{\be}^{0}(H_{\be})\leq 0$ since $\gamma(H_{\be})\leq 0$ for any $\gamma\in\Pi\setminus\{\be\}$. We obtain that
 \[
 k_{\be}=2\sigma^{+}_{\beta}(H_{\be})=2(\sigma_{G}-\sigma_{\be}^{0})(H_{\be})=2\Big(\sigma_{G}(H_{\be})-\sigma_{\be}^{0}(H_{\be})\Big)=2-2\sigma_{\be}^{0}(H_{\be}) \geq 2.
\]
 \end{proof}

Set now  $\Pi = \Pi_B \sqcup \Pi_W = \{\beta_1, \cdots, \beta_v\} \sqcup \{ \alpha_1, \cdots, \alpha_u\}$ $(v+u=\ell)$ and  denote by $\xi :=-\hat{\be}:= -\kappa(\beta)$  the negative  $T$-root  induced by    $-\beta$ with $\beta\in\Pi_{B}$ and by $\mathfrak{f}_{\xi}:= \mathfrak{g}(\kappa^{-1}(\xi))$  the corresponding  $H^{\mathbb{C}}$-submodule. 
 \begin{prop}\label{bline}
      (i) $\mathfrak{f}_{\xi} \subset  \mathfrak{n}^-   \simeq \mathfrak{g}^{\mathbb{C}} /\mathfrak{p} \simeq T_o G^{\mathbb{C}}/P$  is a $P$-submodule   and  it defines a  $G^{\mathbb{C}}$-invariant   holomorphic   subbundle $E_{\xi}$   of  the    holomorphic tangent  bundle $T (G^{\mathbb{C}}/P )$,  of rank   $d_{\xi}$.  \\
    (ii) The  top  exterior   bundle  $\Lambda^{d_{\xi}}(E_{\xi})$  associated to  the holomorphic vector  bundle $E_{\xi}$ is  isomorphic to the  holomorphic  line  bundle  $\cal{L}_{\Lambda_{\be}}^{\otimes k_{\be}}$,  where $k_{\be}$ is the positive integer defined by Lemma \ref{koz1}.\\
  (iii)   The  Chern  class  of  the    line  bundle $\mathcal{L}_{\Lambda_{\beta}}$ is  the  cohomology   class of  the     form  $\omega_{\Lambda_{\beta}}$.
   \end{prop}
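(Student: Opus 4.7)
My plan is to prove the three claims sequentially using the standard correspondence between $G^{\mathbb{C}}$-equivariant holomorphic vector bundles on $F = G^{\mathbb{C}}/P$ and $P$-modules.

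For (i), I first verify that $\mathfrak{f}_{\xi}$, viewed inside $\mathfrak{g}^{\mathbb{C}}/\mathfrak{p} \cong \mathfrak{n}^{-}$, is a $P$-submodule. Split $\mathfrak{p} = \mathfrak{h}^{\mathbb{C}} + \mathfrak{n}^{+}$. The action of $\mathfrak{h}^{\mathbb{C}}$ preserves each $T$-isotypic summand $\mathfrak{f}_{\eta}$ since $\kappa$ vanishes on $R_{H}$ and on $\mathfrak{a}^{\mathbb{C}}$. For the nilpotent part, I take $\gamma \in R_{F}^{+}$ and $\alpha \in \kappa^{-1}(-\hat{\beta})$ and examine $[E_{\gamma},E_{\alpha}] \in \mathfrak{g}_{\gamma+\alpha}$. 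A short case analysis --- using that $\kappa(\gamma) \neq 0$ together with the constraint that every $\alpha \in \kappa^{-1}(-\hat{\beta})$ has $m_{\beta}(\alpha)=-1$ and vanishes on the remaining black simple roots --- forces $\gamma+\alpha$ to lie in $R_{H}$, in $R_{F}^{+}$, or to be zero, so the bracket always falls in $\mathfrak{p}$. Hence $\mathfrak{f}_{\xi}$ is $P$-invariant modulo $\mathfrak{p}$, and $E_{\xi} := G^{\mathbb{C}} \times_{P} \mathfrak{f}_{\xi}$ is a holomorphic subbundle of $T(G^{\mathbb{C}}/P)$ of rank $d_{\xi}=|\kappa^{-1}(-\hat{\beta})|$.

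For (ii), the line bundle $\Lambda^{d_{\xi}}(E_{\xi})$ is canonically $G^{\mathbb{C}} \times_{P} \Lambda^{d_{\xi}}\mathfrak{f}_{\xi}$, and $\Lambda^{d_{\xi}}\mathfrak{f}_{\xi}$ is a one-dimensional $P$-module of weight $\mu := \sum_{\alpha \in \kappa^{-1}(-\hat{\beta})}\alpha$. It remains to prove that $\mu$ is proportional to $\Lambda_{\beta}$ with the prescribed coefficient $k_{\beta}$. The strategy is to decompose the sum according to the $\beta$-gradation $\mathfrak{g}^{\mathbb{C}}=\bigoplus_{k}\mathfrak{g}_{k}^{\beta}$ introduced before Lemma \ref{koz1}, pair each contribution with the appropriate $\sigma_{\beta}^{k}$, and collapse the result via the identity $2\sigma^{+}_{\beta}=k_{\beta}\Lambda_{\beta}$. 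Once the weight identification is in hand, Proposition \ref{Tcharac} identifies the associated homogeneous line bundle with $\mathcal{L}_{\Lambda_{\beta}}^{\otimes k_{\beta}}$, in the convention that $\mathcal{L}_{\lambda}$ corresponds to the holomorphic character $\chi^{\mathbb{C}}_{\lambda}$.

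For (iii), I apply Chern--Weil to the canonical invariant connection on the homogeneous principal $\mathbb{C}^{*}$-bundle over $F$ determined by $\chi^{\mathbb{C}}_{\Lambda_{\beta}}$. Pulled back to $G$, the connection $1$-form is, at the identity, essentially the linear form $i\Lambda_{\beta}$ on $\mathfrak{g}$, so its curvature is represented by the $G$-invariant $2$-form $(i/2\pi)\,d\Lambda_{\beta}$ on $F$. The transgression isomorphism of Proposition \ref{mainuse1} identifies this invariant closed $2$-form with $\omega_{\Lambda_{\beta}}$; since the first Chern class of a Hermitian holomorphic line bundle is represented by $(i/2\pi)$ times the curvature of any compatible connection, this yields $c_{1}(\mathcal{L}_{\Lambda_{\beta}}) = [\omega_{\Lambda_{\beta}}]$.

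The hard part is step (ii). Parts (i) and (iii) reduce to elementary root-system bookkeeping and a routine Chern--Weil calculation, respectively. By contrast, (ii) requires carefully reconciling the sum over $\kappa^{-1}(-\hat{\beta})$ --- which constrains \emph{all} black-simple-root coefficients --- with the sum defining $\sigma^{+}_{\beta}$ in Lemma \ref{koz1}, which constrains only the $\beta$-coefficient. This reconciliation, together with the identity $2\sigma^{+}_{\beta}=k_{\beta}\Lambda_{\beta}$, is precisely the point at which the Koszul number $k_{\beta}$ acquires geometric meaning as the power of $\mathcal{L}_{\Lambda_{\beta}}$ detected by the top exterior of $E_{\xi}$.
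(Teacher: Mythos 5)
Your route is the paper's route. For (i) you run the same case analysis on root coefficients to verify $(\kappa^{-1}(\xi)+R_F^+)\cap R\subset R_H\cup R_F^+$ (the paper also offers a second argument via the $\mathfrak{p}$-submodules $\mathfrak{g}(R_{\geq\gamma})$), and for (iii) you use the same Chern-connection curvature computation combined with the transgression of Proposition \ref{mainuse1}. Both of these are in order.

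The genuine gap is in (ii), precisely at the step you defer. Realizing $\Lambda^{d_\xi}(E_\xi)$ as the homogeneous line bundle of the determinant character, with weight $\mu:=\sum_{\alpha\in\kappa^{-1}(-\hat\beta)}\alpha$, matches the paper. But the mechanism you propose for evaluating $\mu$ --- ``decompose the sum according to the $\beta$-gradation, pair each contribution with the appropriate $\sigma^k_\beta$, and collapse via $2\sigma^+_\beta=k_\beta\Lambda_\beta$'' --- cannot be executed as described: every root in $\kappa^{-1}(-\hat\beta)$ has $\beta$-coefficient equal to $-1$ and zero coefficient on every other black simple root, so the entire sum sits in the single graded piece of $\beta$-degree $-1$ and there are no contributions to pair with $\sigma^k_\beta$ for $k\geq 2$. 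What is actually required is the equality $\sum_{\gamma\in\kappa^{-1}(\hat\beta)}\gamma=k_\beta\Lambda_\beta$ with $k_\beta=2\sigma^+_\beta(H_\beta)$, and this is not a formal consequence of Lemma \ref{koz1}, which computes $2\sigma^+_\beta=\sum_{k\geq1}2\sigma^k_\beta$, a sum over \emph{all} positive $\beta$-degrees and all patterns of coefficients on the remaining black simple roots. (One does get for free that $\mu$ is orthogonal to every white simple root --- e.g.\ because $\mathrm{ad}_{H_\alpha}$ is traceless on the $\mathfrak{h}'$-module $\mathfrak{f}_{\xi}$ --- so $\mu$ is an integral combination of black fundamental weights; but pinning the coefficient of $\Lambda_\beta$ to $k_\beta$, and showing the other coefficients vanish when $\sharp(\Pi_B)>1$, is exactly the content that must be supplied.) The paper itself compresses this step into the phrase ``since we need to restrict to $R_{\xi}$'', so your instinct that this is where the work lies is correct; but as written your plan does not close it, and the reconciliation you describe is not the one that is needed.
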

   \begin{proof}
(i)
  The generalized Gauss decomposition $  \mathfrak{g}^{\mathbb{C}}=  \mathfrak{g}(R^-_F) + (\mathfrak{a}^{\mathbb{C}} + \mathfrak{g}(R_H))+ \mathfrak{g}(R^+_F)  = \mathfrak{g}(R_F^-) + \mathfrak{p}$   allows  to identify   the tangent $\mathfrak{p}$-module
   $T_{o}F = \mathfrak{g}^{\mathbb{C}}/ \mathfrak{p}$ of $F=G^{\mathbb{C}}/P$      with $\mathfrak{n}^{-}$. The Killing form  $B$ induces an isomorphism  of  the  cotangent $\mathfrak{p}$-module  $ T^*_oF = (\fr{g}^{\bb{C}}/\mathfrak{p})^{*}$     with  $\fr{n}^{+}$.
   Since  $\fr{f}_{\xi}:=\mathfrak{f}_{-\hat{\be}} \subset \mathfrak{n}^- \equiv \mathfrak{g}^{\mathbb{C}}/\mathfrak{p}$ is already a
   $\mathfrak{h}^{\mathbb{C}}$-module, it is    sufficient  to  check  that  it is also a $\mathfrak{n}^+ = \mathfrak{g}(R^+_F)$-module.
   This is  equivalent  to  the   condition
   $$   ( \kappa^{-1}(\xi) +R^+_F) \cap R \subset R_H \cup R^+_F . $$
   Any   root  in  $\kappa^{-1}(\xi)$   has   the   form
   $\gamma = -\beta + \sum m_i \alpha_i$ with   $\alpha_i \in \Pi_W$
    and    any  root   in $R^+_F$  is   positive  and  can be  written  as
      $ \delta = p\beta + \sum p_j \beta_j + \sum q_i \alpha_i,$
        where   $\beta_j \in \Pi_B$.   Since  any  root   has   coordinates of  the  same  sign  with  respect  to  a    fundamental  system  $\Pi$,  the  root  $\gamma +\delta $    either  belongs  to  $R_H$ (if $ p=1, p_j =0$),  or  is a positive root from $R^+_F$ (if $p>1$ ). In  both cases it belongs  to   $R_H \cup R^+_F $.

    \noindent    An alternative way to prove (i) reads as follows.  For any $\gamma = \sum k_i \beta_i  + \sum \ell_j \alpha_j \in  R$
 set
$$   R_{\geq \gamma}:= \{\gamma' = \sum k'_i \beta_i + \ell'_j \alpha_j \in R   :  k'_i \geq k_i \}.$$
Then  $\mathfrak{g}(R_{\geq \gamma}) \subset \mathfrak{g}^{\mathbb{C}} $ is a  $\mathfrak{p}$-submodule.  If $\gamma \in R^+_F$, then
$\mathfrak{g}(R_{\geq \gamma})$  is  a  submodule of  $\mathfrak{g}(R^+_F) = T^*(G^{\mathbb{C}}/P)$  and     for  $\gamma \in R^-_F$  it   defines
 a  submodule  of   $\mathfrak{g}(R^-_F)$, hence  an invariant     holomorphic  subbundle  of  the holomorphic tangent bundle $T(G^{\mathbb{C}}/P)$.
Consider a black simple root  $\beta = \beta_0 \in \Pi_B$     with  Dynkin mark $m = m_{\beta}$ and
  let  $\gamma \in R^-_F$  denotes a   negative root    such that   $ m_0(\gamma) = -k$  i.e.   $\gamma = -k \beta + \cdots$, with $1\leq k\leq m_{\be}$.  We set
    $R_{\xi}:= \kappa^{-1}(\xi)\subset R_{F}^{-}$ such that $\fr{f}_{\xi}=\sum_{\al\in R_{\xi}}\bb{C}E_{\al}\subset \mathfrak{n}^-$ and  $R_{k \xi} = \kappa^{-1} (k \xi)$.  Then it is not difficult to see that
    \[
    R_{\geq \gamma} = R_{k \xi} \cup R_{(k-1)\xi} \cup \cdots \cup R_{\xi}\cup R_H \cup R^+_F
    \]
    and the     associated     $\mathfrak{p}$-submodule  of  $\mathfrak{g}(R^-_F)$ is given by   $\mathfrak{g}(R_{k\xi}) + \cdots + \mathfrak{g}(R_{\xi})$.  Note  that  $R_{k \xi} = R_{-\beta}^k$. In particular, for  $k=1$,  the induced $\fr{p}$-module coincides with  the irreducible module $\fr{f}_{\xi}=\mathfrak{g}(R_{\xi})$.

(ii) Let  $E_{\xi}$  be  the associated invariant
     holomorphic subbundle  of  $T(G^{\mathbb{C}}/P)$  of  rank $d_{\xi}$  and write  $\vartheta_{\xi}$ for   the   restriction of  the  isotropy representation of    $P$    to $\fr{f}_{\xi}=\mathfrak{g}(R_{\xi}) = E_{\xi}|_o$. The top    exterior power $L_{\xi} := \Lambda^{d_{\xi}}E_{\xi}$  of  $E_{\xi}$ is the holomorphic line bundle over $F=G^{\bb{C}}/P$, associated to the holomorphic  character $ \chi : P \to  \mathbb{C}^*$ defined by
     $\chi(p):= \det \vartheta_{\xi}(p)$, for any $ p \in P$.
    The kernel of $\chi$ contains $P' := [P,P] =( H')^{\mathbb{C}} \cdot N^+$,  where
     $(H')^{\mathbb{C}}$ is  the  semisimple part of  $H^{\mathbb{C}}$.  Let $E_{\gamma_1}, \cdots E_{\gamma_s}$  be a basis
     of  $\mathfrak{g}(R_{\xi})$ which  consists of  root vectors.  Then    for  any  $\dot a  \in  \mathfrak{a}^{\mathbb{C}}$   we have  that $\vartheta_{\xi}(\dot a) E_{\gamma_j} = \mathrm{ad}_{\dot a}E_{\gamma_j} = \gamma_j(\dot a)E_{\gamma_j} $. Hence,    for   $a = \mathrm{exp}\dot a  \in   Z(H^{\mathbb{C}})$ our character  is   given   by
\[
 \chi (a) = \det \vartheta_{\xi}(a) = \Pi_{j=1}^s  \mathrm{exp}\vartheta(\dot a)E_{\gamma_j}=
      \mathrm{exp}(\sum_j (\gamma_j(\dot a ))),
      \]
      which since $G^{\bb{C}}$ is simply-connected is equivalent to  say that $\dot{\chi}(H_{\al})={\rm tr}( \dot{\theta}_{\xi}(H_{\al}))=\sum_{\gamma\in R_{\xi}}\gamma(H_{\al})$, for any  $H_{\al}\in\fr{a}^{\bb{C}}$.  So   the  character $\chi$ is  the  character  $\chi_{\Lambda}$ associated  with  the  weight    form
     \[
     \Lambda := \sum_{j} \gamma_j  = \sum_{\gamma  \in R^1_{-\beta}} \gamma=  2\sigma_{\beta}^1.
     \]
       However, by Lemma \ref{koz1} we know that $
     2\sigma_{\be}^{+}=k_{\be}\Lambda_{\be}$ and since we need  to restrict  to $R_{\xi}$, we finally get $2\sigma_{\beta}^1=k_{\be}\Lambda_{\be}$. Therefore  it  follows that $\cal{L}_{\Lambda_{\be}}^{\otimes k_{\be}}\cong \Lambda^{d_{\xi}}(E_{\xi})$.

 (iii)
Given the holomorphic line bundle $\cal{L}_{\Lambda_{\be}}$,  its curvature $\Phi$ (with respect to the Chern connection) induces a real (1, 1)-form $\frac{i}{2\pi}\Phi$ which represents the first Chern class, i.e.
\[
c_{1}(\cal{L}_{\Lambda_{\be}})_{\bb{R}}=\frac{i}{2\pi}[\Phi]\in H^{2}(F; \bb{R})
\]
where $c_{\bb{R}}$ means the   the image of some cohomology class $c\in H^{2}(F; \bb{Z})$, under the map $H^{2}(F; \bb{Z})\to H^{2}(F; \bb{R})$ and
$[\Phi]$ denotes the cohomology class of the closed 2-form $\Phi$. By Proposition \ref{mainuse1}, we conclude that $\Phi$ is represented by
the closed $\Ad_{H}$-invariant real 2-form
\[
\omega_{\Lambda_{\be}}= \frac{i}{2 \pi} d \Lambda_{\be}  =\frac{i}{2\pi}\sum_{\al\in R_{F}^{+} }  (\Lambda_{\be} | \al) \omega^{\al}\wedge \omega^{-\al}  \in \Lambda^2(\mathfrak{m}^*)^{H} = \Omega^2_{cl}(F)^{G}.
\]

  \end{proof}

\begin{example}
\textnormal{Consider  two special  cases (we always assume that $G$ is simply-connected):\\}
\textnormal{(a)  Let  $F= G/H = G^{\bb{C}}/P$ be  a flag manifold with $b_{2}(F)=1$, i.e. a so-called {\it minimal flag manifold} (and  $P$ is a maximal parabolic subgroup).  In this case  $\Pi_{B}=\{\be:=\al_{i_{o}}\}$ for some  simple  root   $\be:=\alpha_{i_o}$,  $\mathfrak{t} = iZ(\mathfrak{h}) \cong \mathbb{R}$   and  the  $T$- weight
 lattice  $\cal{P}_T$ coincides with  $\mathbb{Z}\Lambda_{\beta}$,  where   $\Lambda_{\beta}$ is  the   fundamental  weight   associated  with   $\be$.  So, the  Picard group of  isomorphism classes of holomorphic line bundles  $\cal{P}{\rm ic}(F)= H^{1}(F, {\mathbb C}^*) = \cal{P}_T$ over $F=G^{\bb{C}}/P=G/H$, is isomorphic to $\bb{Z}$.\\}
   \textnormal{(b) Let  $F=G/{\rm T}^{\ell}=G^{\bb{C}}/B_{+}$ be the  manifold of  full flags. Then,  $b_{2}(F)=\ell$, $\Pi_{B}=\Pi$   and  $\cal{P}_T =\cal{P}$ coincides with    the  weight lattice  in
   $\mathfrak{t }= \mathfrak{a}_0$.   For the   complex   tangent   space   $T_o(G^{\mathbb{C}}/B_{+})$ we get the identification $T_o(G^{\mathbb{C}}/B_{+}) = \sum_{\beta \in R^+} \mathfrak{g}_{-\beta} $.
    A  simple   root  $\beta \in \Pi$  defines    the  complex  line    $\mathfrak{g}_{- \beta} \subset \mathfrak{m}^{\mathbb{C}}$  which is invariant  under  the  isotropy   representation   of  $B_{+}$. The  associated line  bundle defined  by   the  induced   representation of  $B_{+}$ in  $\mathfrak{g}_{\beta}$  is   the homogeneous  line       bundle  $\mathcal{L}_{\Lambda_{\beta}}$ associated  with   the  fundamental    weight $\Lambda_{\beta}$, corresponding  to $\beta\in\Pi$. Thus,    $\cal{P}{\rm ic}(F) = H^{1}(F, {\mathbb C}^*)=\cal{P}=\mathbb{Z}^{\ell}$.}
 \end{example}



 Consider now the linear forms $\sigma_G:= \frac{1}{2}\sum_{\alpha \in R^+}\al$ and $\sigma_H:=\frac{1}{2} \sum_{\alpha \in R^+_H} \alpha$.  
Recall that  $\sigma_{G}=\sum_{i=1}^{\ell}\Lambda_{i}\in\cal{P}^{+}$, where  $\cal{P}^{+}\subset\cal{P}$ is the subset of strictly positive dominant weights.
We also set  $\cal{P}^{+}_{T}:=\cal{P}^{+}\cap \cal{P}_{T}$ and  define the  {\it  Koszul  form}  of  the flag manifold $(F=G^{\bb{C}}/P=G/H, J)$,  by
              \[
              \sigma^J:=2(\sigma_G - \sigma_H)=  \sum_{\alpha \in R^+_F}\alpha.
              \]
                             \begin{lemma}  \textnormal{(\cite{Alek, AP})}  The  Koszul    form   is   a linear  combination of  the  fundamental  weights  $\Lambda_1, \cdots, \Lambda_v$ associated to the black roots, with positive integers   coefficients, given as follows:
\[
\sigma^J = \sum_{j=1}^{v} k_j \Lambda_j = \sum_{j=1}^{v} (2 + b_j) \Lambda_j\in\cal{P}_{T}^{+}, \ \  \text{where} \  \ k_j = \frac{2(\sigma^J,\be_j)}{(\be_j, \be_j)},\,\,   b_j = -\frac{2(2\sigma_H,\be_j)}{(\be_j,\be_j)} \geq 0.
\]
        \end{lemma}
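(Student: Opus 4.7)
The identity $\sigma^J=2(\sigma_G-\sigma_H)=\sum_{\alpha\in R_F^+}\alpha$ is immediate from the splitting $R^+=R_H^+\sqcup R_F^+$, so the content of the lemma is: (a) $\sigma^J\in\mathcal{P}_T$, (b) the coordinates of $\sigma^J$ in the basis $\{\Lambda_1,\dots,\Lambda_v\}$ of $T$-weights are $k_j = 2+b_j$ with the stated formulas for $k_j,b_j$, and (c) $b_j\geq 0$. I would prove these in that order.

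For (a) I would use the standard fact that for a simple root $\alpha_i\in\Pi_W$ the reflection $s_{\alpha_i}$ permutes $R^+\setminus\{\alpha_i\}$, hence permutes $R_H^+\setminus\{\alpha_i\}$ and also permutes $R_F^+$ (since $s_{\alpha_i}$ only modifies the $\alpha_i$-coefficient of a root, it preserves the property of having a nonzero black-simple-root coefficient). Therefore $s_{\alpha_i}(\sigma^J)=\sigma^J$, which forces $(\sigma^J,\alpha_i)=0$ for every $\alpha_i\in\Pi_W$. By Lemma~\ref{Tlattice} this means $\sigma^J\in\mathcal{P}_T$, and hence $\sigma^J=\sum_{j=1}^v k_j\Lambda_j$ with $k_j=\sigma^J(H_{\beta_j})=\tfrac{2(\sigma^J,\beta_j)}{(\beta_j,\beta_j)}$.

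For (b) I would use the classical identity $\sigma_G=\sum_{i=1}^\ell\Lambda_i$ (where the sum ranges over all simple roots of $G$), which gives $\sigma_G(H_{\beta_j})=1$ and hence $(2\sigma_G)(H_{\beta_j})=2$. Combined with $\sigma^J=2\sigma_G-2\sigma_H$, this yields
\[
k_j = 2 - (2\sigma_H)(H_{\beta_j}) = 2 + b_j, \qquad b_j := -(2\sigma_H)(H_{\beta_j})=-\tfrac{2(2\sigma_H,\beta_j)}{(\beta_j,\beta_j)}.
\]
Integrality of $k_j$ is automatic since $2\sigma_H$ is a sum of roots and hence acts by Cartan integers on each $H_{\beta_j}$.

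The main point, and the technical heart of the proof, is (c): the inequality $b_j\geq 0$, equivalently $(\alpha,\beta_j)\leq 0$ for every $\alpha\in R_H^+$. Here I would use the $\beta_j$-string argument. Any $\alpha\in R_H^+$ lies in $\mathrm{span}_{\mathbb{Z}_{\geq 0}}(\Pi_W)$, so the root $\alpha-\beta_j$ would need a $-1$-coefficient on the black simple root $\beta_j$ together with nonnegative coefficients on the white ones; such an element is not a root. Hence in the $\beta_j$-string $\alpha-p\beta_j,\dots,\alpha+q\beta_j$ through $\alpha$ one has $p=0$, so $-\tfrac{2(\alpha,\beta_j)}{(\beta_j,\beta_j)}=q-p=q\geq 0$, i.e. $(\alpha,\beta_j)\leq 0$. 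Summing over $\alpha\in R_H^+$ gives $(2\sigma_H,\beta_j)\leq 0$, hence $b_j\geq 0$, which simultaneously shows $\sigma^J\in\mathcal{P}_T^+$ (all coefficients $k_j\geq 2$ are strictly positive). I expect the string-length argument to be the only nontrivial step; everything else is a direct unwinding of definitions together with the identity $\sigma_G=\sum\Lambda_i$.
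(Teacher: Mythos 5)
Your proof is correct. Note first that the paper itself offers no proof of this lemma — it is quoted from \cite{Alek, AP} — so there is no in-text argument to match word for word; the closest thing is the proof of Lemma \ref{koz1}, which carries out the same computation for the coefficient over a single black root. Your steps (a) and (b) coincide with what that proof does: $\sigma_G(H_{\beta_j})=1$ from $\sigma_G=\sum_i\Lambda_i$, the $W_H$-invariance (equivalently $(\sigma^J,\alpha_i)=0$ for white simple $\alpha_i$, since $s_{\alpha_i}$ permutes $R_F^+$) placing $\sigma^J$ in $\cal{P}_T$, and integrality from the Cartan integers. The only divergence is in step (c): you establish $(\alpha,\beta_j)\le 0$ for $\alpha\in R_H^+$ via the $\beta_j$-string through $\alpha$ (correctly observing that $\alpha-\beta_j$ cannot be a root because its coefficients would have mixed signs, so $p=0$), whereas the paper's Lemma \ref{koz1} gets the same inequality more cheaply by writing $\alpha$ as a nonnegative integer combination of white simple roots and using that distinct simple roots satisfy $(\alpha_i,\beta_j)\le 0$. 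Both are standard and valid; the string argument is marginally heavier machinery for the same conclusion, and it has the small side benefit of identifying $b_j$ as a sum of string lengths $q_\alpha$. Either way one gets $k_j=2+b_j\ge 2$, hence $\sigma^J\in\cal{P}_T^{+}$.
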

     The  integers  $k_j\in\bb{Z}_{+}$     are   called  {\it Koszul numbers} associated to the complex structure $J$ on $F=G^{\bb{C}}/P=G/H$ and they form the  {\it Koszul vector}  $\vec{k}:=(k_{1}, \ldots, k_{v})\in\bb{Z}_{+}^{v}$.

  \begin{prop}\label{chernclass}\textnormal{(\cite{Bo, Alek, AP})}
      The first  Chern   class $c_1(J)\in H^{2}(F; \bb{Z})$ of    the invariant  complex  structure  $J$ in $F$, associated  with the decomposition  $\Pi= \Pi_W \sqcup \Pi_B$,
  is  represented  by  the closed invariant  2-form  $\gamma_J :=  \omega_{\sigma^J}$,  i.e. the  Chern form of the complex manifold $(F, J)$.
   \end{prop}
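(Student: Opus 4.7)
The plan is to identify $c_{1}(F,J)$ with the first Chern class of the determinant of the holomorphic tangent bundle, realize this determinant as a homogeneous line bundle whose weight is the Koszul form $\sigma^{J}$, and then conclude by combining the additivity of $c_{1}$ under tensor products with the transgression formula of Propositions \ref{mainuse1} and \ref{bline}(iii).

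First I would invoke the identity $c_{1}(F,J)=c_{1}(T^{1,0}F)=c_{1}(\det T^{1,0}F)$. Through the identification $T_{o}F\cong\mathfrak{g}^{\bb{C}}/\mathfrak{p}$ at $o=eP$, the holomorphic tangent bundle of $F=G^{\bb{C}}/P$ is the $G^{\bb{C}}$-homogeneous vector bundle $T^{1,0}F=G^{\bb{C}}\times_{P}(\mathfrak{g}^{\bb{C}}/\mathfrak{p})$. By the generalized Gauss decomposition, $\mathfrak{g}^{\bb{C}}/\mathfrak{p}\cong\mathfrak{n}^{-}=\sum_{\al\in R_{F}^{+}}\mathfrak{g}_{-\al}$ as vector spaces, while the unipotent radical $N^{+}$ of $P$ acts trivially on any one-dimensional $P$-module, since characters of $P$ factor through its reductive part. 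Thus the weight of $\det T^{1,0}F$ is determined by the action of the Cartan subgroup, whose $\mathfrak{a}^{\bb{C}}$-weights on $\mathfrak{g}^{\bb{C}}/\mathfrak{p}$ are precisely $\{-\al:\al\in R_{F}^{+}\}$. Summing them, the weight of $\det T^{1,0}F$ equals $-\sigma^{J}$, and with the sign convention for $\cal{L}_{\lambda}$ fixed by Proposition \ref{bline} (where the top exterior power of the irreducible subbundle of $T^{1,0}F$ associated with a black simple root $\be$ is shown to be $\cal{L}_{\Lambda_{\be}}^{\otimes k_{\be}}$), this means $\det T^{1,0}F\cong\cal{L}_{\sigma^{J}}$.

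Since the Koszul form decomposes as $\sigma^{J}=\sum_{j=1}^{v}k_{j}\Lambda_{j}$ and tensor products of homogeneous line bundles correspond to addition of weights, one gets $\cal{L}_{\sigma^{J}}\cong\bigotimes_{j=1}^{v}\cal{L}_{\Lambda_{j}}^{\otimes k_{j}}$. Applying Proposition \ref{bline}(iii) termwise, together with the additivity of $c_{1}$ under tensor products of line bundles and the $\bb{R}$-linearity of the transgression $\tau:\mathfrak{t}^{*}\to H^{2}(F,\bb{R})$ from Proposition \ref{mainuse1}, I conclude
\[
c_{1}(F,J)=c_{1}(\cal{L}_{\sigma^{J}})=\sum_{j=1}^{v}k_{j}\,c_{1}(\cal{L}_{\Lambda_{j}})=\sum_{j=1}^{v}k_{j}\,[\omega_{\Lambda_{j}}]=[\omega_{\sigma^{J}}]=[\gamma_{J}],
\]
which establishes the claim. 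The delicate point in this plan is the bookkeeping of signs: one must verify that the identification $T^{1,0}F\cong G^{\bb{C}}\times_{P}(\mathfrak{g}^{\bb{C}}/\mathfrak{p})$ is compatible with the convention for $\cal{L}_{\lambda}$ used in Proposition \ref{bline}, so that the Koszul form $\sigma^{J}=\sum_{\al\in R_{F}^{+}}\al$, with its natural plus sign, represents $c_{1}(F,J)$ and not $-c_{1}(F,J)$. Once this consistency is pinned down, the rest of the proof is a routine application of the linearity of transgression.
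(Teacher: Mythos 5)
The paper does not prove this proposition --- it is quoted from \cite{Bo, Alek, AP} --- so there is no in-paper argument to match against. Your reconstruction ($c_1(F,J)=c_1(\det T^{1,0}F)$, identify $\det T^{1,0}F$ with a homogeneous line bundle whose weight is the Koszul form, then apply the transgression of Proposition \ref{mainuse1} and the additivity of $c_1$) is the standard route, and it is exactly the argument the machinery of Propositions \ref{mainuse1} and \ref{bline} is set up to support. Structurally it is sound.

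The one step that is asserted rather than proved is $\det T^{1,0}F\cong\cal{L}_{\sigma^{J}}$. As written --- ``the weight of $\det T^{1,0}F$ equals $-\sigma^{J}$, \ldots this means $\det T^{1,0}F\cong\cal{L}_{\sigma^{J}}$'' --- this is a non sequitur under the paper's literal definition $\cal{L}_{\lambda}=G^{\bb{C}}\times_{P}\bb{C}_{\lambda}$: an associated bundle whose fiber is the $P$-module of weight $-\sigma^{J}$ is, on the face of it, $\cal{L}_{-\sigma^{J}}$, and the conclusion would then come out as $-[\omega_{\sigma^{J}}]$. You correctly flag this as the delicate point, but flagging it is not resolving it, and the sign is the entire content of the statement. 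The clean way to close it inside the paper's framework is to rerun the computation in the proof of Proposition \ref{bline}(ii) with $\fr{f}_{\xi}$ replaced by all of $\fr{n}^{-}\cong\fr{g}^{\bb{C}}/\fr{p}$: the character $p\mapsto\det\vartheta(p)$ of the full isotropy representation is, by the identical argument, the character $\chi_{\Lambda}$ with $\Lambda=\sum_{\gamma\in R_{F}^{+}}\gamma=\sigma^{J}$, so whatever sign convention makes $\Lambda^{d_{\xi}}(E_{\xi})\cong\cal{L}_{\Lambda_{\beta}}^{\otimes k_{\beta}}$ come out with a \emph{positive} power also forces $\det T^{1,0}F\cong\cal{L}_{\sigma^{J}}$. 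Alternatively, anchor the sign on a Hermitian symmetric case such as $\bb{C}P^{n-1}=\SU_{n}/\Ss(\U_{1}\times\U_{n-1})$, where $m_{\beta}=1$, $E_{\xi}$ \emph{is} the whole holomorphic tangent bundle, and Proposition \ref{bline}(ii) gives $\det T^{1,0}=\cal{L}_{\Lambda_{1}}^{\otimes n}$ with $c_{1}=n[\omega_{\Lambda_{1}}]>0$, as it must be for a Fano manifold. With that consistency check supplied, the rest of your argument (decomposing $\cal{L}_{\sigma^{J}}=\bigotimes_{j}\cal{L}_{\Lambda_{j}}^{\otimes k_{j}}$ and applying Proposition \ref{bline}(iii) termwise) goes through.
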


  Propositions  \ref{at} and \ref{meta},  in combination with the  above results, yield  that:
  \begin{prop} \label{chernclass2} A   flag manifold  $F = G/H$  admits  a $G$-invariant   spin  (or metaplectic)  structure,  if  and only is  the first Chern  class  $c_1(F, J)$ of   an invariant   complex  structure  $J$  on  $F$ is   even, that is   all Koszul  numbers  are  even.
 In  this case,   such  spin  (or metablectic) structure is   unique.
  \end{prop}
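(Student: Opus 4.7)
The plan is to combine the general complex-geometric existence criteria (Propositions \ref{at} and \ref{meta}) with the explicit description of $c_1(F,J)$ afforded by Proposition \ref{chernclass} and the lattice identification of Proposition \ref{mainuse1}. Concretely, I would argue as follows.

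First, since $F=G/H$ is a compact K\"ahler manifold (equipped with an invariant complex structure $J$, which also determines a compatible symplectic form), both Proposition \ref{at} and Proposition \ref{meta} apply and reduce the existence of a spin, respectively a metaplectic, structure to the single condition that $c_1(F,J)\in H^2(F;\mathbb{Z})$ be divisible by $2$. By Proposition \ref{chernclass}, this class is represented by the closed invariant Chern form $\gamma_J=\omega_{\sigma^J}$, where the Koszul form decomposes as
\[
\sigma^J=\sum_{j=1}^{v}k_j\Lambda_j\in\mathcal{P}_T^{+}.
\]

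Next, I would appeal to the transgression isomorphism $\tau:\mathfrak{t}^{*}\to H^2(F,\mathbb{R})$ of Proposition \ref{mainuse1}, under which the integral cohomology lattice $H^2(F,\mathbb{Z})$ corresponds exactly to $\tau(\mathcal{P}_T)$. By Lemma \ref{Tlattice}, $\mathcal{P}_T$ is the free $\mathbb{Z}$-module on the black fundamental weights $\Lambda_1,\dots,\Lambda_v$. Consequently, $c_1(F,J)$ is divisible by $2$ in $H^2(F;\mathbb{Z})$ if and only if $\tfrac{1}{2}\sigma^J=\sum_j \tfrac{k_j}{2}\Lambda_j$ lies in $\mathcal{P}_T$, and because $\{\Lambda_j\}_{j=1}^{v}$ is a $\mathbb{Z}$-basis, this happens if and only if every Koszul number $k_j$ is even. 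This establishes the stated equivalence.

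For uniqueness, I would invoke Proposition \ref{ba} (respectively Proposition \ref{meta}), according to which the set of inequivalent spin (respectively metaplectic) structures is in bijection with $H^1(F;\mathbb{Z}_2)$. Since $G$ is compact, connected and simply-connected and $H$ is connected (it is the centralizer of a torus), the homotopy exact sequence of the fibration $H\hookrightarrow G\to G/H$ yields $\pi_1(F)=1$, so $H^1(F;\mathbb{Z}_2)=0$ and the structure is unique. Finally, by Proposition \ref{springer}, since $G$ is simply-connected, the unique spin structure (when it exists) is automatically $G$-invariant, and the same argument via lifts applies to the metaplectic case. The main conceptual step, and the only one that requires care, is identifying the integrality lattice of $H^2(F;\mathbb{Z})$ inside $\mathfrak{t}^{*}$ with $\mathcal{P}_T$ and recognizing $\Lambda_1,\dots,\Lambda_v$ as a $\mathbb{Z}$-basis; everything else is a direct assembly of the preceding propositions.
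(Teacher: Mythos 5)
Your proposal is correct and follows exactly the route the paper intends: the paper states this proposition as a direct consequence of Propositions \ref{at}, \ref{meta} and \ref{chernclass} together with the identification $H^2(F,\mathbb{Z})\cong\tau(\mathcal{P}_T)$ from Proposition \ref{mainuse1} and the $\mathbb{Z}$-basis $\{\Lambda_1,\dots,\Lambda_v\}$ of $\mathcal{P}_T$ from Lemma \ref{Tlattice}. Your treatment of uniqueness via $H^1(F;\mathbb{Z}_2)=0$ and of $G$-invariance via Proposition \ref{springer} matches the paper's framework as well.
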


   \begin{example}\label{FULL}

   \textnormal{Consider the manifold of full flags $F=G/{\rm T}^{\ell}=G^{\bb{C}}/B_{+}$.  Since  the Weyl  group   acts transitively on  Weyl  chambers,  there is unique (up  to  conjugation) invariant complex  structure $J$. The canonical line bundle $K_{F}:=\Lambda^{n}TF$ corresponds to the dominant weight $\sum_{\al\in R^{+}}\al=2\sigma_{G}=2(\Lambda_{1}+\cdots + \Lambda_{\ell})$, hence all the Koszul numbers equal to $2$ and $F$ admits a unique   spin structure.}
    \end{example}

\begin{corol} \label{intChern} The   divisibility by two  of  the  Koszul numbers  of  an invariant   complex  structure $J$ on a (pseudo-Riemannian)    flag manifold $F=G/H=G^{\bb{C}}/P$,    does not  depend on
 the  complex  structure.
  \end{corol}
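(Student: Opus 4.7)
The plan is to reinterpret the evenness of the Koszul numbers as a purely topological condition on $F$. The key ingredient is the standard relation $c_1(F,J) \bmod 2 = w_2(F)$ (already recalled in the paper above Proposition \ref{at}) between the first Chern class of any almost complex structure and the second Stiefel--Whitney class of the underlying real tangent bundle. Since $TF$ as a real $G$-homogeneous vector bundle is $G\times_H \fr{m}$ and does not depend on the choice of $J$, the class $w_2(F)\in H^2(F;\bb{Z}/2)$ is manifestly independent of $J$.

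Next, I would invoke the identification of $H^2(F;\bb{Z})$ with the $T$-weight lattice $\cal{P}_T$ given by the transgression $\tau$ of Proposition \ref{mainuse1}. Crucially, both $\fr{t}^*$ and $\cal{P}_T$ depend only on the reductive datum $(\fr{g},\fr{h})$ (through $R_H$, see Lemma \ref{Tlattice}), not on any extension $\Pi=\Pi_W\sqcup\Pi_B$, so this identification is $J$-independent. Under it, Proposition \ref{chernclass} tells us that $c_1(F,J)$ corresponds to the Koszul form $\sigma^J=\sum_{j=1}^v k_j\Lambda_j\in\cal{P}_T^+$.

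The different invariant complex structures (Proposition \ref{complex}) simply furnish different $\bb{Z}$-bases $\{\Lambda_j\}$ of the same lattice $\cal{P}_T$. For any such basis, the element $\sigma^J$ lies in $2\cal{P}_T$ if and only if all of its coordinates $k_j$ are even. Combining the two previous paragraphs,
\[
\text{all } k_j \text{ even} \ \Longleftrightarrow\ \sigma^J\in 2\cal{P}_T\ \Longleftrightarrow\ c_1(F,J)\bmod 2=0\ \Longleftrightarrow\ w_2(F)=0,
\]
and the last condition is intrinsic to the smooth manifold $F$, hence independent of $J$. This yields the corollary.

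I do not foresee a genuine obstacle: the whole proof is a short chain of facts already established in the excerpt, the only conceptual point being that the $J$-dependence of the Koszul vector $\vec{k}=(k_1,\dots,k_v)$ reflects only a change of $\bb{Z}$-basis of the $J$-independent lattice $\cal{P}_T$, and the property ``lies in $2\cal{P}_T$'' is basis-free.
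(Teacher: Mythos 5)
Your proof is correct and follows essentially the same route as the paper: both arguments reduce evenness of the Koszul vector to divisibility of $c_1(F,J)$ by two and then to the $J$-independent topological condition $w_2(F)=0$ (the paper phrases this via existence of the unique spin structure and Corollary \ref{basics1}, you via $c_1\bmod 2=w_2$ directly). Your extra remark that the various black fundamental-weight systems are just different $\bb{Z}$-bases of the fixed lattice $\cal{P}_T$, so that ``all $k_j$ even'' is the basis-free condition $\sigma^J\in 2\cal{P}_T$, is a useful explicit justification of an equivalence the paper takes for granted in Proposition \ref{chernclass2}.
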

  \begin{proof}
  If $c_{1}(F, J)$ is divisible by two in $H^{2}(F; \bb{Z})$, i.e.  the Koszul numbers are even with respect to  an invariant complex structure $J$, then  $F$ admits a $G$-invariant spin structure, which is unique since $F=G/H=G^{\bb{C}}/P$ is simply-connected.  Using now    Corollary \ref{basics1},  for any other complex structure
   $J'$, we conclude that the associated first Chern class $c_{1}(F, J')$ must be even, as well.
    \end{proof}

 \begin{corol}
 On a spin or metaplectic  flag manifold $F=G^{\bb{C}}/P=G/H$ with a fixed invariant complex structure $J$, there is a unique isomorphism class of holomorphic line bundles $\cal{L}$ such that $\cal{L}^{\otimes 2}=K_{F}$.   \end{corol}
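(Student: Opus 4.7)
The plan is to reduce the claim to the uniqueness of spin structures on $F$, which has essentially already been observed in Proposition~\ref{chernclass2}. First, by Proposition~\ref{at}, the set of isomorphism classes of holomorphic line bundles $\cal{L}$ over the compact complex manifold $(F,J)$ satisfying $\cal{L}^{\otimes 2}=K_F$ is in bijective correspondence with the set of spin structures on $F$ (as a smooth manifold with the orientation induced by $J$). So it is enough to show that $F$ admits exactly one spin structure.

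Second, by Proposition~\ref{ba} (applied in the Riemannian case, which is the relevant one since $F$ carries invariant K\"ahler metrics), if $F$ admits any spin structure then the set of inequivalent spin structures is a torsor over $H^{1}(F;\bb{Z}_{2})$. The hypothesis that $F$ is spin or metaplectic is equivalent, by Propositions~\ref{at} and~\ref{meta} together with Proposition~\ref{chernclass2}, to the evenness of the Koszul numbers, which is independent of $J$ by Corollary~\ref{intChern}; hence the set of square roots of $K_F$ is nonempty.

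Third, the manifold $F = G/H = G^{\bb{C}}/P$ is simply-connected, because $G$ is a compact, connected, simply-connected semisimple Lie group and $H$ is the centralizer of a torus, hence connected, so the long exact homotopy sequence of the fibration $H \to G \to G/H$ forces $\pi_{1}(F)=0$. By the universal coefficient theorem, $H^{1}(F;\bb{Z}_{2}) = \Hom(H_{1}(F;\bb{Z}),\bb{Z}_{2}) = 0$, so there is at most one spin structure on $F$.

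Combining the three steps, the square root $\cal{L}$ of $K_F$ is unique up to isomorphism. The only step requiring any care is the identification of the torsor structure of square roots of $K_F$ with that of spin structures, but this is exactly the content of the bijection in Proposition~\ref{at}; the rest is the standard simple-connectedness of generalized flag manifolds. Thus no serious obstacle arises.
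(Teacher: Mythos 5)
Your proof is correct and follows the same route the paper intends: Proposition~\ref{at} identifies square roots of $K_F$ with spin structures, Proposition~\ref{ba} counts inequivalent spin structures by $H^{1}(F;\bb{Z}_{2})$, and simple-connectedness of $F=G/H$ (with $H$ connected as the centralizer of a torus) kills that group. Nothing to add.
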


\subsection{Invariant spin and metaplectic structures on  classical flag manifolds}
Proposition  \ref{chernclass2} reduces   the  classification of    $G$-invariant   spin or metaplectic   structures   on   a  given  flag manifold $F=G/H$,    to  the  calculation  of    Koszul numbers   of  an invariant   complex  structure $J$ on $F$. In particular, due to Corollary \ref{intChern} it is sufficient to fix the complex structure  $J$ induced by the natural  invariant ordering $R_{F}^{+}=R^{+}\backslash R^{+}_{H}$ (see \cite{AP, Ale1} for details on the notion of an invariant ordering).

 Flag manifolds   of the groups  $\A_n = \SU_{n+1}, \B_n= \SO_{2 n +1}, \Cc_n =  \Sp_n, \D_n = \SO_{2 n}$   fall into four classes:
\begin{eqnarray*}
\A(\vec n)&=&  \SU_{n+1}/\U_1^{n_0} \times \Ss(\U_{n_1} \times \cdots \times \U_{n_s}), \quad \vec n = (n_0, n_1, \cdots, n_s),\quad  \sum n_j  = n+1, \, n_0 \geq 0,  n_j >1;\\
\B(\vec n)&=&  \SO_{2n+1}/\U_1^{n_0}\times \U_{n_1}\times \cdots\times  \U_{n_s} \times \SO_{2r+1},\quad \vec n =\sum n_j +  r,\,n_0 \geq 0,  n_j >1,  r\geq 0;\\
 \Cc(\vec n)&=&  \Sp_{n}/\U_1^{n_0} \times \U_{n_1}\times \cdots\times  \U_{n_s} \times \Sp_{r},\quad  \vec n =\sum n_j + r,\, n_0\geq 0,  n_j >1, r\geq 0;\\
\D(\vec n)&=&  \SO_{2n}/\U_1^{n_0} \times \U_{n_1}\times  \cdots \times \U_{n_s} \times \SO_{2r},\quad \vec n = \sum n_j +  r,\, n_0 \geq 0, n_0 \geq 0,  n_j >1, r\neq 1,
\end{eqnarray*}
with $\vec{n}=(n_{0}, n_{1}, \cdots, n_{s}, r)$ for the groups $\B_{n}, \Cc_{n}$ and $\D_{n}$. 
It will be useful to recall that the fibration $\SU_{p}\hookrightarrow \U_{p}\overset{\det}{\to}\U_{1}=\Ss^{1}={\rm T}^{1}$ implies that $\U_{p}=\U_{1}\times_{\bb{Z}_{p}}\SU_{p}$, where the cyclic group $\bb{Z}_{p}$ acts on each factor by left translations. Hence, we shall often  write  $\U_{p}=\U_{1}\cdot\SU_{p}=\Ss^{1}\cdot\SU_{p}$ and  identify   $\U_{1}/\bb{Z}_{p}=\Ss^{1}/\bb{Z}_{p}$ with its finite covering $\Ss^{1}$.  For example,  $F=\SO_{2n+1}/\U_{p}\times \SO_{2(n-p)+1}=\SO_{2n+1}/\U_{1}\cdot \SU_{p}\times \SO_{2(n-p)+1}=\SO_{2n+1}/\U_{1}\times_{\bb{Z}_{p}}\SU_{p}\times \SO_{2(n-p)+1}$.

  {\bf Notation for root systems.} For  the standard basis of $\bb{R}^{n_{0}}$ we write $\{\theta_{1}, \ldots, \theta_{n_{0}}\}$,  while standard bases of $\bb{R}^{n_{a}}$ $(a=1, \ldots, s)$ are given by $\{\epsilon^{a}_{1}, \ldots, \epsilon^{a}_{n_{a}}\}$, such that   \[
  \epsilon=\{\theta_{1}, \ldots, \theta_{n_{0}}, \epsilon^{1}_{1}, \ldots, \epsilon^{1}_{n_{1}}, \epsilon^{2}_{1}, \dots, \epsilon^{2}_{n_{2}}, \ldots, \epsilon^{s}_{1}, \ldots, \epsilon^{s}_{n_{s}}\}
  \]
   is the standard basis of $\bb{R}^{n+1} =  \bb{R}^{n_0} \times \bb{R}^{n_1}\times \cdots \times \bb{R}^{n_s}$.
    For $\B_{n}, \Cc_{n}$ and $\D_{n}$ we extend this notation to $\{\theta_{j}, \epsilon^{a}_{i}, \pi_{k}\}$ for the standard basis of $\bb{R}^{n} = \bb{R}^{n_0} \times \bb{R}^{n_1}\times \cdots \times \bb{R}^{n_s}\times \bb{R}^r $ with $1\leq k\leq r$, $1\leq j\leq n_{0}$ and $a=1, \ldots, s$, with $1\leq i\leq n_{a}$.
    For  the expressions of the root systems $R, R_{H}$ associated to the flag manifold $G(\vec{n})$ we shall use these of \cite{AA, Chry2} (for a description of root systems see also \cite{Freud}).
 \begin{lemma}  \label{wellknown1}
  The  standard complex  structure   on a  flag manifold $G(\vec n)$ for one of the groups $G =\A_n, \B_n, \Cc_n, \D_n$, is
 the  complex  structure   associated  with the   following  standard painted Dynkin  diagram:
 \[
a) \A_{n} : \begin{picture}(400, 0)(0, 5)
\put(0, 0.5){\circle*{5}}
\put(0, 5){\line(0,3){10}}
\put(0, 15){\line(1,0){25}}
\put(33, 15){\makebox(0,0){{\tiny{$n_{0}$}}}}
\put(41, 15){\line(1,0){26}}
\put(67, 15){\line(0,-3){10}}

 \put(2, 0.5){\line(1,0){14}}
\put(18, 0.5){\circle*{5}}
 \put(20, 0.5){\line(1,0){14}}
\put(32, 0){ $\ldots$}
\put(51, 0.5){\line(1,0){14}}
 \put(67, 0.5){\circle*{5}}
\put(68, 0.5){\line(1,0){13}}
\put(83, 0.5){\circle{5}}
\put(83, 5){\line(0,3){10}}
\put(83, 15){\line(1,0){11}}
\put(107, 15){\makebox(0,0){{\tiny{$n_{1}-1$}}}}
\put(119, 15){\line(1,0){12}}
\put(131, 15){\line(0,-3){10}}
 \put(85, 0.5){\line(1,0){14}}
\put(97, 0){ $\ldots$}
\put(115, 0.5){\line(1,0){14}}
\put(131, 0.5){\circle{5}}
\put(133, 0.5){\line(1,0){14}}
\put(149, 0.5){\circle*{5}}
 \put(149,10){\makebox(0,0){$\be_{n_{1}}$}}
  \put(150.9, 0.5){\line(1,0){13}}
  \put(166, 0.5){\circle{5}}
  \put(166, 5){\line(0,3){10}}
\put(166, 15){\line(1,0){11}}
\put(190, 15){\makebox(0,0){{\tiny{$n_{2}-1$}}}}
\put(203, 15){\line(1,0){13}}
\put(216, 15){\line(0,-3){10}}
    \put(168, 0.5){\line(1,0){14}}
     \put(180, 0){ $\ldots$}
  \put(200, 0.5){\line(1,0){13.6}}
   \put(216, 0.5){\circle{5}}
    \put(218, 0.5){\line(1,0){13.6}}
\put(232, 0.5){\circle*{5}}
 \put(232,10){\makebox(0,0){$\be_{n_{2}}$}}
  \put(233.9, 0.5){\line(1,0){13}}
  \put(249, 0.5){\circle{5}}
    \put(251, 0.5){\line(1,0){13.6}}
  \put(262.5, 0){ $\ldots$}
  \put(282.5, 0.5){\line(1,0){13.6}}
   \put(299, 0.5){\circle{5}}
    \put(301, 0.5){\line(1,0){13.6}}
\put(315, 0.5){\circle*{5}}
 \put(315,10){\makebox(0,0){$\be_{n_{s-1}}$}}
   \put(316, 0.5){\line(1,0){13}}
   \put(331, 0.5){\circle{5}}
   \put(331, 5){\line(0,3){10}}
\put(331, 15){\line(1,0){11}}
\put(355, 15){\makebox(0,0){{\tiny{$n_{s}-1$}}}}
\put(368.7, 15){\line(1,0){13.5}}
\put(382, 15){\line(0,-3){10}}

      \put(333, 0.5){\line(1,0){13.6}}
   \put(346, 0){ $\ldots$}
\put(366.5, 0.5){\line(1,0){13.6}}
   \put(382, 0.5){\circle{5}}
          \end{picture}
 \]
with $\sharp(\Pi_{B}):=v=n_{0}+s-1$, such  that  the first $n_0$  nodes   are   black  and  all other  black nodes   are  isolated.  In terms of the   standard  basis $\Pi=\{\al_1, \ldots, \al_{n}\}$ of $\SU_{n+1}$,  we indicate  the black   simple   roots as  follows
\[
\be_{1}:=\al_1=\theta_{1}-\theta_{2},  \ldots,     \beta_{n_{0}-1}:=\al_{n_{0}-1}=\theta_{n_{0}-1}-\theta_{n_{0}}, \ \beta_{n_{0}}:=\al_{n_{0}}=\theta_{n_{0}}-\epsilon^{1}_{1},
\]
and $\be_{n_{1}}=\al_{n_{1}}, \ \be_{n_2}=\al_{n_{1}+n_{2}-1}, \ \be_{n_{3}}=\al_{n_{1}+n_{2}+n_{3}-2},  \ldots,  \be_{n_{s}-1}=\al_{n_{1}+\ldots+n_{s-1}-(s-2)}$, such that $G(\vec n)=\A(\vec n)$.
Similarly for the other groups, with  $\sharp(\Pi_{B}):=v=n_{0}+s$, such  that  the  first $n_0$  nodes   are   black  and  all other  black nodes   are  isolated:

\smallskip
  \[
 b) \ \B_{n} :  \begin{picture}(350,0)(-10, 0)
\put(0, 0.5){\circle*{5}}
\put(0, 5){\line(0,3){10}}
\put(0, 15){\line(1,0){25}}
\put(33, 15){\makebox(0,0){{\tiny{$n_{0}$}}}}
\put(41, 15){\line(1,0){26}}
\put(67, 15){\line(0,-3){10}}

 \put(2, 0.5){\line(1,0){14}}
\put(18, 0.5){\circle*{5}}
 \put(20, 0.5){\line(1,0){14}}
\put(32, 0){ $\ldots$}
\put(51, 0.5){\line(1,0){14}}
 \put(67, 0.5){\circle*{5}}
\put(68, 0.5){\line(1,0){13}}
\put(83, 0.5){\circle{5}}
\put(83, 5){\line(0,3){10}}
\put(83, 15){\line(1,0){11}}
\put(107, 15){\makebox(0,0){{\tiny{$n_{1}-1$}}}}
\put(119, 15){\line(1,0){12}}
\put(131, 15){\line(0,-3){10}}

 \put(85, 0.5){\line(1,0){14}}
\put(97, 0){ $\ldots$}
\put(115, 0.5){\line(1,0){14}}
\put(131, 0.5){\circle{5}}
\put(133, 0.5){\line(1,0){14}}
\put(149, 0.5){\circle*{5}}
 \put(149,10){\makebox(0,0){$\be_{n_{1}}$}}
  \put(150.9, 0.5){\line(1,0){13}}
  \put(166, 0.5){\circle{5}}
    \put(168, 0.5){\line(1,0){14}}
     \put(180, 0){ $\ldots$}
  \put(200, 0.5){\line(1,0){13.6}}
   \put(216, 0.5){\circle{5}}
    \put(218, 0.5){\line(1,0){13.6}}
\put(232, 0.5){\circle*{5}}
 \put(232,10){\makebox(0,0){$\be_{n_s}$}}
  \put(233.9, 0.5){\line(1,0){13}}
  \put(249, 0.5){\circle{5}}
  \put(249, 5){\line(0,3){10}}
\put(249, 15){\line(1,0){26}}
\put(283, 15){\makebox(0,0){{\tiny{$r$}}}}
\put(290, 15){\line(1,0){24}}
\put(314, 15){\line(0,-3){10}}

  \put(251, 0.5){\line(1,0){13.6}}
  \put(262.5, 0){ $\ldots$}
  \put(280, 0.5){\line(1,0){13}}
    \put(295, 0.5){\circle{5}}
  \put(297, 1.1){\line(1,0){14}}
\put(297, -0.6){\line(1,0){14}}
\put(301, -1.5){\scriptsize $>$}
\put(314, 0.5){\circle{5}}

         \end{picture}
 \]
 \smallskip
  \[
c) \  \Cc_{n} :  \begin{picture}(350,0)(-10, 0)
\put(0, 0.5){\circle*{5}}
\put(0, 5){\line(0,3){10}}
\put(0, 15){\line(1,0){25}}
\put(33, 15){\makebox(0,0){{\tiny{$n_{0}$}}}}
\put(41, 15){\line(1,0){26}}
\put(67, 15){\line(0,-3){10}}

 \put(2, 0.5){\line(1,0){14}}
\put(18, 0.5){\circle*{5}}
 \put(20, 0.5){\line(1,0){14}}
\put(32, 0){ $\ldots$}
\put(51, 0.5){\line(1,0){14}}
 \put(67, 0.5){\circle*{5}}
\put(68, 0.5){\line(1,0){13}}
\put(83, 0.5){\circle{5}}
\put(83, 5){\line(0,3){10}}
\put(83, 15){\line(1,0){11}}
\put(107, 15){\makebox(0,0){{\tiny{$n_{1}-1$}}}}
\put(119, 15){\line(1,0){12}}
\put(131, 15){\line(0,-3){10}}

 \put(85, 0.5){\line(1,0){14}}
\put(97, 0){ $\ldots$}
\put(115, 0.5){\line(1,0){14}}
\put(131, 0.5){\circle{5}}
\put(133, 0.5){\line(1,0){14}}
\put(149, 0.5){\circle*{5}}
 \put(149,10){\makebox(0,0){$\be_{n_1}$}}
  \put(150.9, 0.5){\line(1,0){13}}
  \put(166, 0.5){\circle{5}}
    \put(168, 0.5){\line(1,0){14}}
     \put(180, 0){ $\ldots$}
  \put(200, 0.5){\line(1,0){13.6}}
   \put(216, 0.5){\circle{5}}
    \put(218, 0.5){\line(1,0){13.6}}
\put(232, 0.5){\circle*{5}}
 \put(232,10){\makebox(0,0){$\be_{n_{s}}$}}
  \put(233.9, 0.5){\line(1,0){13}}
  \put(249, 0.5){\circle{5}}
   \put(249, 5){\line(0,3){10}}
\put(249, 15){\line(1,0){26}}
\put(283, 15){\makebox(0,0){{\tiny{$r$}}}}
\put(290, 15){\line(1,0){24}}
\put(314, 15){\line(0,-3){10}}

  \put(251, 0.5){\line(1,0){13.6}}
  \put(262.5, 0){ $\ldots$}
  \put(280, 0.5){\line(1,0){13}}
    \put(295, 0.5){\circle{5}}
  \put(297, 1.1){\line(1,0){14}}
\put(297, -0.6){\line(1,0){14}}
\put(300, -1.5){\scriptsize $<$}
\put(314, 0.5){\circle{5}}

         \end{picture}
 \]
 \smallskip
    \[
d) \  \D_{n} :  \begin{picture}(350,0)(-10, 0)
\put(0, 0.5){\circle*{5}}
\put(0, 5){\line(0,3){10}}
\put(0, 15){\line(1,0){25}}
\put(33, 15){\makebox(0,0){{\tiny{$n_{0}$}}}}
\put(41, 15){\line(1,0){26}}
\put(67, 15){\line(0,-3){10}}

 \put(2, 0.5){\line(1,0){14}}
\put(18, 0.5){\circle*{5}}
 \put(20, 0.5){\line(1,0){14}}
\put(32, 0){ $\ldots$}
\put(51, 0.5){\line(1,0){14}}
 \put(67, 0.5){\circle*{5}}
\put(68, 0.5){\line(1,0){13}}
\put(83, 0.5){\circle{5}}
\put(83, 5){\line(0,3){10}}
\put(83, 15){\line(1,0){11}}
\put(107, 15){\makebox(0,0){{\tiny{$n_{1}-1$}}}}
\put(119, 15){\line(1,0){12}}
\put(131, 15){\line(0,-3){10}}

 \put(85, 0.5){\line(1,0){14}}
\put(97, 0){ $\ldots$}
\put(115, 0.5){\line(1,0){14}}
\put(131, 0.5){\circle{5}}
\put(133, 0.5){\line(1,0){14}}
\put(149, 0.5){\circle*{5}}
 \put(149,10){\makebox(0,0){$\be_{n_{1}}$}}
  \put(150.9, 0.5){\line(1,0){13}}
  \put(166, 0.5){\circle{5}}
    \put(168, 0.5){\line(1,0){14}}
     \put(180, 0){ $\ldots$}
  \put(200, 0.5){\line(1,0){13.6}}
   \put(216, 0.5){\circle{5}}
    \put(218, 0.5){\line(1,0){13.6}}
\put(232, 0.5){\circle*{5}}
 \put(232,10){\makebox(0,0){$\be_{n_{s}}$}}
  \put(233.9, 0.5){\line(1,0){13}}
  \put(249, 0.5){\circle{5}}
   \put(249, 5){\line(0,3){10}}
\put(249, 15){\line(1,0){26}}
\put(283, 15){\makebox(0,0){{\tiny{$r$}}}}
\put(290, 15){\line(1,0){24}}
\put(314, 15){\line(0,-3){20}}

  \put(251, 0.5){\line(1,0){13.6}}
  \put(262.5, 0){ $\ldots$}
  \put(280, 0.5){\line(1,0){13}}
    \put(295, 0.5){\circle{5}}
\put(297.3, 1){\line(2,1){10}}
\put(297.3, -1){\line(2,-1){10}}
\put(309.5, 6){\circle{5}}
\put(309.5, -6){\circle{5}}
         \end{picture}
 \]

 \smallskip
\noindent In these cases, $\be_{n_{s}}:=\al_{n_{1}+\ldots+n_{s}-(s-1)}$. Such  a complex  structure   exists   for  any    flag manifold of a  classical Lie  group.
 \end{lemma}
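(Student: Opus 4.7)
The plan is to verify each of the four cases separately by applying Proposition \ref{complex}. That proposition reduces the choice of an invariant complex structure on $F = G/H$ to an extension of a fundamental system $\Pi_W$ of $\mathfrak{h}'^{\mathbb{C}}$ to a fundamental system $\Pi = \Pi_W \sqcup \Pi_B$ of $\mathfrak{g}^{\mathbb{C}}$. Hence it suffices, for each family $\A_n, \B_n, \Cc_n, \D_n$ and each $\vec{n}$, to identify the reductive decomposition $\mathfrak{h} = Z(\mathfrak{h}) + \mathfrak{h}'$ of the stability subalgebra, to exhibit a fundamental system $\Pi_W$ of $\mathfrak{h}'^{\mathbb{C}}$ consisting of simple roots of $\mathfrak{g}^{\mathbb{C}}$, and to show that the complement $\Pi_B \subset \Pi$ is precisely the set of nodes displayed in the claimed diagrams. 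A useful \emph{a priori} consistency check is the count $\sharp(\Pi_B) = v = \dim \mathfrak{t} = b_2(F)$ (Proposition \ref{mainuse1}), which must match the rank of $Z(\mathfrak{h})$.

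For the $\A_n$ case, the stability subgroup $H = \U_1^{n_0} \times \Ss(\U_{n_1}\times\cdots\times\U_{n_s})$ has semisimple part $H' = \SU_{n_1}\times\cdots\times\SU_{n_s}$ and center of rank $n_0 + (s-1)$, since the unimodularity constraint in $\Ss(\U_{n_1}\times\cdots\times\U_{n_s})$ drops the center by one. Thus $v = n_0 + s - 1$, matching the claimed PDD. Next, in the standard basis $\{\theta_1,\ldots,\theta_{n_0},\epsilon^1_1,\ldots,\epsilon^s_{n_s}\}$ of $\mathbb{R}^{n+1}$, the simple root system of $\SU_{n+1}$ consists of consecutive differences, and the subsystem $\Pi_W := \bigsqcup_{a=1}^{s}\{\epsilon^a_i - \epsilon^a_{i+1} : 1 \leq i < n_a\}$ is precisely a fundamental system of $\mathfrak{h}'^{\mathbb{C}}$ (each piece is of type $\A_{n_a - 1}$, matching a white subdiagram with $n_a - 1$ nodes). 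The remaining simple roots are then $\theta_j - \theta_{j+1}$ for $1 \leq j < n_0$, then $\theta_{n_0} - \epsilon^1_1$, and then the ``bridge'' roots $\epsilon^a_{n_a} - \epsilon^{a+1}_1$ for $a=1,\ldots,s-1$; these are exactly the black nodes $\beta_1,\ldots,\beta_{n_0}, \beta_{n_1},\ldots,\beta_{n_{s-1}}$ indicated in diagram (a), with the first $n_0$ forming a connected $\A_{n_0}$ chain and the rest being isolated.

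For the remaining families, the same bookkeeping applies, using the standard basis $\{\theta_j, \epsilon^a_i, \pi_k\}$ of $\mathbb{R}^n$. Here the center of $H$ has rank $v = n_0 + s$ (one extra unit, since the final factor $\SO_{2r+1}$, $\Sp_r$ or $\SO_{2r}$ is semisimple and the unimodularity constraint does not appear), giving the additional black node $\beta_{n_s}$ which separates the $\U_{n_s}$ block from the tail subdiagram. The tail is of type $\B_r, \Cc_r, \D_r$ respectively, yielding the corresponding arrow or fork on the last piece of white nodes; this is dictated by the fact that $\SO_{2r+1}, \Sp_r, \SO_{2r}$ embed in $G$ via the last $r$ coordinates of the standard basis. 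Choosing $\Pi_W$ to be the union of the internal roots of each $\U_{n_a}$ block together with the standard simple roots of the tail, one verifies, as in the $\A_n$ case, that its complement in $\Pi$ coincides with the black nodes of diagrams (b), (c), (d) and that the short/long root conventions yield the correct arrow orientations.

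The main obstacle is not conceptual but combinatorial: one must carefully match the abstract Levi data $(Z(\mathfrak{h}), \mathfrak{h}')$ of each $G(\vec{n})$ with the concrete system of simple roots of $\mathfrak{g}$ in the $\{\theta_j, \epsilon^a_i, \pi_k\}$-coordinates, and identify which specific simple root plays the role of each $\beta_{n_a}$. Once this dictionary is set up uniformly for all four families (including the degenerate cases $n_0 = 0$ or $r = 0$, and $r \neq 1$ for $\D_n$ to avoid a spurious $\SO_2$ factor), existence of the complex structure for arbitrary $\vec{n}$ follows immediately from Proposition \ref{complex}, since $\Pi_W$ always extends to $\Pi$.
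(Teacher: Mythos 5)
Your overall strategy is the right one and coincides with the paper's: reduce, via Proposition \ref{complex}, to exhibiting a fundamental system $\Pi_W$ of $(\fr{h}')^{\bb{C}}$ consisting of simple roots of $\fr{g}^{\bb{C}}$ and checking that its complement in $\Pi$ is exactly the set of black nodes in the displayed diagrams. Your bookkeeping for $\A(\vec n)$ and for the generic situation in types $\B,\Cc,\D$ (all black nodes either in the initial $\A_{n_0}$ chain or bridges between a $\U_{n_a}$ block and the next block or the tail of rank $r>0$) is correct, including the rank counts $v=n_0+s-1$ and $v=n_0+s$.

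The one place where the lemma genuinely requires an argument, however, is exactly the place you relegate to a parenthetical: the cases $r=0$, $s>0$ for $\B(\vec n)$, $\Cc(\vec n)$, $\D(\vec n)$. There the last black node $\be_{n_s}$ cannot be a bridge to a tail; it must be an end node of the Dynkin diagram, and one must check that a fundamental system of $R$ can be arranged so that the $\A_{n_a-1}$ white string of the $\SU_{n_a}$ factor sitting at the end of the diagram is adjacent to that end node. Concretely (this is the entire content of the paper's proof, which dismisses the generic cases as clear): for $\B_n$ one takes the end of the fundamental system to be $\ldots,\ \epsilon^a_2-\epsilon^a_1,\ \epsilon^a_1$, so that the short end root $\epsilon^a_1\in R_F$ is black and the long roots preceding it form the white $\SU_{n_a}$ string; for $\Cc_n$ the long end root $2\epsilon^a_1$ plays this role; and for $\D_n$ one arranges that one fork tip, $\epsilon^a_2-\epsilon^a_1$, is the last \emph{white} root of the $\SU_{n_a}$ string while the other fork tip $\epsilon^a_2+\epsilon^a_1\in R_F$ is the black node. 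Until this verification is carried out, your ``uniform dictionary'' does not yet yield the final assertion of the lemma, namely that a standard complex structure exists for \emph{every} flag manifold of a classical group; the rest of your argument is routine and correct.
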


     Koszul numbers    of  classical flag manifolds    had  been described  in  \cite{AP} in terms of      painted   Dynkin   diagrams.  A revised version of the algorithm  in \cite{AP} is given as follows:
             \begin{prop}  \label{APP} \textnormal{({\bf Revised version of} \cite{AP, Ale1})} Let $F = G/H$ be  a flag  manifold  of  a  classical Lie  group $\A_{n} , \B_n, \Cc_n, \D_n$   with an invariant  complex  structure $ J$.   Then, the  Koszul number $k_j$   associated  with   the black  simple  root $\be_{j}\in \Pi_B$   equals to $2 + b_j$,  where   $b_j$ is  the number of  white  roots   connected  with   the  black  root   by a  string of   white  roots  with  the   following  exceptions:

 (a) For $\B_{n}$,   each  long white  root of  the  last  white  string  which  corresponds  to  the simple factor  $\SO_{2r +1}$   is  counted  with multiplicity two, and  the last  short    white   root  is  counted  with multiplicity one.  If the  last simple root is painted black, i.e.   $\be_{s_{n}}=\al_{n}$, then the coefficient $b_{n_{s}}$ is twice the number of white roots which are connected with this root.

 (b) For  $\Cc_{n}$,  each     root of  the  last  white  string  which corresponds  to the factor  $\Sp_r$ is  counted  with multiplicity  two.

 (c) For $\D_{n}$, the last white  chain which defines the root system of $\D_{r}=\SO_{2r}$ is considered as a chain of length $2(r-1)$. If $r=0$ and  one of  the last right   roots is  white  and  other is  black,  then the  Koszul number $k_{n_{s}}$ associated to this black end root $\beta_{n_{s}}$,   is   $2(n_s-1)$. 
  \end{prop}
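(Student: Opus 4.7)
My starting point is the identity $k_j = 2+b_j$ with $b_j = -\langle 2\sigma_H,\beta_j^\vee\rangle$, which I will rewrite using the standard normalization $\langle\lambda,\beta^\vee\rangle:=\tfrac{2(\lambda,\beta)}{(\beta,\beta)}$. Since $\langle 2\sigma_G,\beta_j^\vee\rangle=2$ for any simple root $\beta_j$, the lemma preceding Proposition \ref{chernclass} gives
\begin{equation*}
k_j \;=\; \langle 2\sigma_G-2\sigma_H,\beta_j^\vee\rangle \;=\; 2 \;-\; \sum_{\alpha\in R_H^+}\langle\alpha,\beta_j^\vee\rangle,
\end{equation*}
so the task is to compute the sum $\sum_{\alpha\in R_H^+}\langle\alpha,\beta_j^\vee\rangle$ for each black root $\beta_j\in\Pi_B$. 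The key reduction is that $\langle\alpha,\beta_j^\vee\rangle=0$ whenever the $\beta_j$-string through $\alpha$ is trivial; in particular, only positive white roots lying in the \emph{connected white subchain} $C$ of $\Pi_W$ containing a neighbour of $\beta_j$ in the Dynkin diagram of $\Pi$ can contribute. Thus each $b_j$ decomposes as a sum over the (at most two) white chains adjacent to $\beta_j$, and each summand depends only on the type of that chain and on which endpoint is attached to $\beta_j$.

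Next I would treat each classical white chain separately, using the description in Lemma \ref{wellknown1}. Write $\Pi_C$ for the simple roots of a white chain $C$, and let $\gamma\in\Pi_C$ be the white root directly connected to $\beta_j$ (either by a single bond, as in types $\A_n,\D_n$, or by the marked arrow in $\B_n,\Cc_n$). If $C$ is of type $\A_m$ with $\gamma$ at the extreme end, the positive roots of $C$ meeting $\gamma$ nontrivially form an arithmetic string of $m$ roots $\gamma,\gamma+\gamma',\gamma+\gamma'+\gamma'',\ldots$; for a simple bond $\gamma$–$\beta_j$, the Cartan integers $\langle\alpha,\beta_j^\vee\rangle$ telescope to give $-m$, so this chain contributes $m=\sharp(\Pi_C)$ to $b_j$. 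This is the generic case described in the main statement: $b_j$ equals the number of white roots linked to $\beta_j$ by a string of white roots.

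The special cases (a), (b), (c) arise exactly when the $\beta_j$-$\gamma$ bond is not simple, or when the chain has a branch. For (a), if $C$ is the last white chain and is of type $\B_r=\SO_{2r+1}$, the $\beta_j$-string through a long root $\alpha\in R_H^+$ is one longer than in type $\A$, producing a Cartan integer whose absolute value is doubled, while the single short root at the end contributes only $1$; a parallel computation with the ratio $(\beta_j,\beta_j)/(\text{short root})^2$ handles the subcase $\be_{n_s}=\al_n$, where the black end root is short and all long white roots in the chain double their contribution. Case (b) is analogous for $\Cc_n$: every white root of the last $\Sp_r$ chain belongs to a longer $\beta_j$-string because of the double bond, and each one is counted with multiplicity two. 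Case (c) for $\D_n$ follows from the fork structure: the last chain is $\D_r$ whose positive root system has cardinality $2(r-1)$ when intersected with the $\beta_j$-strings through $\beta_{n_s}$, yielding the $2(r-1)$ count; the $r=0$ degenerate subcase, where one of the two fork ends is painted black, is obtained by observing that $\sigma_H$ then contains the long string of type $\D_{n-n_s+1}$ attached to $\beta_{n_s}$, producing $b_{n_s}=2(n_s-1)$ directly.

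\textbf{Main obstacle.} The analytically clean part is the $\A$-type chain calculation; what I expect to require most care is the $\B,\Cc,\D$ arrow/fork bookkeeping, because there one must keep track of the ratio of squared root lengths throughout the telescoping, and one must verify that the convention of associating the last chain to the \emph{rightmost} factor in $H$ (as in Lemma \ref{wellknown1}) is consistent with the decomposition of $R_H^+$ used to split the sum. Once these normalizations are fixed, the proof reduces to a finite enumeration of endpoint configurations of $\beta_j$ in the standard painted Dynkin diagram, each handled by an explicit computation with the ambient basis $\{\theta_j,\epsilon^a_i,\pi_k\}$ described before the statement.
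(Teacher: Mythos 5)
The paper itself gives no proof of Proposition \ref{APP} (it is quoted as a corrected form of the algorithm of \cite{AP, Ale1}), so your argument has to stand on its own. Your reduction is the right one and is correct as far as it goes: $k_j=\langle 2\sigma_G-2\sigma_H,\beta_j^{\vee}\rangle=2-\sum_{\alpha\in R_H^{+}}\langle\alpha,\beta_j^{\vee}\rangle$, only the white chains adjacent to $\beta_j$ can contribute, and for an $\A_m$-chain attached to $\beta_j$ at an extreme end by a single bond the $m$ positive roots through the attaching node each satisfy $\langle\alpha,\beta_j^{\vee}\rangle=-1$, giving the generic count $b_j=m$ (this is a sum of $m$ copies of $-1$ rather than a telescoping sum, but the value is right).

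The gap is in the exceptional cases, where the mechanism you invoke is not the one actually at work. In case (a) with $\beta_j$ long and the tail of type $\B_r$, no Cartan integer is doubled: writing $\beta_j=e_0-e_1$, every positive root of the tail has $e_1$-coefficient $0$ or $1$, so $\langle\alpha,\beta_j^{\vee}\rangle\in\{0,-1\}$; the factor two comes instead from the tail containing $2(r-1)+1$ positive roots through the attaching node, namely $e_1\pm e_j$ and $e_1$, rather than $r$ of them. (Your ``doubled Cartan integer'' mechanism is correct only in the sub-case $\be_{n_s}=\al_n$ black and short, where white long roots give $\langle\alpha,\beta_j^{\vee}\rangle=-2$.) Similarly in case (b) only the single long root $2e_1$ has a $\beta_j$-string of length two; the other $2(r-1)$ units again come from doubling the number of contributing roots, not their Cartan integers. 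Most seriously, your account of case (c) with $r=0$ is wrong: $R_H^{+}$ contains no $\D$-type subsystem there. The black fork-end $\beta_{n_s}=\al_n=e_{n-1}+e_n$ is attached to the \emph{penultimate} node of a white $\A_{n_s-1}$ chain, exactly $2(n_s-2)$ of whose positive roots (those of the form $e_i-e_{n-1}$ and $e_i-e_n$ with $i\le n-2$; note $e_{n-1}-e_n$ pairs to zero) are non-orthogonal to it, each with Cartan integer $-1$, so $b_{n_s}=2(n_s-2)$ and $k_{n_s}=2+b_{n_s}=2(n_s-1)$. Your claim $b_{n_s}=2(n_s-1)$ would give $k_{n_s}=2n_s$, contradicting the proposition (and the value $2n-2$ of the first Chern class coefficient of $\SO_{2n}/\U_n$). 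The repair is uniform: drop the string-length heuristic and, in the coordinates $\{\theta_j,\epsilon^a_i,\pi_k\}$ fixed before Lemma \ref{wellknown1}, enumerate for each endpoint configuration of $\beta_j$ the positive roots of the adjacent white subsystem that are non-orthogonal to $\beta_j$ together with their individual Cartan integers; the stated multiplicities then drop out case by case.
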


A direct computation yields now the following   description of the Koszul vector.
  \begin{corol} \label{CLASFLAGS}   The  Koszul vector $\vec{k}:= (k_1, \cdots, k_v)\in\bb{Z}_{+}^{v}$   associated to  the  standard   complex  structure $J_0$ on  a    flag manifold   $G(\vec n)$  of  classical  type,
 is given  by
\[
\begin{array}{ccl}
 \A(\vec{n}):   & \,\, \vec{k} = & (2,\cdots, 2, 1+ n_1, n_1+n_2, \cdots,  n_{s-1}+ n_s ), \\
  \B(\vec{n}):   & \,\, \vec{k} = &(2,\cdots, 2, 1+ n_1, n_1+n_2, \cdots,  n_{s-1}+ n_s, n_s+ 2r ), \\
  \Cc(\vec{n}):   & \,\, \vec{k} = &(2,\cdots, 2, 1+ n_1, n_1+n_2, \cdots,  n_{s-1}+ n_s, n_s+2r +1), \\
  \D(\vec{n}):   & \,\, \vec{k} = &(2,\cdots, 2, 1+ n_1, n_1+n_2, \cdots,  n_{s-1}+ n_s, n_s+2r-1).
 \end{array}
\]
If $r=0$,  then  the  last Koszul number (over  the  end   black root)  is $2n_s $  for $\B(\vec{n})$, $n_s+1$  for  $\Cc(\vec{n})$ and $2(n_{s}-1)$ for $\D(\vec{n})$.
\end{corol}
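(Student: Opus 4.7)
The proof is a direct case-by-case computation: for each of the four classical families, apply the algorithm in Proposition \ref{APP} to the standard painted Dynkin diagram described in Lemma \ref{wellknown1}. Since the Koszul number attached to a black simple root $\be_j \in \Pi_B$ equals $2 + b_j$, where $b_j$ is essentially the length of the white string attached to $\be_j$ (modified by the specific multiplicity rules of Proposition \ref{APP} near the right end of the diagram), the whole proof reduces to reading off these attached white chains from the standard PDD.

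First I would treat the portion of the diagram that is common to all four types. The first $n_0-1$ black roots $\be_1,\ldots,\be_{n_0-1}$ have no adjacent white root, so $b_j=0$ and $k_j=2$ for $j=1,\ldots,n_0-1$. The root $\be_{n_0}$ sits at the boundary between the initial black chain and the first white string of length $n_1-1$, hence it is connected to exactly $n_1-1$ white roots, giving $k_{n_0}=2+(n_1-1)=1+n_1$. Each middle isolated black root $\be_{n_a}$ (for $1\le a\le s-1$) is flanked by the white chain of length $n_a-1$ on the left and the white chain of length $n_{a+1}-1$ on the right, so $b_{n_a}=(n_a-1)+(n_{a+1}-1)$ and $k_{n_a}=n_a+n_{a+1}$. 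This already yields all Koszul numbers for $\A(\vec{n})$, since in that case the rightmost end root $\be_{n_{s-1}}$ plays the role of the last isolated black root and no special end-rule intervenes.

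Next I would compute $k_{n_s}$ for $\B(\vec{n}), \Cc(\vec{n}), \D(\vec{n})$ with $r>0$, applying the modified multiplicity rules of Proposition \ref{APP} to the rightmost white chain associated to the simple factor $\SO_{2r+1}$, $\Sp_r$ or $\SO_{2r}$. For $\B(\vec{n})$, the last white string consists of $r-1$ long roots each counted twice and one short root counted once, so it contributes $2(r-1)+1=2r-1$; adding the $n_s-1$ white roots on the left of $\be_{n_s}$ gives $b_{n_s}=(n_s-1)+(2r-1)$ and $k_{n_s}=n_s+2r$. For $\Cc(\vec{n})$, the last white string of length $r$ is fully double-counted contributing $2r$, whence $k_{n_s}=2+(n_s-1)+2r=n_s+2r+1$. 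For $\D(\vec{n})$, the last white string associated to $\SO_{2r}$ is treated as a chain of length $2(r-1)$, giving $k_{n_s}=2+(n_s-1)+2(r-1)=n_s+2r-1$.

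Finally, the subcase $r=0$ requires separate attention: then $\be_{n_s}$ is the rightmost node of the diagram, the special end-root rules of Proposition \ref{APP}(a)--(c) apply directly, and one obtains $k_{n_s}=2n_s$ for $\B(\vec{n})$, $k_{n_s}=n_s+1$ for $\Cc(\vec{n})$ and $k_{n_s}=2(n_s-1)$ for $\D(\vec{n})$. The only delicate step in the whole argument, and what I expect to be the main obstacle, is correctly tracking the different multiplicities in the rightmost white string for $\B_n, \Cc_n, \D_n$—in particular the $\D_n$-rule, where the forked end and the extra condition $r\neq 1$ force a careful bookkeeping of how the two end roots relate to $\be_{n_s}$; everything else is a mechanical reading of the diagrams in Lemma \ref{wellknown1}.
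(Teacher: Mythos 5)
Your proposal is correct and is exactly the argument the paper intends: the corollary is stated as following by ``a direct computation'' from Proposition \ref{APP} applied to the standard painted Dynkin diagrams of Lemma \ref{wellknown1}, and your case-by-case reading of the white strings (including the end-of-diagram multiplicity rules for $\B_n$, $\Cc_n$, $\D_n$ and the $r=0$ subcases) reproduces that computation faithfully.
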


Due to  Proposition \ref{chernclass2} and Corollary \ref{CLASFLAGS} we get  the   following  classification of  spin  flag manifolds  (the same conclusions hold also  for $G$-metaplectic structures):
  \begin{theorem}\label{apl1}
(a) The   flag manifold $\A(\vec{n})$ with $n_{0}>0$ is $G$-spin  if and only if all the numbers $n_{1}, \ldots, n_{s}$ are odd. If  $n_{0}=0$, then $\A(\vec{n})$ is $G$-spin, if and only if   the numbers $n_{1}, \ldots, n_{s}$ have the same parity, i.e. they are all odd or all even.

(b)   The flag manifold $\B(\vec{n})$ with $n_{0}>0$ and $r>0$ does  not admit  a ($G$-invariant) spin structure. If $n_{0}>0$ and $r=0$, then  $\B(\vec{n})$  is $G$-spin, if and only if all the numbers $n_{1}, \ldots, n_{s}$ are odd. If $n_{0}=0$ and $r>0$, then $\B(\vec{n})$  is $G$-spin if and only if all the numbers $n_{1}, \ldots, n_{s}$ are even. Finally, for $n_{0}=0=r$, the flag manifold  $\B(\vec{n})$  is $G$-spin if and only if all the numbers $n_{1}, \ldots, n_{s}$ have the same parity.

(c) The flag manifold $\Cc(\vec{n})$ with $n_{0}>0$ is $G$-spin, if and only if all the numbers $n_{1}, \ldots, n_{s}$ are odd, independently of $r$. The same holds if $n_{0}=0$.

(d) The flag manifold $\D(\vec{n})$ with $n_{0}>0$ is $G$-spin, if and only if all the numbers $n_{1}, \ldots, n_{s}$ are odd, independently of $r$.  If $n_{0}=0$  and $r>0$, then $\D(\vec{n})$ is $G$-spin, if and only if all the numbers $n_{1}, \ldots, n_{s}$ are odd. Finally, for  $n_{0}=0=r$, the flag manifold $\D(\vec{n})$ is $G$-spin, if and only if
 the numbers $n_{1}, \ldots, n_{s}$ have the same parity.
\end{theorem}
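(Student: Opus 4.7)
The plan is to combine Proposition \ref{chernclass2} with Corollary \ref{CLASFLAGS}: the flag manifold $G(\vec n)$ carries a (necessarily unique) $G$-invariant spin or metaplectic structure if and only if every entry of the Koszul vector $\vec k=(k_1,\ldots,k_v)$ is even. Thus the theorem reduces entirely to a parity analysis of the explicit lists of Koszul numbers provided in Corollary \ref{CLASFLAGS}, together with the modified final entries $2n_s$, $n_s+1$ and $2(n_s-1)$ that arise for $\B,\Cc,\D$ when $r=0$ (coming from the exceptions in Proposition \ref{APP}).

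The next step is to isolate the structural contributions to $\vec k$ and extract from each a single parity condition: the leading block of $2$'s (which appears precisely when $n_0>0$) is already even and yields no constraint; the entry $1+n_1$, which appears if and only if $n_0>0$, is even iff $n_1$ is odd; the middle entries $n_i+n_{i+1}$ for $i=1,\ldots,s-1$ are even iff the $n_j$ share a common parity; and the final entry, depending on the type, is even iff $n_s$ has the parity dictated by the tail of the painted Dynkin diagram (namely any parity for $\B$ when $r=0$ or $\D$ when $r=0$, opposite parity for $\B$ when $r>0$, etc.).

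With these four independent observations the four series are handled uniformly. For $\A(\vec n)$, the $n_0>0$ case forces $n_1$ odd via $1+n_1$ and then propagates oddness to all $n_j$ by the same-parity constraint, while for $n_0=0$ the constraint reduces to common parity. For $\B(\vec n),\Cc(\vec n),\D(\vec n)$ one adds the parity of the last entry ($n_s+2r$, $n_s+2r+1$, $n_s+2r-1$ for $r>0$, or $2n_s$, $n_s+1$, $2(n_s-1)$ for $r=0$) and combines it with the previous constraints. The non-existence assertion for $\B(\vec n)$ with $n_0>0$ and $r>0$ follows by contradiction: $1+n_1$ even forces $n_1$ odd, the middle entries propagate oddness up to $n_s$, but $n_s+2r$ even requires $n_s$ even. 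Every remaining sub-case is obtained by the same mechanical pairing of ``initial'' and ``terminal'' parity constraints.

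The only place where care is needed is the bookkeeping for the $r=0$ exceptions, where the last simple root is itself black and the relevant white string is absorbed into $b_{n_s}$; once one confirms from Proposition \ref{APP} that the corresponding last Koszul numbers are indeed $2n_s$, $n_s+1$ and $2(n_s-1)$, the $r=0$ cases collapse to the pure ``same parity'' conditions for $\B$ and $\D$ and to the ``all odd'' condition for $\Cc$, exactly as stated. The metaplectic statement is automatic, since Proposition \ref{chernclass2} gives both existence criteria by the same evenness condition on $\vec k$.
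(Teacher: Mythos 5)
Your proposal is correct and follows exactly the paper's own argument: reduce to Proposition \ref{chernclass2} via Corollary \ref{CLASFLAGS} and check the parity of each entry of the Koszul vector, with the $r=0$ exceptional last entries handled separately. The case-by-case parity bookkeeping you outline (including the contradiction for $\B(\vec n)$ with $n_0>0$, $r>0$) matches the paper's proof in every detail.
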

\begin{proof}
{\bf Case of   $\A(\vec{n})$.} Assume first that $n_{0}>0$. We use   Corollary \ref{CLASFLAGS} and examine the divisibility of  Koszul numbers by two. We see that the Koszul numbers $1+ n_1, n_1+n_2, \cdots,  n_{s-1}+ n_s$ are all even if and only if  all $n_{1}, \ldots, n_{s}$ are odd.
Assume now that $n_{0}=0$. Then the Koszul vector is given  by $\vec{k} =  (n_1+n_2, \cdots,  n_{s-1}+ n_s)\in\bb{Z}_{+}^{s-1}$, with $n_{1}+n_{2}$ being the Koszul number of the first black simple root $\be_{n_{1}}$. Hence, in this case $\A(\vec{n})$ is $G$-spin, if and only if all $n_{i}$ have the same parity.\\ 
 {\bf Case of   $\B(\vec{n})$.} Assume that both  $n_{0}\neq 0$ and $r\neq 0$. Then, the Koszul numbers $1+ n_1, n_1+n_2, \cdots,  n_{s-1}+ n_s$ are   even if and only if  all $n_{1}, \ldots, n_{s}$ are  odd. But  the Koszul number   of the last black root is given by $n_{s}+2r$ which is odd, independently of $r\neq 0$. Thus, the first Chern class is not even and $\B(\vec{n})$ is not   spin.  Assume  now that $n_{0}>0$ and $r=0$. Then, although the Koszul vector changes $\vec{k}=(2,\cdots, 2, 1+ n_1, n_1+n_2, \cdots,  n_{s-1}+ n_s, 2n_{s})\in\bb{Z}^{n_{0}+s}_{+}$, the first Chern class is even,    if and only if all   $n_{i}$ $(i=1, \ldots, s)$ are odd.  For   $n_{0}=0$,   by  Corollary \ref{CLASFLAGS}  the Koszul vector reads $\vec{k}=(n_1+n_2, \cdots,  n_{s-1}+ n_s, n_s+ 2r )\in\bb{Z}_{+}^{s}$, and for  $r=0$ it has the form $\vec{k}=(n_1+n_2, \cdots,  n_{s-1}+ n_s, 2n_s)$. In the first case  we deduce   that $\B(\vec{n})$ is $G$-spin, if and only if the numbers $n_{1}, \ldots, n_{s}$ are even, and in the second case, if and only if the numbers $n_{1}, \ldots, n_{s}$ have the same parity. \\
 {\bf Case of   $\Cc(\vec{n})$.} If $n_{0}>0$, then the Koszul numbers $1+n_{1}, n_1+n_2, \cdots,  n_{s-1}+ n_s$ are all even if and only if $n_{1}, \ldots, n_{s}$ are  odd. Since the last Koszul number is given by $n_{s}+2r+1$,  the first Chern class will be even, independently of $r$. The same is true  for $n_{0}>0$ and $r=0$, i.e. when the Koszul vector changes: $ \vec{k}=(2,\cdots, 2, 1+ n_1, n_1+n_2, \cdots,  n_{s-1}+ n_s, n_s+1)\in\bb{Z}^{n_{0}+s}_{+}$. If $n_{0}=0$ but $r>0$, then $\vec{k}=(n_1+n_2, \cdots,  n_{s-1}+ n_s, n_s+2r+1)\in\bb{Z}^{s}_{+}$, and   similarly  $\Cc(\vec{n})$ is $G$-spin if and only if all $n_{1}, \ldots, n_{s}$ are odd, independently of $r>0$. Finally, if $n_{0}=0=r$, then
 $ \vec{\kappa}=(n_1+n_2, \cdots,  n_{s-1}+ n_s, n_s+1)\in\bb{Z}^{s}_{+}$, which yields our assertion: the first Chern class is even if and only if all $n_{1}, \ldots, n_{s}$ are odd.\\
  {\bf Case of   $\D(\vec{n})$.} Assume first that $n_{0}>0$ and $r>0$. Then,  the Koszul numbers $1+ n_1, n_1+n_2, \cdots,  n_{s-1}+ n_s$ are all even if and only if    $n_{1}, \ldots, n_{s}$ are  odd. Since the Koszul number   of the last black root $\be_{n_{s}}$ root is given by $n_{s}+2r-1$, our claim follows. If $n_{0}>0$ but $r=0$, then the Koszul vector changes $\vec{k} = (2,\cdots, 2, 1+ n_1, n_1+n_2, \cdots,  n_{s-1}+ n_s, 2(n_s-1))\in\bb{Z}_{+}^{n_{0}+s}$.  Because  the Koszul numbers $1+ n_1, n_1+n_2, \cdots,  n_{s-1}+ n_s$ are simultaneously  even if and only if    $n_{1}, \ldots, n_{s}$ are  odd, the results follows due to $\vec{k}$.  Assume now that $n_{0}=0$ and $r>0$. Then, $\vec{k} = (n_1+n_2, \cdots,  n_{s-1}+ n_s, n_{s}+2r-1)\in\bb{Z}_{+}^{s}$ and since $2r-1$ is odd we conclude that $\D(\vec{n})$ is $G$-spin if and only if all $n_{1}, \ldots, n_{s}$ are odd. For  $n_{0}=0=r$, the  Koszul vector is given by $\vec{k} = (n_1+n_2, \cdots,  n_{s-1}+ n_s, 2(n_s-1))\in\bb{Z}_{+}^{s}$, hence the first Chern class associate to $J_{0}$ is even if and only if the numbers $n_{1}, \ldots, n_{s}$ have the same parity.
 \end{proof}
 \begin{remark}\label{exce} \textnormal{
 Given an exceptional flag manifold $F=G/H$,  a simple algorithm for the  computation of the Koszul integers   is given as follows:\\
(a) Consider the natural invariant ordering $R_{F}^{+}=R^{+}\backslash R_{H}^{+}$ induced by the splitting $\Pi=\Pi_{W}\sqcup\Pi_{B}$. Let us denote by $J_{0}$ the corresponding complex structure.  Describe  the root system $R_{H}$ and  compute $\sigma_{H}:=\frac{1}{2}\sum_{\be\in\\R_{H}^{+}}\be$ (in terms of simple roots). For the root systems of  the Lie groups $\G_2, \F_4, \E_6, \E_7, \E_8$ we use the notation of \cite{AA} (see also \cite{Freud, GOV}),  with a   difference in the enumeration of the bases of simple roots for  $\G_2$ and $\F_4$.\\
(b) Apply the formula $2(\sigma_{G}-\sigma_{H})=\sum_{\gamma\in R_{F}^{+}}\gamma:=\sigma^{J_{0}}$.   In particular,  for the exceptional simple Lie groups and with respect to   the   fixed bases  of the associated roots systems, it is  $2\sigma_{\G_{2}}=6\al_1+10\al_2$,
 \begin{eqnarray*}
2\sigma_{\F_{4}}&=&16\al_1+30\al_2+42\al_3+22\al_4,\\
2\sigma_{\E_6}&=&16\al_1+30\al_2+42\al_3+30\al_4+16\al_5+22\al_6,\\
2\sigma_{\E_7}&=&27\al_1+52\al_2+75\al_3+96\al_4+66\al_5+34\al_6+49\al_7,\\
2\sigma_{\E_8}&=&58\al_1+114\al_2+168\al_3+220\al_4+270\al_5+182\al_6+92\al_7+136\al_8.
\end{eqnarray*}
(c) Use the Cartan matrix $\cal{C}=(c_{i, j})=\big(\frac{2(\al_{i}, \al_{j})}{(\al_{j}, \al_{j})}\big)$    associated to  the basis $\Pi$ (and its enumeration), to express the simple roots   in terms of fundamental weights   via the formula $\al_{i}=\sum_{j=1}^{\ell} c_{i, j}\Lambda_{j}$. }
 \end{remark}
     \subsection{Classical spin flag  manifolds with $b_{2}=1$ or $b_{2}=2$} Let us examine which of the flag manifolds $F=G/H$ of a classical Lie group $G$, with $b_{2}(F)=1$ or $b_{2}(F)=2$ admit a spin (or metaplectic) structure. The classification of flag manifolds with $b_{2}(F)=1$ is well-known (cf. \cite{CS}).  Next  we also  classify all   classical flag manifolds with $b_{2}(F)=2$ (with respect to Lemma \ref{wellknown1}), and present the first Chern class (verifying in specific cases some previous results, which we cite).

       \begin{example}\label{cpn}\textnormal{({\bf Spin flag manifolds of $\A_{n}$ with $b_{2}=1, 2$})}

   \textnormal{(a) Consider the   Hermitian symmetric space $\SU_{n}/\Ss(\U_{p}\times \U_{n-p})$, with $1\leq p\leq n-1$.  The   painted Dynkin diagram has the form
    \[
  \begin{picture}(500, 0)(-10, 3)
    \put(166, 0.5){\circle{5}}
  \put(166, 5){\line(0,3){10}}
\put(166, 15){\line(1,0){11}}
\put(190, 15){\makebox(0,0){{\tiny{$p-1$}}}}
\put(203, 15){\line(1,0){13}}
\put(216, 15){\line(0,-3){10}}
    \put(168, 0.5){\line(1,0){14}}
     \put(180, 0){ $\ldots$}
  \put(200, 0.5){\line(1,0){13.6}}
   \put(216, 0.5){\circle{5}}
    \put(218, 0.5){\line(1,0){13.6}}
\put(232, 0.5){\circle*{5}}
  \put(232,10){\makebox(0,0){$\al_p$}}
  \put(233.9, 0.5){\line(1,0){13}}
  \put(249, 0.5){\circle{5}}
  \put(249, 5){\line(0,3){10}}
\put(249, 15){\line(1,0){5}}
\put(273, 15){\makebox(0,0){{\tiny{$n-p-1$}}}}
\put(291, 15){\line(1,0){8}}
\put(299, 15){\line(0,-3){10}}
    \put(251, 0.5){\line(1,0){13.6}}
  \put(262.5, 0){ $\ldots$}
  \put(282.5, 0.5){\line(1,0){13.6}}
   \put(299, 0.5){\circle{5}}
         \end{picture}
 \] and in terms of Corollary \ref{CLASFLAGS}, it is $n_{0}=0$, $s=2$, $n_{1}=p=n_{s-1}$, and $n_{2}=n-p=n_{s}$.  The Koszul form associated to the unique $G$-invariant complex structure $J_{0}$ is  $\sigma^{J_{0}}=n\Lambda_{p}$, in particular the Koszul number depends only on $n$. We deduce that $\SU_{n}/\Ss(\U_{p}\times \U_{n-p})$ is $G$-spin if and only if $n$ is even, see also \cite[Thm.~8]{Cahen}.  Notice that the special case $p=1$ induces the   complex projective space $\bb{C}P^{n-1}=\SU_{n}/\Ss(\U_{1}\times \U_{n-1})$. Here we get  $\sigma^{J_{0}}=n\Lambda_{1}$ and we recover    the well-known fact that $\bb{C}P^{n-1}$ is spin if and only if $n$ is even (or equivalently, $\bb{C}P^{n}$ is spin if and only if $n$ is odd, see \cite{Fried, Law}). }

 \textnormal{(b) Consider the painted Dynkin diagram
 \[
 \begin{picture}(300, 0)(-10, 3)
 \put(67,10){\makebox(0,0){$\al_{1}$}}
  \put(67, 0.5){\circle*{5}}
\put(68, 0.5){\line(1,0){13}}
\put(83, 0.5){\circle{5}}
\put(83, 5){\line(0,3){10}}
\put(83, 15){\line(1,0){11}}
\put(107, 15){\makebox(0,0){{\tiny{$p-2$}}}}
\put(119, 15){\line(1,0){12}}
\put(131, 15){\line(0,-3){10}}
 \put(85, 0.5){\line(1,0){14}}
\put(97, 0){ $\ldots$}
\put(115, 0.5){\line(1,0){14}}
\put(131, 0.5){\circle{5}}
\put(133, 0.5){\line(1,0){14}}
\put(149, 0.5){\circle*{5}}
 \put(149,10){\makebox(0,0){$\al_{p}$}}
  \put(150.9, 0.5){\line(1,0){12}}
  \put(166, 0.5){\circle{5}}
  \put(166, 5){\line(0,3){10}}
\put(166, 15){\line(1,0){4}}
\put(190, 15){\makebox(0,0){{\tiny{$n-p-1$}}}}
\put(208, 15){\line(1,0){8}}
\put(216, 15){\line(0,-3){10}}
    \put(168, 0.5){\line(1,0){14}}
     \put(180, 0){ $\ldots$}
  \put(200, 0.5){\line(1,0){13.6}}
   \put(216, 0.5){\circle{5}}
          \end{picture}
 \]
with $2\leq p \leq n-1$ and $n\geq 3$.  This gives rise to    $\SU_{n}/\U_{p-1}\times\U_{n-p}\cong \SU_{n}/\U_{1}\times\Ss(\U_{p-1}\times\U_{n-p})$, with $d=3$.  In particular,  any flag manifold of $\A_{n}$ with $b_{2}=2$ and $n_{0}>0$ is equivalent to  such a coset, for   appropriate $n, p$.  It is $n_{0}=1$, $s=2$, $n_1=p-1=n_{s-1}$, $n_{2}=n-p=n_{s}$. Thus, the Koszul vector is given by $\vec{k}=(1+n_{1}, n_{1}+n_{2})=(p, n-1)\in\bb{Z}_{+}^{2}$.   Consequently, $\SU_{n}/\U_{1}\times\Ss(\U_{p-1}\times\U_{n-p})$ is $G$-spin if and only if $n$ is odd and $p$ is even. For the special case $p=2$, i.e. $\SU_{n}/\U_{1}\times\Ss(\U_{1}\times\U_{n-2})\cong \SU_{n}/\U_{1}^{2}\times\SU_{n-2}\cong\SU_{n}/\U_{1}\times\U_{n-2}$, the Koszul vector is given by $\vec{k}=(2, n-1)$, thus there exists a $G$-invariant spin structure if and only if $n$ is odd. For $n=3$ one gets the full flag manifold $\SU_{3}/{\rm T}^{2}$.}

    \textnormal{(c) Consider the space $\SU_{n}/\Ss(\U_{p}\times\U_{q}\times\U_{n-p-q})$, with painted Dynkin diagram  \[
  \begin{picture}(400, 0)(-10, 8)
\put(83, 0.5){\circle{5}}
\put(83, 5){\line(0,3){10}}
\put(83, 15){\line(1,0){11}}
\put(107, 15){\makebox(0,0){{\tiny{$p-1$}}}}
\put(119, 15){\line(1,0){12}}
\put(131, 15){\line(0,-3){10}}
 \put(85, 0.5){\line(1,0){14}}
\put(97, 0){ $\ldots$}
\put(115, 0.5){\line(1,0){14}}
\put(131, 0.5){\circle{5}}
\put(133, 0.5){\line(1,0){14}}
\put(149, 0.5){\circle*{5}}
 \put(149,10){\makebox(0,0){$\al_{p}$}}
  \put(150.9, 0.5){\line(1,0){13}}
  \put(166, 0.5){\circle{5}}
  \put(166, 5){\line(0,3){10}}
\put(166, 15){\line(1,0){11}}
\put(190, 15){\makebox(0,0){{\tiny{$q-1$}}}}
\put(203, 15){\line(1,0){13}}
\put(216, 15){\line(0,-3){10}}
    \put(168, 0.5){\line(1,0){14}}
     \put(180, 0){ $\ldots$}
  \put(200, 0.5){\line(1,0){13.6}}
   \put(216, 0.5){\circle{5}}
    \put(218, 0.5){\line(1,0){13.6}}
\put(232, 0.5){\circle*{5}}
 \put(232,10){\makebox(0,0){$\al_{p+q}$}}
  \put(233.9, 0.5){\line(1,0){13}}
  \put(249, 0.5){\circle{5}}
   \put(249, 5){\line(0,3){10}}
\put(249, 15){\line(1,0){1}}
\put(273.5, 15){\makebox(0,0){{\tiny{$n-p-q-1$}}}}
\put(296.7, 15){\line(1,0){2.5}}
\put(299, 15){\line(0,-3){10}}
    \put(251, 0.5){\line(1,0){13.6}}
  \put(262.5, 0){ $\ldots$}
  \put(282.5, 0.5){\line(1,0){13.6}}
   \put(299, 0.5){\circle{5}}
           \end{picture}
 \]
 \vskip 0.2cm
 \noindent   It is $\rnk R_{T}=2$, in particular any  flag manifold    of $\A_{n-1}=\SU_{n}$ $(n\geq 6)$ with second Betti number $b_{2}=2$ and $n_{0}=0$, is equivalent to this family, for appropriate   $p, q$ with $2\leq p< n-3$ with $4\leq p+q\leq n-1$.  The isotropy representation  decomposes into three isotropy summands, i.e. $d=3$.  In terms of our notation, it is $n_{0}=0$, $s=3$, $n_{1}=p$, $n_{2}=q$, $n_{3}=n-p-q$ and the Koszul vector is given by   $\vec{k}=(n_{1}+n_{2}=p+q, n_{2}+n_{3}=n-p)\in\bb{Z}_{+}^{2}$ (see also \cite[Thm.~3.1, 3.2]{Kim} but be aware of a slightly different normalization of $\delta_{\fr{m}}$).
   Hence,  $\SU_{n}/\Ss(\U_{p}\times\U_{q}\times\U_{n-p-q})$ is $G$-spin if and only if $p, q, n$ have all the same parity.}
   \end{example}


  \begin{example}\label{SONFLAGS} \textnormal{({\bf Spin flag manifolds of $\B_{n}$ with $b_{2}=1, 2$})}

  \textnormal{(a) We start with the flag manifold $\SO_{2n +1}/\U_{p}\times \SO_{2(n-p)+1}$ with $2\leq p\leq n$, $n\geq 3$ and PDD
  \[
   \begin{picture}(500,0)(-10, 3)
  \put(166, 0.5){\circle{5}}
  \put(166, 5){\line(0,3){10}}
\put(166, 15){\line(1,0){11}}
\put(190, 15){\makebox(0,0){{\tiny{$p-1$}}}}
\put(203, 15){\line(1,0){13}}
\put(216, 15){\line(0,-3){10}}
    \put(168, 0.5){\line(1,0){14}}
     \put(180, 0){ $\ldots$}
  \put(200, 0.5){\line(1,0){13.6}}
   \put(216, 0.5){\circle{5}}
    \put(218, 0.5){\line(1,0){13.6}}
\put(232, 0.5){\circle*{5}}
 \put(232,10){\makebox(0,0){$\al_{p}$}}
  \put(233.9, 0.5){\line(1,0){13}}
  \put(249, 0.5){\circle{5}}
  \put(249, 5){\line(0,3){10}}
\put(249, 15){\line(1,0){22}}
\put(283, 15){\makebox(0,0){{\tiny{$n-p$}}}}
\put(294, 15){\line(1,0){20}}
\put(314, 15){\line(0,-3){10}}
  \put(251, 0.5){\line(1,0){13.6}}
  \put(262.5, 0){ $\ldots$}
  \put(280, 0.5){\line(1,0){13}}
    \put(295, 0.5){\circle{5}}
  \put(297, 1.1){\line(1,0){14}}
\put(297, -0.6){\line(1,0){14}}
\put(301, -1.5){\scriptsize $>$}
\put(314, 0.5){\circle{5}}
         \end{picture}
  \]
 and since $\Hgt(\al_{p})=2$ the isotropy representation decomposes into 2 irreducible submodules.  In terms of Corollary \ref{CLASFLAGS}, it is $n_{0}=0$, $s=1$, $n_{1}=p=n_{s}$ and $r=n-p$. For $p< n$, the unique  Koszul number is given by $n_{s}+2r=p+2(n-p)$, hence $\sigma^{J_{0}}=(2n-p)\Lambda_{p}$ and  $\B(\vec{n})$ is $G$-spin if and only if $2n-p$  is even, which is equivalent to say that $p$ is even.  For $p=n$, the Koszul vector is different, in particular one gets the manifold $\SO_{2n+1}/\U_{n}$. In this case,  the last simple root is black  and this results to the expression of the Koszul number;    $\sigma^{J}=2n\Lambda_{n}$. Thus  $\SO_{2n+1}/\U_{n}$ is $G$-spin for any $n$. 
 Finally, the missing case $p=1$, i.e. $\al_{p}=\al_{1}$ induces the isotropy irreducible Hermitian symmetric space $\SO_{2n+1}/\SO_{2}\times \SO_{2n-1}$ (since $\Hgt(\al_{1})=1$ for $G=\B_{n}$). In our notation, it is $n_{0}=1$, $s=1$, $n_{1}=1=n_{s}$ and $r=n-1$. The Koszul form is given by $\sigma^{J_{0}}=(n_{s}+2r)\Lambda_{1}=(2n-1)\Lambda_{1}$ and this Hermitian symmetric space is not spin, see also \cite[Thm.~8]{Cahen}.  }

     \textnormal{(b) Let $\B(\vec{n})=\SO_{2n+1}/\U_{1}\times\U_{1}\times\SO_{2n-3}$, with $n\geq 3$. The painted Dynkin diagram is given by
  \[
   \begin{picture}(700,0)(0, 3)
      \put(216, 0.5){\circle*{5}}
       \put(216,10){\makebox(0,0){$\al_{1}$}}
    \put(218, 0.5){\line(1,0){13.6}}
\put(232, 0.5){\circle*{5}}
 \put(232,10){\makebox(0,0){$\al_{2}$}}
  \put(233.9, 0.5){\line(1,0){13}}
  \put(249, 0.5){\circle{5}}
  \put(249, 5){\line(0,3){10}}
\put(249, 15){\line(1,0){22}}
\put(283, 15){\makebox(0,0){{\tiny{$n-2$}}}}
\put(294, 15){\line(1,0){20}}
\put(314, 15){\line(0,-3){10}}
  \put(251, 0.5){\line(1,0){13.6}}
  \put(262.5, 0){ $\ldots$}
  \put(280, 0.5){\line(1,0){13}}
    \put(295, 0.5){\circle{5}}
  \put(297, 1.1){\line(1,0){14}}
\put(297, -0.6){\line(1,0){14}}
\put(301, -1.5){\scriptsize $>$}
\put(314, 0.5){\circle{5}}
         \end{picture}
  \]
 This is a flag manifold with $\rnk R_{T}=2$ and the isotropy representation decomposes into  four irreducible submodules, $d=\sharp(R_{T}^{+})=4$.  The Koszul form for $J_{0}$ has been computed in \cite{Chry2}; $\sigma^{J_{0}}=2\Lambda_{1}+(2n-3)\Lambda_{2}$.  Indeed, in terms of Corollary \ref{CLASFLAGS} it is $n_{0}=1$, $s=1$, $n_{1}=1=n_{s}$ and $r=\ell-2$, thus the Koszul vector is given by $  \vec{k}=(2, n_{s}+2r)=(2, 2n-3)\in\bb{Z}_{+}^{2}$.       Therefore, for $n\geq 3$  the space $\B(\vec{n})$ is not spin.  Notice that for $n=2$,   $\B(\vec{2})$ coincides with the full flag manifold $\SO_{5}/{\rm T}^{2}$, which still has 4 isotropy summands. However, in this case the Koszul vector changes, since the last simple root is painted black, namely $\vec{\kappa}=(2, 2)\in\bb{Z}_{+}^{2}$ and $\B(\vec{2})$ is spin, see also Example \ref{FULL}.}

    \textnormal{(c) Let us draw now the painted Dynkin diagram
  \[
    \begin{picture}(460,0)(-10, 3)
     \put(148, 0.5){\circle*{5}}
      \put(148,10){\makebox(0,0){$\al_{1}$}}
  \put(150, 0.5){\line(1,0){13}}
  \put(166, 0.5){\circle{5}}
  \put(166, 5){\line(0,3){10}}
\put(166, 15){\line(1,0){11}}
\put(190, 15){\makebox(0,0){{\tiny{$p-1$}}}}
\put(201, 15){\line(1,0){15}}
\put(216, 15){\line(0,-3){10}}
    \put(168, 0.5){\line(1,0){14}}
     \put(180, 0){ $\ldots$}
  \put(200, 0.5){\line(1,0){13.6}}
   \put(216, 0.5){\circle{5}}
    \put(218, 0.5){\line(1,0){13.6}}
\put(232, 0.5){\circle*{5}}
 \put(232,10){\makebox(0,0){$\al_{p+1}$}}
  \put(233.9, 0.5){\line(1,0){13}}
  \put(249, 0.5){\circle{5}}
  \put(249, 5){\line(0,3){10}}
\put(249, 15){\line(1,0){16}}
\put(283, 15){\makebox(0,0){{\tiny{$n-p-1$}}}}
\put(301, 15){\line(1,0){13}}
\put(314, 15){\line(0,-3){10}}
  \put(251, 0.5){\line(1,0){13.6}}
  \put(262.5, 0){ $\ldots$}
  \put(280, 0.5){\line(1,0){13}}
    \put(295, 0.5){\circle{5}}
  \put(297, 1.1){\line(1,0){14}}
\put(297, -0.6){\line(1,0){14}}
\put(301, -1.5){\scriptsize $>$}
\put(314, 0.5){\circle{5}}
         \end{picture}
    \]
 which determines the flag space $\SO_{2n+1}/\U_{1}\times\U_{p}\times\SO_{2n-2p-1}$ with $n\geq 3$ and $2\leq p\leq n-1$, with five isotropy summands \cite{ACS1}. The Koszul form has been computed in \cite{ACS1} for $p\neq n-1$; it is given by $\sigma^{J_{0}}=(p+1)\Lambda_{1}+(2n-p-2)\Lambda_{p+1}$. In our notation it is $n_{0}=1$, $s=1$, $n_{1}=p=n_{s}$, $r=n-p-1$. Thus, for $r\neq 0$ the Koszul vector reads $ \vec{k}=(1+n_{1}, n_{s}+2r)=(p+1, 2n-p-2)$          and the associated  flag manifold is not  spin.   In the special case $r=0$, i.e. $p=n-1$,  one takes the painted Dynkin diagram
   $
    \begin{picture}(93,0)(225, -2)
 \put(232, 0.5){\circle*{5}}
  \put(233.9, 0.5){\line(1,0){13}}
  \put(249, 0.5){\circle{5}}
  \put(251, 0.5){\line(1,0){13.6}}
  \put(262.5, 0){ $\ldots$}
  \put(280, 0.5){\line(1,0){13}}
    \put(295, 0.5){\circle{5}}
  \put(297, 1.1){\line(1,0){14}}
\put(297, -0.6){\line(1,0){14}}
\put(301, -1.5){\scriptsize $>$}
\put(314, 0.5){\circle*{5}}
         \end{picture},
   $     with $\Pi_{M}=\{\al_1, \al_{n}\}$.  It defines the space $\SO_{2n+1}/\U_{1}\times\U_{n-1}$.  In this case, the last simple root is painted black, so the Koszul vector is given by $\vec{k} =(1+n_1, 2n_{s})=(n, 2(n-1))$.
     Hence,  $\SO_{2n+1}/\U_{1}\times\U_{n-1}$ is $G$-spin if and only if $n$ is even.  }

  \textnormal{(d) The final class of flag manifolds of $\B_{n}$ with $b_{2}(M)=2=\rnk R_{T}$ is given by the painted Dynkin diagram
  \[
   \begin{picture}(400,0)(-10, -3)
 \put(83, 0.5){\circle{5}}
\put(83, 5){\line(0,3){10}}
\put(83, 15){\line(1,0){10}}
\put(107, 15){\makebox(0,0){{\tiny{$p-1$}}}}
\put(120, 15){\line(1,0){11}}
\put(131, 15){\line(0,-3){10}}
 \put(85, 0.5){\line(1,0){14}}
\put(97, 0){ $\ldots$}
\put(115, 0.5){\line(1,0){14}}
\put(131, 0.5){\circle{5}}
\put(133, 0.5){\line(1,0){14}}
\put(149, 0.5){\circle*{5}}
 \put(149,10){\makebox(0,0){$\al_{p}$}}
  \put(150.9, 0.5){\line(1,0){13}}
  \put(166, 0.5){\circle{5}}
  \put(166, 5){\line(0,3){10}}
\put(166, 15){\line(1,0){15}}
\put(192, 15){\makebox(0,0){{\tiny{$q-1$}}}}
\put(205, 15){\line(1,0){11}}
\put(216, 15){\line(0,-3){10}}
    \put(168, 0.5){\line(1,0){14}}
     \put(180, 0){ $\ldots$}
  \put(200, 0.5){\line(1,0){13.6}}
   \put(216, 0.5){\circle{5}}
    \put(218, 0.5){\line(1,0){13.6}}
\put(232, 0.5){\circle*{5}}
 \put(232,10){\makebox(0,0){$\al_{p+q}$}}
  \put(233.9, 0.5){\line(1,0){13}}
  \put(249, 0.5){\circle{5}}
  \put(249, 5){\line(0,3){10}}
\put(249, 15){\line(1,0){16}}
\put(283, 15){\makebox(0,0){{\tiny{$n-p-q$}}}}
\put(302, 15){\line(1,0){12}}
\put(314, 15){\line(0,-3){10}}
  \put(251, 0.5){\line(1,0){13.6}}
  \put(262.5, 0){ $\ldots$}
  \put(280, 0.5){\line(1,0){13}}
    \put(295, 0.5){\circle{5}}
  \put(297, 1.1){\line(1,0){14}}
\put(297, -0.6){\line(1,0){14}}
\put(301, -1.5){\scriptsize $>$}
\put(314, 0.5){\circle{5}}
         \end{picture}
  \]
\noindent with $2\leq p\leq  n-2$, $4\leq p+q\leq n$ and $n\geq 4$,  such that $\Hgt(\al_{p})=2=\Hgt(\al_{p+q})$.  It gives rise to $\SO_{2n+1}/\U_{p}\times\U_{q}\times\SO_{2(n-p-q)+1}$, which   is a flag manifold with six isotropy summands. It is $n_{0}=0$, $s=2$, $n_{1}=p=n_{s-1}$, $n_{2}=q=n_{s}$, $r=n-p-q$.  Assume that $q\neq n-p$, i.e. that the black root $\al_{p+q}$ is not the end simple root. Then, the Koszul vector is given by $\vec{k}=(n_{1}+n_{2}=p+q, n_{s}+2r=2n-2p-q)$ and  this coset is $G$-spin if and only if both $p, q$ are even, independent of $n$. Suppose now that $q=n-p$. In this case we take the flag manifold $\SO_{2n+1}/\U_{p}\times \U_{n-p}$ with $n\geq 4$ and $2\leq p\leq n$ which still has $d=6$ isotropy summands, but the Koszul vector changes. In particular, since $n_{0}=0$, $n_{1}=p=n_{s-1}$, $n_{2}=n-p=n_{s}$ and $r=0$, we conclude that  $\vec{k}=(n_{1}+n_{2}, 2n_{s})=(n, 2(n-p))\in\bb{Z}_{+}^{2}$.  Thus, this coset is $G$-spin if and only if $n$ is even, independently of $p$. Such an interesting case occurs already for $n=4$ and $\B_{4}$. Consider for example  the painted Dynkin diagram
\[
  \begin{picture}(600,0)(-10,  2)
    \put(216, 0.5){\circle{5}}
    \put(218, 0.5){\line(1,0){13.6}}
\put(232, 0.5){\circle*{5}}
 \put(232,10){\makebox(0,0){$\al_{2}$}}
  \put(233.9, 0.5){\line(1,0){13}}
  \put(249, 0.5){\circle{5}}
\put(251, 1.1){\line(1,0){13.6}}
\put(251, -0.6){\line(1,0){13.6}}
\put(255, -1.5){\scriptsize $>$}
 \put(267, 0.5){\circle*{5}}
  \put(267,10){\makebox(0,0){$\al_{4}$}}
          \end{picture}
\] It gives rise to the flag manifold $F=\SO_{9}/\U_{2}\times\U_{2}$. It is $R_{H}^{+}=\{\al_1, \al_3\}$ and $2\sigma_{H}=\al_1+\al_3$. Since $2\sigma_{\SO_{9}}=7\al_1+12\al_2+15\al_3+16\al_4$, we conclude that $\sigma^{J_{0}}=6\al_1+12\al_2+14\al_3+16\al_4$. Using the Cartan matrix of $\SO_{9}$ we finally get   $\sigma^{J_{0}}=4\Lambda_{2}+4\Lambda_{4}$. Thus $F$ admits a unique spin structure.}
  \end{example}

 \begin{example}\label{SpFLAGS} \textnormal{({\bf Spin flag manifolds of $\Cc_{n}$ with $b_{2}=1, 2$})}

 \textnormal{(a) We start with  the flag manifold $\Cc(\vec{n})=\Sp_{n}/\U_{p}\times\Sp_{n-p}$ with $1\leq p\leq n-1$ and PDD
  \[
   \begin{picture}(520,0)(-10, 3)
  \put(166, 0.5){\circle{5}}
  \put(166, 5){\line(0,3){10}}
\put(166, 15){\line(1,0){11}}
\put(190, 15){\makebox(0,0){{\tiny{$p-1$}}}}
\put(203, 15){\line(1,0){13}}
\put(216, 15){\line(0,-3){10}}
    \put(168, 0.5){\line(1,0){14}}
     \put(180, 0){ $\ldots$}
  \put(200, 0.5){\line(1,0){13.6}}
   \put(216, 0.5){\circle{5}}
    \put(218, 0.5){\line(1,0){13.6}}
\put(232, 0.5){\circle*{5}}
 \put(232,10){\makebox(0,0){$\al_{p}$}}
  \put(233.9, 0.5){\line(1,0){13}}
  \put(249, 0.5){\circle{5}}
  \put(249, 5){\line(0,3){10}}
\put(249, 15){\line(1,0){22}}
\put(283, 15){\makebox(0,0){{\tiny{$n-p$}}}}
\put(294, 15){\line(1,0){20}}
\put(314, 15){\line(0,-3){10}}
  \put(251, 0.5){\line(1,0){13.6}}
  \put(262.5, 0){ $\ldots$}
  \put(280, 0.5){\line(1,0){13}}
    \put(295, 0.5){\circle{5}}
  \put(297, 1.1){\line(1,0){14}}
\put(297, -0.6){\line(1,0){14}}
\put(301, -1.5){\scriptsize $<$}
\put(314, 0.5){\circle{5}}
         \end{picture}
  \]
 It is  $\Pi_{B}=\{\al_{p} : 1\leq p\leq n-1\}$ and hence  $\Hgt(\al_{p})=2$ and $\fr{m}=\fr{m}_{1}\oplus\fr{m}_{2}$.  In terms of Corollary \ref{CLASFLAGS} it is $n_{0}=0$, $s=1$, $n_{1}=p=n_{s}$ and $r=n-p$.  Thus, the Koszul form is given by  $\sigma^{J}=n_{s}+2r+1=(2n-p+1)\Lambda_{p}$, so $\Cc(\vec{n})$ is spin if and only if $2n-p+1$   is even, which is equivalent to say that $p$ is odd.
   An important case here occurs for $p=1$, which defines the complex projective space $\bb{C}P^{2n-1}=\Sp_{n}/\U_{1}\times\Sp_{n-1}$. In particular,  for $p=1$  we see that $\sigma^{J_{0}}=2n\Lambda_{1}$ and $\bb{C}P^{2n-1}$ is spin for any $n$, as it should be since the coset $\Sp_{n}/\U_{1}\times\Sp_{n-1}$  gives  rise to   odd dimensional complex projective spaces. It worths also to remark the  exceptional case $p=n$, which induces the isotropy irreducible Hermitian symmetric space $\Sp_{n}/\U_{n}$ (since $\Hgt(\al_{n})=1$ for $G=\Sp_{n}$).  In this case $n_{0}=0$, $s=1$, $n_{1}=n=n_{s}$, $r=0$, and the unique Koszul number is given by $k_{n}=n_{s}+1=n+1$, i.e. $\sigma^{J_{0}}=(n+1)\Lambda_{n}$. Hence, $\Sp_{n}/\U_{n}$ is spin, if and only if $n$ is odd, see also \cite[Thm.~8]{Cahen}.}

  \textnormal{(b) Consider the painted Dynkin diagram
  \[
   \begin{picture}(480,0)(-10, 3)
  \put(166, 0.5){\circle{5}}
  \put(166, 5){\line(0,3){10}}
\put(166, 15){\line(1,0){11}}
\put(190, 15){\makebox(0,0){{\tiny{$p-1$}}}}
\put(203, 15){\line(1,0){13}}
\put(216, 15){\line(0,-3){10}}
    \put(168, 0.5){\line(1,0){14}}
     \put(180, 0){ $\ldots$}
  \put(200, 0.5){\line(1,0){13.6}}
   \put(216, 0.5){\circle{5}}
    \put(218, 0.5){\line(1,0){13.6}}
\put(232, 0.5){\circle*{5}}
 \put(232,10){\makebox(0,0){$\al_{p}$}}
  \put(233.9, 0.5){\line(1,0){13}}
  \put(249, 0.5){\circle{5}}
  \put(249, 5){\line(0,3){10}}
\put(249, 15){\line(1,0){5}}
\put(272, 15){\makebox(0,0){{\tiny{$n-p-1$}}}}
\put(290, 15){\line(1,0){5}}
\put(295, 15){\line(0,-3){10}}
  \put(251, 0.5){\line(1,0){13.6}}
  \put(262.5, 0){ $\ldots$}
  \put(280, 0.5){\line(1,0){13}}
    \put(295, 0.5){\circle{5}}
  \put(297, 1.1){\line(1,0){14}}
\put(297, -0.6){\line(1,0){14}}
\put(301, -1.5){\scriptsize $<$}
\put(314, 0.5){\circle*{5}}
 \put(314,10){\makebox(0,0){$\al_{n}$}}
         \end{picture}
  \] }
\textnormal{with $1\leq p\leq n-1$ and $n\geq 3$. Then it is always $\Hgt(\al_{p})=2$ and $\Hgt(\al_{n})=1$, hence we get a   flag manifold  with four isotropy summands, namely $\Sp_{n}/\U_{p}\times\U_{n-p}$. The Koszul form has been computed in  \cite{Chry2}; $\sigma^{J_{0}}= n\Lambda_{p}+(n-p+1)\Lambda_{n}$. Indeed, in terms of Corollary \ref{CLASFLAGS}, it is $n_{0}=0$, $s=2$, $n_{1}=p=n_{s-1}$, $n_{2}=n-p=n_{s}$ and $r=0$. Thus, the Koszul vector is given by $\vec{k}=(n_{1}+n_{2}, n_{s}+1)=(n, n-p+1)\in\bb{Z}^{2}_{+}$ and $\Sp_{n}/\U_{p}\times\U_{n-p}$ is spin or metaplectic,  if and only if $n$ is even and $p$ is odd.}

\textnormal{(c) Let us describe now the painted Dynkin diagram
\[
 \begin{picture}(700,0)(0, 3)
      \put(216, 0.5){\circle*{5}}
       \put(216,10){\makebox(0,0){$\al_{1}$}}
    \put(218, 0.5){\line(1,0){13.6}}
\put(232, 0.5){\circle*{5}}
 \put(232,10){\makebox(0,0){$\al_{2}$}}
  \put(233.9, 0.5){\line(1,0){13}}
  \put(249, 0.5){\circle{5}}
  \put(249, 5){\line(0,3){10}}
\put(249, 15){\line(1,0){22}}
\put(283, 15){\makebox(0,0){{\tiny{$n-2$}}}}
\put(294, 15){\line(1,0){20}}
\put(314, 15){\line(0,-3){10}}
  \put(251, 0.5){\line(1,0){13.6}}
  \put(262.5, 0){ $\ldots$}
  \put(280, 0.5){\line(1,0){13}}
    \put(295, 0.5){\circle{5}}
  \put(297, 1.1){\line(1,0){14}}
\put(297, -0.6){\line(1,0){14}}
\put(300.5, -1.5){\scriptsize $<$}
\put(314, 0.5){\circle{5}}
         \end{picture}
   \] It induces the flag manifold $\Sp_{n}/\U_{1}\times\U_{1}\times\Sp_{n-2}$, with six isotropy summands since $\Hgt(\al_1)=\Hgt(\al_2)=2$. It is $n_{0}=1$, $s=1$, $n_{1}=1=n_{s}$, $r=n-2$, so the Koszul vector associated to $J_{0}$ is given by $   \vec{k}=(2, n_{s}+2r+1)=(2, 2(n-1))\in\bb{Z}_{+}^{2}$.
      Hence this coset is spin for any $n\geq 3$. A generalization of the above painted Dynkin diagram is described  below.}

    \textnormal{(d) Set $\Pi_{B}=\{\al_{p}, \al_{p+q}\}$, with $1\leq p\leq n-3$, $3\leq p+q\leq n-1$ and $n\geq 4$, such that $\Hgt(\al_{p})=\Hgt(\al_{p+q})=2$. This choice corresponds to the flag manifold $  \Sp_{n}/\U_{p}\times\U_{q}\times\Sp_{n-p-q}$ with $\rnk R_{T}=2$ and  $d=\sharp(R_{T}^{+})=6$.    In terms of Corollary \ref{CLASFLAGS}, we get $n_{0}=0$, $s=2$, $n_{1}=p=n_{s-1}$, $n_{2}=2=n_{s}$, and $r=n-p-q$. Thus, the Koszul vector reads    $\vec{k}=(n_{1}+n_{2}, n_{s}+2r+1)=(p+q, 2n-2p-q+1)\in\bb{Z}^{2}_{+}$, see also \cite{ACS2}.   We deduce that the  coset $\Sp_{n}/\U_{p}\times\U_{q}\times\Sp_{n-p-q}$ is spin if and only if $p$ and $q$ are both odd, independently of $n\geq 3$.}
  \end{example}

  \begin{example}\label{SO2NFLAGS} \textnormal{({\bf Spin flag manifolds of $\D_{n}$ with $b_{2}=1, 2$})}

\textnormal{(a) Consider the painted Dynkin diagram
\[
\begin{picture}(390,-10)(20, 3)
  \put(166, 0.5){\circle{5}}
  \put(166,10){\makebox(0,0){$\al_{1}$}}
    \put(168, 0.5){\line(1,0){14}}
     \put(180, 0){ $\ldots$}
  \put(200, 0.5){\line(1,0){13.6}}
   \put(216, 0.5){\circle{5}}
    \put(218, 0.5){\line(1,0){13.6}}
\put(232, 0.5){\circle*{5}}
 \put(232,10){\makebox(0,0){$\al_{p}$}}
  \put(233.9, 0.5){\line(1,0){13}}
  \put(249, 0.5){\circle{5}}
   \put(249, 5){\line(0,3){10}}
\put(249, 15){\line(1,0){22}}
\put(283, 15){\makebox(0,0){{\tiny{$n-p$}}}}
\put(294, 15){\line(1,0){20}}
\put(314, 15){\line(0,-3){20}}
  \put(251, 0.5){\line(1,0){13.6}}
  \put(262.5, 0){ $\ldots$}
  \put(280, 0.5){\line(1,0){13}}
    \put(295, 0.5){\circle{5}}
\put(297.3, 1){\line(2,1){10}}
\put(297.3, -1){\line(2,-1){10}}
\put(309.5, 6){\circle{5}}
\put(309.5, -6){\circle{5}}
         \end{picture}
\]
\vskip 0.2cm
\noindent with $2\leq p\leq n-2$. It determines the unique flag manifold of $\SO_{2n}$ with two isotropy summands, namely $\D(\vec{n})=\SO_{2n}/\U_{p}\times \SO_{2(n-p)}$.  It is $n_{0}=0$, $s=1$, $n_{1}=p=n_{s}$ and $r=n-p$. Hence, the  Koszul form with respect to the unique invariant complex structure $J_{0}$  is $\sigma^{J_{0}}=(n_{s}+2r-1)\Lambda_{p}=(2n-p-1)\Lambda_{p}$. Consequently, $D(\vec{n})$ is $G$-spin if and only if  $2n-p-1$ is even, which is equivalent to say that $p$ is odd. There are some special cases,  which are not included in the above painted Dynkin diagram and all of them correspond to a  Hermitian symmetric space. In particular, if $\Pi_{B}=\{\al_1\}$, then we get the isotropy irreducible space $\SO_{2n}/\SO_{2}\times\SO_{n-2}$; in our notation it is $n_{0}=0$, $s=1$, $n_{1}=1=n_{s}$, $r=n-1$ and hence $\sigma^{J_{0}}=(n_{s}+2r-1)\Lambda_{1}=2(n-1)\Lambda_{1}$.  Thus, $\SO_{2n}/\SO_{2}\times\SO_{n-2}$ is spin for any $n$, see also \cite[Thm.~8]{Cahen}. On the other hand, painting black one of the end right roots (say $\al_{n}$) we get the isotropy irreducible Hermitian symmetric space $\SO_{2n}/\U_{n}$. Then, $n_{0}=0$, $s=1$, $n_{1}=n=n_{s}$, $r=0$ and the Koszul vector changes, i.e.  $\sigma^{J_{0}}=2(n_{s}-1)\Lambda_{n}=2(n-2)\Lambda_{n}$, see Corollary \ref{CLASFLAGS}.  Hence,  the coset $\SO_{2n}/\U_{n}$ is spin for any $n\geq 3$, see \cite[Thm.~8]{Cahen}.  }

     \textnormal{(b)
  Examine  flag manifolds $F=G/H$ of $G=\SO_{2n}$ with $b_{2}=2$, we start with the space $F=\SO_{2n}/\U_{1}\times\U_{n-1}$ $(n\geq 4)$, defined by
     the painted Dynkin diagram \[
   \begin{picture}(700,0)(0, -2)
 \put(232, 0.5){\circle{5}}
  \put(233.9, 0.5){\line(1,0){13}}
  \put(249, 0.5){\circle{5}}
  \put(251, 0.5){\line(1,0){13.6}}
  \put(262.5, 0){ $\ldots$}
  \put(280, 0.5){\line(1,0){13}}
    \put(295, 0.5){\circle{5}}
\put(297.5, 1){\line(2,1){10}}
\put(297.2, -1){\line(2,-1){10}}
\put(310, 6){\circle*{5}}
\put(326,7){\makebox(0,0){$\al_{n-1}$}}
\put(310, -6){\circle*{5}}
\put(321,-7){\makebox(0,0){$\al_{n}$}}
         \end{picture}
 \]
 Since $\Hgt(\al_1)=\Hgt(\al_{n-1})=\Hgt(\al_{n})=1$, the same space occurs by setting $\Pi_{M}=\{\al_1, \al_{n-1}\}$ or $\Pi_{M}=\{\al_1, \al_{n}\}$. By \cite{Kim} it is known that $d=\sharp(R_{T}^{+})=3$.  In terms of Corollary \ref{CLASFLAGS}, it is $n_{0}=0=r$,   $n_{1}=1=n_{s-1}$ and $n_{s}=n-1$ with $s=2$.  Since $r=0$ and the end simple root is black, we get $\vec{k}=(n_{s-1}+n_{s}=n, 2(n_{s}-1)=2(n-2))\in\bb{Z}_{+}^{2}$ and $F$ is spin if and only if $n\geq 4$ is even (cf.  \cite[Thm.~3.2]{Kim}).
     Let us explain now  the painted Dynkin diagram \[
   \begin{picture}(700,0)(0, -2)
 \put(232, 0.5){\circle*{5}}
 \put(232,7){\makebox(0,0){$\al_{1}$}}
  \put(233.9, 0.5){\line(1,0){13}}
  \put(249, 0.5){\circle*{5}}
   \put(249,7){\makebox(0,0){$\al_{2}$}}
  \put(251, 0.5){\line(1,0){13.6}}
  \put(262.5, 0){ $\ldots$}
  \put(280, 0.5){\line(1,0){13}}
    \put(295, 0.5){\circle{5}}
\put(297.5, 1){\line(2,1){10}}
\put(297.2, -1){\line(2,-1){10}}
\put(310, 6){\circle{5}}
\put(310, -6){\circle{5}}
         \end{picture}
 \]
The associated flag manifold $\D(\vec{n})$ is given by $\SO_{2n}/\U_{1}\times\U_{1}\times\SO_{2(n-2)}$ $(n\geq 4)$ with $\rnk R_{T}=2$ and  $d=\sharp(R_{T}^{+})=4$ \cite{Chry2}. In terms of Corollary \ref{CLASFLAGS}, it is $n_{0}=1$, $s=1$, $n_{1}=1=n_{s}$ and $r=n-2$. Thus the Koszul vector is given by $\vec{k}=(1+n_{1}, n_{s}+2r-1)=(2, 2(n-2))\in\bb{Z}_{+}^{2}$ (see also \cite[Thm.~3]{Chry2}).
 We deduce that $\D(\vec{n})$ is $G$-spin for any $n$.}

  \textnormal{(c) Consider the flag manifold $\SO_{2n}/
  \U_{p}\times
  \U_{n-p}$ with $2\leq p\leq n-2$ and $n\geq 4$. The painted Dynkin diagram is given by  \[
   \begin{picture}(500,0)(-10, 3)
    \put(166, 0.5){\circle{5}}
     \put(166, 5){\line(0,3){10}}
\put(166, 15){\line(1,0){11}}
\put(190, 15){\makebox(0,0){{\tiny{$p-1$}}}}
\put(203, 15){\line(1,0){13}}
\put(216, 15){\line(0,-3){10}}
    \put(168, 0.5){\line(1,0){14}}
     \put(180, 0){ $\ldots$}
  \put(200, 0.5){\line(1,0){13.6}}
   \put(216, 0.5){\circle{5}}
    \put(218, 0.5){\line(1,0){13.6}}
\put(232, 0.5){\circle*{5}}
 \put(232,10){\makebox(0,0){$\al_{p}$}}
  \put(233.9, 0.5){\line(1,0){13}}
  \put(249, 0.5){\circle{5}}
   \put(249, 5){\line(0,3){10}}
\put(249, 15){\line(1,0){10}}
\put(280, 15){\makebox(0,0){{\tiny{$n-p-1$}}}}
\put(300, 15){\line(1,0){10}}
\put(310, 15){\line(0,-3){4}}
  \put(251, 0.5){\line(1,0){13.6}}
  \put(262.5, 0){ $\ldots$}
  \put(280, 0.5){\line(1,0){13}}
    \put(295, 0.5){\circle{5}}
\put(297.3, 1){\line(2,1){10}}
\put(297.3, -1){\line(2,-1){10}}
\put(309.5, 6){\circle{5}}
\put(309.5, -6){\circle*{5}}
         \end{picture}
  \]
  \vskip 0.2cm
  \noindent which  induces the second flag manifold of $\D_{n}$ with $\rnk R_{T}=2$ and $d=4$ \cite{Chry2}. Notice that the same occurs if we paint black the simple root $\al_{n-1}$ (since $\Hgt(\al_{n})=\Hgt(\al_{n-1})=1$). It is $n_{0}=0$, $s=2$, $n_{1}=p=n_{s-1}$, $n_{2}=n-p=n_{s}$ and $r=0$.  Observing that the end simple root is black, by Corollary  \ref{CLASFLAGS} it follows that $  \vec{k}=(n_{1}+n_{2}, 2(n_{s}-1))=(n, 2(n-p-1))\in\bb{Z}_{+}^{2}$.    
  Thus, the coset $\SO_{2n}/\U_{p}\times
  \U_{n-p}$ is $G$-spin if and only if $n\geq 4$ is even.}

  \textnormal{(d) We describe now  the painted Dynkin diagram
  \[
 \begin{picture}(420,-10)(20, 3)
\put(149, 0.5){\circle*{5}}
  \put(149,10){\makebox(0,0){$\al_{1}$}}
   \put(150.9, 0.5){\line(1,0){13}}
  \put(166, 0.5){\circle{5}}
    \put(168, 0.5){\line(1,0){14}}
     \put(180, 0){ $\ldots$}
  \put(200, 0.5){\line(1,0){13.6}}
   \put(216, 0.5){\circle{5}}
    \put(218, 0.5){\line(1,0){13.6}}
\put(232, 0.5){\circle*{5}}
 \put(232,10){\makebox(0,0){$\al_{p+1}$}}
  \put(233.9, 0.5){\line(1,0){13}}
  \put(249, 0.5){\circle{5}}
   \put(249, 5){\line(0,3){10}}
\put(249, 15){\line(1,0){14}}
\put(283, 15){\makebox(0,0){{\tiny{$n-p-1$}}}}
\put(300, 15){\line(1,0){14}}
\put(314, 15){\line(0,-3){20}}
  \put(251, 0.5){\line(1,0){13.6}}
  \put(262.5, 0){ $\ldots$}
  \put(280, 0.5){\line(1,0){13}}
    \put(295, 0.5){\circle{5}}
\put(297.3, 1){\line(2,1){10}}
\put(296.8, -1){\line(2,-1){10}}
\put(309.5, 6){\circle{5}}
\put(309.5, -6){\circle{5}}
         \end{picture}
  \]
  \vskip 0.2cm
 \noindent with $2\leq p\leq n-3$. It corresponds to the flag manifold $\D(\vec{n})=\SO_{2n}/\U_{1}\times\U_{p}\times
  \SO_{2(n-p-1)}$ with $\rnk R_{T}=2$ and $d=\sharp(R_{T}^{+})=5$ \cite{ACS1}. In terms of Corollary \ref{CLASFLAGS} it is $n_{0}=1$, $s=1$, $n_{1}=p=n_{s}$, and $r=n-p-1$. Hence, the Koszul vector is given by  $ \vec{k}=(1+n_{1}, n_{s}+2r-1)=(1+p, 2n-p-3)\in\bb{Z}_{+}^{2}$ (see also \cite[Exm.~5.2]{ACS1}),
 and  the homogeneous space $\D(\vec{n})$ is $G$-spin if and only if $p$ is odd.} 

  \textnormal{(e) There is one more non-isomorphic  flag manifold of $\D_{n}$ with $b_{2}=2$, and this occurs  when both painted black roots have height two. This corresponds to the painted Dynkin diagram
  \[
  \begin{picture}(400,0)(-10, 6)
 \put(83, 0.5){\circle{5}}
\put(83, 5){\line(0,3){10}}
\put(83, 15){\line(1,0){11}}
\put(107, 15){\makebox(0,0){{\tiny{$p-1$}}}}
\put(119, 15){\line(1,0){12}}
\put(131, 15){\line(0,-3){10}}
 \put(85, 0.5){\line(1,0){14}}
\put(97, 0){ $\ldots$}
\put(115, 0.5){\line(1,0){14}}
\put(131, 0.5){\circle{5}}
\put(133, 0.5){\line(1,0){14}}
\put(149, 0.5){\circle*{5}}
 \put(149,10){\makebox(0,0){$\al_{p}$}}
  \put(150.9, 0.5){\line(1,0){13}}
  \put(166, 0.5){\circle{5}}
  \put(166, 5){\line(0,3){10}}
\put(166, 15){\line(1,0){11}}
\put(190, 15){\makebox(0,0){{\tiny{$q-1$}}}}
\put(203, 15){\line(1,0){13}}
\put(216, 15){\line(0,-3){10}}
    \put(168, 0.5){\line(1,0){14}}
     \put(180, 0){ $\ldots$}
  \put(200, 0.5){\line(1,0){13.6}}
   \put(216, 0.5){\circle{5}}
    \put(218, 0.5){\line(1,0){13.6}}
\put(232, 0.5){\circle*{5}}
 \put(232,10){\makebox(0,0){$\al_{p+q}$}}
  \put(233.9, 0.5){\line(1,0){13}}
  \put(249, 0.5){\circle{5}}
   \put(249, 5){\line(0,3){10}}
\put(249, 15){\line(1,0){12}}
\put(283, 15){\makebox(0,0){{\tiny{$n-p-q$}}}}
\put(302, 15){\line(1,0){12}}
\put(314, 15){\line(0,-3){20}}
  \put(251, 0.5){\line(1,0){13.6}}
  \put(262.5, 0){ $\ldots$}
  \put(280, 0.5){\line(1,0){13}}
    \put(295, 0.5){\circle{5}}
\put(297.3, 1){\line(2,1){10}}
\put(297.3, -1){\line(2,-1){10}}
\put(309.5, 6){\circle{5}}
\put(309.5, -6){\circle{5}}
         \end{picture}
  \]
\vskip 0.3cm
\noindent  which induces the family $\SO_{2n}/\U_{p}\times\U_{q}\times\SO_{2(n-p-q)}$ with $n\geq 5$, $2\leq p\leq n-4$, $4\leq p+q\leq n-2$ such that $\Hgt(\al_{p})=\Hgt(\al_{p+q})=2$. We compute $d=6$. Moreover, it is $n_{0}=0$, $s=2$, $n_{1}=p=n_{s-1}$, $n_{2}=q=n_{s}$ and $r=n-p-q$. Since $p+q\leq n-2$,  the two simple roots in the end of the painted Dynkin diagram cannot be black (this was examined in case (c) above); hence  the associated  Koszul form is given by
$
\sigma^{J_{0}}=(p+q)\Lambda_{p}+(2n-2p-q-1)\Lambda_{p+q}.
$
  So, $\SO_{2n}/\U_{p}\times\U_{q}\times\SO_{2(n-p-q)}$ is spin if and only if both $p, q$ are odd. }
  \end{example}
Let us summarize the results of Examples \ref{cpn}, \ref{SONFLAGS}, \ref{SpFLAGS} and \ref{SO2NFLAGS}   In Table 1. 

  \begin{theorem}\label{OURGOOD1} 
  Let $G$ be a simple classical Lie group  and  $F=G/H=G^{\bb{C}}/P$ the associated flag manifold, such that $b_{2}(F)=1$ or $b_{2}(F)=2$.   Then, $F$ admits a (unique)   $G$-invariant   spin or metaplectic  structure, if and only $F$ is equivalent to one of the spaces appearing in Table 1 and satisfying the given conditions, if any.
\end{theorem}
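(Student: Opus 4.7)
The plan is to verify the theorem by essentially tabulating what Examples \ref{cpn}--\ref{SO2NFLAGS} have already established in scattered form. By Proposition \ref{chernclass2} and Corollary \ref{intChern}, a flag manifold $F = G/H$ admits a $G$-invariant spin (or metaplectic) structure if and only if every Koszul number associated to the standard complex structure $J_{0}$ is even, and this condition is invariant of the chosen invariant complex structure. Hence the problem reduces to a computation of the Koszul vector $\vec{k}$ for each flag manifold in question, followed by a parity check.

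First I would enumerate, up to isomorphism of flag manifolds, all painted Dynkin diagrams of $\A_{n}, \B_{n}, \Cc_{n}, \D_{n}$ with exactly one or two black nodes; by Proposition \ref{complex}, this is precisely what classifies flag manifolds with $b_{2}(F) = 1$ or $b_{2}(F) = 2$, since $b_{2}(F) = v = \sharp(\Pi_{B})$ by Proposition \ref{mainuse1}. For each type, diagram symmetries (e.g.\ the outer automorphism interchanging the two forked nodes of $\D_{n}$) allow one to reduce the list to the PDDs drawn explicitly in Examples \ref{cpn}--\ref{SO2NFLAGS}, together with the Hermitian symmetric endpoint cases (those where the painted root is an end root of the diagram, such as $\SO_{2n+1}/\SO_{2} \times \SO_{2n-1}$, $\Sp_{n}/\U_{n}$, $\SO_{2n}/\U_{n}$, etc.).

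Next, for each of these cosets I would read off the integer data $(n_{0}, n_{1}, \dots, n_{s}, r)$ parametrising the PDD as in Corollary \ref{CLASFLAGS}, and apply that corollary to write down $\vec{k} = (k_{1}, \dots, k_{v})$. The only subtle point is to respect the boundary rules: if $r = 0$ one must use the modified final Koszul number ($2n_{s}$ for $\B$, $n_{s}+1$ for $\Cc$, $2(n_{s}-1)$ for $\D$), and if one of the terminal simple roots of the diagram is painted black the same modification applies. Once $\vec{k}$ is in hand, Proposition \ref{chernclass2} translates the spin/metaplectic condition into a system of parity constraints on the integers $n_{0}, n_{1}, \ldots, n_{s}, r$; Theorem \ref{apl1} already states these constraints in full generality, and specialising to $v = 1$ and $v = 2$ produces exactly the list recorded in Table~1.

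The main obstacle, and essentially the only one, is the bookkeeping of the boundary cases: when a painted root sits at an endpoint of a diagram, the corresponding Koszul number jumps from the uniform expressions $n_{s-1} + n_{s}$ or $n_{s} + 2r(\pm 1)$ to the modified values above, and one must check that these endpoint cases (which typically coincide with Hermitian symmetric spaces, whose spin status is already known through \cite{Cahen}) are correctly merged with the generic cases in a single table. I would handle this by treating, for each of $\A, \B, \Cc, \D$ and for each value $v \in \{1,2\}$, the cases $n_{0} > 0$, $n_{0} = 0$, $r > 0$, $r = 0$ separately, matching each sub-case with the corresponding row in Table~1; the uniqueness statement then follows automatically, since $H^{1}(F;\mathbb{Z}_{2}) = 0$ for a simply-connected $F$, and Propositions \ref{ba} and \ref{meta} give a bijection between (meta)spin structures and this vanishing group.
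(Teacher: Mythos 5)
Your proposal follows essentially the same route as the paper: the paper's proof of Theorem \ref{OURGOOD1} is precisely the case-by-case enumeration of painted Dynkin diagrams with one or two black nodes carried out in Examples \ref{cpn}--\ref{SO2NFLAGS}, reading off $(n_0,n_1,\dots,n_s,r)$, applying Corollary \ref{CLASFLAGS} (with the modified final Koszul number in the $r=0$ and end-root cases), and checking parity via Proposition \ref{chernclass2} and Corollary \ref{intChern}. Your treatment of the boundary cases and of uniqueness via $H^{1}(F;\mathbb{Z}_{2})=0$ matches what the paper does, so the argument is correct and not substantively different.
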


\smallskip
 {\small{ \begin{center}
{\bf{Table 1.}}  Spin or metaplectic  classical  flag manifolds  with $b_{2}=1, 2$.
   \[
    \begin{tabular}{ l l c l l }
  $F=G/H$ with  $b_{2}(F)=1={\rm rnk}R_{T}$ &  conditions  &  $d$ & $k_{\al_{i_{o}}}\in\bb{Z}_{+}$  &     $G$-spin $(\Leftrightarrow)$    \\
 \thickline
 $\SU_{n}/\Ss(\U_{p}\times \U_{n-p})$ & $n\geq 2, 1\leq p\leq n-1$  & $1$ & $(n)$ &    $n$ even\\
  $\Sp_{n}/\U_{n}$ & $n\geq 3$ & $1$ &  $(n+1)$   & $n$ odd \\
  $\SO_{2n}/\SO_{2}\times\SO_{2n-2}$ & $n\geq 4$ & $1$  & $(2n-2)$  &  $\forall \ n \geq 4$ \\
  $\SO_{2n}/\U_{n}$ & $n\geq 3$ & $1$ & $(2n-4)$ & $\forall \ n \geq 3$\\
   $\SO_{2n +1}/\U_{p}\times \SO_{2(n-p)+1}$ & $n\geq 2, 2\leq p\leq n$ & $2$  & $(2n-p)$   &   $p$ even $\geq 2$\\
   $\SO_{2n+1}/\U_{n}$ (special case) & $n\geq 2$ & $2$ & $(2n)$ & $\forall \ n \geq 2 $\\
  $\Sp_{n}/\U_{p}\times \Sp_{n-p}$ & $n\geq 3,1\leq p\leq n-1$&  $2$ & $(2n-p+1)$    &    $p$ odd $\geq 1$ \\
    $\Sp_{n}/\U_{1}\times\Sp_{n-1}=:\bb{C}P^{2n-1}$ & $n\geq 3$ & $2$ & $(2n)$ & $\forall \ n\geq 3$\\
   $\SO_{2n}/\U_{p}\times \SO_{2(n-p)}$ & $n\geq 4, 2\leq p\leq n-2$ & $2$ &  $(2n-p-1)$    & $p$ odd $\geq 2$ \\
\thickline \\
     $F=G/H$   $\text{with}$ $ b_{2}(F)=2={\rm rnk}R_{T}$ & $\text{conditions}$ &  $d$ & $\vec{k}\in\bb{Z}_{+}^{2}$ &     $G$-spin     \\
    \thickline
      $\SU_{n}/\U_{1}\times\Ss(\U_{p-1}\times\U_{n-p})$ & $n\geq 3, 2\leq p\leq n-2$ & $3$  & $(p, n-1)$ &  $n$ odd $\&$ $p$ even\\
       $\SU_{3}/{\rm T}^{2}$ (special case) & - & $3$ & $(2, 2)$ & yes  \\
    $\SU_{n}/\Ss(\U_{p}\times\U_{q}\times\U_{n-p-q})$ & $n\geq 6, 2\leq p\leq n-3$ & $3$ & $(p+q, n-p)$ &  $p, q, n$ same parity\\
    & $4\leq p+q\leq n-1$ & \\
    \hline
    $\SO_{5}/{\rm T}^{2}$ (special case) & - & $4$ & $(2, 2)$ & yes \\
    $\SO_{2n+1}/\U_{1}\times\U_{n-1}$ & $n\geq 3$ & $5$ & $(n, 2(n-1))$ & $n$ even\\
     $\SO_{2n+1}/\U_{p}\times\U_{n-p}$ & $n\geq 4, 2\leq p\leq n$ & $6$ & $(n, 2(n-p))$ & $n$ even\\
    $\SO_{2n+1}/\U_{p}\times\U_{q}\times\SO_{2(n-p-q)+1}$ & $n\geq 4, 2\leq p\leq n-1$ & $6$ & $(p+q, 2n-2p-q)$ & $p \ \& \ q$ even\\
    & $4\leq p+q\leq n-1$ & \\
       \hline
    $\Sp_{n}/\U_{p}\times\U_{n-p}$ & $n\geq 3, 1\leq p\leq n-1$ & $4$ & $(n, n-p+1)$ & $n$ even $\&$ $p$ odd\\
      $\Sp_{3}/{\rm T^{2}}$ (special case) & - & $4$  & $(2, 2)$  &  yes\\
     $\Sp_{n}/\U_{1}\times\U_{1}\times\Sp_{n-2}$ & $n\geq 3$ & $6$  & $(2, 2(n-1))$  &  $\forall \ n\geq 3$\\
    $\Sp_{n}/\U_{p}\times\U_{q}\times\Sp_{n-p-q}$ & $n\geq 3, 1\leq p\leq n-3$ & $6$ & $(p+q, 2n-2p-q+1)$ & $p \ \& \ q$ odd\\
    & $3\leq p+q\leq n-1$ & \\
    \hline
     $\SO_{2n}/\U_{1}\times\U_{n-1}$ & $n\geq 4$ & $3$ & $(n, 2(n-2))$ & $n$ even\\
    $\SO_{2n}/\U_{1}\times\U_{1}\times\SO_{2(n-2)}$ & $n\geq 4$ & $4$ & $(2, 2(n-2))$ &  $\forall \ n\geq 4$\\
    $\SO_{2n}/\U_{p}\times\U_{n-p}$ & $n\geq 4, 2\leq p\leq n-2$ & $4$ & $(n, 2(n-p-1))$ &  $n$ even\\
    $\SO_{2n}/\U_{1}\times\U_{p}\times\SO_{2(n-p-1)}$ & $n\geq 4, 2\leq p\leq n-3$ & $5$ & $(1+p, 2n-p-3)$ & $p$ odd\\
   $\SO_{2n}/\U_{p}\times\U_{q}\times\SO_{2(n-p-q)}$ & $n\geq 5, 2\leq p\leq n-4$ & $6$ & $(p+q, 2n-2p-q-1)$ &  $p \ \& \ q$ odd\\
   & $4\leq p+q\leq n-2$ &\\
    \thickline
\end{tabular}
\]
\end{center} }}

 \subsection{Invariant spin and metaplectic structures on  exceptional flag manifolds}
 Given an exceptional Lie group $G\in\{\G_2, \F_4, \E_6, \E_7, E_8\}$ with root system $R$ and a basis of simple roots $\Pi=\{\al_1, \ldots, \al_{\ell}\}$, we shall denote by $G(\al_{{j_{1}}}, \ldots, \al_{{j_{u}}})\equiv G({j_{1}}, \ldots, {j_{u}})$ the flag manifold $F=G/H$ where the semisimple part $\fr{h}'$ of the stability subalgebra $\fr{h}=T_{e}H$ corresponds to  the simple roots $\Pi_{W}:=\{\al_{{j_{1}}}, \ldots, \al_{{j_{u}}}\}$ for some $1\leq j_{1}<\ldots < j_{u}\leq \ell$. 
There are 101 non-isomorphic flag manifolds $F=G/H$ corresponding to a   simple exceptional Lie group $G$, see \cite{Bor, AA}.
In Tables 2, 3 and 4 we list them, together with their second Betti number, the Koszul form associated to the natural invariant ordering $R_{F}^{+}=R^{+}\backslash R_{H}^{+}$ and the number $d=\sharp(R^{+}_{T})$.
    For  details on the calculation of the Koszul form we refer to Remark \ref{exce}, see also \cite{Chry2}. 

 {\bf Notation in Tables 2, 3, 4.}
We  denote by ${\rm T}^{1}=\U_1$ the circle group and  by ${\rm T}^{\mu}$ the product ${\rm T}^{1}\times \cdots\times {\rm T}^{1}$ ($\mu$-times).  Also, $\U_{2}^{l}$ (resp. $\U_{2}^{s}$) denotes the Lie group (diffeomorphic to $\U_2\cong\SU_2\times\U_1=\A_1\times{\rm T}^{1}$) generated by the long (resp. short) simple root  of $\G_2$. Notice that these groups are not conjugate in $\G_2$. Similarly, for $\F_4$, we will denote by   $\A_{\nu}^{l}$ (resp. $\A_{\nu}^{s}$)  the non-conjugate    subgroups defined by the long (resp. short) simple root(s) of $\F_4$. Finally, $(\A_{\nu})^{\mu}$ means $\A_{\nu}\times\cdots\times\A_{\nu}$ ($\mu$-times) for some special unitary Lie group $\A_{\nu}=\SU_{\nu+1}$. For the notation $[0, 0]$, or $[1, 1]$, which we use in Table 3 with aim to emphasize on  non-isomorphic flag  manifolds of $\E_7$, we refer to \cite{Gr2}.

\medskip
 {\small{ \begin{center}
{\bf{Table 2.}} Cardinality of $T$-root system  and Koszul form of non-isomorphic    flag manifolds $(F=G/H, g, J)$  of  an exceptional   Lie group $G\in\{\G_2, \F_4, \E_6\}$.
  \end{center}  }}

    {\small{\begin{center}
  \begin{tabular}{ | c | l | l |  l | l |  }
   \thickline & & & &  \\
   $G$ &  $F=G/H$ & $b_{2}(F)$ &    $d=\sharp(R^{+}_{T})$ &  $\sigma^{J}$ \\
 \thickline
 $\G_2$ & $\G_{2}(0)=\G_2/{\rm T}^{2}$  &  2 &   6 & $2(\Lambda_{1}+\Lambda_{2})$ \\
 \hline
              & $\G_{2}(1)=\G_2/\U_2^{l}$ & 1 &    3  & $ 5\Lambda_{2}$  \\

             &  $\G_{2}(2)=\G_2/\U_{2}^{s}$ & 1 &   2  & $3\Lambda_{1}$ \\
             \thickline
 $\F_4$ &  $\F_{4}(0)=\F_{4}/{\rm T}^{4}$ & 4 &   24 & $2(\Lambda_{1}+\Lambda_{2}+\Lambda_{3}+\Lambda_{4})$ \\
 \hline
             & $\F_{4}(1)=\F_{4}/\A_{1}^{l}\times{\rm T}^{3}$ & 3 &  16 & $3\Lambda_{2}+2(\Lambda_{3}+\Lambda_{4})$\\
             &  $\F_{4}(4)=\F_{4}/\A_1^{s}\times{\rm T}^{3}$ & 3 &   13 & $2(\Lambda_{1}+\Lambda_{2})+\Lambda_{3}$\\
             \hline
              & $\F_{4}(1, 2)=\F_4/\A_{2}^{l}\times{\rm T}^{2}$ & 2 &   9 &  $6\Lambda_{3}+2\Lambda_{4}$ \\
             & $\F_{4}(1, 4)=\F_4/\A_{1}\times\A_{1}\times{\rm T}^{2}$ & 2  & 8 & $3\Lambda_{2}+3\Lambda_{3}$ \\
             & $\F_{4}(2, 3)=\F_4/\B_{2}\times{\rm T}^{2}$ & 2   & 6 & $5\Lambda_{1}+6\Lambda_{4}$\\
             & $\F_{4}(3, 4)=\F_4/\A_{2}^{s}\times{\rm T}^{2}$ & 2   & 6 & $ 2\Lambda_{1}+4\Lambda_{2}$ \\
             \hline
             & $\F_{4}(1, 2, 4)=\F_4/\A_{2}^{l}\times\A_{1}^{s}\times{\rm T}^{1}$ & 1   & 4 & $7\Lambda_{3}$\\
             & $\F_{4}(1, 3, 4)=\F_4/\A_{2}^{s}\times\A_{1}^{l}\times{\rm T}^{1}$ & 1   & 3 & $5\Lambda_{2}$\\
              & $\F_{4}(1, 2, 3)=\F_4/\B_{3}\times{\rm T}^{1}$ & 1   & 2 & $11\Lambda_{4}$\\
             & $\F_{4}(2, 3, 4)=\F_4/\Cc_{3}\times{\rm T}^{1}$ & 1  & 2 & $8\Lambda_{1}$\\
                         \thickline
$\E_6$  & $\E_6(0)=\E_6/{\rm T}^{6}$ & 6   &36 & $2(\Lambda_{1}+\cdots+\Lambda_{6})$ \\
\hline
              & $\E_{6}(1)=\E_6/\A_{1}\times{\rm T}^{5}$ & 5   &25 & $3\Lambda_{2}+2(\Lambda_{3}+\cdots+\Lambda_{6})$ \\
              \hline
                 & $\E_6(3, 5)=\E_6/\A_{1}\times\A_{1}\times{\rm T}^{4}$ & 4   &17 & $2\Lambda_{1}+3\Lambda_{2}+4\Lambda_{4}+3\Lambda_{6}$ \\

              & $\E_{6}(4, 5)=\E_6/\A_{2}\times{\rm T}^{4}$ & 4  &15  & $2(\Lambda_{1}+\Lambda_{2}+2\Lambda_{3}+\Lambda_{6})$ \\
              \hline
              & $\E_{6}(1, 3, 5)=\E_6/\A_1\times\A_1\times\A_1\times{\rm T}^{3}$ & 3   &11 & $4(\Lambda_{2}+\Lambda_{4})+3\Lambda_{6}$\\
              & $\E_{6}(2, 4, 5)=\E_6/\A_2\times\A_1\times{\rm T}^{3}$ & 3   & 10 & $3\Lambda_{1}+5\Lambda_{3}+2\Lambda_{6}$\\
              & $\E_{6}(3, 4, 5)=\E_6/\A_{3}\times{\rm T}^{3}$ & 3   & 8 &  $2\Lambda_{1}+5(\Lambda_{2}+\Lambda_{6})$\\
              \hline
              & $\E_{6}(2, 3, 4, 5)=\E_6/\A_{4}\times{\rm T}^{2}$ & 2   & 4 & $6\Lambda_{1}+8\Lambda_{6}$ \\
              & $\E_{6}(1, 3, 4, 5)=\E_6/\A_{3}\times\A_{1}\times{\rm T}^{2}$ & 2  & 5 & $6\Lambda_{2}+5\Lambda_{6}$  \\
                           & $\E_{6}(1, 2, 4, 5)=\E_6/\A_{2}\times\A_{2}\times{\rm T}^{2}$ & 2   & 6 & $6\Lambda_{3}+2\Lambda_{6}$  \\
              & $\E_{6}(2, 4, 5, 6)=\E_6/\A_{2}\times\A_{1}\times\A_{1}\times{\rm T}^{2}$ & 2   & 6  & $3\Lambda_{1}+6\Lambda_{3}$\\
              & $\E_{6}(2, 3, 4, 6)=\E_{6}/\D_{4}\times{\rm T}^{2}$ & 2  & 3 & $8(\Lambda_{1}+\Lambda_{5})$  \\
              \hline
              & $\E_{6}(1, 2, 4, 5, 6)=\E_6/\A_{2}\times\A_{2}\times\A_{1}\times{\rm T}^{1}$ & 1   & 3 & $7\Lambda_{3}$\\
              & $\E_{6}(1, 2, 3, 4, 5)=\E_6/\A_{5}\times{\rm T}^{1}$ & 1   & 2 & $11\Lambda_{6}$\\
              & $\E_{6}(1, 3, 4, 5, 6)=\E_6/\A_{1}\times\A_{1}\times{\rm T}^{1}$ & 1   & 2 & $9\Lambda_{2}$ \\
              & $\E_{6}(2, 3, 4, 5, 6)=\E_6/\D_{5}\times{\rm T}^{1}$ & 1  & 1  &    $12\Lambda_{1}$\\
           \thickline
           \end{tabular}
\end{center}}}

\medskip
    Due to Proposition \ref{chernclass2},  Corollary \ref{intChern}, and  the results in Table   2,  we conclude that
\begin{theorem}\label{g2}
 (1) For $G=\G_2$ there is a unique $G$-spin (or $G$-metaplectic) flag manifold, namely the full flag $\G_2(0)=\G_2/{\rm T}^{2}$.  \\
  (2) For $G=\F_4$ the associated $G$-spin (of $G$-metaplectic) flag manifolds are the cosets defined by $\F_{4}(0)$,  $\F_{4}(1, 2)$, $ \F_{4}(3, 4)$, $\F_{4}(2, 3, 4)$, and the flag manifolds isomorphic to them.   In particular, $\F_{4}(2, 3, 4)=\F_4/\Cc_{3}\times{\rm T}^{1}$ is the unique (up to equivalence) flag manifold   of $G=\F_4$ with $b_2(F)=1=\rnk R_{T}$ which admits a   $G$-invariant spin and metaplectic structure. Moreover,  there are not exist flag manifolds $F=G/H$ of $G=F_4$ with $b_{2}(F)=3=\rnk R_{T}$ carrying a ($G$-invariant) spin structure or a metaplectic structure.\\
 (3) For $G=\E_6$  the associated $G$-spin (or $G$-metaplectic) flag manifolds are the cosets defined by $\E_{6}(0)$, $\E_{6}(4, 5)$, $\E_{6}(2, 3, 4, 5)$, $\E_{6}(1, 2, 4, 5)$, $\E_{6}(2, 3, 4, 6)$, $\E_{6}(2, 3, 4, 5, 6)$, and the flag manifolds isomorphic to them. In particular,
   $\E_{6}(4, 5)=\E_6/\A_{2}\times{\rm T}^{4}$ is the unique (up to equivalence) flag manifold  of $G=\E_6$ with $b_2(F)=4=\rnk R_{T}$ which admits a  $G$-invariant spin    and metaplectic structure. Moreover,
  $\E_{6}(2, 3, 4, 5, 6)=\E_6/\D_{5}\times{\rm T}^{1}$ is the unique (up to equivalence) flag manifold  of $G=\E_6$ with $b_2(F)=1=\rnk R_{T}$ which admits a  $G$-invariant spin   and metaplectic structure and
there are not exist flag manifolds $F=G/H$ of $G=\E_6$ with $b_{2}(F)=3=\rnk R_{T}$ carrying a ($G$-invariant) spin   or metaplectic structure.
 \end{theorem}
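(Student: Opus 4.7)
The plan is to reduce everything to the criterion established in Proposition \ref{chernclass2}, combined with the computations summarized in Table 2. Recall that Proposition \ref{chernclass2} asserts that a flag manifold $F=G/H$ admits a (necessarily unique) $G$-invariant spin structure (equivalently, an invariant metaplectic structure with respect to any invariant symplectic form) if and only if all Koszul numbers $k_j$ associated to an invariant complex structure $J$ are even. Moreover, by Corollary \ref{intChern}, this divisibility property is independent of the choice of $J$, so it suffices to work with the standard complex structure $J_0$ defined by the natural invariant ordering $R_F^+ = R^+\setminus R_H^+$ and the associated painted Dynkin diagram $\Pi = \Pi_W\sqcup \Pi_B$.

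First I would recall, for each exceptional Lie group $G\in\{\G_2,\F_4,\E_6\}$, the enumeration of non-isomorphic flag manifolds $G(j_1,\dots,j_u)$ in terms of painted Dynkin diagrams, as presented in Tables 2. The Koszul forms $\sigma^{J_0} = \sum_{j=1}^{v} k_j\Lambda_{\beta_j}$ for every such coset have already been computed there, via the procedure outlined in Remark \ref{exce}: one describes $R_H^+$ from the white subdiagram, computes $\sigma_H = \tfrac12\sum_{\beta\in R_H^+}\beta$, and then obtains $\sigma^{J_0}=2\sigma_G-2\sigma_H$ and expresses it in the basis $\{\Lambda_1,\ldots,\Lambda_\ell\}$ via the Cartan matrix.

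The main body of the proof is then a case-by-case inspection of the last column of Table 2. For $G=\G_2$, only $\G_2(0)$ has both Koszul numbers even ($\sigma^{J_0}=2\Lambda_1+2\Lambda_2$), while $\G_2(1)$ and $\G_2(2)$ have odd Koszul numbers $5$ and $3$ respectively, hence fail to be $G$-spin. For $G=\F_4$ one checks that exactly the entries $\F_4(0),\F_4(1,2),\F_4(3,4),\F_4(2,3,4)$ yield a Koszul vector with all entries even, while every other diagram contains at least one odd coefficient (for instance $3,5,7,11$ in $\F_4(1),\F_4(2,3),\F_4(1,2,4),\F_4(1,2,3)$, etc.); in particular among the PDDs with $v=3$, none produces an even Koszul vector, which accounts for the non-existence claim. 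For $G=\E_6$ the same routine inspection singles out $\E_6(0),\E_6(4,5),\E_6(2,3,4,5),\E_6(1,2,4,5),\E_6(2,3,4,6),\E_6(2,3,4,5,6)$; among diagrams with $v=3$ there is again no occurrence of a fully even Koszul vector, and among $v=1,4$ only the cosets $\E_6/\D_5\times {\rm T}^1$ and $\E_6/\A_2\times{\rm T}^4$ respectively pass the test, which gives the uniqueness statements.

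Finally, uniqueness of the spin (respectively metaplectic) structure in each admissible case follows because every flag manifold $F=G/H$ is simply-connected, so $H^1(F;\mathbb Z_2)=0$ and Proposition \ref{ba} (resp. Proposition \ref{meta}) gives a single equivalence class. The only real obstacle here is bookkeeping: verifying that the painted Dynkin diagrams listed in Tables 2 really exhaust all non-isomorphic flag manifolds of the given exceptional groups (which relies on the classification of \cite{Bor, AA}) and that the Koszul forms have been computed correctly in each row. Once the table is trusted, the theorem itself is a direct corollary.
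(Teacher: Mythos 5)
Your proposal is correct and follows essentially the same route as the paper: the theorem is obtained by combining Proposition \ref{chernclass2} and Corollary \ref{intChern} with a parity inspection of the Koszul vectors tabulated in Table 2. The extra remarks on uniqueness via simple-connectedness are consistent with the paper's statement of Proposition \ref{chernclass2}.
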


We proceed now with flag manifolds associated to the Lie group $\E_7$.

\smallskip
           {\small{ \begin{center}
{\bf{Table 3.}} Cardinality of $T$-root system  and Koszul form of non-isomorphic    flag manifolds $(F=G/H, g, J)$  of  the exceptional  Lie group $G=\E_7$
  \end{center}  }}

    {\small{\begin{center}
  \begin{tabular}{ | c | l | l |  l | l |  }
   \thickline & & & &  \\
   $G$ &  $F=G/H$ & $b_{2}(F)$ &    $d=\sharp(R^{+}_{T})$ &  $\sigma^{J}$ \\
 \thickline

$\E_7$  & $\E_7(0)=\E_7/{\rm T}^{7}$ & 7   & 63 & $2(\Lambda_{1}+\cdots+\Lambda_{7})$\\
\hline
             & $\E_7(1)=\E_7/\A_{1}\times{\rm T}^{6}$ & 6  & 46 & $3\Lambda_{2}+2(\Lambda_{3}+\cdots+\Lambda_{7})$ \\
             \hline
             & $\E_{7}(4, 6)=\E_7/\A_1\times\A_1\times{\rm T}^{5}$ & 5   & 33 & $2(\Lambda_{1}+\Lambda_{2})+3(\Lambda_3+\Lambda_7)+4\Lambda_{5}$\\
             & $\E_7(5, 6)=\E_7/\A_{2}\times{\rm T}^{5}$ & 5  & 30 & $2(\Lambda_{1}+\cdots+\Lambda_{4}+\Lambda_{7})$ \\
             \hline
             & $\E_7(1, 3, 5)=\E_7/\A_{1}\times\A_{1}\times\A_{1}\times{\rm T}^{4}$ \ $[1, 1]$ & 4 & 23 & $4\Lambda_{2}+4\Lambda_{4}+3\Lambda_{6}+2\Lambda_{7}$ \\
             & $\E_7(1, 3, 7)=\E_7/\A_{1}\times\A_{1}\times\A_{1}\times{\rm T}^{4}$ \ $[0, 0]$ & 4 & {24}   & $4\Lambda_{2}+4\Lambda_{4}+2\Lambda_{5}+2\Lambda_{6}$\\
             & $\E_7(3, 5, 6)=\E_7/\A_2\times\A_1\times{\rm T}^{4}$ & 4 & 21 & $2\Lambda_{1}+3\Lambda_{2}+5\Lambda_{4}+2\Lambda_{7}$\\
             & $\E_7(4, 5, 6)=\E_7/\A_3\times {\rm T}^{4}$ & 4 & 18 & $2\Lambda_{1}+2\Lambda_{2}+5\Lambda_{3}+5\Lambda_{7}$ \\
             \hline
             & $\E_7(1, 2, 3, 4)=\E_7/\A_4\times{\rm T}^{3}$ & 3 & 10 & $6\Lambda_{5}+2\Lambda_{6}+6\Lambda_{7}$ \\
             & $\E_7(1, 2, 3, 5)=\E_7/\A_3\times\A_1\times{\rm T}^{3}$ \ $[1, 1]$ & 3 & 12 & $6\Lambda_{4}+3\Lambda_{6}+2\Lambda_{7}$ \\
             & $\E_7(1, 2, 3, 7)=\E_7/\A_3\times\A_1\times{\rm T}^{3}$ \ $[0, 0] $ & 3 & {13}   & $6\Lambda_{4}+2\Lambda_{5}+2\Lambda_{6}$ \\
             & $\E_{7}(1, 2, 4, 5)=\E_7/\A_2\times\A_2\times{\rm T}^{3}$ & 3 & 13 & $6\Lambda_{3}+4\Lambda_{6}+4\Lambda_{7}$ \\
             & $\E_{7}(1, 2, 4, 6)=\E_7/\A_2\times \A_1\times \A_1\times{\rm T}^{3}$ & 3 & 14 & $5\Lambda_{3}+4\Lambda_{5}+3\Lambda_{7}$ \\
             & $\E_7(1, 3, 5, 7)=\E_7/(\A_{1})^{4}\times{\rm T}^{3}$ & 3 & 16 & $4\Lambda_{2}+5\Lambda_{4}+3\Lambda_{6}$ \\
             & $\E_7(3, 4, 5, 7)=\E_7/\D_{4}\times{\rm T}^{3}$ & 3 & 9 & $2\Lambda_{1}+8\Lambda_{2}+8\Lambda_{6}$ \\
             \hline
             & $\E_{7}(1, 2, 3, 4, 5)=\E_7/\A_{5}\times{\rm T}^{2}$ \ $[1, 1]$ & 2 & 5 & $7\Lambda_{6}+10\Lambda_{7}$ \\
             & $\E_{7}(1, 2, 3, 4, 7)=\E_7/\A_{5}\times{\rm T}^{2}$ \ $[0, 0]$ & 2 &   6  & $10\Lambda_{5}+2\Lambda_{6}$ \\
             & $\E_{7}(1, 2, 3, 4, 6)=\E_7/\A_{4}\times\A_{1}\times{\rm T}^{2}$ & 2 & 6 & $7\Lambda_{5}+6\Lambda_{7}$ \\
             & $\E_{7}(1, 2, 3, 5, 6)=\E_7/\A_{3}\times\A_{2}\times{\rm T}^{2}$ & 2 & 7 &  $7\Lambda_{4}+2\Lambda_{7}$\\
             & $\E_{7}(1, 2, 3, 5, 7)=\E_7/\A_{3}\times\A_{1}\times\A_{1}\times{\rm T}^{2}$ & 2 & 8 & $7\Lambda_{4}+3\Lambda_{6}$ \\
             & $\E_7(1, 3 , 4, 5, 7)=\E_7/\D_{4}\times\A_{1}\times{\rm T}^{2}$ & 2 & 6 & $9\Lambda_{2}+4\Lambda_{6}$\\
             & $\E_7(1, 2, 5, 6, 7)=\E_7/\A_{2}\times\A_{1}\times\A_{1}\times{\rm T}^{2}$ & 2 & 8 & $4\Lambda_{3}+5\Lambda_{4}$\\
             & $\E_7(1, 3, 5, 6, 7)=\E_7/\A_{2}\times(\A_{1})^{3}\times{\rm T}^{2}$ & 2 & 9  & $4\Lambda_{2}+6\Lambda_{4}$ \\
             & $\E_{7}(3, 4, 5, 6, 7)=\E_7/\D_{5}\times{\rm T}^{2}$ & 2 & 4 & $2\Lambda_{1}+12\Lambda_{2}$\\
             \hline
             & $\E_{7}(1, 2, 3, 4, 5, 6)=\E_7/\A_{6}\times{\rm T}^{1}$ & 1   & 2 & $14\Lambda_{7}$ \\
             & $\E_7(2, 3, 4, 5, 6, 7)=\E_7/\E_6\times{\rm T}^{1}$ & 1   & 1  & $18\Lambda_{1}$ \\
             & $\E_7(1, 3, 4, 5, 6, 7)=\E_7/\D_{5}\times\A_{1}\times{\rm T}^{1}$ & 1  & 2 & $13\Lambda_{2}$ \\
             & $\E_7(1, 2, 4, 5, 6, 7)=\E_7/\A_{4}\times\A_{2}\times{\rm T}^{1}$ & 1   & 3 & $10\Lambda_{3}$ \\
             & $\E_7(1, 2, 3, 5, 6, 7)=\E_7/\A_{3}\times\A_{2}\times\A_{1}\times{\rm T}^{1}$ & 1   & 4 & $8\Lambda_{4}$ \\
             & $\E_7(1, 2, 3, 4, 6, 7)=\E_7/\A_{5}\times\A_{1}\times{\rm T}^{1}$ & 1  & 2 & $12\Lambda_{5}$ \\
             & $\E_7(1, 2, 3, 4, 5, 7)=\E_7/\D_{6}\times{\rm T}^{1}$ & 1   & 2 & $17\Lambda_{6}$\\
                                        \thickline
\end{tabular}
\end{center}}}

\medskip
Using Table 3 and Proposition \ref{chernclass2}, Corollary \ref{intChern}  we conclude that
 \begin{theorem}\label{e7-8}
 For the Lie group  $G=\E_7$ the associated $G$-spin (or $G$-metaplectic) flag manifolds are the cosets defined by $\E_7(0)$,  $\E_7(5, 6)$, $\E_7(1, 3, 7)$, $\E_7(1, 2, 3, 4)$, $\E_7(1, 2, 3, 7)$, $\E_{7}(1, 2, 4, 5)$, $\E_7(3, 4, 5, 7)$, $\E_{7}(1, 2, 3, 4, 7)$,  $\E_7(1, 3, 5, 6, 7)$, $\E_{7}(3, 4, 5, 6, 7)$, $\E_{7}(1, 2, 3, 4, 5, 6)$, $\E_7(2, 3, 4, 5, 6, 7)$, $\E_7(1, 2, 4, 5, 6, 7)$, $\E_7(1, 2, 3, 5, 6, 7)$, $\E_7(1, 2, 3, 4, 6, 7)$ and the flag manifolds isomorphic to them. In particular, $\E_7(5, 6)=\E_7/\A_{2}\times{\rm T}^{5}$ is the unique (up to equivalence)  flag manifold  of $G=\E_7$ with second Betti number $b_{2}(F)=5=\rnk R_{T}$, which admits a  $G$-invariant  spin   and   metaplectic structure.
  Moreover, the space $\E_7(1, 3, 7)=\E_7/\A_{1}\times\A_{1}\times\A_{1}\times{\rm T}^{4}$ is the unique (up to equivalence)  flag manifold  of $G=\E_7$ with second Betti number $b_{2}(F)=4=\rnk R_{T}$, which admits a  $G$-invariant  spin   and metaplectic structure and there are not exist flag manifolds $F=G/H$ of $G=\E_7$ with $b_{2}(F)=\rnk R_{T}=6$, carrying a ($G$-invariant) spin or metaplectic structure.          \end{theorem}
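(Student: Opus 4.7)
The plan is to combine Proposition~\ref{chernclass2} with Corollary~\ref{intChern}: a flag manifold $F=G/H$ of a simply-connected $G$ admits a (unique) $G$-invariant spin, or equivalently metaplectic, structure if and only if every Koszul number of some (and hence of every) invariant complex structure is even. For the thirty-six non-isomorphic flag manifolds of $\E_7$ the natural choice is the complex structure $J_0$ associated with the invariant ordering $R^+_F=R^+\setminus R^+_H$, and the problem reduces to a parity check of the integer coefficients in the expansion $\sigma^{J_0}=\sum_j k_j\Lambda_j$ tabulated in Table~3.

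To produce Table~3 I would follow the algorithm of Remark~\ref{exce}. Starting from
\[
2\sigma_{\E_7}= 27\alpha_1+52\alpha_2+75\alpha_3+96\alpha_4+66\alpha_5+34\alpha_6+49\alpha_7,
\]
for every choice $\Pi_W\subset\Pi=\{\alpha_1,\ldots,\alpha_7\}$ I would (i) read off $R^+_H$ from the sub-Dynkin diagram spanned by $\Pi_W$; (ii) compute $2\sigma_H=\sum_{\gamma\in R^+_H}\gamma$ in the $\alpha_i$-basis; (iii) form $\sigma^{J_0}=2(\sigma_{\E_7}-\sigma_H)$; (iv) substitute $\alpha_i=\sum_j c_{ij}\Lambda_j$ with the Cartan matrix $\mathcal{C}=(c_{ij})$ of $\E_7$ to extract the Koszul vector $(k_j)_{\alpha_j\in\Pi_B}$.

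Once Table~3 is in hand, the theorem is proved by inspection, stratifying by $b_2(F)=\mathrm{rnk}\, R_T=7-|\Pi_W|$. In level $7$, the unique flag $\E_7(0)$ has Koszul vector $(2,2,2,2,2,2,2)$; in level $6$, the only candidate $\E_7(1)$ fails because of the odd coefficient $3$ of $\Lambda_2$, giving the non-existence claim; in level $5$, between $\E_7(4,6)$ and $\E_7(5,6)$ only the latter survives with Koszul vector $(2,2,2,2,2)$, yielding the asserted uniqueness; in level $4$, among the four candidates $\E_7(1,3,5)$, $\E_7(1,3,7)$, $\E_7(3,5,6)$, $\E_7(4,5,6)$ exactly $\E_7(1,3,7)$ passes with $(4,4,2,2)$. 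The identical enumeration in levels $3$, $2$ and $1$ retains, respectively, the four, three and five remaining cosets listed in the statement, for a total of fifteen spin flag manifolds of $\E_7$ (up to equivalence).

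The principal obstacle is purely computational: correctly enumerating $R^+_H$ for the $36$ non-conjugate reductive subalgebras $\mathfrak{h}\subset\mathfrak{e}_7$ and performing the associated linear-algebra manipulations without arithmetic error, particularly when $R_H$ is one of the larger subsystems $\A_5$, $\A_6$, $\D_4$, $\D_5$, $\D_6$, $\E_6$, where many positive roots contribute to $2\sigma_H$. In practice I would cross-check the delicate cases both by the revised algorithm of Proposition~\ref{APP} (applied to the classical factors of $\mathfrak{h}'$) and by independently computing $2\sigma_H=\sum_{\alpha\in\Pi_W}\Lambda^{H}_\alpha$ in the weights of the subsystem $R_H$ before re-embedding into $\mathfrak{a}_0^*$; any discrepancy between the two computations flags an error and pins it to a single sub-case, so this verification strategy makes the finite check manageable.
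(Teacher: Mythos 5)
Your proposal is correct and follows essentially the same route as the paper, which obtains Theorem~\ref{e7-8} precisely by combining Proposition~\ref{chernclass2} and Corollary~\ref{intChern} with the Koszul vectors of Table~3, themselves computed by the algorithm of Remark~\ref{exce} (compute $2\sigma_H$, form $2(\sigma_{\E_7}-\sigma_H)$, convert to fundamental weights via the Cartan matrix), and your level-by-level parity counts (one, zero, one, one, four, three, five survivors for $b_2=7,\ldots,1$) match the table exactly. The only blemish is the stated total of ``36'' non-isomorphic flag manifolds of $\E_7$: there are $31$ (consistent with the paper's count of $101$ exceptional flag manifolds overall), but this miscount plays no role in the argument.
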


  Let us finally present the results for the Lie group $\E_8$

  \smallskip

  {\small{ \begin{center}
{\bf{Table 4.}} Cardinality of $T$-root system  and Koszul form of non-isomorphic    flag manifolds $(F=G/H, g, J)$  of  the exceptional   Lie group $G=\E_8$.
  \end{center}  }}

   {\small{\begin{center}
  \begin{tabular}{ | c | l | l |  l | l |  }
   \thickline & & & &   \\
   $G$ &  $F=G/H$ & $b_{2}(F)$ &   $d=\sharp(R^{+}_{T})$ &  $\sigma^{J}$ \\
 \thickline
 $\E_8$  & $\E_8(0)=\E_8/{\rm T}^{8}$ & 8   & 120  & $2(\Lambda_{1}+\cdots+\Lambda_{8})$\\
 \hline
              & $\E_{8}(1)=\E_8/\A_{1}\times{\rm T}^{7}$ & 7 & 91  & $3\Lambda_{2}+2(\Lambda_{3}+\cdots+\Lambda_{8})$ \\
              \hline
              & $\E_{8}(1, 2)=\E_8/\A_{2}\times{\rm T}^{6}$ & 6 & 63 & $ 4\Lambda_{3}+2(\Lambda_{4}+\cdots+\Lambda_{8})$\\
              & $\E_{8}(1, 3)=\E_8/\A_{1}\times\A_{1}\times{\rm T}^{6}$ & 6 & 68 & $4\Lambda_{2}+3\Lambda_{4}+2(\Lambda_{5}+\cdots+\Lambda_{8})$ \\
              \hline
             & $\E_{8}(1, 2, 3)=\E_8/\A_3\times{\rm T}^{5}$ & 5 & 41 & $5\Lambda_{4}+2(\Lambda_{5}+\cdots+\Lambda_{8})$\\
             & $\E_{8}(1, 2, 4)=\E_8/\A_{2}\times\A_{1}\times{\rm T}^{5}$ & 5 & 46 & $5\Lambda_{3}+3\Lambda_{5}+2\Lambda_{6}+2\Lambda_{7}+2\Lambda_{8}$ \\
             & $\E_{8}(1, 3, 5)=\E_8/(\A_{1})^{3}\times{\rm T}^{5}$ & 5 & {50} & $4\Lambda_{2}+4\Lambda_{4}+3\Lambda_{6}+2\Lambda_{7}+3\Lambda_{8}$\\
             \hline
             & $\E_8(1, 2, 3, 4)=\E_8/\A_4\times{\rm T}^{4}$ & 4 & 25 & $6\Lambda_{5}+2(\Lambda_{6}+\Lambda_{7}+\Lambda_{8})$ \\
             & $\E_8(1, 2, 3, 5)=\E_8/\A_3\times\A_1\times{\rm T}^{4}$ & 4 & 29 & $6\Lambda_{4}+3\Lambda_{6}+2\Lambda_{7}+3\Lambda_{8}$ \\
             & $\E_{8}(1, 2, 4, 5)=\E_8/\A_2\times\A_2\times{\rm T}^{4}$ & 4 & 30 & $6\Lambda_{3}+4\Lambda_{6}+2\Lambda_{7}+4\Lambda_{8}$ \\
             & $\E_{8}(1, 2, 4, 6)=\E_8/\A_2\times\A_{1}\times\A_{1}\times{\rm T}^{4}$ & 4 & 33 & $5\Lambda_{3}+4\Lambda_{5}+3\Lambda_{7}+2\Lambda_{8}$ \\
             & $\E_8(1, 3, 5, 7)=\E_8/(\A_{1})^{4}\times{\rm T}^{4}$ & 4 & 36 & $2(\Lambda_{2}+\Lambda_{4}+\Lambda_{6})+3\Lambda_{8}$\\
             & $\E_8(4, 5, 6, 8)=\E_8/\D_{4}\times{\rm T}^{4}$ & 4 & 24 & $2(\Lambda_{1}+\Lambda_{2})+8(\Lambda_{3}+\Lambda_{7})$ \\
             \hline
             & $\E_8(1, 2, 3, 4, 5)=\E_8/\A_{5}\times{\rm T}^{3}$ & 3 & 14 & $7\Lambda_{6}+2\Lambda_{7}+7\Lambda_{8}$ \\
             & $\E_8(1, 2, 3, 4, 6)=\E_8/\A_{4}\times\A_{1}\times{\rm T}^{3}$ & 3 & 17 & $7\Lambda_{5}+3\Lambda_{7}+2\Lambda_{8}$ \\
             & $\E_8(1, 2, 3, 5, 6)=\E_8/\A_{3}\times\A_{2}\times{\rm T}^{3}$ & 3 & 18 & $7\Lambda_{4}+4\Lambda_{7}+4\Lambda_{8}$ \\
             & $\E_8(1, 2, 3, 5, 7)=\E_8/\A_3\times\A_1\times\A_1\times{\rm T}^{3}$ & 3 & {20} & $6\Lambda_{4}+2\Lambda_{6}+3\Lambda_{8}$ \\
             & $\E_8(1, 2, 4, 5, 7)=\E_8/\A_2\times\A_2\times\A_1\times{\rm T}^{3}$ & 3 & {21}  & $6\Lambda_{3}+5\Lambda_{6}+4\Lambda_{8}$ \\
             & $\E_8(1, 2, 4, 6, 8)=\E_8/\A_2\times(\A_{1})^{3}\times{\rm T}^{3}$ & 3 & {23} & $5\Lambda_{3}+5\Lambda_{5}+3\Lambda_{7}$ \\
             & $\E_8(1, 4, 5, 6, 8)=\E_8/\D_{4}\times\A_1\times{\rm T}^{3}$ & 3 & 16 &     $3\Lambda_{2}+8\Lambda_{3}+8\Lambda_{7}$ \\
             & $\E_8(4, 5, 6, 7, 8)=\E_8/\D_{5}\times{\rm T}^{3}$ & 3 & 13 & $2(\Lambda_{1}+\Lambda_{2}+\Lambda_{3})$ \\
             \hline
             & $\E_8(1, 2, 3, 4, 5, 6)=\E_8/\A_6\times{\rm T}^{2}$ & 2 & 7 & $8\Lambda_{7}+12\Lambda_{8}$\\
             & $\E_8(1, 2, 3, 4, 5, 7)=\E_8/\A_5\times\A_1\times{\rm T}^{2}$ & 2 & 9 & $8\Lambda_{6}+7\Lambda_{8}$\\
             & $\E_8(1, 2, 3, 4, 6, 7)=\E_8/\A_4\times\A_2\times{\rm T}^{2}$ & 2 & 10 & $8\Lambda_{5}+2\Lambda_{8}$\\
             & $\E_8(1, 2, 3, 5, 6, 7)=\E_8/\A_3\times\A_3\times{\rm T}^{2}$ & 2 & 10 & $8\Lambda_{4}+5\Lambda_{8}$\\
             & $\E_8(1, 2, 3, 4, 6, 8)=\E_8/\A_4\times\A_1\times\A_1\times{\rm T}^{2}$ & 2 & 11 & $8\Lambda_{5}+3\Lambda_{7}$ \\
             & $\E_8(1, 2, 4, 5, 6, 8)=\E_8/\D_4\times\A_2\times{\rm T}^{2}$ & 2 & 9 & $10\Lambda_{3}+8\Lambda_{7}$\\
             & $\E_8(1, 4, 5, 6, 7 ,8)=\E_8/\D_5\times\A_1\times{\rm T}^{2}$ & 2 & 8 & $3\Lambda_{2}+12\Lambda_{3}$ \\
             & $\E_8(2, 3, 4, 5, 6, 8)=\E_8/\D_6\times{\rm T}^{2}$ & 2 & 6 & $12\Lambda_{1}+17\Lambda_{7}$\\
             & $\E_8(1, 2, 3, 6, 7, 8)=\E_8/\A_3\times\A_2\times\A_1\times{\rm T}^{2}$ & 2 & 12 & $5\Lambda_{4}+5\Lambda_{5}$\\
             & $\E_8(1, 2, 4, 6, 7, 8)=\E_8/\A_2\times\A_{2}\times\A_{1}\times\A_1\times{\rm T}^{2}$ & 2 & 14 & $5\Lambda_{3}+6\Lambda_{5}$\\
             & $\E_8(3, 4, 5, 6, 7, 8)=\E_8/\E_6\times{\rm T}^{2}$ & 2 & 6 & $2\Lambda_{1}+18\Lambda_{2}$\\
            \hline
             & $\E_8(1, 2, 3, 4, 5, 6, 7)=\E_8/\A_{7}\times{\rm T}^{1}$ & 1   & 3 & $17\Lambda_{8}$ \\
             & $\E_8(2, 3, 4, 5, 6, 7, 8)=\E_8/\E_7\times{\rm T}^{1}$ & 1   & 2 & $29\Lambda_{1}$ \\
             & $\E_8(1, 3, 4, 5, 6, 7, 8)=\E_8/\E_6\times\A_{1}\times{\rm T}^{1}$ & 1   & 3 & $19\Lambda_{2}$ \\
             & $\E_8(1, 2, 4, 5, 6, 7, 8)=\E_8/\D_{5}\times\A_{2}\times{\rm T}^{1}$ & 1   & 4 & $14\Lambda_{3}$ \\
             & $\E_8(1, 2, 3, 5, 6, 7, 8)=\E_8/\A_{4}\times\A_{3}\times{\rm T}^{1}$ & 1  & 5 & $11\Lambda_{4}$ \\
             & $\E_8(1, 2, 3, 4, 6, 7, 8)=\E_8/\A_{4}\times\A_{2}\times\A_{1}\times{\rm T}^{1}$ & 1   & 6 & $9\Lambda_{5}$\\
             & $\E_8(1, 2, 3, 4, 5, 7, 8)=\E_8/\A_6\times\A_1\times{\rm T}^{1}$ & 1  & 4 & $13\Lambda_{6}$ \\
             & $\E_8(1, 2, 3, 4, 5, 6, 8)=\E_8/\D_{7}\times{\rm T}^{1}$ & 1   & 2 & $23\Lambda_{7}$\\
                      \thickline
\end{tabular}
\end{center}}}

\medskip
Using the results  in Table 4  in combination with Proposition \ref{chernclass2} and  Corollary \ref{intChern} we get that
 \begin{theorem}\label{e8}
  For $G=\E_8$ the associated $G$-spin (or $G$-metaplectic)  flag manifolds are the cosets defined by $\E_8(0)$, $\E_{8}(1, 2)$, $\E_8(1, 2, 3, 4)$, $\E_{8}(1, 2, 4, 5)$, $\E_8(4, 5, 6, 8)$, $\E_8(4, 5, 6, 7, 8)$, $\E_8(1, 2, 3, 4, 5, 6)$, $\E_8(1, 2, 3, 4, 6, 7)$, $\E_8(1, 2, 4, 5, 6, 8)$, $\E_8(1, 2, 4, 5, 6, 7, 8)$ and the flag manifolds isomorphic to them. In particular, $\E_{8}(1, 2)=\E_8/\A_{1}\times{\rm T}^{6}$ is the unique (up to equivalence)  flag manifold  of $G=\E_8$ with second Betti number $b_{2}(F)=6=\rnk R_{T}$, which admits a  $G$-invariant  spin  and metaplectic structure.  Moreover, $\E_8(4, 5, 6, 7, 8)=\E_8/\D_{5}\times{\rm T}^{3}$  is the unique (up to equivalence)  flag manifold   of $G=\E_8$ with second Betti number $b_{2}(F)=3=\rnk R_{T}$, which admits a   $G$-invariant spin  and metaplectic structure. Similarly, $\E_8(1, 2, 4, 5, 6, 7, 8)=\E_8/\D_{5}\times\A_{2}\times{\rm T}^{1}$   is the unique (up to equivalence)  flag manifold   of $G=\E_8$ with second Betti number $b_{2}(F)=1=\rnk R_{T}$ which admits a  $G$-invariant spin  and metaplectic structure. Finally, there are not exist flag manifolds $F=G/H$ of $G=\E_8$ with $b_{2}(F)=\rnk R_{T}=5$, or  $b_{2}(F)=\rnk R_{T}=7$, carrying a ($G$-invariant) spin or metaplectic structure.
 \end{theorem}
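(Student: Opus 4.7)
The plan is to apply Proposition \ref{chernclass2} in combination with Corollary \ref{intChern} to every non-isomorphic flag manifold $F=E_8/H$, exactly as was done in Theorems \ref{g2} and \ref{e7-8} for the other exceptional groups. By Corollary \ref{intChern}, the parity of the Koszul numbers $k_j$ associated with an invariant complex structure $J$ is an invariant of the smooth manifold $F$, independent of $J$. Hence it is enough to fix the natural invariant ordering $R_F^+ = R^+\setminus R_H^+$, compute the Koszul form $\sigma^{J_0}=\sum_{j=1}^{v} k_j \Lambda_j$ once for each of the 50 non-isomorphic exceptional cosets $E_8/H$, and then read off when all $k_j$ are even. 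In that case, $c_1(F,J_0)$ is divisible by $2$ in $H^{2}(F;\mathbb{Z})$, and since $F$ is simply-connected, Proposition \ref{chernclass2} yields a (unique) $G$-invariant spin structure and a (unique) $G$-invariant metaplectic structure.

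The actual computation of $\sigma^{J_0}$ for each $F=E_8/H$ follows the three-step algorithm described in Remark \ref{exce}. First, from the painted Dynkin diagram of $E_8/H$ I would read off $\Pi_W$ and reconstruct $R_H^+$. Second, I would use the identity
\[
\sigma^{J_0}=\sum_{\gamma\in R_F^+}\gamma = 2\sigma_{E_8}-2\sigma_H,
\]
with $2\sigma_{E_8}=58\alpha_1+114\alpha_2+168\alpha_3+220\alpha_4+270\alpha_5+182\alpha_6+92\alpha_7+136\alpha_8$ as recorded in Remark \ref{exce}. Third, I would invert the Cartan matrix of $E_8$ to pass from the simple-root basis to the basis $\{\Lambda_1,\dots,\Lambda_8\}$ of fundamental weights, which yields the final expression $\sigma^{J_0}=\sum_j k_j\Lambda_j$ entered in Table 4. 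Once Table 4 is in place, the theorem is proved by direct inspection: $F$ is $G$-spin if and only if the coefficient vector $(k_1,\dots,k_v)$ of $\sigma^{J_0}$ consists entirely of even integers, and a scan through all 50 lines of Table 4 singles out precisely the cosets listed in the statement.

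The main obstacle is the bookkeeping behind Table 4, rather than any conceptual difficulty. For stability subgroups whose semisimple part has several simple factors (for instance $E_8(1,3,5)$, $E_8(1,2,4,6,8)$, or $E_8(1,2,4,5,6,8)=E_8/D_4\times A_2\times T^2$), one must correctly identify the positive roots of each factor inside $R^+$ of $E_8$, and this becomes particularly delicate when the Dynkin subdiagram $\Pi_W$ contains non-adjacent long strings giving rise to $D$- or $E$-type subalgebras with branched roots. Care is also required for $H$ containing $D_n$- or $E_6$-, $E_7$-factors, where $2\sigma_H$ is most conveniently obtained from the standard formula $2\sigma_H=\sum_{i}\Lambda_i^H$ expressed in the fundamental weights of the subgroup and then re-expressed in the $\Lambda_j$ of $E_8$ via the inclusion of weight lattices. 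Once these root-theoretic calculations are carried out and transcribed into Table 4, the conclusions of Theorem \ref{e8}, including the uniqueness assertions for the values $b_2(F)=1,3,6$ and the non-existence of spin cosets for $b_2(F)=5,7$, follow immediately by matching each line of the table against the parity criterion provided by Proposition \ref{chernclass2}.
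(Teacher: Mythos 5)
Your proposal follows essentially the same route as the paper: the authors prove Theorem \ref{e8} precisely by computing $\sigma^{J_0}=2\sigma_{\E_8}-2\sigma_H$ for each non-isomorphic coset via the algorithm of Remark \ref{exce}, recording the Koszul vectors in Table 4, and then reading off the spin cases from the parity criterion of Proposition \ref{chernclass2} together with Corollary \ref{intChern}. The only slip is the count of non-isomorphic $\E_8$-flag manifolds, which is $40$ (not $50$); this does not affect the argument.
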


\section{C-spaces and invariant spin structures}\label{Cspace}
\subsection{Topology of C-spaces} A natural  generalization of   flag manifolds  are    C{\it-spaces}, i.e.  compact simply-connected  homogeneous complex manifolds
  $M = G/L$  of    a   compact  semisimple Lie  group  $G$.   In this case, the stability subgroup  $L$ is a closed connected subgroup of $G$ whose semisimple part coincides with the semisimple part of the centralizer of a torus in $G$.
    According  to  H. C. Wang \cite{Wang},  any C-space is the  total  space  of  a  homogeneous
   bundle $ M = G/L \to  F = G/H$ over a flag manifold $F=G/H$ with structure group  a complex torus ${\rm T}^{2k}$  of real even dimension $2k:=\rnk G-\rnk L$. Thus, $F$ has  $b_{2}(F):=\sharp(\Pi_{B})=v\geq 2$, i.e. $F=G/H=G/H'\cdot {\rm T}^{v}$ $(v\geq 2)$.

  Let $\mathfrak{g}= \mathfrak{h}+ \mathfrak{m}=  (Z(\mathfrak{h})+ \mathfrak{h}')+\mathfrak{m}$
  be   reductive  decomposition   of  the flag manifold $F=G/H$. A C-space  is  defined
   by  a  decomposition of the space $\fr{t}=iZ(\fr{h})$  into a  direct  sum     of a
 (commutative)  subalgebra  $\mathfrak{t}_1$  of even  dimension  $2k$, generating the torus ${\rm T}_{1}^{2k}\equiv {\rm T}^{2k}$ and   a complementary subalgebra  $\mathfrak{t}_0$,   which generates   a   torus  ${\rm T}_0$ of  $H$  with  $\dim_{\bb{R}} {\rm T}_{0}:=m=v-2k$, that is
  \[
 \fr{t}:=i Z(\mathfrak{h}) = \mathfrak{t}_0 + \mathfrak{t}_1.
   \]
   Then, $\rnk G=\rnk H=\dim {\rm T}^{v}+\rnk H'$, $\rnk L=\dim {\rm T}_{0}+\rnk H'$,   $H'$ (the semisimple part   of $H$) coincides  with the simisimple part of $L$  and the homogeneous manifold
 $M =  G/L := G/ T_0 \cdot H'$ is    a C-space. In particular,     any  C-space  can be obtained by this construction (see  \cite{Wang}). The reductive  decomposition of $M=G/L$ is given by
 \begin{equation}\label{redc}
   \mathfrak{g}= \mathfrak{l}+ \mathfrak{q}= (\mathfrak{h}'+ i\mathfrak{t}_0)+(   i\mathfrak{t}_1 +  \mathfrak{m}), \quad \fr{q}:=(i\mathfrak{t}_1+\mathfrak{m})\cong T_{eL}M.
\end{equation}

  Changing $G$ to its universal covering,   we  may assume (and we do) that $G$ is  simply-connected    and  moreover that the  action of $G$ on $M =G/L$ is      effective.
Notice that  any  extension of an invariant complex     $J_F$ on $F$ (which corresponds to an $\Ad_{H}$-invariant endomorphism  $J_{\mathfrak{m}} : \mathfrak{m}\to\fr{m}$ with $J^{2}_{\fr{m}}=-\Id$),    induces  an invariant  complex  structure  $J_{M}$ on $M$, such that  the projection  $\pi : M \to F$ is holomorphic. This is given by
 $J_{\mathfrak{q}} := J_{\mathfrak{t}_1}+  J_{\mathfrak{m}}$, for some complex structure $J_{\fr{t}_{1}}$ on $\fr{t}_{1}$. For later use, notice  that   the complex  structure   $J_M$  has   the  same  Koszul   form as   $J_F$ : $\sigma_{J_M} = \sigma_{J_F} = \sum_{R_F}\alpha$.

 \begin{lemma} \label{dmitri}
 The  space $\Omega^2_{cl}(M)^{G}$ of  closed invariant  2-forms on a C-space $M=G/L$ is  isomorphic   to
 $C_{\mathfrak{\mathfrak{g}}}(\mathfrak{l}) = i \mathfrak{t} + \mathfrak{q}_0$, where $\mathfrak{q}_0:= C_{\mathfrak{q}}(L)$,   and  consists of  the  form
  $\omega_{\xi} = \frac{i}{2 \pi} d\xi,\, \xi  \in \mathfrak{t}^* + i\mathfrak{q}_0^*  $.  The   form  $\omega_{\xi}$ on  $M$ is  exact  if and only if
  $\xi \in   i \mathfrak{t}_1 + \mathfrak{q}_0$.  In particular,  the   cohomology group  $H^2(M, \mathbb{R}) \cong \mathfrak{t}_0 \cong  Z(\mathfrak{l}) $.
 \end{lemma}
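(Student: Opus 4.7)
The plan is to transport the argument of Proposition \ref{mainuse1} from a flag manifold to a C-space via the Chevalley--Eilenberg isomorphism $\Omega^k(M)^G \cong (\Lambda^k \fr{q}^*)^L$ under which the de Rham differential becomes
\[
(d\omega)(X_0,\dots,X_k) = \sum_{i<j}(-1)^{i+j}\omega([X_i,X_j]_{\fr{q}},X_0,\dots,\widehat{X}_i,\dots,\widehat{X}_j,\dots,X_k),
\]
and then to identify closed invariant $2$-forms with ``transgressions'' of elements of $C_{\fr{g}}(\fr{l})$. First I would verify the asserted description of $C_{\fr{g}}(\fr{l})$: since $\fr{g}=\fr{l}+\fr{q}$ is an $\Ad_L$-invariant splitting, any $Y\in C_{\fr{g}}(\fr{l})$ decomposes into components that individually centralize $\fr{l}$; the $\fr{l}$-part lies in $Z(\fr{l})=i\fr{t}_0$ (because $\fr{h}'$ is semisimple with trivial centre and $\fr{t}_0$ is abelian), the $\fr{q}$-part is $\fr{q}_0$ by definition, and $i\fr{t}_1\subset Z(\fr{h})\subset C_{\fr{g}}(\fr{l})$ already sits inside $\fr{q}_0$, so $i\fr{t}_0+\fr{q}_0 = i\fr{t}+\fr{q}_0$.

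Next, for any $X\in C_{\fr{g}}(\fr{l})$ I would form $\xi_X:=B(X,\cdot)\in\fr{g}^*$, which is $\Ad_L$-invariant because $[X,\fr{l}]=0$. The associated left-invariant $1$-form on $G$ satisfies $d\xi_X(Y,Z)=-B(X,[Y,Z])=B([X,Y],Z)$, which vanishes as soon as $Y$ or $Z$ lies in $\fr{l}$, so $d\xi_X$ is basic for the principal bundle $G\to G/L$ and descends to a closed $G$-invariant $2$-form $\omega_{\xi_X}$ on $M$. After rescaling by $\tfrac{i}{2\pi}$ and letting $\xi$ range over the Killing-dual parametrization of $C_{\fr{g}}(\fr{l})$, this yields the family $\omega_\xi$ with $\xi\in\fr{t}^*+i\fr{q}_0^*$ asserted in the lemma.

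The main obstacle is the exhaustion direction---showing no closed invariant $2$-form arises otherwise. For this I would decompose the $\Ad_L$-module $\fr{q}=i\fr{t}_1\oplus\bigoplus_{\zeta\in R_T^+}\fr{m}_\zeta$ (refining the $\Ad_H$-decomposition if some $\fr{m}_\zeta$ further splits under $L\subsetneq H$), expand an arbitrary $\omega\in(\Lambda^2\fr{q}^*)^L$ into its $i\fr{t}_1\wedge i\fr{t}_1$, $i\fr{t}_1\wedge\fr{m}$ and $\fr{m}\wedge\fr{m}$ blocks, and impose the closedness identity on all triples. A Schur-type argument inside each $\Ad_L$-irreducible, combined with the bracket rule $[\fr{m}_\zeta,\fr{m}_\eta]\subset\fr{h}^{\mathbb C}+\fr{m}_{\zeta+\eta}+\fr{m}_{\zeta-\eta}$, forces the $\fr{m}\wedge\fr{m}$ block to be the transgression of an element of $\fr{t}^*$ (exactly as in Proposition \ref{mainuse1}), the mixed block to originate from a linear functional on $\fr{q}_0\cap\fr{m}$, and leaves the pure $i\fr{t}_1$ block unconstrained; these three pieces reassemble into $\tfrac{i}{2\pi}d\xi$ for some $\xi\in\fr{t}^*+i\fr{q}_0^*$.

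For the exactness statement and the cohomology, I would use that $G$-invariant $1$-forms on $M$ are $(\fr{q}^*)^L=(i\fr{t}_1)^*+\fr{q}_0^*$ (again by Schur, since each $\fr{m}_\zeta$ outside $\fr{q}_0$ carries a nontrivial $\Ad_L$-action). Consequently $\omega_\xi$ is exact iff $\xi\in (i\fr{t}_1)^*+\fr{q}_0^*$, i.e., under the Killing identification $\xi\in i\fr{t}_1+\fr{q}_0$; quotienting gives
\[
H^2(M,\mathbb{R})\;\cong\;\bigl(\fr{t}^*+i\fr{q}_0^*\bigr)\big/\bigl((i\fr{t}_1)^*+\fr{q}_0^*\bigr)\;\cong\;\fr{t}_0^*\;\cong\;\fr{t}_0 \;=\; Z(\fr{l}),
\]
finishing the claim.
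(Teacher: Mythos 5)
Your route is genuinely different from the paper's. The paper settles the hard (``exhaustion'') direction in one line via Whitehead's lemma: since $\fr{g}$ is semisimple, $H^2(\fr{g},\mathbb{R})=0$, so every closed element of $\Lambda^2(\fr{g}^*)$ is already $d\xi$ for some $\xi\in\fr{g}^*$, and the only thing left to decide is for which $\xi$ the form $d\xi$ is horizontal and $\Ad_L$-invariant, i.e.\ descends to $M$; that is exactly the condition $\xi\in B(C_{\fr{g}}(\fr{l}),\cdot)$. Your identification of $C_{\fr{g}}(\fr{l})$, your construction of the forms $\omega_{\xi_X}$, and your treatment of exactness and of $H^2(M,\mathbb{R})$ are all sound and consistent with the paper.

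The gap is in your block-by-block Schur analysis, precisely at the claim that closedness ``leaves the pure $i\fr{t}_1$ block unconstrained.'' It does not. Take $Z\in i\fr{t}_1$ and $X,Y\in\fr{m}_\zeta$: then $[X,Z]_{\fr{q}}$ and $[Y,Z]_{\fr{q}}$ lie in $\fr{m}$, while $[X,Y]_{\fr{q}}$ has a component in $i\fr{t}_1$ (the projections to $i\fr{t}_1$ of the coroots $H_\alpha$, $\alpha\in\kappa^{-1}(\zeta)$, and as $\zeta$ varies over $R_T^+$ these span $i\fr{t}_1$, because a $t\in\fr{t}$ orthogonal to all of them would be annihilated by every root). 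Hence the closedness identity on such mixed triples couples the $i\fr{t}_1\wedge i\fr{t}_1$ block to the others and in fact forces it to vanish. This constraint is also needed for consistency with your own conclusion: since $i\fr{t}_1$ is abelian and central in $\fr{h}$, every $d\xi$ with $\xi\in\fr{t}^*+i\fr{q}_0^*$ restricts to zero on $i\fr{t}_1\wedge i\fr{t}_1$, so if that block really were free your closed forms could not all reassemble into $\frac{i}{2\pi}d\xi$; concretely, for $M=G$ fibered over $G/{\rm T}^{\ell}$ an unconstrained block would produce $H^2(G,\mathbb{R})\neq 0$. The direct analysis can be completed once this constraint is recorded and used, but as written the key step fails; invoking $H^2(\fr{g},\mathbb{R})=0$ removes the need for the block computation altogether.
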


 \begin{proof}
Since  $H^2(\mathfrak{g}, \mathbb{R})=0$, any closed   2-form $\rho \in \Lambda^2(\mathfrak{g}^*) $ is   exact, $\rho = d \xi.$
 Such a form  defines   a  closed  form  $\rho = d \xi \in \Omega^2_{cl}(M)$  if and only if $\xi  \in    C_{\mathfrak{g}}(L) =  \mathfrak{t} + \mathfrak{q}_0$, where  $\mathfrak{q}_0:= C_{\mathfrak{q}}(L)  $.
Moreover, the form  $\rho  = d \xi \in \Omega^2_{cl}(M)$   
is exact if and  only if  $\xi \in i\mathfrak{t}_1^* + \mathfrak{q}_0^*$,
since  any   such  form  defines  an invariant  1-form on  $M$.
 So  the  cohomology  $H^2(M,\mathbb{R})\cong \mathfrak{t}_0$.
\end{proof}

Next we shall frequently call   {\it M-space}  a  C-space  $M=G/L$ whose stability group $L$ is semisimple, $L=H'$.  This is equivalent to say that  $\fr{t}_{0}$ is trivial and hence in this case Lemma \ref{dmitri} yields
 \begin{corol} \label{mspaces}
A  M-space $M=G/H'$  has trivial second cohomology group, $H^{2}(M; \bb{R})=0$. 
\end{corol}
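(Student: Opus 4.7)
The plan is to deduce this directly from Lemma \ref{dmitri}, since the M-space hypothesis forces the torus part $\mathfrak{t}_0$ to vanish. Recall the setup introduced before Lemma \ref{dmitri}: a C-space $M = G/L$ comes from a splitting $\mathfrak{t} = i Z(\mathfrak{h}) = \mathfrak{t}_0 + \mathfrak{t}_1$ of the center of the isotropy algebra of the underlying flag manifold $F = G/H$, where $\mathfrak{t}_0$ generates a torus ${\rm T}_0 \subset H$ and $L = {\rm T}_0 \cdot H'$, with $H'$ the semisimple part of $H$. Under this decomposition, $Z(\mathfrak{l}) = i\mathfrak{t}_0$, and Lemma \ref{dmitri} gives $H^2(M,\mathbb{R}) \cong \mathfrak{t}_0 \cong Z(\mathfrak{l})$.

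First I would observe that by definition an M-space is the special case in which $L = H'$, i.e.\ the stabilizer is semisimple. Comparing with $L = {\rm T}_0 \cdot H'$, this forces ${\rm T}_0$ to be the trivial (zero-dimensional) torus; equivalently $\dim \mathfrak{t}_0 = m = v - 2k = 0$, so $\mathfrak{t} = \mathfrak{t}_1$. In particular, every central element of the flag manifold's isotropy algebra lies in $\mathfrak{t}_1$, whose exponential generates the fiber torus ${\rm T}^{2k}$ of the Tits fibration $M \to F$.

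Next I would substitute $\mathfrak{t}_0 = 0$ into Lemma \ref{dmitri}. The isomorphism $H^2(M, \mathbb{R}) \cong \mathfrak{t}_0$ immediately yields $H^2(M, \mathbb{R}) = 0$. Equivalently, every closed $G$-invariant real 2-form $\omega_\xi = \tfrac{i}{2\pi} d\xi$ on $M$ with $\xi \in \mathfrak{t}^* + i \mathfrak{q}_0^*$ has its defining form $\xi$ sitting in $i\mathfrak{t}_1^* + \mathfrak{q}_0^*$ (since $\mathfrak{t}^* = \mathfrak{t}_1^*$), and hence $\omega_\xi$ is exact by the exactness criterion established in the lemma. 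So there is nothing substantial to prove beyond this translation.

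There is no genuine obstacle here; the statement is formal once Lemma \ref{dmitri} is in hand. The only point worth making explicit in the write-up is the identification $Z(\mathfrak{l}) = i\mathfrak{t}_0$, which uses that $\mathfrak{l} = i\mathfrak{t}_0 + \mathfrak{h}'$ with $\mathfrak{h}'$ semisimple and $\mathfrak{t}_0$ central in $\mathfrak{h}$, so that the center of $\mathfrak{l}$ is exactly $i\mathfrak{t}_0$. Together with the M-space condition $\mathfrak{t}_0 = 0$, this completes the argument.
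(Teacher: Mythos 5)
Your proof is correct and follows exactly the paper's own route: the M-space condition $L=H'$ forces $\mathfrak{t}_0=0$, and Lemma \ref{dmitri}'s isomorphism $H^2(M,\mathbb{R})\cong\mathfrak{t}_0\cong Z(\mathfrak{l})$ then gives the result immediately. The extra remarks on exactness of invariant $2$-forms and the identification $Z(\mathfrak{l})=i\mathfrak{t}_0$ are harmless elaborations of the same argument.
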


 Since $\pi_{1}({\rm T}^{2k})\cong \bb{Z}^{2k}$ is the only  non-trivial homotopy group of the torus ${\rm T}^{2k}$, the principal bundle   ${\rm T}^{2k}\overset{\iota}{\hookrightarrow}M=G/L\overset{\pi}{\to} F=G/H$ induces an exact sequence
\[
\pi_{2}({\rm T}^{2k})=0\to\pi_{2}(M)\to \pi_{2}(F)\overset{\delta}{\to}\pi_{1}({\rm T}^{2k})\to \pi_{1}(M)\to\pi_{1}(F)\to 0,
\]
where $\delta : \pi_{s}(F)\to \pi_{s-1}({\rm T}^{2k})$ is a homomorphism of homotopy groups. By assumption, it is $b_{2}(F)=v$ and thus $\pi_{2}(F)\cong \bb{Z}^{v}$. Hence, if $k\neq 0$ (i.e. $\fr{t}_{1}\neq\{0\}$) then $\delta : \bb{Z}^{v}\to\bb{Z}^{2k}$ is non-zero.  In particular, $\pi_{2}(M)$ is  finite.  
 In the level of  homology groups we have an exact sequence
\[
H_{2}({\rm T}^{2k}; \bb{Z})\overset{\iota_{*}}{\to}H_{2}(M; \bb{Z})\overset{\pi_{*}}{\to} H_{2}(F; \bb{Z})\overset{\delta_{*}}{\to} H_{1}({\rm T}^{2k}; \bb{Z})\overset{\iota_{*}}{\to}H_{1}(M; \bb{Z})\to 0.
\]
Recall that a flag manifold  $F$ does not admit any $G$-invariant real 1-form. In particular, $b_{1}(F)=0$ and $H^{1}(F; \bb{Z})=0$. Thus, dualy we can write
\begin{equation}\label{seq}
0=H^{1}(F; \bb{Z})\to H^{1}(M; \bb{Z})\overset{\iota^{*}}{\to} H^{1}({\rm T}^{2k}; \bb{Z})\overset{\delta^{*}}{\to} H^{2}(F; \bb{Z})\overset{\pi^{*}}{\to} H^{2}(M; \bb{Z})\overset{\iota^{*}}{\to} H^{2}({\rm T}^{2k}; \bb{Z}).
\end{equation}
 In fact, (\ref{seq})  holds even if we consider the cohomology groups with  complex coefficients (see \cite{Ise, Hof}).

\begin{prop}\label{cspacespin}
Consider a ${\rm T}_{1}^{2k}$-principal bundle  $M=G/L\overset{\pi}{\to} F=G/H$ of a non-K\"ahler C-space $M=G/L$ over   a flag manifold  $F= G/H $. Then\\
$(i)$   $H^{1}(M; \bb{Z})=0$, $H^{2}(M; \bb{Z})$ is torsion free and the pull back  $\iota^{*} : H^{2}(M; \bb{Z})\to H^{2}({\rm T}^{2k}; \bb{Z})$ is zero.\\
$(ii)$ $b_{1}(M)=0$. \\
$(iii)$  The pull-back  $\pi^{*} : H^{2}(F; \bb{C})\to H^{2}(M; \bb{C})$ is surjective, $H^{2}(M; \bb{Z})\cong H^{2}(F; \bb{Z})/\delta^{*}\big(H^{1}(T^{2k}; \bb{Z})\big)$ and $b_{2}(M)=b_{2}(F)-b_{1}({\rm T}^{2k})=b_{2}(F)-2k=m=\dim_{\bb{R}}\fr{t}_{0}$.

 \end{prop}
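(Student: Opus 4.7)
The proof proposal is to exploit the cohomology long exact sequence (\ref{seq}) already derived, pivoting on the injectivity of the transgression $\delta^{*}:H^{1}({\rm T}^{2k};\bb{Z})\to H^{2}(F;\bb{Z})$. Once this injectivity (with torsion-free cokernel) is established, each of the three statements falls out by a formal diagram chase combined with the rank count of Lemma \ref{dmitri}.

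First I would identify $\delta^{*}$ with the usual restriction map on characters. By Proposition \ref{Tcharac}, $\cal{X}(H)\cong \cal{P}_{T}$, and by Proposition \ref{mainuse1} the transgression $\tau:\fr{t}^{*}\supset \cal{P}_{T}\to H^{2}(F;\bb{Z})$ is an isomorphism, sending $\lambda\mapsto \omega_{\lambda}=\tfrac{i}{2\pi}d\lambda$. Similarly $H^{1}({\rm T}^{2k};\bb{Z})\cong \cal{X}({\rm T}^{2k}_{1})$, and in the principal torus bundle $\pi:M=G/L\to F=G/H$ the connecting map $\delta^{*}$ sends a character $\chi_{\lambda}$ of ${\rm T}^{2k}_{1}$ to the first Chern class of the associated $\Ss^{1}$-bundle, which is precisely $[\omega_{\lambda}]\in H^{2}(F;\bb{Z})$. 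Because the closed subtorus ${\rm T}^{2k}_{1}\subset Z(H)={\rm T}^{v}$ is complementary to ${\rm T}_{0}$ (in accordance with the decomposition $\fr{t}=\fr{t}_{0}+\fr{t}_{1}$), the restriction $\cal{X}({\rm T}^{v})\twoheadrightarrow \cal{X}({\rm T}^{2k}_{1})$ admits a splitting of free abelian groups. Hence $\delta^{*}(H^{1}({\rm T}^{2k};\bb{Z}))$ is a direct summand of $H^{2}(F;\bb{Z})$ of rank $2k$, with torsion-free cokernel of rank $v-2k=m$.

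With $\delta^{*}$ injective, the sequence (\ref{seq}) instantly forces $H^{1}(M;\bb{Z})=\ker(\delta^{*})=0$, proving part (ii) and the first assertion of (i). For part (iii), Lemma \ref{dmitri} gives $\dim_{\bb{R}}H^{2}(M;\bb{R})=\dim\fr{t}_{0}=m$, which matches exactly the $\bb{R}$-rank of $H^{2}(F;\bb{Z})/\delta^{*}(H^{1}({\rm T}^{2k};\bb{Z}))$. Since the latter group embeds into $H^{2}(M;\bb{Z})$ as $\mathrm{image}(\pi^{*})$ via (\ref{seq}), this rank equality forces $\pi^{*}\otimes\bb{C}$ to be surjective and hence $\iota^{*}\otimes\bb{C}=0$ on $H^{2}(M;\bb{C})$. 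To upgrade $\iota^{*}=0$ to integral coefficients, I would invoke the Serre spectral sequence $E_{2}^{p,q}=H^{p}(F;\bb{Z})\otimes H^{q}({\rm T}^{2k};\bb{Z})$: since $H^{1}(F;\bb{Z})=0$ the term $E_{2}^{1,1}$ vanishes, and the $d_{2}$-differential on $E_{2}^{0,2}=\Lambda^{2}H^{1}({\rm T}^{2k};\bb{Z})$ acts as a derivation built from the injective $\delta^{*}$, whence $E_{\infty}^{0,2}=0$. Consequently $H^{2}(M;\bb{Z})=E_{\infty}^{2,0}=H^{2}(F;\bb{Z})/\delta^{*}(H^{1}({\rm T}^{2k};\bb{Z}))$, which is torsion free by the direct-summand property established above; this simultaneously proves the remaining claim of (i), the integral isomorphism in (iii), and (via $\iota^{*}\pi^{*}=0$ with $\pi^{*}$ surjective) the vanishing $\iota^{*}=0$. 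The $b_{2}$ computation is then just rank counting.

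The main obstacle I anticipate is the multiplicative spectral-sequence argument giving $E_{\infty}^{0,2}=0$ at integral coefficients and, relatedly, the torsion-freeness of the quotient; everything rational is trivial from Lemma \ref{dmitri}, but integrality requires carefully exploiting that $\fr{t}_{1}$ can be chosen so that its integral lattice is a direct summand of the integral lattice of $\fr{t}$, which in turn rests on the Wang description of C-spaces and the structure of $Z(H)={\rm T}^{v}$. All remaining steps are either applications of (\ref{seq}) or invocations of previously established results.
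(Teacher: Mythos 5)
Your proposal is correct, but it reaches the conclusion by a genuinely different route than the paper. The paper gets injectivity of $\delta^{*}$ homotopy-theoretically: since $M$ is simply connected, the boundary map $\pi_{2}(F)\to\pi_{1}({\rm T}^{2k})$ is onto, hence $\delta_{*}:H_{2}(F;\bb{Z})\to H_{1}({\rm T}^{2k};\bb{Z})$ is onto and $\delta^{*}$ is injective by universal coefficients; torsion-freeness of $H^{2}(M;\bb{Z})$ then comes from $H_{1}(M;\bb{Z})=0$ via universal coefficients, and part (ii) is outsourced to H\"ofer's Proposition 11.3. You instead identify $\delta^{*}$ Lie-theoretically as the inclusion of the sublattice $\cal{P}_{1}=\{\lambda\in\cal{P}_{T}:\lambda|_{\fr{t}_{0}}=0\}$ into $\cal{P}_{T}\cong H^{2}(F;\bb{Z})$ (anticipating what the paper only establishes later, in Proposition \ref{newnew}), and you replace the citation of the exact sequence and of H\"ofer by a direct Serre spectral sequence computation showing $E_{\infty}^{0,2}=0$ and $H^{2}(M;\bb{Z})=E_{\infty}^{2,0}$. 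What your approach buys is self-containedness and the stronger structural statement that ${\rm im}\,\delta^{*}$ is a \emph{primitive} sublattice, which immediately gives torsion-freeness of the quotient; what the paper's approach buys is brevity, since $\pi_{1}(M)=1$ does all the work at once. Two small points: (a) your justification that ${\rm im}\,\delta^{*}$ is a direct summand via a splitting of the restriction $\cal{X}({\rm T}^{v})\to\cal{X}({\rm T}_{1}^{2k})$ is slightly misdirected as stated — the summand complementary to the kernel of that restriction is not canonically ${\rm im}\,\delta^{*}$; the clean argument is that $\cal{P}_{T}/\cal{P}_{1}$ injects into the real vector space $\fr{t}_{0}^{*}$ under $\lambda\mapsto\lambda|_{\fr{t}_{0}}$, hence is torsion-free, hence $\cal{P}_{1}$ is primitive (equivalently, apply your splitting argument to the restriction onto $\cal{X}({\rm T}_{0})$, whose kernel is exactly $\cal{P}_{1}$); (b) your upgrade of $\iota^{*}\otimes\bb{C}=0$ to $\iota^{*}=0$ can bypass the multiplicative spectral-sequence step entirely once torsion-freeness of $H^{2}(M;\bb{Z})$ is known, since a homomorphism of torsion-free finitely generated groups vanishing rationally vanishes integrally. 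Neither point is a gap in substance.
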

\begin{proof}
$(i)$  As long as $M$ is not K\"ahlerian, $G$-invariant 1-forms  exist, in contrast to $F$. However, since $M$ is a  (closed) simply-connected manifold, similarly with $F$ we immediately  conclude that   $H^{1}(M; \bb{R})=0$ (cf. \cite[Thm.~15.17]{Lee}) and $H^{1}(M; \bb{Z})=0$.   In particular,  the map $\delta_{*} : H_{2}(F; \bb{Z})\to H_{1}({\rm T}^{2k}; \bb{Z})$ coincides with $\pi_{2}(F)\to\pi_{1}({\rm T}^{2k})$ and so it must be surjective. 
Then,  by the universal coefficient theorem it follows that  the transgression $\delta^{*} : H^{1}({\rm T}^{2k}; \bb{Z})\to H^{2}(F; \bb{Z})$ must be injective. Thus, by (\ref{seq}) we get $H^{1}(M; \bb{Z})=0$  and   $\iota^{*} : H^{2}(M; \bb{Z})\to H^{2}({\rm T}^{2k}; \bb{Z})$ is zero.   Also, since $H_{1}(M; \bb{Z})=0$,  the universal coefficient theorem implies that    $H^{2}(M; \bb{Z})$ is torsion free,
\[
H^{2}(M; \bb{Z})={\rm Hom}(H_{2}(M; \bb{Z}); \bb{Z})\oplus {\rm Ext}(H_{1}(M; \bb{Z});  \bb{Z})={\rm Hom}(H_{2}(M; \bb{Z}); \bb{Z}).
\]
 $(ii)$ 
Given of  a torus principal bundle ${\rm T}^{2k}{\hookrightarrow} M\to F$ over a complex manifold $F$,     the first Betti number  of the total space vanishes $b_{1}(M)=0$, if and only if $b_{1}(F)=0$ and $\delta^{*}_{\bb{C}} : H^{1}({\rm T}^{2k}; \bb{C})\to H^{2}(F; \bb{C})$ is injective, see   \cite[Prop.~11.3]{Hof}.   Here, it is  $H^{1}({\rm T}^{2k}; \bb{C})=H^{1}({\rm T}^{2k}; \bb{Z})\otimes\bb{C}$ and $ H^{2}(F; \bb{C})= H^{2}(F; \bb{Z})\otimes\bb{C}$.  But $\delta^{*}_{\bb{C}} : H^{1}({\rm T}^{2k}; \bb{C})\to H^{2}(F; \bb{C})$ is obtained by $\delta^{*}$ by scalar multiplication, i.e. $\delta^{*}_{\bb{C}}=\delta^{*}\otimes \Id_{\bb{C}}$ and our assertion follows.\\ 
$(iii)$ The first claim in $(iii)$ occurs by  the injectivity of  $\delta^{*}_{\bb{C}}$ and (\ref{seq}) (written with complex coefficients). Moreover, since $H^{1}(M; \bb{Z})=0$ and $\iota^{*} : H^{2}(M; \bb{Z})\to H^{2}({\rm T}^{2k}; \bb{Z})$ is the zero map,  (\ref{seq}) reduces to the following short exact sequence
\begin{equation}\label{seq3}
0\to H^{1}({\rm T}^{2k}; \bb{Z})\overset{\delta^{*}}{\to} H^{2}(F; \bb{Z})\overset{\pi^{*}}{\to} H^{2}(M; \bb{Z})\to 0.
\end{equation}
Thus, we infer  the isomorphism $H^{2}(M; \bb{Z})\cong H^{2}(F; \bb{Z})/\delta^{*}\big(H^{1}(T^{2k}; \bb{Z})\big)$. 
\end{proof}
\begin{example}
\textnormal{The even dimensional simply-connected  simple Lie groups $G$  (or equivalently of even rank), can be viewed as M-spaces  over the associated full flag manifolds $G/{\rm T}^{2y}$, i.e. ${\rm T}^{2y}\hookrightarrow G\to G/{\rm T}^{2y}$ with $\ell=\rnk G=2y$. Thus,  Corollary \ref{mspaces} and Proposition \ref{cspacespin}, together they yield the well-known    $H^{1}(G; \bb{R})=H^{2}(G; \bb{R})=0$.}
\end{example}
Let us relate now  some  characteristic classes of the   C-space $M=G/L$ and  those of the  associated  flag manifold   $F=G/H$.
\begin{prop}\label{c1w2}
 The second Stiefel-Whitney classes and the first Chern classes of $M$ and $F$   are respectively related  by
\[
w_2(TM)=\pi^{*}(w_{2}(TF)), \quad c_{1}(M, J_{M})=\pi^{*}(c_{1}(F, J_{F})), \quad w_{2}(TM)=\pi^{*}(c_{1}(F)) \ {\rm mod} 2.
\]
\end{prop}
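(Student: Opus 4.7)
The plan is to exploit the homogeneous fibration $\pi : M = G/L \to F = G/H$ with fibre $N = H/L$ and apply Proposition~\ref{genw2}, together with the fact that the fibre is a complex torus on which the isotropy acts trivially. First I would identify the fibre: since $H = H' \cdot {\rm T}^{v}$ and $L = H' \cdot {\rm T}_{0}$, we have $N = H/L \cong {\rm T}^{v}/{\rm T}_{0} \cong {\rm T}^{2k}_{1}$, with Lie algebra $\mathfrak{n} = i\mathfrak{t}_{1}$. Then, using the reductive decomposition (\ref{redc}), the tangent bundle along the fibres is the homogeneous vector bundle $\tau_{N} = G \times_{L} i\mathfrak{t}_{1}$.

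The key observation is that $L = H' \cdot {\rm T}_{0}$ acts trivially on $i\mathfrak{t}_{1}$. Indeed, $i\mathfrak{t}_{1} \subset i\mathfrak{t} = Z(\mathfrak{h})$ lies in the centre of $\mathfrak{h}$, and since $\mathfrak{l} = \mathfrak{h}' + i\mathfrak{t}_{0} \subset \mathfrak{h}$, the adjoint action of $L$ fixes $i\mathfrak{t}_{1}$ pointwise. Hence $\tau_{N} \cong G/L \times i\mathfrak{t}_{1}$ is a trivial real vector bundle of rank $2k$ and, moreover, endowed with the constant complex structure $J_{\mathfrak{t}_{1}}$ it is also trivial as a complex vector bundle. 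In particular $w_{2}(\tau_{N}) = 0$ and $c_{1}(\tau_{N}, J_{\mathfrak{t}_{1}}) = 0$. Combining this with Proposition~\ref{genw2}(ii), which gives
\[
w_{2}(TM) = w_{2}(\tau_{N}) + \pi^{*}(w_{2}(TF)),
\]
we immediately obtain $w_{2}(TM) = \pi^{*}(w_{2}(TF))$.

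For the first Chern class, I would observe that the $G$-invariant complex structure $J_{M}$ splits according to $\mathfrak{q} = i\mathfrak{t}_{1} + \mathfrak{m}$ as $J_{\mathfrak{q}} = J_{\mathfrak{t}_{1}} + J_{\mathfrak{m}}$, so the decomposition $TM = \tau_{N} \oplus \pi^{*}(TF)$ coming from (\ref{decomp1}) is in fact a decomposition of complex vector bundles (with $\pi^{*}(TF)$ carrying the pull-back complex structure $\pi^{*}J_{F}$). By the Whitney sum formula for Chern classes and naturality,
\[
c_{1}(M, J_{M}) = c_{1}(\tau_{N}, J_{\mathfrak{t}_{1}}) + c_{1}(\pi^{*}(TF), \pi^{*}J_{F}) = 0 + \pi^{*}(c_{1}(F, J_{F})),
\]
yielding the second identity. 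The final claim $w_{2}(TM) \equiv \pi^{*}(c_{1}(F)) \pmod{2}$ is then a direct consequence of the classical relation $c_{1}(F, J_{F}) \equiv w_{2}(TF) \pmod 2$ for almost complex manifolds recalled before Proposition~\ref{at}, combined with the naturality of $\pi^{*}$ modulo $2$.

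There is no serious obstacle: the whole argument hinges on the centrality of $i\mathfrak{t}_{1}$ in $\mathfrak{h}$, which trivialises the bundle along the fibres, so all the characteristic classes of $M$ arise by pull-back from $F$. The only minor point requiring care is to check that the complex structure on $\tau_{N}$ really descends from the $L$-invariant decomposition $\mathfrak{q} = i\mathfrak{t}_{1} + \mathfrak{m}$, so that the splitting $TM = \tau_{N} \oplus \pi^{*}(TF)$ is genuinely holomorphic (equivalently, $\pi : M \to F$ is a holomorphic submersion), which is exactly the content of the discussion preceding Lemma~\ref{dmitri}.
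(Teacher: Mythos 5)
Your argument is correct. For the identity $w_2(TM)=\pi^{*}(w_{2}(TF))$ you follow the same route as the paper: the splitting $TM=\tau_{N}\oplus\pi^{*}(TF)$ from the reductive decomposition (\ref{redc}), the triviality of $\tau_{N}=G\times_{L}i\mathfrak{t}_{1}$ because $\Ad_{L}$ acts trivially on $i\mathfrak{t}_{1}$ (you derive this from $i\mathfrak{t}_{1}\subset Z(\mathfrak{h})$ and $L\subset H$ connected; the paper phrases it via $\mathfrak{l}$ being an ideal of $\mathfrak{h}$ so that $\Ad_{L}$ acts trivially on $\mathfrak{h}/\mathfrak{l}\cong\mathfrak{t}_{1}$ --- the same fact), and then Proposition \ref{genw2}(ii). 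Where you genuinely diverge is the Chern class identity: you observe that $J_{\mathfrak{q}}=J_{\mathfrak{t}_{1}}+J_{\mathfrak{m}}$ makes $TM=\tau_{N}\oplus\pi^{*}(TF)$ a splitting of complex vector bundles with $(\tau_{N},J_{\mathfrak{t}_{1}})$ trivial, and conclude by the Whitney sum formula that $c_{1}(M,J_{M})=\pi^{*}(c_{1}(F,J_{F}))$. The paper instead represents $c_{1}(F,J_{F})$ by the invariant Chern form $\omega_{\sigma^{J_{F}}}$, uses the equality of Koszul forms $\sigma^{J_{M}}=\sigma^{J_{F}}$, and invokes the surjectivity of $\pi^{*}:H^{2}(F;\mathbb{Z})\to H^{2}(M;\mathbb{Z})$ from Proposition \ref{cspacespin} together with the exact sequence (\ref{seq3}). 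Your version is more elementary and self-contained (it parallels the $w_{2}$ computation and needs no input from the cohomology of the torus bundle), while the paper's version ties $c_{1}(M,J_{M})$ directly to the Koszul form, which is what is actually exploited later in Theorem \ref{final3} and Corollary \ref{zeroc1}. The final reduction mod $2$ via $c_{1}(F,J_{F})\equiv w_{2}(TF)\ (\mathrm{mod}\ 2)$ is handled identically in both.
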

 \begin{proof}
  The reductive decomposition (\ref{redc}) induces a splitting of tangent bundle of $M=G/L$, given by
   \begin{equation}\label{TM}
TM=G\times_{L}\fr{q}=(G\times_{L}i\fr{t}_{1})\oplus \pi^{*}(TF)=\tau_{{\rm T}^{2k}}\oplus \pi^{*}(TF).
\end{equation}
  Since  any invariant  subbundle  of even rank over $M$ admits   an invariant  almost  complex  structure, it is oriented. Thus, the tangent bundle along the fibres $\tau_{{\rm T}^{2k}}=G\times_{L}i\fr{t}_{1}$ satisfies $w_{1}(\tau_{{\rm T}^{2k}})=0$.  In fact, the vector bundle $\tau_{{\rm T}^{2k}}\to M$ is trivial and hence $w_{1}(\tau_{{\rm T}^{2k}})=w_{2}(\tau_{{\rm T}^{2k}})=0$. This comes true since $\fr{l}\subset\fr{h}$ is an ideal of $\fr{h}$ and so the structure group $\Ad_{L}$ acts trivially on $\fr{h}/\fr{l}\cong\fr{t}_{1}$. 
Therefore, the  relation $w_2(TM)=\pi^{*}(w_{2}(TF))$ follows by (\ref{TM}) and the naturality of Stiefel-Whitney classes (see Proposition \ref{genw2}).  To prove the  relation between the first Chern classes $c_{1}(M, J_{M})$ and $c_{1}(F, J_{F})$,    recall   that   the first  Chern  class  of $(F, J_F)$ is    represented  by  the invariant  Chern   form  $\gamma_{J_F} = \omega_{\sigma^{J_F}}$ associated  with  the Koszul  form.  Since the pull-back $\pi^{*} : H^{2}(F; \bb{Z})\to H^{2}(M; \bb{Z})$ is surjective, by the definition of the first Chern class on $M$ and using (\ref{seq3}), we  get $c_{1}(M, J_{M})=\pi^{*}(c_{1}(F, J_{F}))$ (see also \cite{Hano, Hof}),
and   the relation $w_{2}(TM)=\pi^{*}(c_{1}(F)) \ {\rm mod} \  2$ follows. 
\end{proof}
 \subsection{Spin structures on C-spaces}
Let us examine now spin structures on a $C$-space $M=G/L$. If $M$ admits a  spin structure  then it will be $G$-invariant  and  unique,  because  $M$ is oriented and  both $ M, G$  are simply-connected   (see Proposition \ref{springer}).  Moreover,    Proposition \ref{c1w2} implies that

\begin{corol}\label{concl}
 (1) The C-space  $M$ is spin if and only if $w_{2}(TF)$ belongs to the kernel of $\pi^{*} : H^{2}(F; \bb{Z}_{2})\to H^{2}(M; \bb{Z}_{2})$.  Therefore, if $F$ is $G$-spin,   so is $M$. \\ 
 (2) \textnormal {(\cite{Hof})}  The first Chern class $c_{1}(M)$ of a C-space $(M, J_{M})$ vanishes if and only if the first Chern class $c_{1}(F)$  of $(F, J_{F})$ belongs to the image of the transgression $\delta^{*} : H^{1}({\rm T}^{2k}; \bb{Z})\to H^{2}(F; \bb{Z})$, i.e. $ c_{1}(M)\in{\rm Im} \ \delta^{*}$.
 \end{corol}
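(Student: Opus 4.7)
The plan is to derive both statements as essentially immediate corollaries of Proposition \ref{c1w2} (relating the second Stiefel--Whitney class and first Chern class of $M$ to those of $F$ via pullback), together with the short exact sequence established in Proposition \ref{cspacespin} for the topology of the torus fibration $\pi : M = G/L \to F = G/H$. No new homotopical input is needed; the work is to assemble existing ingredients.

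For part $(1)$, I would first note that since $M$ is a simply-connected oriented homogeneous space of a compact connected Lie group, Corollary \ref{basics1} reduces the existence of a (necessarily $G$-invariant and unique) spin structure to the vanishing of $w_2(TM) \in H^2(M;\bb{Z}_2)$. Then, applying the first identity of Proposition \ref{c1w2}, namely $w_2(TM) = \pi^*(w_2(TF))$, I would conclude immediately that $w_2(TM) = 0$ if and only if $w_2(TF) \in \ker\bigl(\pi^*: H^2(F;\bb{Z}_2)\to H^2(M;\bb{Z}_2)\bigr)$. The second assertion of $(1)$ is then trivial: if $F$ is $G$-spin, then $w_2(TF) = 0$ and a fortiori lies in $\ker\pi^*$, so $M$ is spin.

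For part $(2)$, I would again use Proposition \ref{c1w2}, now invoking $c_1(M,J_M) = \pi^*(c_1(F,J_F))$ in $H^2(M;\bb{Z})$, so that the vanishing of $c_1(M)$ is equivalent to $c_1(F) \in \ker\bigl(\pi^*: H^2(F;\bb{Z})\to H^2(M;\bb{Z})\bigr)$. The key step is then to identify this kernel with the image of the transgression $\delta^*$. This is provided precisely by the short exact sequence \eqref{seq3} obtained in the proof of Proposition \ref{cspacespin},
\[
0\to H^{1}({\rm T}^{2k}; \bb{Z})\overset{\delta^{*}}{\to} H^{2}(F; \bb{Z})\overset{\pi^{*}}{\to} H^{2}(M; \bb{Z})\to 0,
\]
whose exactness at $H^2(F;\bb{Z})$ yields $\ker\pi^* = \mathrm{Im}\,\delta^*$. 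Combining these two facts gives the stated equivalence $c_1(M) = 0 \Leftrightarrow c_1(F) \in \mathrm{Im}\,\delta^*$.

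There is no serious obstacle here; the only subtlety worth being explicit about is that in part $(1)$ the surjectivity portion of the sequence \eqref{seq3} (or more precisely its mod $2$ reduction) is not needed, only the naturality of Stiefel--Whitney classes under the bundle projection $\pi$, while part $(2)$ genuinely uses exactness at $H^2(F;\bb{Z})$. One may want to remark in passing that $c_1(M,J_M)$ is independent of the chosen extension $J_M$ of $J_F$, since any two such extensions differ by a complex structure on the trivial bundle $\tau_{{\rm T}^{2k}}$ appearing in the splitting \eqref{TM}, which contributes trivially to $c_1$; this remark reconciles the statement with the fact that the Koszul form $\sigma^{J_M}$ equals $\sigma^{J_F}$, as recorded immediately before Lemma \ref{dmitri}.
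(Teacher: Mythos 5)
Your proposal is correct and follows exactly the route the paper intends: the corollary is stated as an immediate consequence of Proposition \ref{c1w2} (the pullback identities $w_2(TM)=\pi^*(w_2(TF))$ and $c_1(M,J_M)=\pi^*(c_1(F,J_F))$), with the identification $\ker\pi^*=\mathrm{Im}\,\delta^*$ supplied by the exact sequence \eqref{seq3} from Proposition \ref{cspacespin}. Your additional remark on the independence of $c_1(M,J_M)$ from the choice of extension $J_M$ is a harmless and accurate clarification.
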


Based on Corollary  \ref{concl}, we shall use now the results of Section \ref{flags} to describe all C-spaces fibered over  spin flag manifolds of  exceptional Lie groups.\footnote{For $\E_7$,  we denote by $M\overset{*}{\to}F$ the fibrations associated to  $\E_7$-flag manifolds of type $[0, 0]$.}  
 \begin{prop}\label{cprop}
 There are 45 non-biholomorphic C-spaces $M=G/L$ fibred over a spin flag manifold $F=G/H$ of an exceptional Lie group $G\in\{\G_2, \F_4, \E_6, \E_7, \E_8\}$, and any such space carries a unique $G$-invariant spin structure.   The associated fibrations are given as follows:
   \medskip
 \[
   \begin{tabular}{l l l l l || l l l l l}
   ${\rm T}^{2}$ & $\hookrightarrow$ & $\G_2$ & $\longrightarrow$ & $\G_2/{\rm T}^{2}$
 &  ${\rm T}^{6}$ & $\hookrightarrow$ & $\E_7/{\rm T}^{1}$ & $\longrightarrow$ & $\E_7/{\rm T}^{7}$\\
      ${\rm T}^{4}$ & $\hookrightarrow$ & $ \F_4$  &$\longrightarrow$& $\F_4/{\rm T}^{4}$
 &  ${\rm T}^{4}$  & $\hookrightarrow$ & $\E_7/{\rm T}^{3}$ & $\longrightarrow$ & $\E_7/{\rm T}^{7}$\\
         ${\rm T}^{2}$& $\hookrightarrow$ & $\F_4/ {\rm T}^{2}$&$\longrightarrow$& $\F_4/{\rm T}^{4}$
  &  ${\rm T}^{2}$ & $\hookrightarrow$ & $\E_7/{\rm T}^{5}$ & $\longrightarrow$ & $\E_7/{\rm T}^{7}$\\
  $ {\rm T}^{2}$&$\hookrightarrow$ &$\F_4/ \A_{2}^{l}$  &$\longrightarrow$& $\F_4/ \A_{2}^{l}\times{\rm T}^{2}$ &  ${\rm T}^{4}$ & $\hookrightarrow$ & $\E_7/\A_{2}\times{\rm T}^{1}$ & $\longrightarrow$ & $\E_7/\A_{2}\times{\rm T}^{5}$\\
  ${\rm T}^{2}$&$\hookrightarrow$ &$\F_4/ \A_{2}^{s}$ &$\longrightarrow$& $\F_4/ \A_{2}^{s}\times{\rm T}^{2}$ & ${\rm T}^{2}$ & $\hookrightarrow$ & $\E_7/\A_{2}\times{\rm T}^{3}$ & $\longrightarrow$ & $\E_7/\A_{2}\times{\rm T}^{5}$\\
   ${\rm T}^{6}$ & $\hookrightarrow$ & $\E_6$ & $\longrightarrow$ & $\E_6/{\rm T}^{6}$
   &     ${\rm T}^{4}$ & $\hookrightarrow$ & $\E_7/(\A_{1})^{3}$ & $\overset{*}{\longrightarrow}$ & $\E_7/(\A_{1})^{3}\times{\rm T}^{4}$   \\
  ${\rm T}^{4}$ & $\hookrightarrow$ & $\E_6/{\rm T}^{2}$ & $\longrightarrow$ & $\E_6/{\rm T}^{6}$
  &  ${\rm T}^{2}$ & $\hookrightarrow$ & $\E_7/(\A_{1})^{3}\times {\rm T}^{2}$ & $\overset{*}{\longrightarrow}$ & $\E_7/(\A_{1})^{3}\times{\rm T}^{4}$  \\
    ${\rm T}^{2}$ & $\hookrightarrow$ & $\E_6/{\rm T}^{4}$ & $\longrightarrow$ & $\E_6/{\rm T}^{6}$
    &  ${\rm T}^{2}$ & $\hookrightarrow$ & $\E_7/\A_{4}\times {\rm T}^{1}$ & $\longrightarrow$ & $\E_7/\A_{4}\times {\rm T}^{3}$\\
       ${\rm T}^{4}$ & $\hookrightarrow$ & $\E_6/\A_{2}$ & $\longrightarrow$ & $\E_6/\A_{2}\times{\rm T}^{4}$
       &  ${\rm T}^{2}$ & $\hookrightarrow$ & $\E_7/\A_{3}\times\A_{1}\times {\rm T}^{1}$ & $\overset{*}{\longrightarrow}$ & $\E_7/\A_{3}\times\A_{1}\times {\rm T}^{3}$   \\
      ${\rm T}^{2}$ & $\hookrightarrow$ & $\E_6/\A_{2}\times{\rm T}^{2}$ & $\longrightarrow$ & $\E_6/\A_{2}\times{\rm T}^{4}$
      & ${\rm T}^{2}$ & $\hookrightarrow$ & $\E_7/\A_{2}\times\A_{2}\times {\rm T}^{1}$ & $\longrightarrow$ & $\E_7/\A_{2}\times\A_{2}\times {\rm T}^{3}$\\
               ${\rm T}^{2}$ & $\hookrightarrow$ & $\E_6/\A_{4}$ & $\longrightarrow$ & $\E_6/\A_{4}\times{\rm T}^{2}$
    & ${\rm T}^{2}$ & $\hookrightarrow$ & $\E_7/\D_{4}\times {\rm T}^{1}$ & $\longrightarrow$ & $\E_7/\D_{4}\times {\rm T}^{3}$\\
            ${\rm T}^{2}$ & $\hookrightarrow$ & $\E_6/\A_{2}\times\A_{2}$ & $\longrightarrow$ & $\E_6/\A_{2}\times\A_{2}\times{\rm T}^{2}$
   & ${\rm T}^{2}$ & $\hookrightarrow$ & $\E_7/\A_{5}$ & $\longrightarrow$ & $\E_7/\A_{5}\times {\rm T}^{2}$\\
               ${\rm T}^{2}$ & $\hookrightarrow$ & $\E_6/\D_{4}$ & $\longrightarrow$ & $\E_6/\D_{4}\times{\rm T}^{3}$
 & ${\rm T}^{2}$ & $\hookrightarrow$ & $\E_7/\A_{2}\times(\A_{1})^{3}$ & $\overset{*}{\longrightarrow}$ & $\E_7/\A_{2}\times(\A_{1})^{3}\times {\rm T}^{2}$ \\
  &  & & & &
  ${\rm T}^{2}$ & $\hookrightarrow$ & $\E_7/\D_{5}$ & $\longrightarrow$ & $\E_7/\D_{5}\times{\rm T}^{2}$ \\
  \thickline
       ${\rm T}^{8}$ & $\hookrightarrow$ & $\E_8$ & $\longrightarrow$ & $\E_8/{\rm T}^{8}$ &  ${\rm T}^{4}$ & $\hookrightarrow$ & $\E_8/\D_{4}$ & $\longrightarrow$ & $\E_8/\D_{4}\times{\rm T}^{4}$  \\
       ${\rm T}^{6}$ & $\hookrightarrow$ & $\E_8/{\rm T}^{2}$ & $\longrightarrow$ & $\E_8/{\rm T}^{8}$ &  ${\rm T}^{2}$ & $\hookrightarrow$ & $\E_8/\D_{4}\times{\rm T}^{2}$ & $\longrightarrow$ & $\E_8/\D_{4}\times{\rm T}^{4}$  \\
         ${\rm T}^{4}$ & $\hookrightarrow$ & $\E_8/{\rm T}^{4}$ & $\longrightarrow$ & $\E_8/{\rm T}^{8}$ &
          ${\rm T}^{4}$ & $\hookrightarrow$ & $\E_8/\A_{2}\times\A_{2}$ & $\longrightarrow$ & $\E_8/\A_{2}\times\A_{2}\times{\rm T}^{4}$  \\
           ${\rm T}^{2}$ & $\hookrightarrow$ & $\E_8/{\rm T}^{6}$ & $\longrightarrow$ & $\E_8/{\rm T}^{8}$ &
            ${\rm T}^{2}$ & $\hookrightarrow$ & $\E_8/\A_{2}\times\A_{2}\times{\rm T}^{2}$ & $\longrightarrow$ & $\E_8/\A_{2}\times\A_{2}\times{\rm T}^{4}$  \\
  ${\rm T}^{6}$ & $\hookrightarrow$ & $\E_8/\A_{2}$ & $\longrightarrow$ & $\E_8/\A_{2}\times{\rm T}^{6}$ &
  ${\rm T}^{2}$ & $\hookrightarrow$ & $\E_8/\D_{5}\times{\rm T}^{1}$ & $\longrightarrow$ & $\E_8/\D_{5}\times{\rm T}^{3}$ \\
    ${\rm T}^{4}$ & $\hookrightarrow$ & $\E_8/\A_{2}\times{\rm T}^{2}$ & $\longrightarrow$ & $\E_8/\A_{2}\times{\rm T}^{6}$ &
  ${\rm T}^{2}$ & $\hookrightarrow$ & $\E_8/\A_{6}$ & $\longrightarrow$ & $\E_8/\A_{6}\times{\rm T}^{2}$ \\
      ${\rm T}^{2}$ & $\hookrightarrow$ & $\E_8/\A_{2}\times{\rm T}^{4}$ & $\longrightarrow$ & $\E_8/\A_{2}\times{\rm T}^{6}$ &  ${\rm T}^{2}$ & $\hookrightarrow$ & $\E_8/\A_{4}\times\A_{2}$ & $\longrightarrow$ & $\E_8/\A_{4}\times\A_{2}\times{\rm T}^{2}$ \\
        ${\rm T}^{4}$ & $\hookrightarrow$ & $\E_8/\A_{4}$ & $\longrightarrow$ & $\E_8/\A_{4}\times{\rm T}^{4}$ &
         ${\rm T}^{2}$ & $\hookrightarrow$ & $\E_8/\D_{4}\times\A_{2}$ & $\longrightarrow$ & $\E_8/\D_{4}\times\A_{2}\times{\rm T}^{2}$ \\
          ${\rm T}^{2}$ & $\hookrightarrow$ & $\E_8/\A_{4}\times{\rm T}^{2}$ & $\longrightarrow$ & $\E_8/\A_{4}\times{\rm T}^{4}$ &
           ${\rm T}^{2}$ & $\hookrightarrow$ & $\E_8/\E_{6}$ & $\longrightarrow$ & $\E_8/\E_{6}\times{\rm T}^{2}$
       \end{tabular}
       \]
 \end{prop}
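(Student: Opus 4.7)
The plan is to combine the sufficiency criterion of Corollary \ref{concl}(1) -- every C-space fibred over a $G$-spin flag manifold is $G$-spin -- with the classifications of $G$-spin exceptional flag manifolds in Theorems \ref{g2}, \ref{e7-8} and \ref{e8}, and then enumerate admissible C-spaces over each such base. Recall that for $F = G/H = G/H'\cdot{\rm T}^v$ with $v = b_2(F)$, a C-space $M = G/L$ fibred over $F$ corresponds to a splitting ${\rm T}^v = {\rm T}_0\cdot{\rm T}_1$ with $L = H'\cdot{\rm T}_0$, where the fibre ${\rm T}_1$ has even real dimension $2k \geq 2$ (cf.\ Wang \cite{Wang}). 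Hence the admissible subtori are parametrised by integers $m = \dim{\rm T}_0$ with $0 \leq m \leq v - 2$ and $m \equiv v \pmod 2$, producing exactly $\lfloor v/2 \rfloor$ fibrations over $F$. For each of these, Corollary \ref{concl}(1) delivers the spin structure, Proposition \ref{cspacespin}(ii) gives simply-connectedness of $M$, and together with simple-connectedness of $G$, Proposition \ref{springer} ensures existence, $G$-invariance and uniqueness. Inequivalence within the list is also automatic: the base $F$ of the Tits fibration is an isomorphism invariant of $M$, and for fixed $F$ the value $b_2(M) = m$ (Proposition \ref{cspacespin}(iii)) separates distinct choices of $m$.

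The core of the argument is then bookkeeping. From Theorem \ref{g2} only $\G_2/{\rm T}^2$ (with $v=2$) contributes, giving $1$ C-space. For $\F_4$, the spin flags $\F_4/{\rm T}^4$, $\F_4/\A_2^l\times{\rm T}^2$, $\F_4/\A_2^s\times{\rm T}^2$ contribute $2+1+1 = 4$. For $\E_6$, the spin flags $\E_6/{\rm T}^6$, $\E_6/\A_2\times{\rm T}^4$, $\E_6/\A_4\times{\rm T}^2$, $\E_6/\A_2\times\A_2\times{\rm T}^2$, $\E_6/\D_4\times{\rm T}^2$ give $3+2+1+1+1 = 8$. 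For $\E_7$, the ten spin flags from Theorem \ref{e7-8} with $v\geq 2$ are distributed as one each for $v=7,5,4$, four for $v=3$ and three for $v=2$, contributing $3+2+2+4+3 = 14$. For $\E_8$ one must first observe that Table 4 exhibits $\E_8/\E_6\times{\rm T}^2$ as a spin flag manifold (Koszul form $2\Lambda_1+18\Lambda_2$, with both coefficients even, a case inadvertently omitted from the statement of Theorem \ref{e8}); including it, the ten $v\geq 2$ spin flags split as one each for $v=8,6,3$, three for $v=4$ and four for $v=2$, contributing $4+3+6+1+4 = 18$. Summing, $1+4+8+14+18 = 45$, and writing out $M = G/H'\cdot{\rm T}_0$ for each admissible $m$ reproduces the displayed table verbatim.

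The hard part is not conceptual but the correct enumeration: one must trace every entry of Tables 2--4, verify parity of all Koszul numbers, and detect the omission in the statement of Theorem \ref{e8}, since the advertised count $45$ is sensitive to it (ignoring $\E_8/\E_6\times{\rm T}^2$ would yield $44$). Once the parity check is done and the subtorus dimensions are tabulated for each spin base, the remainder of the proof is an immediate application of Corollary \ref{concl}(1) and Proposition \ref{springer}, and the non-biholomorphism of the fibrations follows from the Tits-fibration invariance together with Proposition \ref{cspacespin}(iii).
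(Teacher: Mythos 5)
Your proposal is correct and follows exactly the route the paper intends: Corollary \ref{concl}(1) applied to the spin exceptional flag manifolds classified in Theorems \ref{g2}, \ref{e7-8} and \ref{e8}, with one fibration for each admissible even codimension of the subtorus ${\rm T}_0\subset{\rm T}^{v}$ (so $\lfloor v/2\rfloor$ fibrations over a spin base with $b_2=v\geq 2$), and the tally $1+4+8+14+18=45$ matches the displayed table. Your observation that $\E_8(3,4,5,6,7,8)=\E_8/\E_6\times{\rm T}^{2}$, whose Koszul form $2\Lambda_1+18\Lambda_2$ from Table 4 is even and hence spin by Proposition \ref{chernclass2}, is absent from the list in Theorem \ref{e8} yet indispensable for reaching $45$, is accurate and correctly resolves an internal inconsistency of the paper in favour of the Proposition.
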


Let $M=G/{\rm T}_{0}^{m}\cdot H'$ be a C-space, and $\pi : M=G/{\rm T}_{0}^{m}\cdot H'\to F=G/{\rm T}^{v}\cdot H'$ the associated     principal ${\rm T}_{1}^{2k}$-bundle over  a flag manifold $F$, such that $v=b_{2}(F)=2k+m$.  Any such principal bundle is classified by $2k$   elements $\be_1, \ldots, \be_{2k}\in  \mathcal{P}_T  \cong H^{2}(F; \bb{Z})$  (see \cite{Wang2}), of  the  $T$-weight lattice $\mathcal{P}_T = \mathrm{span}_{\mathbb{Z}}\{\Lambda_1, \cdots, \Lambda_v\}$ (spanned by    the    black    fundamental  weights of $F$).  In fact, the generators in $H^{1}({\rm T}^{2k}; \bb{Z})$ transgress to $\be_{i}\in\cal{P}_{T}$ and each $\be_{i}$ can be thought as the Euler class of the orientable circle bundle $M/{\rm T}^{2k-1}\to  F$, where ${\rm T}^{2k-1}\subset{\rm T}^{2k}$ is the subtorus with the $i$th factor $\Ss^{1}$ removed.

 Denote  by    $\beta^{\vee} = \frac{2}{(\beta, \beta)} B^{-1}\beta   \in  \mathfrak{a}_0 $     the  coroot     associated   with  a simple  black  root  $\beta \in \Pi_B$  and  by  $\Gamma $   the   lattice   in   $\mathfrak{t}$,  spanned   by  the  projection    $t_{\beta}$  of    $\beta^{\vee}$ $(\beta  \in \Pi_B)$ onto  $\mathfrak{t} = iZ(\mathfrak{h})$ (see   Section \ref{bflags}  for  notation).
Since   $G$ is   simply-connected, the   central   subgroup   ${\rm T}^{v}$   of    $H \subset  G$ generated   by   the   subalgebra   $i \mathfrak{t}$, is  identified   with
${\rm T}^{v} = \mathfrak{t}/ 2 \pi i\Gamma$,  see  \cite[Ch.~3, 2.4]{GOV}.  Since by assumption   the    subgroup  ${\rm T}_1^{2k}$ is  closed, the  intersection $\Gamma_1 := \Gamma \cap \mathfrak{t}_1$ is   a  lattice (of full rank, recall  that  by  a lattice in  a  real  $n$-dimensional vector  space  $V$, we always  mean  a     discrete    subgroup  of  the  vector  group $V$  of rank $n$),  and   we  get   

\begin{lemma}
There are  isomorphisms  ${\rm T}^{2k}  \cong  \mathfrak{t}_1/ \Gamma_1$
and   $H^1(\mathrm{T}^{2k},\mathbb{Z}) = \Gamma_{1}^*$.
\end{lemma}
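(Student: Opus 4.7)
The strategy is to exploit the identification ${\rm T}^v = \mathfrak{t}/2\pi i\Gamma$ stated just before the lemma and restrict the covering map $\exp: i\mathfrak{t} \to {\rm T}^v$ to the subspace $i\mathfrak{t}_1$ generating ${\rm T}_1^{2k}$. Since ${\rm T}_1^{2k}$ is by assumption a \emph{closed} connected subgroup of ${\rm T}^v$ with Lie algebra $i\mathfrak{t}_1$, its preimage under $\exp$ is a closed subgroup of $i\mathfrak{t}_1$ whose connected component is $i\mathfrak{t}_1$ itself; hence the kernel of $\exp|_{i\mathfrak{t}_1}$ is a discrete subgroup which, by closedness of ${\rm T}_1^{2k}$, must be a lattice of full rank $2k$. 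This kernel is precisely $i\mathfrak{t}_1\cap 2\pi i\Gamma = 2\pi i(\mathfrak{t}_1\cap\Gamma) = 2\pi i\Gamma_1$, so ${\rm T}^{2k} \cong i\mathfrak{t}_1/2\pi i\Gamma_1 \cong \mathfrak{t}_1/\Gamma_1$ via the obvious $\mathbb{R}$-linear identification $\mathfrak{t}_1\leftrightarrow i\mathfrak{t}_1$.

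For the second statement I would invoke the standard computation of the cohomology of a torus $T = V/\Lambda$, where $V$ is a real vector space and $\Lambda\subset V$ is a lattice of full rank. Since $\pi_1(T) \cong \Lambda$ is abelian, the Hurewicz theorem gives $H_1(T,\mathbb{Z}) \cong \Lambda$; then the universal coefficient theorem (or equivalently the fact that invariant $1$-forms on $T$ representing the characters dual to $\Lambda$ generate $H^1(T,\mathbb{R})$ integrally) yields $H^1(T,\mathbb{Z}) \cong \mathrm{Hom}(\Lambda,\mathbb{Z}) = \Lambda^*$. Applying this with $V = \mathfrak{t}_1$ and $\Lambda = \Gamma_1$ gives $H^1({\rm T}^{2k},\mathbb{Z}) = \Gamma_1^*$.

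There is no real obstacle; the only subtle point is checking that $\Gamma_1$ is a lattice of full rank in $\mathfrak{t}_1$, which is equivalent to the hypothesis that ${\rm T}_1^{2k}$ is closed in ${\rm T}^v$ (a non-closed connected subgroup of a torus is an irrational winding and would give a non-discrete kernel). This is already recorded in the paragraph preceding the lemma, so the argument reduces to tracking the factors of $2\pi i$ in the identifications.
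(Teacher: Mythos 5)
Your proof is correct and follows essentially the same route as the paper: identify $\pi_1({\rm T}^{2k})\cong H_1({\rm T}^{2k};\mathbb{Z})\cong\Gamma_1$ and apply the universal coefficient theorem to get $H^1({\rm T}^{2k};\mathbb{Z})=\mathrm{Hom}(\Gamma_1,\mathbb{Z})=\Gamma_1^*$, with the first isomorphism ${\rm T}^{2k}\cong\mathfrak{t}_1/\Gamma_1$ coming from restricting the covering $\mathfrak{t}/2\pi i\Gamma\cong{\rm T}^v$ and using closedness of ${\rm T}_1^{2k}$ to see that $\Gamma_1$ has full rank. Your write-up simply makes explicit the kernel computation that the paper delegates to the paragraph preceding the lemma.
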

 \begin{proof}
 We only remark that  we can identify $\pi_{1}({\rm T}^{2k})=H_{1}({\rm T}^{2k}; \bb{Z})=\Gamma_1$. Then,  the universal coefficient theorem induces the desired  isomorphism, $H^{1}({\rm T}^{2k}; \bb{Z})={\rm Hom}(\pi_{1}({\rm T}^{2k}), \bb{Z})={\rm Hom}(\Gamma_1, \bb{Z})=\Gamma_{1}^{*}$.  \end{proof}


Let us denote by $\cal{P}_{1}\subset\cal{P}_{T}$ the annihilator  of $\fr{t}_0$  in   $\cal{P}_T$,
\[
\cal{P}_{1}:=\{\lambda\in\cal{P}_{T} : \lambda|_{\fr{t}_{0}} = 0\}=\cal{P}_{T}\cap \fr{t}_{1}^{*}.
\]
 If $\fr{t}_{1}$ is trivial, then the same is $\cal{P}_{1}$.   Below we will show that  $\cal{P}_{1}$ is a (full rank) lattice in  $\fr{t}^{*}_{1}\cong\bb{R}^{2k}$, which is equivalent to say that  $\fr{t}^{*}_{1}$ contains $2k$ linear independent elements of $\cal{P}_{T}$. For  example, for a M-space, it is $\cal{P}_{1}=\cal{P}_{T}$.

\begin{remark}\label{martinet}
\textnormal{Consider the direct sum $\fr{t}=\fr{t}_{0}+\fr{t}_{1}$ and its dual version, $\fr{t}^{*}=\fr{t}_{0}^{*}+\fr{t}_{1}^{*}$. Since $\fr{t}_{0}^{*}\cap\fr{t}_{1}^{*}=\{0\}$,  the projection  of the $T$-weight lattice   onto $\fr{t}_{0}^{*}$  will be a  full rank  lattice in $\fr{t}_{0}^{*}$, {\it if and only if} $\cal{P}_{1}=\cal{P}_{T}\cap \fr{t}_{1}^{*} $ is a full rank lattice in $\fr{t}_{1}^{*}$, i.e.     $\cal{P}_{T}/\cal{P}_{1}$ is torsion free (see \cite[Prop.~1.1.4]{Martinet}).} 
\end{remark}

\begin{lemma}\label{REUSE1}
Any $\lambda\in\cal{P}_{1}$ induces an non-zero $G$-invariant 2-form $\omega_{\lambda}=\frac{i}{2\pi}d\lambda\neq 0$ on $M=G/L$ (notice that $d\lambda\neq 0$ since $H^{1}(M; \bb{Z})=0$), which is cohomologous to zero.
\end{lemma}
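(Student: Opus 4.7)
My plan is to verify both claims---non-vanishing of $\omega_\lambda$ on $M$ and its exactness---by realizing $\lambda$ as the differential of a globally defined $G$-invariant $1$-form on $M$. Concretely, I would first invoke Lemma \ref{Tlattice} to view a nonzero $\lambda\in \cal{P}_T$ as a linear form on $\fr{a}_0$ vanishing on the Cartan subalgebra $\fr{a}'$ of $\fr{h}'$, so $\lambda\in \fr{t}^{*}$. The additional condition $\lambda\in\cal{P}_1$ says $\lambda|_{\fr{t}_0}=0$, so $\lambda$ is supported on $\fr{t}_1$ alone.

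Next I would extend $\lambda$ to a linear form on $\fr{g}$ by declaring it to be zero on $\fr{h}'\oplus\fr{t}_0$ and on $\fr{m}$. Since $\fr{t}\subset Z(\fr{h})$, the adjoint action of $L=H'\cdot T_0$ fixes $i\fr{t}$ pointwise, so this extension is $\Ad_L$-invariant and therefore descends to a $G$-invariant 1-form $\alpha_\lambda\in \Omega^{1}(M)^{G}$, whose value at $o=eL$ coincides with $\lambda$ on $i\fr{t}_1\subset\fr{q}$ and with zero on $\fr{m}\subset\fr{q}$. Viewed as an invariant form on $\fr{q}$, its exterior differential is $d\alpha_\lambda = d\lambda$, so
\[
\omega_\lambda = \frac{i}{2\pi}\, d\lambda = \frac{i}{2\pi}\, d\alpha_\lambda
\]
represents the zero class in $H^{2}(M,\bb{R})$. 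This step is consistent with the exactness criterion $\xi\in i\fr{t}_1+\fr{q}_0$ of Lemma \ref{dmitri}.

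To see $\omega_\lambda\neq 0$, I would apply the transgression formula of Proposition \ref{mainuse1} to the flag manifold $F$ and then pull back along $\pi\colon M\to F$: this gives
\[
\omega_\lambda = \frac{i}{2\pi}\sum_{\al\in R_{F}^{+}} (\lambda|\al)\,\omega^{\al}\wedge\omega^{-\al},
\]
where the same expression represents the invariant $2$-form $\omega_\lambda$ on $M$ under the identification of invariant forms on $M$ pulled back from $F$ with $\Ad_{H}$-invariant forms on $\fr{m}$. Since $\lambda$ is a nonzero weight, the non-degenerate Killing pairing forces $(\lambda,\al)\neq 0$ for some $\al\in R$; the defining property $\lambda\in\cal{P}_T$ (so $\lambda\perp R_H$) then forces this $\al$ to lie in $R_F$, so at least one coefficient $(\lambda|\al)$ in the sum is nonzero.

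The only delicate point in this plan is the verification that the extension of $\lambda$ is a well-defined $\Ad_L$-invariant element of $\fr{g}^{*}$ that actually annihilates $\fr{l}$; this reduces to the inclusion $\fr{a}'\subset\fr{h}'$ (handled by Lemma \ref{Tlattice}) together with the triviality of $\Ad_L$ on $Z(\fr{h})$. Apart from this book-keeping, the argument is a direct combination of Lemma \ref{dmitri} with the transgression isomorphism of Proposition \ref{mainuse1}, so no substantial obstacle is anticipated.
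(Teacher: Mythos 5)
Your argument is correct and follows essentially the same route as the paper: the key point in both is that $\lambda\in\cal{P}_{1}$ annihilates $\fr{l}=\fr{h}'+\fr{t}_{0}$ (vanishing on $\fr{a}'$ by Lemma \ref{Tlattice} and on $\fr{t}_{0}$ by definition of $\cal{P}_{1}$), hence extends to a genuine $G$-invariant $1$-form on $M$ whose differential exhibits $\omega_{\lambda}$ as exact. Your verification of $\omega_{\lambda}\neq 0$ via the transgression formula and the fact that a nonzero $\lambda\perp R_{H}$ must pair nontrivially with some $\al\in R_{F}$ is a slightly more explicit substitute for the paper's parenthetical appeal to $H^{1}(M;\bb{Z})=0$, but the substance is the same.
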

\begin{proof}
   It is obvious that any weight $\lambda\in\cal{P}_{1}$ gives rise to an integer $G$-invariant 1-form on $M=G/L$, since it vanishes on $\fr{t}_{0}$ and consequently on $\fr{l}=\fr{h}'+\fr{t}_{0}$, i.e. $\lambda(\mathfrak{t}_0) = \lambda(\mathfrak{l})=0$. Thus our claim follows.
\end{proof}
 Consider the natural projection of $\cal{P}_{T}$ onto the quotient  group   $\cal{P}_{0}:=c(\cal{P}_{T})=\cal{P}_{T}/\cal{P}_{1}$,  which is   a  discrete  subgroup  of  the  vector  group   $\mathfrak{t}^*/\mathfrak{t}_1^{*} = \mathfrak{t}_0^*$,
 \[
 c: \mathcal{P}_T \to  \mathcal{P}_T/ \mathcal{P}_1,\,    \lambda \mapsto c(\lambda) := \lambda  + \mathcal{P}_1.
 \]
\begin{prop}\label{newnew}
(i)  There is a natural  isomorphism
\[
 \cal{P}_{0}:= \cal{P}_{T}/\cal{P}_{1} \cong   H^{2}(M; \bb{Z}), \quad   \lambda +\cal{P}_{1} \mapsto [\omega_{\lambda}]\in H^{2}(M; \bb{Z}),
\]
  where $\omega_{\lambda}:=\frac{i}{2\pi}d\lambda$. \\
  (ii)   The  group   $\cal{P}_1$ is  a   lattice
 in   $\mathfrak{t}_1^*$,  in particular,  $\cal{P}_{1}\cong\delta^{*}\big(H^{1}(T^{2k}; \bb{Z})\big)$,  and   the  group $\cal{P}_0$ is  a lattice  in $\mathfrak{t}_0^*$.
 \end{prop}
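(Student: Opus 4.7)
The plan is to identify both lattices through the short exact sequence
\[
0 \to H^1({\rm T}^{2k}; \bb{Z}) \overset{\delta^*}{\to} H^2(F; \bb{Z}) \overset{\pi^*}{\to} H^2(M; \bb{Z}) \to 0
\]
from (\ref{seq3}) in Proposition \ref{cspacespin}, combined with the transgression isomorphism $\tau : \cal{P}_T \overset{\sim}{\to} H^2(F; \bb{Z})$ of Proposition \ref{mainuse1}, and the criterion for exactness of invariant 2-forms on $M$ furnished by Lemma \ref{dmitri}.

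First I would prove $(i)$. Via the transgression, every $\lambda \in \cal{P}_T$ corresponds to the closed invariant $2$-form $\omega_\lambda=\frac{i}{2\pi}d\lambda$ on $F$, and the same formula defines a closed invariant $2$-form on $M$ under the pullback $\pi^*$. Thus the map $\lambda \mapsto [\omega_\lambda]\in H^{2}(M;\bb{Z})$ factors through $\pi^*\circ\tau$, and in particular it descends to a well-defined surjective homomorphism $\cal{P}_T/\ker(\pi^*\circ\tau) \to H^2(M;\bb{Z})$. The heart of the argument is then the identification $\ker(\pi^*\circ\tau)=\cal{P}_1$. The inclusion $\cal{P}_1\subseteq\ker(\pi^*\circ\tau)$ is Lemma \ref{REUSE1}: any $\lambda\in\cal{P}_1$ vanishes on $\fr{l}=\fr{h}'+\fr{t}_0$, hence $\lambda$ itself defines an invariant $1$-form on $M$ and $\omega_\lambda=\frac{i}{2\pi}d\lambda$ is exact there. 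For the reverse inclusion I would appeal to Lemma \ref{dmitri}: a form $\omega_\xi$ with $\xi\in\fr{t}^*+i\fr{q}_0^*$ is exact on $M$ iff $\xi$ lies in $i\fr{t}_1^*+\fr{q}_0^*$; for $\lambda\in\cal{P}_T\subset\fr{t}^*$ this forces $\lambda|_{\fr{t}_0}=0$, i.e. $\lambda\in\cal{P}_1$. This yields the injection $\cal{P}_0=\cal{P}_T/\cal{P}_1 \hookrightarrow H^2(M;\bb{Z})$, and combining with surjectivity of $\pi^*$ (again from Proposition \ref{cspacespin}) gives the desired isomorphism.

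Part $(ii)$ now follows formally. Since $\delta^*$ is injective (Proposition \ref{cspacespin}$(ii)$) and its image coincides with $\ker\pi^*\cong\cal{P}_1$ under $\tau$, we obtain an isomorphism of abelian groups $\cal{P}_1\cong H^1({\rm T}^{2k};\bb{Z})\cong\bb{Z}^{2k}$. Being a free abelian subgroup of $\fr{t}_1^*$ of rank $2k=\dim\fr{t}_1^*$, $\cal{P}_1$ is a full-rank lattice in $\fr{t}_1^*$. For $\cal{P}_0$, I would invoke Remark \ref{martinet}: the projection $c:\cal{P}_T\to\fr{t}_0^*$ has image a lattice of full rank in $\fr{t}_0^*$ precisely when $\cal{P}_T/\cal{P}_1$ is torsion-free. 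But by $(i)$ this quotient is isomorphic to $H^2(M;\bb{Z})$, which is torsion-free by Proposition \ref{cspacespin}$(i)$. A rank count via the exact sequence gives $\mathrm{rank}(\cal{P}_0)=v-2k=m=\dim\fr{t}_0^*$, completing the proof.

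The only delicate point is step $(i)$, where one must make sure that the identification of $\ker\pi^*$ with $\delta^*(H^1({\rm T}^{2k};\bb{Z}))$ from the Wang-type exact sequence really matches the algebraically defined sublattice $\cal{P}_1$. This is exactly what Lemma \ref{dmitri} provides, by characterising exactness of an invariant closed $2$-form on $M$ in terms of the vanishing of the defining form on $\fr{t}_0$; everything else is bookkeeping.
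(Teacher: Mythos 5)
Your proof is correct and follows essentially the same route as the paper: the identification of the kernel of $\lambda\mapsto[\omega_\lambda]$ with $\cal{P}_1$ rests on Lemma \ref{dmitri} and Lemma \ref{REUSE1}, and the lattice statements come from the torsion-freeness established in Proposition \ref{cspacespin} together with Remark \ref{martinet}. The only difference is the order of deduction in part (ii) — you first extract that $\cal{P}_1\cong\bb{Z}^{2k}$ is a full-rank discrete subgroup of $\fr{t}_1^*$ from the exact sequence and then obtain the statement for $\cal{P}_0$, whereas the paper argues in the opposite direction — but the ingredients are identical.
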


\begin{proof}
 (i) The  normalizer     $N_{\mathfrak{g}}(\mathfrak{l})  = \mathfrak{l} +\mathfrak{q}_0  $  is a direct  sum of  two ideals. On the other hand,
  $C_{\mathfrak{g}}(\mathfrak{l})= i\mathfrak{t} + \mathfrak{q}_0.$ Any closed  invariant 2-form  on $M$ has  the  form    $\omega_{\xi}$ where $\xi  $  is  a  real linear    form on $iC_{\mathfrak{g}}(\mathfrak{l})  = \mathfrak{t} + i \mathfrak{q}_0$.
The  form   $\xi = \xi_{\mathfrak{t}} + i\xi_{\mathfrak{q}_0}$ is integer (i.e., $\omega_{\xi}$   defines   an integer class   $[ \omega_{\xi} ] \in  H^2(M, \mathbb{Z})$)  if  and only if   the  component   $\xi_{\mathfrak{t}}$  is   integer,  that is belongs  to    $\mathcal{P}_T$. It    follows   that    the  second   component     defines    the   trivial cohomology  class    $[\omega_{\xi_{\fr{q}_0}}]$.  Now, according to Lemma \ref{REUSE1},  a     form  $\xi \in  \mathcal{P}_T$   defines     the  trivial       class  in   $H^2(M,\mathbb{Z})$ if  and only if $\xi|_{\mathfrak{t}_0} =0$,
that is  iff   $\xi \in \mathcal{P}_1$.  Hence   $H^2(M,\mathbb{Z}) \simeq  \mathcal{P}_T/\mathcal{P}_1$, which proves (i). 

\vskip 0.1cm
\noindent  (ii) Recall by Proposition \ref{cspacespin}   that   $H^{2}(M; \bb{Z})$ is torsion free, in particular $H^{2}(M; \bb{Z})$ is a finitely generated free $\bb{Z}$-module.  Because $H^2(M,\mathbb{Z}) \cong  \mathcal{P}_T/\mathcal{P}_1$,  it immediately follows that   $\cal{P}_{0}:=\cal{P}_{T}/\cal{P}_{1}$  is a lattice in $\fr{t}_{0}^{*}\cong \bb{R}^{m}$ of full rank, and we conclude for $\cal{P}_{1}$ by Remark \ref{martinet}. The isomorphism $\cal{P}_{1}\cong\delta^{*}\big(H^{1}(T^{2k}; \bb{Z})\big)$  follows now by  (i) and Proposition  \ref{cspacespin}.   Another way to prove (ii) reads as follows: It is  clear  that   $\cal{P}_1$ is  a discrete    subgroup  of  $\mathfrak{t}_1^{*}$.  Let   $\be_1, \cdots \be_{2k}$ be its  basis (notice that $H^{1}(M; \bb{Z})=0$ if and only if $H^{1}(F ; \bb{Z})=0$ and $\be_1, \ldots, \be_{2k}$ are linearly independent; thus Proposition \ref{cspacespin} yields the linear independence of $\be_1, \ldots, \be_{2k}$).   It  can be  extended   by elements  $\delta^1 , \ldots , \delta^{m}$  to
   a basis of the lattice $\cal{P}_T$.   Let   $\delta^j = \delta^j_0 + \delta^j_1$  be  the  decomposition    according  to   the   decomposition  $\mathfrak{t}^* = \mathfrak{t}_0^* + \mathfrak{t}_1^*$.  It is  sufficient  to  show  that  elements $\delta^j_0, \,  j=1, \ldots, m$ are linearly independent. Assume  that they  are linearly  dependent.  Then,   there is  a non  trivial  linear combination  with integer coefficients such  that  $\sum   k_j \delta^j_0  =0$.  But  then $\sum k_j \delta^j \in \cal{P}_1$,  which is impossible and gives rise to a contradiction.
   \end{proof}

\begin{theorem}\label{final3}
Let   $M=G/L=G/{\rm T}_{0}^{m}\cdot H'$ be a C-space,  viewed as a ${\rm T}_{1}^{2k}$-principal bundle over a flag manifold  $F=G/{\rm T}^{v}\cdot H'$    with   an invariant  complex  structure
$J_F$. Then, $M$  admits a $G$-invariant spin structure if and only if     the   projection    $c(\sigma^{J_{F}})\in \cal{P}_{T}/\cal{P}_{1}$   of  the  Koszul   form   $\sigma^{J_F}$ of $(F, J_{F})$  is   even, i.e.  is  divided  by  two  in  $\cal{P}_{T}/\cal{P}_{1}$.
  \end{theorem}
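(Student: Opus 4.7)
The plan is to chain together three facts already in hand: (a) the spin condition in the complex setting reduces to divisibility of the first Chern class by two; (b) by Proposition \ref{c1w2}, $c_1(M,J_M)=\pi^{\ast}(c_1(F,J_F))$; (c) Proposition \ref{newnew} identifies $H^2(M;\mathbb{Z})$ with $\cal{P}_T/\cal{P}_1$, and under this identification $\pi^{\ast}$ becomes the quotient map $c:\cal{P}_T\to\cal{P}_T/\cal{P}_1$. Granting these, the result is essentially a translation.

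First I would fix an invariant complex structure $J_M$ on $M$ extending $J_F$, as explained right after \eqref{redc}. Since $M$ is compact almost complex with $H^2(M;\mathbb{Z})$ torsion-free (Proposition \ref{cspacespin}), the criterion of Proposition \ref{at} shows that $M$ is spin if and only if $c_1(M,J_M)$ is divisible by two in $H^2(M;\mathbb{Z})$; equivalently (by Proposition \ref{springer}), $M$ admits a (necessarily unique $G$-invariant) spin structure iff $c_1(M,J_M)\in 2H^2(M;\mathbb{Z})$.

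Next I would identify the integral cohomology groups with $T$-weight data. On the base, Proposition \ref{mainuse1} gives $\tau:\cal{P}_T\stackrel{\sim}{\to} H^2(F;\mathbb{Z})$, $\lambda\mapsto[\omega_\lambda]$, and Proposition \ref{chernclass} says $c_1(F,J_F)=\tau(\sigma^{J_F})$. On the total space, Proposition \ref{newnew}(i) provides the isomorphism $\bar\tau:\cal{P}_T/\cal{P}_1\stackrel{\sim}{\to} H^2(M;\mathbb{Z})$, $\lambda+\cal{P}_1\mapsto[\omega_\lambda]$. The key compatibility is the commutative square
\[
\begin{CD}
\cal{P}_T @>\tau>> H^2(F;\mathbb{Z}) \\
@VcVV @VV\pi^{\ast}V \\
\cal{P}_T/\cal{P}_1 @>\bar\tau>> H^2(M;\mathbb{Z})
\end{CD}
\]
which simply records that the $G$-invariant $2$-form $\omega_\lambda=\frac{i}{2\pi}d\lambda$ on $F$ pulls back to the same form $\omega_\lambda$ on $M$. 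Thus $\pi^{\ast}(c_1(F,J_F))$ corresponds to $c(\sigma^{J_F})\in\cal{P}_T/\cal{P}_1$.

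Finally I would combine these: by Proposition \ref{c1w2},
\[
c_1(M,J_M)=\pi^{\ast}(c_1(F,J_F))=\bar\tau\!\left(c(\sigma^{J_F})\right),
\]
so divisibility of $c_1(M,J_M)$ by two in $H^2(M;\mathbb{Z})$ is equivalent, via the isomorphism $\bar\tau$, to divisibility of $c(\sigma^{J_F})$ by two in the lattice $\cal{P}_T/\cal{P}_1$. Together with the first paragraph this yields the theorem. The only nontrivial step is the commutativity of the square above; everything else is a direct invocation of previously established results. That compatibility, however, is really a definition-chase: it follows because the transgression isomorphisms $\tau,\bar\tau$ are both given by the same formula $\lambda\mapsto[\omega_\lambda]$ and the pullback $\pi^{\ast}\omega_\lambda^F=\omega_\lambda^M$ is tautological. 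So I expect no genuine obstacle, only bookkeeping to confirm that the identifications on $F$ and $M$ are normalized consistently (in particular that $\lambda\in\cal{P}_1$ corresponds to an exact invariant $2$-form on $M$, which is precisely Lemma \ref{REUSE1} and part (i) of Proposition \ref{newnew}).
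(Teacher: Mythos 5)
Your proposal is correct and follows essentially the same route as the paper: it reduces the spin condition to evenness of $c_1(M,J_M)$, uses Proposition \ref{c1w2} to write $c_1(M,J_M)=\pi^{*}(c_1(F,J_F))$, and then translates via the identification $H^2(M;\mathbb{Z})\cong \cal{P}_T/\cal{P}_1$ of Proposition \ref{newnew}. The only difference is that you spell out the commutative square relating $\pi^{*}$ to the quotient map $c$, which the paper leaves implicit in the phrase ``the result follows by Proposition \ref{newnew}''; your justification of that compatibility is the right one.
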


\begin{proof}
As we noticed before, $\sigma^{J_M} = \sigma^{J_F} \in   \mathcal{P}_T$ and  Proposition  \ref{c1w2} shows that  the   pull back $\gamma_{J_M} = \pi^* \gamma_{J_F}$   of  the
   invariant Chern  form  $\gamma_{J_F} $ of the flag manifold $(F, J_{F})$,     represents                                                                                                                                                                                                                                                                                                                                                                                                                                                                                                                                                                                                                                                                                                                                                                                                                                                                                                                                                                                                                                                                                                                                                                                                                                                                                                                                                                                                                                                                                                                                                                                                                                                                                                                                                                                                                                                                                                                                                                                                                                                    the first  Cherm  class   of  the C-space $(M=G/L, J_M)$.   Then, the   result  follows  by Proposition \ref{newnew}.
\end{proof}

  Finally, a direct combination of Theorem \ref{final3} and Lemma \ref{REUSE1} yields   that (see also \cite{Gran})

 \begin{corol} \textnormal{}\label{zeroc1}
 The first Chern class $c_{1}(M, J_{M})$ of  a C-space $(M, J_{M})$ vanishes, if and only if $\sigma^{J_{F}}\in\cal{P}_{1}$, where $J_F$  is   the  projection  of     the invariant  complex  structure
 $J_M$   to  $F$.
\end{corol}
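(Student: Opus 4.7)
The plan is to combine three ingredients that are already in place: the pull-back formula for the first Chern class from Proposition \ref{c1w2}, the explicit representative of $c_{1}(F, J_{F})$ by the Chern form $\gamma_{J_{F}} = \omega_{\sigma^{J_{F}}}$ from Proposition \ref{chernclass}, and the identification $H^{2}(M;\mathbb{Z}) \cong \mathcal{P}_{T}/\mathcal{P}_{1}$ furnished by Proposition \ref{newnew}. Roughly speaking, the statement is the cohomological translation, at the level of $T$-weights, of the fact that ``being pulled back from $F$'' plus ``becoming trivial on $M$'' is equivalent to lying in $\mathcal{P}_{1}$.

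First I would write $c_{1}(M, J_{M}) = \pi^{*}(c_{1}(F, J_{F}))$ using Proposition \ref{c1w2}, and then replace $c_{1}(F, J_{F})$ by the cohomology class of the invariant Chern form $\gamma_{J_{F}} = \omega_{\sigma^{J_{F}}}$. Since $\pi$ is a homogeneous projection, the pull-back $\pi^{*}\omega_{\sigma^{J_{F}}}$ is exactly the invariant closed $2$-form $\omega_{\sigma^{J_{F}}}$ on $M$ obtained by transgression from the same $T$-weight $\sigma^{J_{F}} \in \mathcal{P}_{T}$ (the $\Ad_{H}$-invariant $2$-form on $\mathfrak{m}$ is restricted/lifted to the reductive complement $\mathfrak{q} = i\mathfrak{t}_{1} + \mathfrak{m}$ without change, because $\sigma^{J_{F}}$ vanishes on $\mathfrak{t}_{1}$ only when it lies in $\mathcal{P}_{1}$, but this does not affect the form $\omega_{\sigma^{J_F}}$ itself, which is built solely from the summation over $R_{F}^{+}$).

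Next I would invoke the isomorphism
\[
\mathcal{P}_{T}/\mathcal{P}_{1} \;\xrightarrow{\ \cong\ }\; H^{2}(M;\mathbb{Z}), \qquad \lambda + \mathcal{P}_{1} \;\longmapsto\; [\omega_{\lambda}],
\]
from Proposition \ref{newnew}(i). Under this isomorphism, the class $c_{1}(M, J_{M}) = [\omega_{\sigma^{J_{F}}}]$ corresponds precisely to the coset $\sigma^{J_{F}} + \mathcal{P}_{1}$. Hence $c_{1}(M, J_{M}) = 0$ in $H^{2}(M;\mathbb{Z})$ if and only if $\sigma^{J_{F}} + \mathcal{P}_{1} = 0$, i.e.\ $\sigma^{J_{F}} \in \mathcal{P}_{1}$, which is the claim.

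There is no real obstacle: the result is essentially a formal consequence of statements already proved. The only point worth a careful sentence is to justify that ``Chern class zero in integral cohomology'' is equivalent to ``Chern class zero in real cohomology'' here, which follows from Proposition \ref{cspacespin}(i) because $H^{2}(M;\mathbb{Z})$ is torsion-free, so no subtleties arise between the integral and real pictures; equivalently, one may appeal directly to Lemma \ref{REUSE1}, which asserts that $\omega_{\lambda}$ is exact on $M$ precisely when $\lambda \in \mathcal{P}_{1}$.
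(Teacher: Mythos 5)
Your argument is correct and is essentially the paper's own route: the paper deduces the corollary from Theorem \ref{final3} and Lemma \ref{REUSE1}, and the proof of Theorem \ref{final3} is exactly your chain $c_{1}(M,J_{M})=\pi^{*}[\omega_{\sigma^{J_{F}}}]$ combined with the isomorphism $H^{2}(M;\mathbb{Z})\cong\mathcal{P}_{T}/\mathcal{P}_{1}$ of Proposition \ref{newnew}. The only blemish is the parenthetical claim that $\lambda\in\mathcal{P}_{1}$ means vanishing on $\mathfrak{t}_{1}$ — by definition $\mathcal{P}_{1}$ annihilates $\mathfrak{t}_{0}$ — but as you note this aside plays no role in the argument.
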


\subsection{A special construction} If     the  flag manifold  $F=G/H$ is not  spin, then      we  can   construct  all spin   C-spaces  $M=G/L$ over  $F = G/H$   as  follows.
   Assume that $b_{2}(F)\geq 3$ and recall  that the  complex  structure  $J_F$   corresponds  to     the  decomposition  $\Pi = \Pi_W \sqcup \Pi_B$ of  simple  roots into   white  and   black. The Koszul  form   has  the  form  $\sigma^{J_F} =  \sum_{\beta \in \Pi_B} k_j \Lambda_j$,          where  $\Lambda_{j}$   is   fundamental weights   associated  with the   simple black root  $\beta_j\in\Pi_{B}$.  We  choose   a   subset  $\Pi_0 \subset \Pi_B$
  such  that:
 \par (i)   the  coefficients  $k_j$   associated  with   $\beta_j \in \Pi_0$ are   even,     and
\par  (ii)    the  cardinality   $\sharp(\Pi_1) = \sharp(\Pi_B \setminus \Pi_0)$ is   even.\\
  Denote  by   $\mathfrak{t}  = \mathfrak{t}_0 + \mathfrak{t}_1 = \mathrm{ker \Pi_1 } + \mathrm{ker}\Pi_0$  the  associated   direct sum decomposition of    the  space $\mathfrak{t} = i Z(\mathfrak{h})$. The    subalgebra  $\mathfrak{l}= \mathfrak{h}' \oplus \mathfrak{t}_0$  defines a  closed  connected subgroup  $L = H' \cdot  T_0$     and  $M := G/L$ is  a  C-space   over  the  flag manifold
  $F = G/H.$ Notice that the condition $\sharp(\Pi_1)=\text{even}$, certifies that the fibre is  even-dimensional.
  Moreover, Theorem \ref{final3}  implies

  \begin{corol}   \label{cspacespin2}
  The $C$-space $M= G/L$ constructed above, is  spin  and   any  spin  $C$-space is  obtained  by  this  construction.
\end{corol}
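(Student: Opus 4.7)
The plan is to invoke Theorem~\ref{final3}, which reduces the spin property of the $C$-space $M=G/L$ to the membership $\sigma^{J_F}\in 2\mathcal{P}_T+\mathcal{P}_1$, equivalently to divisibility by two of $c(\sigma^{J_F})$ in $\mathcal{P}_T/\mathcal{P}_1$. For the direct implication, I would split the Koszul form as
\[
 \sigma^{J_F}\;=\;\sum_{\beta_j\in\Pi_0} k_j\Lambda_j \;+\;\sum_{\beta_i\in\Pi_1} k_i\Lambda_i ,
\]
so that condition~(i) places the first summand in $2\mathcal{P}_T$ and it would suffice to show that the $\Pi_1$-part lies in $2\mathcal{P}_T+\mathcal{P}_1$.

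The key input is that for each $\beta_i\in\Pi_1$ the restriction $\kappa(\beta_i)=\beta_i|_{\mathfrak{t}}$ vanishes on $\mathfrak{t}_0=\ker\Pi_1$ by the very definition of $\mathfrak{t}_0$, so it lies in $\mathfrak{t}_1^*$; clearing denominators in its expansion in $\{\Lambda_j\}_{\beta_j\in\Pi_B}$ produces an integer combination lying in $\mathcal{P}_T\cap\mathfrak{t}_1^*=\mathcal{P}_1$. Equivalently, one may fix a primitive generator $f_0=\sum_j a_j f_j$ of $\mathfrak{t}_0\cap\Gamma$, where $\{f_j\}$ is dual to $\{\Lambda_j\}_{\beta_j\in\Pi_B}$; the spin condition then becomes that $\sigma^{J_F}(f_0)=\sum_j k_j a_j$ is an even multiple of the generator of the image lattice $\mathcal{P}_T/\mathcal{P}_1\hookrightarrow \mathfrak{t}_0^*$. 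Writing $k_j=2+b_j$ via Lemma~\ref{koz1}, this reduces to the parity statement $\sum_{\beta_j\in\Pi_1,\,b_j\text{ odd}} a_j\equiv 0\pmod 2$, and here condition~(ii) --- the evenness of $|\Pi_1|$ --- combined with the Cramer/cofactor description of $(a_j)$ coming from the linear system $\sum_j \tilde C_{ij} a_j=0$ for $i\in\Pi_1$, where $\tilde C_{ij}:=\beta_i|_{\mathfrak{t}}(f_j)$ is the restricted Cartan-type matrix, should deliver the required parity.

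For the converse, given a spin $C$-space $M=G/L$ over $F$ with $\mathfrak{t}_0=\mathfrak{l}\cap i Z(\mathfrak{h})$, I would set $\Pi_1:=\{\beta_j\in\Pi_B:\beta_j|_{\mathfrak{t}_0}=0\}$ and $\Pi_0:=\Pi_B\setminus\Pi_1$. Then condition~(ii) would be immediate from $\dim\mathfrak{t}_1=2k$ being even, while condition~(i) should follow because the $\Lambda_j$ with $\beta_j\in\Pi_0$ have non-trivial image in $\mathcal{P}_T/\mathcal{P}_1$, so that the spin hypothesis forces the corresponding $k_j$ in $\sigma^{J_F}$ to be even. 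The decomposition $\mathfrak{t}=\mathfrak{t}_0+\mathfrak{t}_1=\ker\Pi_1+\ker\Pi_0$ of the construction is then recovered from this choice of $\Pi_0,\Pi_1$.

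\emph{Main obstacle.} The hardest step is the mod-2 parity argument in the direct part: establishing that the evenness of $|\Pi_1|$ together with the integer linear algebra of the restricted Cartan data forces $\sum_{\beta_j\in\Pi_1,\,b_j\text{ odd}} a_j$ to be even is a purely combinatorial task, but requires careful bookkeeping with the denominators appearing in $\kappa$ and with the Smith-normal-form of the matrix $(\tilde C_{ij})_{i\in\Pi_1,\,j\in\Pi_B}$. Verifying that the identification $\mathfrak{t}_0=\ker\Pi_1$ (and not merely $\mathfrak{t}_0\subseteq\ker\Pi_1$) is forced in the converse direction is the corresponding obstacle.
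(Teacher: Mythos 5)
Your starting point (Theorem~\ref{final3}) and the splitting $\sigma^{J_F}=\sum_{\beta_j\in\Pi_0}k_j\Lambda_j+\sum_{\beta_i\in\Pi_1}k_i\Lambda_i$ are the right ones, but you have misidentified the subspace $\mathfrak{t}_0=\ker\Pi_1$, and this sends you into a computation that is not needed and that you do not actually complete. In the construction, $\mathfrak{t}_0$ is the common kernel of the black \emph{fundamental weights} $\Lambda_i$ with $\beta_i\in\Pi_1$, not of the restricted roots $\kappa(\beta_i)$; equivalently $\mathfrak{t}_1$ is spanned by the coroot projections $t_{\beta_i}$, $\beta_i\in\Pi_1$, and $\mathfrak{t}_0$ by the $t_{\beta_j}$, $\beta_j\in\Pi_0$ (these are exactly the dual basis to $\{\Lambda_1,\dots,\Lambda_v\}$, since $\Lambda_j(t_{\beta_i})=\Lambda_j(\beta_i^{\vee})=\delta_{ij}$). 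With this reading each $\Lambda_i$, $\beta_i\in\Pi_1$, is an integral $T$-weight vanishing on $\mathfrak{t}_0$, hence lies in $\mathcal{P}_1=\mathcal{P}_T\cap\mathfrak{t}_1^{*}$ outright, and in fact $\mathcal{P}_1=\mathrm{span}_{\mathbb{Z}}\{\Lambda_i:\beta_i\in\Pi_1\}$, so that $\mathcal{P}_T/\mathcal{P}_1$ is the free module on the classes of $\Lambda_j$, $\beta_j\in\Pi_0$. Consequently $c(\sigma^{J_F})=\sum_{\beta_j\in\Pi_0}k_j\,c(\Lambda_j)$, and its divisibility by two in $\mathcal{P}_T/\mathcal{P}_1$ is \emph{verbatim} condition (i). There is no mod-2 linear algebra with the restricted Cartan matrix, no Cramer/cofactor expansion, and no Smith normal form to be done: the step you flag as the ``main obstacle'' is an artifact of reading $\ker\Pi_1$ as $\bigcap_{\beta_i\in\Pi_1}\ker\kappa(\beta_i)$, and since you leave that parity claim unproven, the direct implication is not established in your write-up as it stands.

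A second, related point: condition (ii) plays no role in verifying the spin condition, contrary to the use you try to make of it. Its only purpose is to make $\dim\mathfrak{t}_1=\sharp(\Pi_1)$ even, so that the fibre ${\rm T}_1$ is an even-dimensional torus and $M=G/L$ is genuinely a C-space (carries an invariant complex structure extending $J_F$). In the converse direction the same dictionary fixes your remaining worry: for a C-space of this coordinate type one takes $\Pi_1$ to be the set of black roots $\beta_i$ with $t_{\beta_i}\in\mathfrak{t}_1$ (equivalently $\Lambda_i|_{\mathfrak{t}_0}=0$), so $\sharp(\Pi_1)=\dim\mathfrak{t}_1=2k$ is even, and since $\{c(\Lambda_j)\}_{\beta_j\in\Pi_0}$ is a $\mathbb{Z}$-basis of $\mathcal{P}_T/\mathcal{P}_1\cong H^2(M;\mathbb{Z})$ (Proposition~\ref{newnew}), evenness of $c(\sigma^{J_F})$ forces each $k_j$, $\beta_j\in\Pi_0$, to be even, which is condition (i). Defining $\Pi_1$ by the vanishing of the \emph{roots} on $\mathfrak{t}_0$, as you do, does not give this.
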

\begin{example}
\textnormal{Consider  the flag manifold $F=\E_7(1, 2, 3, 5)=\E_7/\A_{3}\times\A_{1}\times{\rm T}^{3}$ of type $[1, 1]$, with $\Pi_{B}=\{\al_4, \al_6, \al_7\}$.  We compute  $\sigma^{J}=6\Lambda_{4}+3\Lambda_{6}+2\Lambda_{7}$ and $\E_7(1, 2, 3, 5)$ is not a spin manifold. Set $\Pi_{0}=\{\al_4\}$ and $\Pi_{1}=\Pi_{B}\backslash\Pi_{0}=\{\al_6, \al_7\}$. Decompose  $\fr{t}=\fr{t}_{0}\oplus\fr{t}_{1}=\ker\Pi_{0}\oplus\ker\Pi_{1}$ and define the fibration ${\rm T}^{2}\hookrightarrow M\to F$, where $M=\E_7/\A_{3}\times\A_{1}\times{\rm T}^{1}$ is of type $[1, 1]$. Then, according to Corollary   \ref{cspacespin2}  $M$ is spin.  The same   occurs by Proposition \ref{cprop}, since the C-space $M=\E_7/\A_{3}\times\A_{1}\times{\rm T}^{1}$ of type [1, 1]   coincides with  the C-space $M=\E_7/\A_{3}\times\A_{1}\times{\rm T}^{1}$ of type [0,0], which is spin.}
\end{example}


\begin{thebibliography}{80}
  \bibitem[Agr]{Srni} I.~Agricola, {\it The Srn\'{i} lectures on non-integrable
  geometries with torsion}, Arch. Math. $42$  (2006), 5--84. With an appendix by M.~Kassuba.
 \bibitem[Akh]{Akh}
  D.~N.~Akhiezer: {\it Lie Group Actions in Complex Analysis}, Aspects of Mathematics, Vol. E27,  Vieweg, 1995.
  \bibitem[A$\ell$1]{Alek} D. V. Alekseevsky, {\it Homogeneous Einstein metrics},   Differential Geometry and its Applications (Proceeding of the Conference), 1--21. Univ. of. J. E. Purkyne, Czechoslovakia (1987).
 \bibitem[A$\ell$2]{Ale1}
 D.~V.~Alekseevsky,
 {\it Flag manifolds}, Yugoslav geometric seminar,  Divcibare, 10-17 October 1996, 3-35.
 \bibitem[A$\ell$A]{AA}
 D. V. Alekseevsky and A. Arvanitoyeorgos,
{\it Riemannian flag manifolds with homogeneous geodesics},
 Trans. Amer. Math. Soc. 359 (8)  (2007),  3769--3789.
\bibitem[A$\ell$P]{AP}
 D.~V.~Alekseevsky, A.~M.~Perelomov,
{\it Invariant K\"ahler-Einstein metrics on compact homogeneous spaces},
Funct. Anal. Appl. 20 (3)  (1986),  171--182.
 \bibitem[AC]{Chry2}
A.~ Arvanitoyeorgos, I.~Chrysikos,
{\it Invariant Einstein metrics on generalized flag manifolds with four isotropy summands},
Ann. Glob. Anal. Geom. 37 (2) (2010), 185--219.
 \bibitem[ACS1]{ACS2}
  A.~Arvanitoyeorgos, I.~Chrysikos, Y~Sakane, {\it Homogeneous Einstein metrics on generalized flag manifolds $\Sp(n)/(\U(p)\times \U(q)\times \Sp(n-p-q))$}, Prospects of Differential Geometry and Its Related Fields, Proceedings of the 2nd International Colloquium on Differential Geometry and its Related Fields, Veliko Tarnovo, September 2010, published   by World Scientific Publishing Company, 2011, 1-24, ISBN: 978-981-4355-46-9.
 \bibitem[ACS2]{ACS1}
A. Arvanitoyeorgos, I. Chrysikos and Y. Sakane,
{\it Homogeneous Einstein metrics on generalized flag manifolds with five isotropy summands},   Intern. J. Math. Vol.24  (10)   (2013), 1350077 (52 pages).

  \bibitem[At]{AT}
  M.~F.~Atiya, {\it Riemannian surfaces and spin structures}, Annales scientifiques de L'.~\'E.~N.~S, $4^{e}$ ser\'{i}e, (4) 1  (1971), 47--62.
\bibitem[B]{Baum2}
 H.~Baum, {\it Spin-Strukturen und Dirac-Operatoren \"uber pseudoriemannschen Mannigfaltigkeiten},
 Teubner-Texte zur Mathematik, Band 41, 1981.
  \bibitem[BFGK]{Baum}
H.~Baum, T.~Friedrich, R.~Grunewald and I.~Kath: {\it Twistors And Killing Spinors On Riemannian Manifolds}, Sektion Mathematik der Humboldt-Uiversit\"at zu Berlin, 1989.
      \bibitem[BFR]{Bor}
M. Bordeman, M. Forger and H. R\"omer,
 {\it Homogeneous K\"ahler manifolds: paving the way towards new supersymmetric sigma models},
 Comm. Math. Phys. 102 (1986),  604--647.
     \bibitem[BoHi]{Bo}
 A.~Borel, F.~Hirzebruch,
 {\it Characteristic  classes and homogeneous spaces I, II, III},
  Amer. J. Math. 80   (1958),  458--538; 81 (1959), 315--382; 82  (1960), 491--504.
     \bibitem[CG]{Cahen}
  M.~Cahen, S.~Gutt, {\it Spin structures on compact simply connected Rieamannian symmetric spaces}, Simon Stevin 62  (1988), 291--330.
  \bibitem[CGT]{Cahen2}
M.~Cahen, S.~Gutt, A.~Trautman, {\it Spin structures on real projective quadratics}, J. Geom. Phys. 10 (1993), 127--154.
    \bibitem[ChS]{CS}
  I.~Chrysikos,  Y.~Sakane,
   {\it  The classification of homogeneous Einstein metrics on  flag manifolds with $b_{2}(M)=1$}, Bull. Sci. Math. 138  (2014),   665--692.
   \bibitem[Fg]{FF}
    J.~Figueroa-O'Farrill,  {\it Symmetric $M$-theory backgrounds},    Cent. Eur. J. Phys. 11 (2013),  1--36.
 \bibitem[FiG]{Fino} A.~Fino, G.~Grantcharov, {\it Properties of manifolds with skew-symmetric torsion and special holonomy}, Adv. Math. 189 (2) (2004),  439--450.
 \bibitem[FrV]{Freud}
 H.~Freudenthal, H.~de~Vries, {\it Linear Lie Groups}, Pure and Applied Mathematics, Vol. 35, Academic Press, 1969.
        \bibitem[Fr] {Fried}
 T. ~Friedrich, {\it Dirac Operators in Riemannian Geometry}, Amer. Math. Soc,
 Graduate Studies in Mathematics, Vol. 35, 2000.
   \bibitem[FKMS]{FKMS}
  Th.~Friedrich, I.~ Kath, A.~Moroianu, U.~Semmelmann, {\it  On nearly parallel $\G_2$-structures},  J. Geom. Phys. 23  (1997),  259--286.
\bibitem[FH]{forger}
  M.~Forger, H.~Hess. {\it Universal metaplectic structures and geometric quantization},  Commun. Math. Phys. 64  (1979),  269--278.
 \bibitem[GGO] {Gadea}
 P.~M.~Gadea, J.~C.~Gonz\'alez-D\'avila, J.~ A.~Oubi\~{n}a, {\it Homogeneous spin Riemannian manifolds with the simplest Dirac operator},  arXiv:1504.05925v2.
  \bibitem[GOV]{GOV}
 V.V. Gorbatzevich, A.L. Onishchik, E.B. Vinberg,
 {\it Structure of Lie Groups and Lie Algebras}, Encycl. of Math. Sci. v41,
 Lie Groups and Lie Algebras--3, Springer--Verlag.
  \bibitem[Gr1]{Gr1}
 M.~M.~Graev, {\it On the number of invariant Eistein metrics on a compact homogeneous space, Newton polytopes and contraction of Lie algebras},
Intern. J. Geom. Meth.  Mod. Phys. 3 (5-6)  (2006), 1047--1075.
  \bibitem[Gr2]{Gr2} M.~M.~Graev, {\it $T$-root systems of exceptional flag manifolds},  J. Geom. Phys. 76 (2014),  217--234.
   \bibitem[Grn]{Gran}
G.~Grantcharov, {\it Geometry of compact complex homogeneous spaces with vanishing first Chern class,}
Adv. Math.  226 (2011), 3136--3159.
\bibitem[GGP]{GGP}
D.~Grantcharov, G.~Grantcharov,  Y.~S.~Poon,
{\it Calabi-Yau connections with torsion on toric bundles,}
J. Differential Geom. 78  (2008), 13--32.
\bibitem[HH]{Hab} K.~Habermann, L.~Habermann, {\it  Introduction to Syplectic Dirac Operators}, Lectures notes in Mathematic, Springer--Verlag, Berlin-Heidelberg 2006.
\bibitem[HK]{Hano}
J-I. Hano, S.~Kobayashi, {\it   A fibering of a class of homogeneous complex manifolds}, Trans. Amer. Math. Soc. 94 (1960), 233--243.
 \bibitem[H\"of]{Hof}
 T.~H\"ofer, {\it Remarks on torus principal bundles}, J. Math. Kyoto Univ. 33 (1) (1993), 227--259.
 \bibitem[Ise]{Ise}
 M.~Ise, {\it Some properties of complex analytic vector bundles over compact complex homogeneous spaces}, Osaka Math J. 12 (1960), 217--252.
   \bibitem[K]{Karoubi}
 M.~Karoubi, {\it Alg\'ebres de Clifford at $K$-th\'eorie}, Ann. Sci. \'Ec. Norm. Sup. Paris 1 (1968), 161--270.
\bibitem[KR]{Kil2} T.~P.~Killingback, E.~G.~Rees,  {\it
Spin$^{c}$  structures on manifolds},
Class. Quantum Grav. 2 (4) (1985),   433--438.
  \bibitem[Kim]{Kim}
 M. Kimura,
 {\it Homogeneous Einstein metrics on certain K\"ahler C-spaces},
  Adv. Stud. Pure. Math.  18-I  (1990),  303--320.
   \bibitem[LM]{Law}
  H.~B.~Lawson, M-L.~Michelsohn, {\it Spin Geometry}, Princeton University Press, 1989.
  \bibitem[Lee]{Lee}
J.~M.~Lee, {\it Introduction to Smooth Manifolds}, Springer-Verlag, New York, 2003.
\bibitem[Mar]{Martinet}
J.~Martinet, {\it Perfect Lattices in Euclidean Spaces}, Grundlehren der mathematischen Wissenschaften,  Springer-Verlag, Berlin Heidelberg, 2003.
  \bibitem[Mon]{Mon}
  D.~Montgomery; {\it Simply connected homogeneous spaces}, Proc. Amer. Math. Soc. 1 (4) 1950, 467--469.
  \bibitem[Mor]{Mor} S.~Morrison, {\it Classifying Spinor Structures}, (PhD thesis), arXiv:math-ph/0106007.

  \bibitem[P$\ell$]{plymen} R.~J.~Plymen, {\it    Strong Morita equivalence, spinors and symplectic spinors},  J. Operator Theory 16 (2) (1986),  305--324.

  \bibitem[Sie]{Sie}
  J.~Siebenthal,
  {\it Sur certains modules dans une alg\'ebre de Lie semisimple},
  Comment.~ Math.~ Helv.~ 44 (1) (1964),  1--44.
\bibitem[Sin]{Sin} W.~Singhof, {\it Parallelizability of homogeneous spaces  I}, Math.  Ann. 260 (1) (1982),   101--116.

      \bibitem[T]{Tak}  M.~Takeuchi, {\it Homogeneous K\"ahler submanifolds in complex projective spaces},
    Japan. J. Math.  4   (1)   (1978),  171--219.
        \bibitem[W]{Wang}
 H.C. Wang,
 {\it Closed manifolds with homogeneous complex structures},
  Amer. J. Math.  76 (1)  (1954), 1--32.
  \bibitem[WZ]{Wang2}
M.~Y.~Wang, W.~Ziller, {\it Einstein metrics on principal torus bundles}, J. Diff. l Geom. 31 (1990), 215--248.



 \end{thebibliography}
 \end{document}